\documentclass[dvipsnames, reqno]{amsart}

\usepackage{amsfonts,amsthm,amssymb,amsmath,stackrel, latexsym,mathabx}

\usepackage{mathrsfs}
\usepackage{tikz-cd}

\usepackage{newcent}
\usepackage{upgreek}

\usepackage{xcolor}
\usepackage{graphicx}
\usepackage[all]{xy}

\usepackage{microtype}
\usepackage{xcolor}
\usepackage{graphicx}
\usepackage[all]{xy}
\usepackage[colorlinks=true,citecolor=MidnightBlue,linkcolor=MidnightBlue,urlcolor=MidnightBlue] 
           {hyperref}  

           \usepackage[inline]{enumitem}
\usepackage{a4wide}
\usepackage{stmaryrd}
\usepackage{mathtools,
  thmtools}

\setcounter{tocdepth}{2}

\makeatletter
\DeclareRobustCommand*\cal{\@fontswitch\relax\mathcal}
\makeatother

\makeatletter
\@namedef{subjclassname@2020}{%
  \textup{2020} Mathematics Subject Classification}
\makeatother

\numberwithin{equation}{section}

\usepackage{mathtools}
	
\usepackage[toc,page]{appendix}

\usepackage[bbgreekl]{mathbbol}
\usepackage{bbm}
	
\usepackage[normalem]{ulem}

\usepackage{scalerel,stackengine}  
	
\usepackage[textsize=scriptsize]{todonotes}

\def\longleftrightarrows{\buildrel
  {\displaystyle{\relbar\joinrel\longrightarrow}} \over
  {\longleftarrow\joinrel\relbar}}

\newcommand{\compactlist}[1]{\setlength{\itemsep}{0pt} \setlength{\parskip}{0pt} \setlength{\leftskip}{-1.#1em}}

\allowdisplaybreaks
	
\theoremstyle{plain}
\newtheorem{theorem}{Theorem}[section]
\newtheorem{lemma}[theorem]{Lemma}
\newtheorem{proposition}[theorem]{Proposition}
\newtheorem{corollary}[theorem]{Corollary}

\declaretheorem[name=Theorem,
  numbered=no]{theorem*}
\theoremstyle{definition}
\newtheorem{definition}[theorem]{Definition}
\theoremstyle{remark}
\newtheorem{remark}[theorem]{Remark}
\declaretheorem[name=Remark, numbered=no]{remark*}

\declaretheorem[name=Example,
  ,sibling=theorem]{example}
	
\DeclareRobustCommand{\SkipTocEntry}[5]{}

\def\kasten#1{\mathop{\mkern0.5\thinmuskip
		\vbox{\hrule
			\hbox{\vrule
				\hskip#1
				\vrule height#1 width 0pt
				\vrule}%
			\hrule}%
		\mkern0.5\thinmuskip}}

\newcommand{\bx}{{\kasten{6pt}}}

\newcommand{\Tor}{{\rm Tor}}
\newcommand{\Ext}{{\rm Ext}}
\newcommand{{\bull}}{{\scriptscriptstyle{\bullet}}}

\newcommand{{\op}}{{{\rm op}}}
\newcommand{{\co}}{{{\rm co}}}

\newcommand{\ga}{\alpha}
\newcommand{\gb}{\beta}  
\newcommand{\gG}{\Omega}
\newcommand{\gd}{\delta}
\newcommand{\gD}{\Delta}
\newcommand{\gve}{\varepsilon}
  
\newcommand{\gvf}{\varphi}  

\newcommand{\gl}{\lambda}

\newcommand{\go}{\omega}
\newcommand{\gO}{\Omega}

\newcommand{\gs}{\sigma}

\newcommand{{\Aop}}{{A^{\rm op}}}
\newcommand{{\Rop}}{{R^{\rm op}}}
\newcommand{{\Uop}}{{U^{\rm op}}}
	
\newcommand{{\Aope}}{({A^{\rm op}})^{\rm e}}
\newcommand{{\Ae}}{{A^{\rm e}}}
\newcommand{{\RRe}}{{R^{\rm {\mkern 1mu} e}}}

\newcommand{{\coop}}{{{\rm coop}}}

\newcommand{{\id}}{{{\mathrm{id}}}}

\newcommand{{\dd}}{{{\mathrm{d}}}}
	
\newcommand{\lact}{\smalltriangleright}                  
\newcommand{\ract}{\smalltriangleleft}
\newcommand{\blact}{\blacktriangleright}
\newcommand{\bract}{\blacktriangleleft}
	
\newcommand\pig[1]{\scalerel*[5pt]{\big#1}{%
	\ensurestackMath{\addstackgap[1.5pt]{\big#1}}}}

\newcommand{\cG}{\mathcal{G}}

\newcommand{\cH}{\mathcal{H}}
\newcommand{\cL}{\mathcal{L}}
\newcommand{\cM}{\mathcal{M}}

\newcommand{\C}{\mathbb{C}}

\newcommand{\Sum}{\textstyle\sum\limits}

\newcommand{\due}[3]{{}_{{#2 }} {#1}_{{ #3}}\,}

\newcommand{\smap}{{\raisebox{0.3pt}{${{\scriptscriptstyle {[+]}}}$}}}
\newcommand{\smam}{{\raisebox{0.3pt}{${{\scriptscriptstyle {[-]}}}$}}}

\title{Takeuchi-Schneider equivalence and calculi for homogeneous spaces of Hopf algebroids}

\author[N.\ Kowalzig]{Niels Kowalzig}
\address{\hskip -7pt N.K.: Dipartimento di Matematica, Universit\`a di Roma Tor Vergata, Via della Ricerca Scientifica~1, 00133 Roma, Italy}
\email{niels.kowalzig@uniroma2.it}
	
\author[Th.\ Weber]{Thomas Weber}
\address{\hskip -7pt Th.W.: Mathematical Institute of Charles University, Sokolovsk\'a 49/83, 186 75 Praha 8,
  Czech Republic
}
\email{thomas.weber@matfyz.cuni.cz}

\keywords{Hopf algebroids, covariant differential calculi, principal homogeneous spaces, Takeuchi-Schneider equivalence}

\subjclass[2020]{16T05, 16T15, 18D40, 16S40}

	\sloppy

\begin{document}

\begin{abstract}
We develop a notion of covariant differential calculus for Hopf algebroids. As a byproduct, we prove analogues of the fundamental theorem of Hopf modules and a Takeuchi-Schneider equivalence in the realm of Hopf algebroids. The resulting categorical equivalences enable us to classify covariant calculi on Hopf algebroids and, more in general, covariant calculi on quantum homogeneous spaces in this context, in terms of substructures of the augmentation ideal. This generalises the well-known classification results of Woronowicz and Hermisson. A particular focus is given on examples, including covariant calculi on the Ehresmann-Schauenburg Hopf algebroid of a faithfully flat Hopf-Galois extension, and covariant calculi on scalar extension Hopf algebroids, as well as homogeneous space variants of the latter.
\end{abstract}

\maketitle

\tableofcontents

\section{Introduction}

In noncommutative geometry, 
Hopf algebras have been proven to be appropriate generalisations of groups, in particular as a way to describe the concept of symmetry.
For instance, quantum groups 
provide deformations of classical groups which are relevant in mathematical physics since they correspond to solutions of the quantum Yang-Baxter equation. This leads, among various other applications, to the construction of topological invariants of 3-manifolds \cite{FaTa} and to solutions in the quantum inverse scattering method \cite{ReTu}. Moreover, Hopf algebras play the r\^ole of noncommutative structure groups in the geometric interpretation of Hopf-Galois extensions as noncommutative principal bundles \cite{Schneider}.

However, it turns out that, in certain situations, the Hopf algebraic approach is too restrictive, in the sense that one necessitates symmetries with a possibly noncommutative base algebra rather than a base field. This occurs in the study of principal bundles with groupoid symmetries \cite{Mrcun}, when considering solutions of the quantum dynamical Yang-Baxter equation \cite{EtVa},
when trying to interpret
non-integral values of the quantum dimensions in low-dimensional quantum
field theories \cite{BoeSzl:WHAIIRTDATMT}, or still when considering differential operators as universal enveloping algebras of Lie-Rinehart algebras \cite{KowKra:DAPIACT}. An algebraic notion governing the previously mentioned examples is provided by that of a {\em Hopf algebroid} in its various guises, that is, with \cite{Boe:HA} or without \cite{Schau:DADOQGHA} antipodes.
As a groupoid encodes generalised symmetries as an extension of the concept of groups, a Hopf algebroid enlarges the concept of symmetries to generalised symmetries in noncommutative geometry.
A Hopf algebroid has, to start with, an underlying structure of a {\em bialgebroid}, analogously to the fact that a Hopf algebra is, in particular, a bialgebra. Roughly speaking, a bialgebroid
can be seen as the generalisation of a bialgebra over a noncommutative base ring, which, however,  due to the noncommutativity cannot be described
as a compatible monoid and comonoid in some monoidal category (of bimodules) 
but rather as a monoid and a comonoid in different ones. This  entails a remarkable amount of technical complications,  of which we give a short account in \S\ref{heiszheiszheisz} and in general refer to \cite{Boe:HA} for more details; for example,
the fact that one now has to distinguish between {\em left} and {\em right} bialgebroids, or, analogously to the groupoid case, the presence of {\em source and target} maps, or still
the already mentioned difficulty of generalising the notion of antipode. Nevertheless, an increasing number of concepts have been carried over from Hopf algebras to Hopf algebroids resp.\ bialgebroids; so as, for example, the theory of Galois extensions as a successful application \cite{Kad:GTFBDTANHS, BalSzl:FGEONCB, BoeBrz:PTAE}.

As a convention throughout this paper, for the notion of Hopf algebroid (more precisely, of {\em right} Hopf algebroid over a {\em left} bialgebroid) we adopt the perspective of \cite{Schau:DADOQGHA}, that is, we do not require the existence of (whatever kind of) an antipode but solely the invertibility of a certain Hopf-Galois map.
  
On the level of Hopf algebras, there is a rich theory of differential calculi, initiated by Woronowicz in the seminal paper \cite{Woronowicz1989} and developed over the last decades, see, {\em e.g.}, the recent monograph \cite{BeggsMajid}. The idea is to understand noncommutative geometry through differential forms and model the latter algebraically as differential graded algebras which are generated in degree zero. It is effectively sufficient to understand the first degree of a differential cal\-cu\-lus as there is a canonical maximal prolongation \cite{Schau96}, and all possible extensions are quotients of the latter. Moreover, every algebra admits a universal first order differential cal\-cu\-lus \cite{Woronowicz1989} as the kernel of its multiplication and all possible first order differential calculi are again quotients of the latter. In case of a Hopf algebra $H$, this characterisation can be further simplified when dealing with covariant calculi, {\em i.e.}, first order differential calculi such that the bimodule $\Omega(H)$ of one-forms is a Hopf module and the differential $\mathrm{d}\colon H\to\Omega(H)$ colinear. Via the fundamental theorem of Hopf modules, see \cite[Thm.~4.1.1]{Swe:HA} or  \cite[Thm.~1.9.4]{MontBook},
we obtain $\Omega(H)\cong H\otimes\Lambda$, where $\Lambda$ denotes the subspace of one-forms which are invariant under the coaction, and observe furthermore that there is a surjective right $H$-linear map
\begin{equation*}
  \varpi\colon H^+\to\Lambda,
  \quad
	h\mapsto S(h_{(1)})\mathrm{d}(h_{(2)}),
\end{equation*}
that is, the {\em Maurer-Cartan form}, where $H^+$ denotes the kernel of the counit and $S$ the antipode. Thus, $I:=\ker\varpi\subseteq H^+$ is a right $H$-ideal and it turns out that there is a one-to-one correspondence between left covariant first order differential calculi on $H$ and right $H$-ideals of $H^+$. This is the classification theorem of Woronowicz \cite{Woronowicz1989}, which has been used to construct numerous covariant calculi, most famously on matrix quantum groups \cite{Jurco}.

There is a generalisation of the previous classification result proven by Hermisson \cite{Hermisson}, which applies to principal homogeneous spaces. The latter appear as the coinvariants of Hopf algebra surjections $\pi\colon A\to H$, where one assumes that $B:=A^{\mathrm{co}H}\subseteq A$ is a faithfully flat Hopf-Galois extension. Using the Takeuchi-Schneider equivalence \cite{Schneider,Tak:Rel}, it follows that any left $A$-covariant first order differential calculus $\Omega(B)$ on $B$ decomposes as 
\begin{equation*}
	\Omega(B)\cong A\bx_H\Lambda,
\end{equation*}
where $\bx_H$ denotes the cotensor product and $\Lambda$ denotes the subspace of left $A$-coinvariant forms on $B$. Similar to the approach before, one obtains an ideal in $B^+:=B\cap A^+$ as the kernel of a Maurer-Cartan form and then this gives a one-to-one correspondence between left $A$-covariant first order differential calculi on $B$ and subspaces of $B^+$ that are closed under the left $B$-action and left $H$-coaction. This result was used by Heckenberger and Kolb \cite{HeckKolb} to prove that there exists an essentially unique covariant differential calculus of classical dimension on every irreducible quantum flag manifold. The simplest example is the well-known differential calculus on the Podle\'s sphere \cite{Podles}, arising as the quotient of the noncommutative Hopf fibration. Many of these differential calculi have been proven to admit further interesting structures, such as Levi-Civita connections \cite{BGKO} or noncommutative K\"ahler structures \cite{Reamonn}.

The natural question arises if the previously described theory of covariant calculi generalises to the level of Hopf algebroids and if a similar characterisation strategy can be applied. It is the main purpose of this paper to provide a notion of covariant first order differential calculus for Hopf algebroids, discuss quantum homogeneous spaces in this context, and to prove according classification results in the spirit of Woronowicz and Hermisson as described before. In doing so, we mimic the previous strategy in \S\ref{sec:HopfModules} and \S\ref{sec:CovBim} by first discussing Hopf modules for (left) bialgebroids $(\mathcal{H},R)$. A {\em (right-left) Hopf module} $M$ is a right $\mathcal{H}$-module with a compatible left $\mathcal{H}$-comodule structure such that the underlying induced right $R$-actions coincide, and such that the natural left $R$-action commutes with the right $\cH$-action, see Definitions \ref{genzano}. Similarly, one defines a left-left variant as well as {\em covariant bimodules} as an intersection between these two and commuting left and right $\cH$-actions,
see Definition \ref{laparolacheciunisce}. The corresponding fundamental theorem, proven in Theorems \ref{thm:HopfModules} and \ref{rhodia}, is compactly summarised below (see the main text for details and notation).

\begin{theorem*}
  If a left bialgebroid 
 $(\cH, R)$ is also a right Hopf algebroid such that $\cH_\ract$ is $R$-flat,
 then the categories ${}_\mathcal{H}^\cH \!\mathcal{M}^{\phantom{}}_\cH$ of covariant bimodules and ${}_\mathcal{H} \cM$ of left $\mathcal{H}$-modules are equivalent through the adjunction
$$
{}^{\mathrm{co}\mathcal{H}\!}(-) \colon {}_\mathcal{H}^\cH \!\mathcal{M}^{\phantom{}}_\cH
\quad \raisebox{-3pt}{$\longleftrightarrows$} \quad {}_\mathcal{H} \cM \ \colon \!  \cH_\ract \otimes_R -, 
$$
where ${}^{\mathrm{co}\mathcal{H}\!}(-)$ denotes the functor of coinvariants and $\cH_\ract \otimes_R -$ is the tensor product functor.
\end{theorem*}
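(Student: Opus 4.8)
The plan is to realise the two functors as an adjoint pair and then to prove that the unit and the counit of the adjunction are both isomorphisms; the flatness hypothesis and the translation map $h\mapsto h_{[+]}\otimes h_{[-]}$ coming from the right Hopf algebroid structure will enter at the two halves of this last step. First I would make the functors explicit. Given a left $\cH$-module $V$, I turn $\cH_\ract\otimes_R V$ into a covariant bimodule by letting $\cH$ act on the right through multiplication on the first leg, coact on the left through $\Delta\otimes\id_V$, and act on the left diagonally via $g\cdot(h\otimes v)=g_{(1)}h\otimes g_{(2)}\cdot v$; the right-left and left-left compatibilities and the commutation of the two $\cH$-actions are then direct to verify. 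For a covariant bimodule $M$ with coaction $\rho(m)=m_{(-1)}\otimes_R m_{(0)}$, I equip the coinvariants ${}^{\mathrm{co}\cH}M=\{m\mid\rho(m)=1\otimes_R m\}$ with a left $\cH$-module structure. Since the genuine left $\cH$-action does not preserve coinvariants, this must be an adjoint-type action assembled from the left and right $\cH$-actions together with the translation map, arranged so that the outcome is again coinvariant; checking that it is unital, associative and well defined over the balanced tensor products is the first point requiring care.

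Next I would assemble the adjunction. The counit is the right action map $\varepsilon_M\colon \cH_\ract\otimes_R{}^{\mathrm{co}\cH}M\to M$, $h\otimes m\mapsto m\cdot h$, and the unit is $\eta_V\colon V\to{}^{\mathrm{co}\cH}(\cH_\ract\otimes_R V)$, $v\mapsto 1\otimes v$; colinearity of $\varepsilon_M$ holds because $m$ is coinvariant, and the triangle identities are formal. That $\eta_V$ is invertible is the easy half: applying $\id\otimes\epsilon\otimes\id_V$ to the coinvariance identity $\sum_i h^i_{(1)}\otimes h^i_{(2)}\otimes v^i=\sum_i 1\otimes h^i\otimes v^i$ and using counitality shows that every coinvariant of $\cH_\ract\otimes_R V$ already lies in $1\otimes V$, whence ${}^{\mathrm{co}\cH}(\cH_\ract\otimes_R V)=1\otimes V\cong V$; notably, no flatness is needed here.

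The substance, and the step I expect to be the main obstacle, is to prove that $\varepsilon_M$ is an isomorphism. Its inverse should be $m\mapsto m_{(-1)}\otimes\Pi(m_{(0)})$ (indices understood via coassociativity), where $\Pi\colon M\to{}^{\mathrm{co}\cH}M$ is the projection onto coinvariants; $\Pi$ is the algebroid replacement, through the translation map, of the classical assignment $m\mapsto m_{(0)}\cdot S^{-1}(m_{(-1)})$, and it is precisely the right rather than left Hopf algebroid hypothesis that supplies the translation map of the correct handedness for this projection to land in the coinvariants. Two points need care. A priori the inverse takes values only in $\cH_\ract\otimes_R M$; to see that it lands in the subobject $\cH_\ract\otimes_R{}^{\mathrm{co}\cH}M$ one uses exactly the $R$-flatness of $\cH_\ract$, which keeps the inclusion ${}^{\mathrm{co}\cH}M\hookrightarrow M$ injective after applying $\cH_\ract\otimes_R-$. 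That the two composites with $\varepsilon_M$ are the respective identities then reduces, after unwinding the coactions, to the defining relations of the translation map (the surrogates for the antipode and the Galois identities) together with coassociativity and counitality.

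The genuine hardness lies in the bookkeeping forced by the absence of an antipode: every manipulation must respect the source and target maps and the distinction between $\otimes_R$ and $\otimes_{R^{\mathrm{op}}}$, so that the somewhat asymmetric translation map axioms can be applied legitimately, and the adjoint left action on the coinvariants must be set up compatibly with all of these. Once this is under control, naturality of the unit and counit is a routine check, and the adjunction upgrades to the asserted equivalence.
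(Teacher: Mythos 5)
Your proposal is correct and takes essentially the same route as the paper's own proof (Theorems \ref{thm:HopfModules} and \ref{rhodia}): the adjoint action $h\otimes m\mapsto h_{\smap}\, m\, h_{\smam}$ on coinvariants built from the translation map, the counit $h\otimes m\mapsto mh$ with inverse $m\mapsto m_{(-1)}\otimes_R m_{(0)\smap}m_{(0)\smam}$, the unit $v\mapsto 1\otimes_R v$ whose invertibility needs no flatness, and $R$-flatness of $\cH_\ract$ entering through the invertibility of the comodule Galois map (Lemma \ref{lem:GalInv}) are precisely the paper's ingredients. The only organisational difference is that the paper first establishes the fundamental theorem for right-left Hopf modules and then upgrades it to covariant bimodules, whereas you verify the adjunction for covariant bimodules in a single pass.
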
 

Let us underline again that for proving this fundamental theorem the invertibility of a canonical Galois map is sufficient and no antipode is needed.

In \S\ref{sec:Wor}, we define covariant first order differential calculi on right Hopf algebroids as pairs $(\Omega,\mathrm{d})$, where $\Omega$ is a Hopf module, the differential $\mathrm{d}\colon\mathcal{H}\to\Omega$ is $R$-bilinear with respect to the $R$-actions induced by the source maps, left $\mathcal{H}$-colinear, satisfies the Leibniz rule, and, finally, its image
generates $\Omega$. These conditions do not imply that $\mathrm{d}$ vanishes on $R$ since, in general, it is not linear with respect to the target maps. After the construction of a universal calculus in this setting and giving the definition of a Maurer-Cartan form in terms of the translation map, we prove the following analogue  for Hopf algebroids of Woronowicz's classification theorem, see Theorem~\ref{thm:Wor}.

\begin{theorem*}
If $\mathcal{H}_\ract$ is $R$-flat, then left covariant calculi on $\mathcal{H}$ are in bijective correspondence with left ideals in the kernel $\mathcal{H}^+$ of the counit.
\end{theorem*}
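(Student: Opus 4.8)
The plan is to reduce the classification to the fundamental theorem of Hopf modules established above, by realising every left covariant calculus as a quotient of a single universal object whose space of coinvariants is identified with $\mathcal{H}^+$. First I would construct the universal left covariant calculus $(\Omega_{\mathrm{univ}},\mathrm{d}_{\mathrm{univ}})$. In the Hopf algebra case this is $\ker(m\colon\mathcal{H}\otimes\mathcal{H}\to\mathcal{H})$ with $\mathrm{d}(h)=1\otimes h-h\otimes 1$; here the construction must be carried out over $R$, with the Hopf module structure induced from the regular left coaction on the first tensor factor. The crucial point is that, via the fundamental theorem $\Omega\cong\mathcal{H}_\ract\otimes_R{}^{\mathrm{co}\mathcal{H}}\Omega$, the coinvariants of this universal calculus are canonically identified with $\mathcal{H}^+$. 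In the absence of an antipode this identification is implemented by the Maurer-Cartan form written through the translation map $h\mapsto h^{\smam}\otimes h^{\smap}$ of the inverse Galois map, as $\varpi(h)=h^{\smam}\,\mathrm{d}_{\mathrm{univ}}(h^{\smap})$.

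Granting these preparations, the forward direction proceeds as follows. By universality, every left covariant calculus $(\Omega,\mathrm{d})$ receives a surjection of Hopf modules from $(\Omega_{\mathrm{univ}},\mathrm{d}_{\mathrm{univ}})$ intertwining the differentials, so that $\Omega\cong\Omega_{\mathrm{univ}}/N$ for a sub-Hopf-module $N\subseteq\Omega_{\mathrm{univ}}$. Since $\mathcal{H}_\ract$ is $R$-flat, the fundamental theorem applies to $N$ as well, and $N$ is recovered from its coinvariants ${}^{\mathrm{co}\mathcal{H}}N\subseteq{}^{\mathrm{co}\mathcal{H}}\Omega_{\mathrm{univ}}\cong\mathcal{H}^+$. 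I would then verify that ${}^{\mathrm{co}\mathcal{H}}N$ is precisely a left ideal of $\mathcal{H}^+$: the left $\mathcal{H}$-module structure on $\mathcal{H}^+$ arising from the equivalence, which one computes to be left multiplication, combined with the Leibniz rule for $\mathrm{d}$, forces closure under the left $\mathcal{H}$-action, while surjectivity of $\varpi$ ensures that the ambient space is all of $\mathcal{H}^+$. To a calculus one thus assigns the left ideal $I:=\ker\varpi={}^{\mathrm{co}\mathcal{H}}N$.

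Conversely, starting from a left ideal $I\subseteq\mathcal{H}^+$, the quotient $\Lambda:=\mathcal{H}^+/I$ is a left $\mathcal{H}$-module, and I would set $\Omega:=\mathcal{H}_\ract\otimes_R\Lambda$, which by the fundamental theorem is a covariant bimodule, with differential $\mathrm{d}_{\mathrm{univ}}$ followed by the projection. Here the left ideal condition is exactly what guarantees that $\Lambda$ is a left $\mathcal{H}$-module and hence that $\Omega$ is a genuine Hopf module; the Leibniz rule, left colinearity and $R$-bilinearity with respect to the source maps are then inherited from the universal calculus, and the generating property follows since $\mathrm{d}_{\mathrm{univ}}(\mathcal{H})$ generates $\Omega_{\mathrm{univ}}$. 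The two assignments are mutually inverse, as both are governed by the single correspondence $N\leftrightarrow{}^{\mathrm{co}\mathcal{H}}N$ of the fundamental theorem restricted to sub-Hopf-modules of $\Omega_{\mathrm{univ}}$, together with the bijection between such sub-Hopf-modules and left ideals of $\mathcal{H}^+$ furnished by $\varpi$.

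The main obstacle I anticipate lies in the careful bookkeeping of the several $R$-actions. Because $\mathrm{d}$ is bilinear only for the actions induced by the source maps and not for the target maps, base-ring elements cannot be moved freely as in the Hopf algebra case, and both the well-definedness and the surjectivity of the Maurer-Cartan form must be deduced from the translation-map identities of the right Hopf algebroid rather than from antipode axioms. Establishing that the coinvariants of a sub-Hopf-module assemble into a genuine left ideal---and not merely a left $\mathcal{H}$-subcomodule or a one-sided $R$-submodule---is where the Leibniz rule and the precise compatibility between the $\ract$-action and the left coaction, namely the hypotheses of the fundamental theorem, have to be exploited in full.
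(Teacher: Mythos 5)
Your strategy is correct and viable, but it is organized differently from the paper's proof of Theorem \ref{thm:Wor}. The paper never routes through a universal object: given a calculus, it applies the Maurer-Cartan form directly to $\Omega$ (Lemma \ref{lem:MaurerCartan}: $\varpi$ is well defined, left $\cH$-linear, and surjective, so $I=\ker\varpi$ is a left ideal with ${}^{\mathrm{co}\cH}\Omega\cong\cH^+/I$) and then invokes the fundamental theorem (Theorem \ref{rhodia}) to reconstruct $\Omega\cong\cH_\ract\otimes_R\cH^+/I$; conversely, it builds $\cH_\ract\otimes_R\cH^+/I$ with $\dd h=(\cH\otimes_R\pi)(\Delta h-h\otimes_R 1)$ from scratch, checks all calculus axioms by hand, and verifies bijectivity by two explicit computations ($\Xi\circ\dd=\dd'$, and $\varpi(h)=1\otimes_R\pi(h)$ on the constructed calculus). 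Your route---realize every covariant calculus as $\Omega^u_\cH/N$ via (a covariant-bimodule upgrade of) Proposition \ref{schweineschmalz}, identify ${}^{\mathrm{co}\cH}\Omega^u_\cH\cong\cH^+$ with left multiplication as the transported $\cH$-action (this computation, $h_{\smap}(1\otimes_R[g'])h_{\smam}=1\otimes_R[hg']$ by \eqref{Tch2}, is correct), and let the exactness of the equivalence in Theorem \ref{rhodia} implement the lattice correspondence $N\leftrightarrow{}^{\mathrm{co}\cH}N$---is the classical Woronowicz organization. It buys a single correspondence governing both directions and lets the quotient calculus inherit the Leibniz rule, colinearity, and surjectivity rather than having them re-verified; the price is that the whole weight rests on the identification of the coinvariants of the universal calculus, which the paper's argument avoids.

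Two points in your sketch need repair, though both are fixable. First, the coaction on $\Omega^u_\cH=\ker m$ is the diagonal one of Example \ref{freibadoderbadesee}, not ``the regular left coaction on the first tensor factor'': the first-factor coaction neither preserves $\ker m$ nor makes $\dd_u$ colinear. The clean fix is to observe that the Galois map $\ga$ of \eqref{eq:HG} restricts to an isomorphism $\Omega^u_\cH\xrightarrow{\;\cong\;}\cH_\ract\otimes_R\cH^+$ (since $m\circ\ga^{-1}(g\otimes_R g')=g\ract\varepsilon(g')$ and $\cH$ splits off $\cH^+$ as a left $R$-module via $h\mapsto h-s\varepsilon(h)$), intertwining the diagonal coaction with $\Delta\otimes_R\mathrm{id}$; this makes your ``crucial point'' rigorous and also supplies the injectivity of the universal Maurer-Cartan form, which you do not address: surjectivity of $\varpi$ alone yields only ${}^{\mathrm{co}\cH}\Omega^u_\cH\cong\cH^+/\ker\varpi$, while here $\varpi_u(h)=-h_{\smap}\otimes_R h_{\smam}$ for $h\in\cH^+$ by \eqref{Tch7}, whence $\ga(\varpi_u(h))=-1\otimes_R h$ by \eqref{Tch2} and $\varpi_u$ is injective. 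Second, the order in your Maurer-Cartan formula matters: with the paper's convention $\ga^{-1}(1\otimes_R h)=h_{\smap}\otimes_R h_{\smam}$ the form must be $\dd(h_{\smap})\,h_{\smam}$, equivalently $-h_{\smap}\,\dd(h_{\smam})$ on $\cH^+$; the coinvariance computation runs through \eqref{Tch5} and \eqref{Tch2} and fails for the opposite product $h_{\smam}\,\dd(h_{\smap})$, for which no translation-map identity controls $h_{\smam(1)}h_{\smap(1)}$. Read with your reversed Sweedler labels, your formula agrees with the paper's up to sign, so the kernel is unaffected---but this is precisely the bookkeeping hazard your closing paragraph anticipates, and it deserves the explicit check.
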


The main tool to prove this result is the Hopf algebroid version of the fundamental theorem, together with the Maurer-Cartan form
$$
\varpi\colon\mathcal{H}^+\to{}^{\mathrm{co}\mathcal{H}}\Omega, \quad
h \mapsto 
\dd (h_{\smap })h_{\smam }
$$
defined in terms of the translation map $\ga^{-1}(1 \otimes_R h) =: h_{\smap } \otimes_R h_{\smam }$, where $\ga^{-1}$ is the inverse of a Galois map $\ga\colon	\mathcal{H}  \otimes_R \mathcal{H} \to \mathcal{H}  \otimes_R \mathcal{H}$ as in \eqref{eq:HG}.

In \S\ref{sec:TakSchMain}, we introduce quantum homogeneous space versions for Hopf algebroids and prove the
corresponding {\em Takeuchi-Schneider correspondence theorem} in \S\ref{sec:TakSchSub}. In more detail, given a Hopf algebroid surjection $\pi\colon(\mathcal{G},R)\to(\mathcal{H},R)$ such that the subalgebra $B:=\mathcal{G}^{\mathrm{co}\mathcal{H}}$ of coinvariants under the induced $\mathcal{H}$-coaction yields a faithfully flat Hopf-Galois extension $B\subseteq\mathcal{G}$, together with $R$-flatness conditions on $\mathcal{G}$, we call $B$ a {\em Hopf algebroid principal homogeneous space}, see Definition \ref{def:PHS}. In this setup, we define appropriate categories ${}^\mathcal{G}_B\mathcal{M}^{\phantom{}}_B$ and ${}^\mathcal{H}_B\mathcal{M}$ of Hopf modules and prove their Takeuchi-Schneider equivalence (see Proposition \ref{prop:Adj} and Theorem \ref{Kuchen}):

\begin{theorem*}
For every Hopf algebroid principal homogeneous space $B=\mathcal{G}^{\mathrm{co}\mathcal{H}}$ there is a categorical equivalence ${}^{\mkern 3mu \mathcal{G}}_B\mathcal{M}^{\phantom{\cG}}_B \cong {}^\mathcal{H}_B\mathcal{M}$ induced by the adjunction
$$
\Phi \colon {}^{\mkern 3mu\mathcal{G}}_B\mathcal{M}^{\phantom{\cG}}_B
\quad \raisebox{-3pt}{$\longleftrightarrows$} \quad {}^\mathcal{H}_B\mathcal{M} \ \colon \!  \Psi,
$$
where $\Phi$ and $\Psi$ are the Takeuchi functors as defined in
\S\ref{sec:TakSchSub}.
\end{theorem*}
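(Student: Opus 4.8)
The plan is to exhibit the two Takeuchi functors, prove they form an adjoint pair $\Phi\dashv\Psi$, and then upgrade the adjunction to an equivalence by showing that the unit and counit are natural isomorphisms, the decisive input being the faithful flatness and the Hopf-Galois property of the extension $B\subseteq\mathcal{G}$. Concretely, I take $\Psi(N)=\mathcal{G}\,\bx_\mathcal{H}\,N$, the cotensor product of $\mathcal{G}$ — viewed as a right $\mathcal{H}$-comodule via $\pi$ and the comultiplication — with the left $\mathcal{H}$-comodule $N$; the surviving left $\mathcal{G}$-coaction on the first leg, together with left and right multiplication by $B\subseteq\mathcal{G}$, endow $\Psi(N)$ with the structure of an object of ${}^{\mkern 3mu\mathcal{G}}_B\mathcal{M}^{\phantom{\cG}}_B$. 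That both $B$-actions preserve the cotensor is exactly the statement that left and right multiplication by $\mathcal{H}$-coinvariant elements are right $\mathcal{H}$-comodule maps, which is immediate from $B=\mathcal{G}^{\mathrm{co}\mathcal{H}}$. For the other functor I set $\Phi(M)=M\otimes_B R$ (equivalently $M/MB^+$), coequalising the right $B$-action through the counit $\epsilon\colon B\to R$; the left $B$-action survives and the $\mathcal{G}$-coaction descends along $\pi$ to a well-defined left $\mathcal{H}$-coaction, so that $\Phi(M)\in{}^\mathcal{H}_B\mathcal{M}$. Well-definedness of both functors is routine and uses only the $R$-flatness of $\mathcal{G}_\ract$, which guarantees that the cotensor product, being a kernel inside $\mathcal{G}\otimes_R N$, is compatible with the ambient tensor products.

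Next I record the unit and counit. The unit $\eta_M\colon M\to\Psi\Phi(M)=\mathcal{G}\,\bx_\mathcal{H}\,(M\otimes_B R)$ is induced by the left $\mathcal{G}$-coaction, $m\mapsto m_{(-1)}\otimes(m_{(0)}\otimes_B 1)$, which lands in the cotensor by coassociativity; the counit $\epsilon_N\colon\Phi\Psi(N)=(\mathcal{G}\,\bx_\mathcal{H}\,N)\otimes_B R\to N$ is induced by the counit of $\mathcal{G}$, $(\textstyle\sum_i g_i\otimes n_i)\otimes_B 1\mapsto\sum_i\epsilon(g_i)\,n_i$, its well-definedness following from the defining relation of the cotensor together with $\epsilon_\mathcal{H}\circ\pi=\epsilon$. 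Verifying the two triangle identities is then a direct computation with the (co)module axioms and establishes the adjunction $\Phi\dashv\Psi$.

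That the counit is an isomorphism is the easier half. Because $\mathcal{G}_\ract$ is $R$-flat, the functor $(-)\otimes_B R$ commutes with the cotensor kernel, giving natural isomorphisms $(\mathcal{G}\,\bx_\mathcal{H}\,N)\otimes_B R\cong(\mathcal{G}\otimes_B R)\,\bx_\mathcal{H}\,N\cong\mathcal{H}\,\bx_\mathcal{H}\,N\cong N$, where the middle step uses the Hopf-Galois identification $\mathcal{G}\otimes_B R\cong\mathcal{H}$ coming from the bijectivity of the canonical map of the extension, and the last is the counit of the $\mathcal{H}$-cotensor; one checks that this composite agrees with $\epsilon_N$.

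The heart of the argument — and where I expect the main obstacle — is the invertibility of $\eta_M$, i.e.\ the structure theorem $M\cong\mathcal{G}\,\bx_\mathcal{H}\,(M\otimes_B R)$. Here I would argue by faithfully flat descent along $B\subseteq\mathcal{G}$: applying the faithfully flat functor $\mathcal{G}\otimes_B(-)$ turns $M$ into the induced object $\mathcal{G}\otimes_B M$, which, with its induced module structure and codiagonal left $\mathcal{G}$-coaction, lies within the scope of the fundamental theorem proved in Theorems \ref{thm:HopfModules} and \ref{rhodia}; that theorem identifies $\mathcal{G}\otimes_B M$ with a cofree Hopf module and, correspondingly, identifies $\id_\mathcal{G}\otimes_B\eta_M$ with the canonical cofreeness isomorphism. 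Since this base-changed map is invertible and $\mathcal{G}$ is faithfully flat over $B$, the map $\eta_M$ is itself invertible. The explicit inverses needed to make these identifications precise are furnished by the translation map of the Galois structure, $\ga^{-1}(1\otimes_R h)=h_\smap\otimes_R h_\smam$ from \eqref{eq:HG}. Combining the two isomorphisms with the adjunction established above yields the asserted categorical equivalence ${}^{\mkern 3mu\mathcal{G}}_B\mathcal{M}^{\phantom{\cG}}_B\cong{}^\mathcal{H}_B\mathcal{M}$.
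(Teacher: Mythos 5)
Your skeleton matches the paper's (same functors $\Phi, \Psi$, same unit and counit, and the idea of faithfully flat descent along $B\subseteq\cG$; the adjunction itself is Proposition \ref{prop:Adj}), but two of your key steps fail as stated. The most serious is the counit argument, which you call the easier half: the claim that $(-)\otimes_B R$ commutes with the cotensor kernel ``because $\cG_\ract$ is $R$-flat'' is false. The cotensor $\cG\bx_\cH N$ is a kernel inside $\cG\otimes_R N$, and $(-)\otimes_B R\cong (-)/(-)B^+$ is only right exact; exchanging it with that kernel would require $R$ to be flat as a $B$-module via $\varepsilon|_B$, which is essentially never true (already for Hopf algebras, $\Bbbk$ is not flat over a quantum homogeneous space $B$) --- the $R$-flatness of $\cG_\ract$ is beside the point here. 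This is precisely why the paper does \emph{not} apply $\Phi$ directly to $\Psi(N)$, but instead tensors with the faithfully \emph{left} $B$-flat module $\cG$: in Lemma \ref{lem:Tak2} the associativity $(\cG\bx_\cH N)\otimes_B\cG\cong(\cG\otimes_B\cG)\bx_\cH N$ is legitimate because the tensorand $\cG$ is $B$-flat, and then Lemma \ref{Warschau} and the Hopf--Galois identification $\overline\cG\cong\cH$ give $(\cG\bx_\cH N)\otimes_B\cG\cong\cG\otimes_R N$. Both the unit and the counit are then handled \emph{uniformly} by one commutative square ($F\otimes_B\cG$ against $\cG\otimes_R f$, via Lemmas \ref{lem:Tak1} and \ref{lem:Tak2}) and faithful flatness: $F$ is invertible iff $f$ is, and the two special cases $f=\mathrm{id}$ and $F=\mathrm{id}$ yield $\Psi\Phi(M)\cong M$ and $\Phi\Psi(N)\cong N$. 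There is no shortcut for the counit.

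Your unit argument also has gaps. First, you base-change with $\cG\otimes_B(-)$, which needs $\cG$ faithfully flat as a \emph{right} $B$-module; Definition \ref{def:PHS} only assumes faithful \emph{left} $B$-flatness (and in the bialgebroid setting the two sides are genuinely asymmetric, cf.\ Remark \ref{tobeornottobeakernel}), so you must tensor on the other side, $M\otimes_B\cG$, as the paper does. Second, ``lies within the scope of the fundamental theorem'' is a leap: to invoke Theorem \ref{thm:HopfModules} you would have to exhibit the base-changed object as a Hopf module of the required type \emph{and} compute its coinvariants as $\overline M$, and that computation --- well-definedness over $\otimes_B$ and the explicit inverse $g\otimes_R\pi(m)\mapsto m_{\smap}\otimes_B m_{\smam}g$ to $m\otimes_B g\mapsto m_{(-1)}g\otimes_R\pi(m_{(0)})$ built from the translation map --- is exactly the content of Lemma \ref{lem:Tak1}; it is not an application of the fundamental theorem but its relative analogue, proved from scratch. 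Finally, a structural slip: the left $B$-action on $\Psi(N)$ is \emph{not} left multiplication on the first leg but the diagonal action $b(g^i\otimes_R n^i)=b_{(1)}g^i\otimes_R b_{(2)}n^i$ of Eq.~\eqref{Regenschirm}, which uses $\Delta(B)\subseteq\cG\otimes_R B$ and the given left $B$-action on $N$; with plain multiplication your counit $\epsilon_N$ would not be left $B$-linear and the adjunction with ${}^\cH_B\cM$ would not hold.
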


In \S\ref{sec:calcPHS}, the Takeuchi-Schneider equivalence enables us to obtain yet another result, that is, the {\em Hermisson classification theorem} for Hopf algebroid principal homogeneous spaces, see Theorem \ref{thm:herm}:

\begin{theorem*}
Left $\mathcal{G}$-covariant calculi on Hopf algebroid principal homogeneous spaces are in bijective correspondence with subobjects $I\subseteq(B\cap\mathcal{G}^+)$ which are closed under the left $B$-action and left $\mathcal{H}$-coaction.
\end{theorem*}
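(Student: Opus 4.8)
The plan is to run the same argument as for the Woronowicz-type classification of Theorem~\ref{thm:Wor}, but with the fundamental theorem of Hopf modules replaced by the Takeuchi-Schneider equivalence of Theorem~\ref{Kuchen}. Let $(\Omega,\mathrm{d})$ be a left $\mathcal{G}$-covariant calculus on $B$. By definition the bimodule of one-forms is an object $\Omega\in{}^{\mkern 3mu \mathcal{G}}_B\mathcal{M}^{\phantom{\cG}}_B$, so applying the Takeuchi functor $\Phi$ yields $\Lambda:=\Phi(\Omega)\in{}^\mathcal{H}_B\mathcal{M}$, while the counit of the adjunction of Proposition~\ref{prop:Adj} produces an isomorphism $\Omega\cong\Psi(\Lambda)$, generalising the classical decomposition $\Omega(B)\cong A\bx_H\Lambda$. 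The first step is therefore to record that a covariant calculus is determined, up to isomorphism, by the object $\Lambda$ of left $\mathcal{G}$-coinvariant one-forms together with the differential.

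To extract a substructure of $B^+:=B\cap\mathcal{G}^+$, I would then construct a Maurer-Cartan form
$$\varpi\colon B^+\longrightarrow\Lambda$$
in analogy with Theorem~\ref{thm:Wor}. Since $B$ is a left coideal subalgebra of $\mathcal{G}$, the differential $\mathrm{d}b$ of any $b\in B^+$ may be projected onto the coinvariants $\Lambda$ by composing $\mathrm{d}$ with $\Phi$, the projection being realised explicitly through the translation map of the Hopf-Galois extension $B\subseteq\mathcal{G}$. The decisive properties to establish are that $\varpi$ is a morphism in ${}^\mathcal{H}_B\mathcal{M}$, that is, left $B$-linear and left $\mathcal{H}$-colinear, and that it is surjective. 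Surjectivity is forced by the requirement that the image of $\mathrm{d}$ generate $\Omega$ as an object of ${}^{\mkern 3mu \mathcal{G}}_B\mathcal{M}^{\phantom{\cG}}_B$, so that every coinvariant one-form is recovered from differentials via the translation map. It then follows that $I:=\ker\varpi$ is a subobject of $B^+$ closed under the left $B$-action and left $\mathcal{H}$-coaction, and that $\Lambda\cong B^+/I$ in ${}^\mathcal{H}_B\mathcal{M}$.

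For the converse direction, starting from a subobject $I\subseteq B^+$ closed under these operations I would set $\Lambda:=B^+/I$ with its induced structure in ${}^\mathcal{H}_B\mathcal{M}$, define the one-forms as $\Omega:=\Psi(\Lambda)$, and let $\mathrm{d}\colon B\to\Omega$ be the projection of the universal first order calculus on $B$ modulo $I$. The Leibniz rule, the $R$-bilinearity with respect to the source maps, and the left $\mathcal{H}$-colinearity of $\mathrm{d}$ are inherited from the universal calculus, while the generation condition is built in. Finally I would check that the two assignments $(\Omega,\mathrm{d})\mapsto I$ and $I\mapsto(\Omega,\mathrm{d})$ are mutually inverse, using the faithful flatness of $B\subseteq\mathcal{G}$ to guarantee that passage to the quotient by $I$ commutes with the Takeuchi-Schneider decomposition $\Omega\cong\Psi(\Lambda)$.

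The main obstacle I anticipate is the compatibility bookkeeping forced by the algebroid framework: because $\mathrm{d}$ is only required to be balanced over the source maps and not over the target maps, and because the translation map of $B\subseteq\mathcal{G}$ lives over the noncommutative base $R$, one must verify carefully that $\varpi$ descends to the coinvariants and respects the correct one-sided linearities. Establishing surjectivity of $\varpi$, equivalently that $\Omega\cong\Psi(B^+/I)$ exhausts every covariant calculus, is the delicate point, and here both the faithful flatness hypothesis and the explicit form of the Takeuchi functors $\Phi$ and $\Psi$ are indispensable.
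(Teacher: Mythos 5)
Your proposal is correct and, once unwound, is essentially the paper's own argument in a light disguise. The paper likewise classifies calculi as quotients $\gO^u_B/M$ of the universal calculus (Proposition~\ref{schweineschmalz}) and transports subobjects across the Takeuchi--Schneider equivalence of Theorem~\ref{Kuchen}, with exactness of $\Phi$ and $\Psi$ supplied by the coflatness statements of Lemma~\ref{lem:Tak2}. The one difference is packaging: where you take the kernel of a Maurer--Cartan form $\varpi\colon B^+\to\Phi(\Omega)$, the paper instead establishes two explicit isomorphisms, namely $\zeta\colon\Phi(\gO^u_B)=\gO^u_B/\gO^u_BB^+\xrightarrow{\cong}B^+$, $[b\,\dd_u b']\mapsto bb'-b\bract\gve(b')$, and $\xi\colon\Psi(B^+)=\cG\bx_\cH B^+\xrightarrow{\cong}\gO^u_B$, $g^i\otimes_R b^i\mapsto b^i_{\smap}\otimes_R b^i_{\smam}g^i$, and then sets $I:=\zeta(\Phi(M))$ resp.\ $M:=\xi(\Psi(I))$. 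Your $\varpi$ is exactly the composite $B^+\xrightarrow{\ \zeta^{-1}\ }\Phi(\gO^u_B)\twoheadrightarrow\Phi(\Omega)$, i.e.\ $b\mapsto[\dd b]$, so by exactness of $\Phi$ applied to $0\to M\to\gO^u_B\to\Omega\to 0$ the two routes produce the same ideal; what your Maurer--Cartan packaging buys is a direct, Woronowicz-style description of the correspondence, while the paper's route keeps both directions visibly symmetric under the equivalence (also note the functor directions: $\Phi$ is left adjoint to $\Psi$, so the isomorphism $\Omega\cong\Psi(\Phi(\Omega))$ is the \emph{unit}, not the counit).

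Two of your descriptions need correcting, though neither derails the plan. First, in the algebroid setting $\Phi$ is the quotient functor $M\mapsto M/MB^+$, not a coinvariants functor, so $\Lambda=\Phi(\Omega)$ is not a space of ``left $\cG$-coinvariant one-forms,'' and no translation map is needed to project $\dd b$ into $\Lambda$: the canonical quotient map suffices. Second, the translation map's real and unavoidable job sits in the \emph{opposite} direction, inside $\xi$: the verification that $b^i_{\smap}\otimes_R b^i_{\smam}g^i$ lands in $B\otimes_R B$ for cotensor elements $g^i\otimes_R b^i\in\cG\bx_\cH B^+$ (which uses \eqref{Tch3}--\eqref{Tch5} together with the cotensor condition) is precisely what makes your phrase ``the generation condition is built in'' true; without it, surjectivity of the calculus $\Psi(B^+/I)$ is not automatic. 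Finally, when checking that the two assignments are mutually inverse you must match the differentials and not only the underlying objects of ${}_B^{\mkern 3mu\cG}\cM^{\phantom{\cG}}_{\mkern -2mu B}$; this is the analogue of the verification in Theorem~\ref{thm:Wor} that the structure isomorphism intertwines $\dd$ and $\dd'$ (and note that the paper's $\xi$ intertwines $\dd$ with $-\dd_u$, a sign your sketch would need to absorb).
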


Since in this article we introduce the new concept of differential calculus on Hopf algebroids, a major emphasis is set on examples. Our main classes of examples are given by Ehresmann-Schauenburg Hopf algebroids (\S\ref{gewittrigheute}), scalar extension Hopf algebroids (\S\ref{adapter}), and homogeneous space variants of scalar extension Hopf algebroids (\S\ref{sec:HomogeneousScalarExt}). Covariant calculi on all of the previous examples are constructed in \S\ref{sec:EScalc}, \S\ref{sec:scalar-calc}, and \S\ref{sec:QHScalc}, respectively. In more detail, we construct the Ehresmann-Schauenburg Hopf algebroid (see \cite{Schau98}) from a faithfully flat Hopf-Galois extension $B = {}^{\mathrm{co}H}\! A \subseteq A$, in the usual Hopf algebraic sense, as the space $\mathcal{H}_\mathrm{ES}:= {}^{\mathrm{co}H} \! (A\otimes A)$ of left coinvariants of the diagonal coaction  as a Hopf algebroid over $B$. It turns out (Theorem \ref{prop:ES}) that left covariant first order differential calculi on $\mathcal{H}_\mathrm{ES}$ are in bijective correspondence with left $H$-covariant calculi on $A$. 
Secondly, the scalar extension Hopf algebroid \cite{BrzMil:BBAD} is constructed from a braided commutative algebra $A$ in the category of Yetter-Drinfel'd modules of a Hopf algebra $H$ as the smash product $\mathcal{H}_\#:=A\#H$, which is a (right) Hopf algebroid over $A$. Given appropriate ideals $I_A\subseteq A$ and $I_H\subseteq H^+$, one constructs left covariant calculi on $\mathcal{H}_\#$ as the quotient $\mathcal{H}_\#\otimes_A(\mathcal{H}_\#^+/I_A\#I_H)$, see Proposition \ref{prop:smashCalc}. 
Finally, starting from a principal homogeneous space $\pi\colon G\to H$ in the usual Hopf algebraic sense and a braided commutative algebra $A$ in the category ${}_H\mathcal{YD}^G$ of $H$-$G$-Yetter-Drinfel'd modules, one obtains a Hopf algebroid principal homogeneous space from the Hopf algebroid projection $A\#G\to A\#H$, see Proposition \ref{prop:smashhomogeneous}. Left $A\#G$-covariant calculi on $A\otimes G^{\mathrm{co}H}$ are then constructed in Proposition \ref{prop:CalcSmashHom}.
In particular, this procedure can be applied to the Heckenberger-Kolb calculi \cite{HeckKolb} on irreducible quantum flag manifolds to construct covariant calculi on the corresponding Hopf algebroid principal homogeneous spaces.

    Let us conclude by a remark on a different notion of noncommutative differential calculi.
    A {\em Tamarkin-Tsygan (homotopy) calculus} \cite{TamTsy:NCDCHBVAAFC} essentially starts from a mixed complex $(\gO_{-\bullet}, B, b)$, where $\gO_{-\bullet}$ plays the r\^ole of the module of differential forms with differential $B$,
    along with a (homotopy) Gerstenhaber algebra $V^\bullet$ that is seen as the space of polyvector fields. The objective in this context is quite different from what we have in mind for this article and rather focusses on the construction of a contraction $\iota$ and a Lie derivative $\cL$ by which $V^\bullet$ acts on $\gO_{-\bullet}$ in a homotopical way as a graded module resp.\ as a graded Lie algebra. In the context of bialgebroids and Hopf algebroids, this has been dealt with extensively in \cite{KowKra:BVSOEAT}, and from a more general perspective in \cite[\S6.3]{Kow:GABVSOMOO}.

\addtocontents{toc}{\SkipTocEntry}
 \subsection*{Notation}
Throughout this paper, we fix a commutative unital ring $\Bbbk$ and an associative unital $\Bbbk$-algebra $R$. The category of $\Bbbk$-modules is denoted by ${}_\Bbbk\mathcal{M}$, while the categories of left $R$-modules, right $R$-modules, and $R$-bimodules are denoted by ${}_R\mathcal{M}$, $\mathcal{M}_R$, and ${}_R\mathcal{M}_R$, respectively, and likewise for any other ring. We use the unadorned symbol $\otimes$ for the tensor product of $\Bbbk$-modules, while $\otimes_R$ denotes the $R$-balanced tensor product.
Finally, by $\RRe:=R\otimes R^\mathrm{op}$ we mean the enveloping algebra of $R$.
Actions on a module, be it left or right, are most of the time denoted just by juxtaposition.

\addtocontents{toc}{\SkipTocEntry}
\subsection*{Acknowledgements}

We would like to thank Gabriella B\"ohm, Sophie Chemla, Laiachi El Kaoutit, Xiao Han, Ulrich Kr\"ahmer, Paolo Saracco, Peter Schauenburg, and the referee for helpful discussions and precious comments. This work was partially supported by the MIUR Excellence Department Project MatMod@TOV (CUP:E83C23000330006). Also partially supported by the Swedish Research Council under grant no.~2021-06594, while the authors were in residence at the Mittag-Leffler Institute in Djursholm, Sweden, during spring 2025.
The first author is a member of the {\em Gruppo Nazionale per le Strutture Algebriche, Geometriche e le loro
Applicazioni} (GNSAGA-INdAM) and last but not least wants to thank 
the Institut de Math\'ematiques de Jussieu -- Paris
Rive Gauche, where part of this work has been achieved, for hospitality and support.
The second author is supported by the GA\v{C}R PIF 24-11324I.

\section{Hopf-Galois extensions, Hopf algebroids, and Hopf-Galois comodules}

This section unites some well-known material on Hopf-Galois extensions and quantum homogeneous spaces in the classical, {\em i.e.}, Hopf algebraic case with our main objects of interest, bialgebroids resp.\ Hopf algebroids, and presents several first results when applying these concepts to the Ehresmann-Schauenburg bialgebroid, the scalar extension bialgebroid, {\em etc.} We also discuss here the respective notion of Hopf modules along with (what we call) {\em Hopf-Galois} comodules, a corresponding fundamental theorem, and covariant bimodules, all in the realm of bialgebroids resp.\ Hopf algebroids.

\subsection{Hopf-Galois extensions and quantum homogeneous spaces for Hopf algebras}
\label{fuellfederhalter}

Let $H$ be a Hopf algebra over a field $\Bbbk$ with invertible antipode $S$.
Recall the standard con\-struction of a  \textit{Hopf-Galois extension} $B\subseteq A$, where $A$ is a right $H$-comodule algebra with right $H$-coaction $\rho_A\colon A\to A\otimes H$ such that
\begin{equation}
	\label{berlinale}
	\chi\colon A\otimes_BA\to A\otimes H,
	\quad
	a\otimes_Ba' \mapsto aa'_{(0)} \otimes a'_{(1)}
\end{equation}
is a bijection, and where $B:=A^{\mathrm{co}H}$ denotes the subalgebra of right $H$-coinvariant elements.
A Hopf-Galois extension $B\subseteq A$ is called \textit{faithfully flat} if $A$ is a faithfully flat left $B$-module, or, equivalently, a faithfully flat right $B$-module, according to \cite[Thm.~1]{Schneider}. All Hopf-Galois extensions we consider will be assumed faithfully flat.
Its \emph{translation map} \cite{Brz96} is defined by 
\begin{equation}
	\label{berlinale2}
\tau:=\chi^{-1}\circ(\eta_A\otimes H)\colon H \to A \otimes_B A, \quad
h \mapsto h^{\langle 1\rangle} \otimes_B h^{\langle 2\rangle},
\end{equation}
for which we adopt the given Sweedler-type notation and where $\eta_A\colon\Bbbk\to A$ denotes the unit of $A$.
The image of $\tau$ lands in the $B$-bimodule centre of $A \otimes_B A$, that is,
$
b h^{\langle 1\rangle} \otimes_B h^{\langle 2\rangle} = h^{\langle 1\rangle} \otimes_B h^{\langle 2\rangle}b
$ for $b\in B$ and $h\in H$.
With this, let us list a couple of further standard identities mentioned in \cite[Rem.~3.4]{Schneider2}:
\begin{equation}
	\label{allthat}
	\begin{array}{rcl}
		h^{\langle 1\rangle } {h^{\langle 2\rangle }}_{\!\!(0)} \otimes  {h^{\langle 2\rangle }}_{\!\!(1)} = 1_A \otimes h, &&  a_{(0)} {a_{(1)}}^{\!\!\langle 1\rangle } \otimes_B
		{a_{(1)}}^{\!\!\langle 2\rangle } = 1_A \otimes_B a,
		\\[1mm]
		h^{\langle 1\rangle } h^{\langle 2\rangle } = \gve(h)1_A, &&
		(hg)^{\langle 1\rangle } \otimes_B (hg)^{\langle 2\rangle }
		= g^{\langle 1 \rangle }h^{\langle 1\rangle } \otimes_B h^{\langle 2\rangle } g^{\langle 2\rangle },
		\\[1mm]
		h^{\langle 1\rangle } \otimes_B  {h^{\langle 2\rangle }}_{\!\!(0)} \otimes  {h^{\langle 2\rangle }}_{\!\!(1)} & = &  {h_{(1)}}^{\!\!\langle 1\rangle } \otimes_B  {h_{(1)}}^{\!\!\langle 2\rangle } \otimes  h_{(2)} ,
		\\[1mm]
		{h_{(2)}}^{\!\!\langle 1\rangle } \otimes_B  {h_{(2)}}^{\!\!\langle 2\rangle } \otimes  Sh_{(1)} & = &  {h^{\langle 1\rangle }}_{\!\!(0)} \otimes_B  h^{\langle 2\rangle } \otimes  {h^{\langle 1 \rangle}}_{\!\!(1)} ,
		\\[1mm]
		{h_{(1)}}^{\!\!\langle 1\rangle } \otimes_B  {h_{(1)}}^{\!\!\langle 2\rangle } {h_{(2)}}^{\!\!\langle 1\rangle } \otimes_B   {h_{(2)}}^{\!\!\langle 2\rangle } & = &  {h^{\langle 1\rangle }} \otimes_B 1 \otimes_B  {h^{\langle 2 \rangle}},
	\end{array}
\end{equation}
for all $h,g\in H$ and $a\in A$.

\begin{remark}
  \label{rem:S-1Galois}
  Since the antipode of $H$ is invertible,
in case of a Hopf-Galois extension as above,
  a {\em second} Galois map
	\begin{equation}
          \label{Galois2}
		\tilde\chi\colon A\otimes_BA\to A\otimes H,\quad a\otimes_Ba' \mapsto a_{(0)}a'\otimes a_{(1)}
	\end{equation}
	is invertible (see, {\em e.g.}, \cite[p.~372]{MontSurvey}), and it is rather {\em this} map that we are going to generalise to (left) bialgebroids later on.
        Its inverse can be described explicitly in terms of the translation map $\tau$ of $\chi$ and the inverse of the antipode by
	\begin{equation}
          \label{chi'inv}
		\tilde\chi^{-1}\colon A\otimes H\to A\otimes_BA,\quad
		a\otimes h \mapsto \tau'(h)a : = S^{-1}(h)^{\langle 1\rangle}\otimes_BS^{-1}(h)^{\langle 2\rangle}a,
	\end{equation}
	where $\tau'=\tau\circ S^{-1}\colon H\to A\otimes_BA$ denotes the translation map corresponding to $\tilde\chi$. In fact, \eqref{chi'inv} is the inverse of $\tilde\chi$ as can be easily seen as follows:
one has
	\begin{eqnarray*}
		\tilde\chi^{-1}(\tilde\chi(a\otimes_Ba'))
		&=&
                \tilde\chi^{-1}(a_{(0)}a'\otimes a_{(1)})
                =
                S^{-1}(a_{(1)})^{\langle 1\rangle}\otimes_BS^{-1}(a_{(1)})^{\langle 2\rangle}a_{(0)}a'
                \\
                &\overset{(*)}{=}&
                S^{-1}(a_{(1)})^{\langle 1\rangle}S^{-1}(a_{(1)})^{\langle 2\rangle}a_{(0)} \otimes_B a' 
		\overset{\eqref{allthat}}{=} \, a\otimes_Ba'
	\end{eqnarray*}
	for all $a,a'\in A$ and $h\in H$, where in $(*)$ we used that $S^{-1}(a_{(1)})^{\langle 1\rangle}\otimes_BS^{-1}(a_{(1)})^{\langle 2\rangle}a_{(0)}\in A\otimes_BB$. The latter holds since $A$ is flat as a left $B$-module and
	\begin{align*}
		S^{-1}(a_{(1)})^{\langle 1\rangle}\otimes_B\rho_A(S^{-1}(a_{(1)})^{\langle 2\rangle}a_{(0)})
		&=S^{-1}(a_{(2)})^{\langle 1\rangle}\otimes_BS^{-1}(a_{(2)})^{\langle 2\rangle}{}_{(0)}a_{(0)}\otimes S^{-1}(a_{(2)})^{\langle 2\rangle}{}_{(1)}a_{(1)}
                \\
		&=S^{-1}(a_{(2)})_{(1)}{}^{\!\!\langle 1\rangle}\otimes_BS^{-1}(a_{(2)})_{(1)}{}^{\!\!\langle 2\rangle}a_{(0)}\otimes S^{-1}(a_{(2)})_{(2)}a_{(1)}
                \\
		&=S^{-1}(a_{(3)})^{\langle 1\rangle}\otimes_BS^{-1}(a_{(3)})^{\langle 2\rangle}a_{(0)}\otimes S^{-1}(a_{(2)})a_{(1)}
                \\
		&=S^{-1}(a_{(1)})^{\langle 1\rangle}\otimes_BS^{-1}(a_{(1)})^{\langle 2\rangle}a_{(0)}\otimes 1,
	\end{align*}
	where we employed Eqs.~\eqref{allthat} and
the customary antipode properties.
Even simpler is to show that $\tilde\chi \circ \tilde\chi^{-1} = \id$. 
\end{remark}

If one starts from a Hopf algebra surjection $\pi \colon G \twoheadrightarrow H$, setting $A := G$ with right $H$-coaction given by $\rho_G \colon G \to G \otimes H, \ g \mapsto g_{(0)} \otimes g_{(1)} := g_{(1)} \otimes \pi(g_{(2)})$, we call $B:=A^{\mathrm{co}H}$ a \emph{principal (quantum)} \emph{homogeneous space} if $B\subseteq A$ is a faithfully flat Hopf-Galois extension.

In \S\ref{schreibtinte}, we are going to generalise this notion to the realm of (right) Hopf algebroids. 

\subsection{Bialgebroids and Hopf algebroids}
\label{heiszheiszheisz}
Mainly to fix the notation, let us
recall that a \textit{left bialgebroid} \cite{Tak:GOAOAA} is a sextuple $(\mathcal{H},R, s,t,\Delta,\varepsilon)$, where
\begin{enumerate}
\compactlist{99}
\item
the triple  $(\mathcal{H},s,t)$ is an $\RRe$-ring.
In particular, $\mathcal{H}$ has two commuting $R$-bimodule structures
$$
r\lact h\ract r'
:=s(r)t(r')h,\qquad\qquad
r\blact h\bract  r'
:=ht(r)s(r'); 
$$
\item
the triple  $(\cH,\Delta,\varepsilon)$ is an $R$-coring with respect to the actions $\lact, \ract$ and coproduct $\Delta\colon\mathcal{H}\to\mathcal{H}_\ract\times_R\due {\mathcal{H}} \lact {}$, where
  \begin{equation}
  \label{takeuchiyippieh}
  \mathcal{H}_\ract\times_R\due {\mathcal{H}} \lact {}=
  \big\{\Sum\nolimits_i h^i\otimes_Rh_i\in\mathcal{H}_\ract\otimes_R\due {\mathcal{H}} \lact {} \mid 
r\blact h^i\otimes_R h_i=h^i\otimes_Rh_i\bract r,\ \forall r\in R \big\}
\end{equation}
is the {\em Takeuchi} subspace, and in the quotient $\mathcal{H}_\ract \otimes_R \due \cH \lact {}$ we identify $h\ract r\otimes h'$ with $h\otimes r\lact h'$.
In particular, $\gD$ is $R$-{\em tetralinear}, that is, 
$$
\Delta(r\lact h\ract r')
=r\lact h_{(1)}\otimes_R h_{(2)}\ract r',\qquad\qquad
\Delta(r\blact h\bract r')
=h_{(1)}\bract  r'\otimes_Rr\blact h_{(2)},
$$
where we wrote $\Delta h = h_{(1)}\otimes_Rh_{(2)}$;
\item
the counit  $\varepsilon\colon\mathcal{H}\to R$ is an $R$-bilinear generalised left character, by which we mean
\begin{equation}
  \label{counityippieh}
\varepsilon(r\lact h\ract r')
=r\varepsilon(h)r',\qquad\qquad
\varepsilon(hh') =
\varepsilon(h\bract  \varepsilon(h'))
=\varepsilon(\varepsilon(h')\blact h).
\end{equation}
\end{enumerate}
We will often denote a left bialgebroid compactly by $(\cH, R)$ or even $\cH$ only.

A {\em module}, be it left or right, over a left bialgebroid $(\cH, R)$ is simply a module over the underlying ring $\cH$. The category ${}_\cH \cM$ of left modules over a left bialgebroid is monoidal with unit $R$, while the category $\cM_\cH$ of right modules, in striking contrast to the bialgebra case, is not; nevertheless, one still has two forgetful functors ${}_\cH \cM \to {}_\RRe \cM$ resp.\ $\cM_\cH \to {}_\RRe \cM$ with respect to which we write
\begin{equation}
  \label{soedermalm}
r \lact m \ract r' := s(r)t(r')m, \qquad r \blact n \bract r' := nt(r)s(r')
  \end{equation}
for $r,r'\in R$, $m\in M$, and $n\in N$, where $M$ is a left $\mathcal{H}$-module and $N$ is a right $\mathcal{H}$-module.
\noindent The notion of {\em comodules} over a left bialgebroid will be discussed at the beginning of \S\ref{mittagleffler}.

We call a left bialgebroid $(\mathcal{H},R,s,t,\Delta,\varepsilon)$ a \textit{right Hopf algebroid} if the canonical map
\begin{equation}
  \label{eq:HG}
\ga\colon	\mathcal{H}_\bract  \otimes_R{}_\lact  \mathcal{H}\to\mathcal{H}_\ract  \otimes_R{}_\lact  \mathcal{H},\quad
	h\otimes_Rh'\mapsto h_{(1)}h'\otimes_Rh_{(2)}
\end{equation}
is bijective. In this case, we adopt yet another Sweedler-type notation and write 
\begin{equation}
  \label{toujours}
\ga^{-1}(1 \otimes_R h) =: h_{\smap } \otimes_R h_{\smam },
\end{equation}
for the {\em translation map} such that, in total,
\begin{equation*}
  \ga^{-1}\colon	
  \mathcal{H}_\ract  \otimes_R{}_\lact  \mathcal{H}
  \to
  \mathcal{H}_\bract  \otimes_R{}_\lact  \mathcal{H}, \quad
	h\otimes_Rh'\mapsto h'_{\smap } \otimes_R h'_{\smam }h.
\end{equation*}
A left bialgebroid is, on the other hand, called 
a \textit{left Hopf algebroid} if, in turn, 
$$
\beta \colon
{}_\blact \mathcal{H}\otimes_{R^\mathrm{op}}\mathcal{H}_\ract  \to \mathcal{H}_\ract  \otimes_R{}_\lact \mathcal{H},\quad
h\otimes_{R^\mathrm{op}}h'\mapsto h_{(1)}\otimes_Rh_{(2)}h'
$$
is a bijective map. For some categorical implications of this definition, see \cite{BruLacVir:HMOMC, Schau:DADOQGHA}. Since in this article we are mainly concerned with right Hopf algebroids, for simplicity we sometimes refer to them as \textit{Hopf algebroids} only.
If a left bialgebroid $(\mathcal{H},R, s, t, \gD, \gve)$ is right Hopf, then one verifies that for the translation map the identities
\begin{eqnarray}
	\label{Tch1}
	h_{\smap } \otimes_R  h_{\smam } & \in
	& \mathcal{H}_\bract\times_R\due {\mathcal{H}} \lact {},  \\
	\label{Tch2}
	h_{\smap (1)} h_{\smam } \otimes_R h_{\smap (2)}  &=& 1 \otimes_R h \quad \in \mathcal{H}_{\ract} \! \otimes_R \! {}_\lact \mathcal{H},  \\
	\label{Tch3}
      h_{(2)\smap } \otimes_R  h_{(2)\smam }h_{(1)}   &=& h \otimes_R 1
        \quad \in {\mathcal{H}}_{\bract} \!
	\otimes_R \! \due {\mathcal{H}} \lact {},  \\
	\label{Tch4}
	h_{\smap (1)} \otimes_R h_{\smam } \otimes_R h_{\smap (2)} &=& h_{(1)\smap } \otimes_R
	h_{(1)\smam } \otimes_R  h_{(2)},  \\
	\label{Tch5}
	h_{\smap \smap } \otimes_R  h_{\smap \smam } \otimes_R h_{\smam } &=&
	h_{\smap } \otimes_R h_{\smam (1)} \otimes_R h_{\smam (2)},  \\
	\label{Tch6}
	(hg)_{\smap } \otimes_R (hg)_{\smam } &=& h_{\smap }g_{\smap }
	\otimes_R g_{\smam }h_{\smam },  \\
	\label{Tch7}
	h_{\smap }h_{\smam } &=& t \varepsilon (h),  \\
	\label{Tch8}
	h_{\smap } \bract \varepsilon(h_{\smam })  &=&  h,  \\
	\label{Tch9}
	(s (r) t (r'))_{\smap } \otimes_R (s (r) t (r') )_{\smam }
	&=& t(r') \otimes_R t(r),
\end{eqnarray}
hold for all $h,g\in\mathcal{H}$ and $r,r'\in R$,
where in  \eqref{Tch1} we denoted the Takeuchi subspace
\begin{equation*}
  \label{petrarca2}
	\mathcal{H}_\bract\times_R\due {\mathcal{H}} \lact {}    \coloneqq 
	\big\{ {\textstyle \sum_i} h_i \otimes  g_i \in {\mathcal{H}}_{\bract}  \otimes_R \!  \due {\mathcal{H}} \lact {} \mid \, {\textstyle \sum_i} r \lact h_i \otimes g_i = {\textstyle \sum_i} h_i \otimes g_i \bract r,  \ \forall r \in R  \big\}.
\end{equation*}
See, for example, \cite[Prop.~4.2]{BoeSzl:HAWBAAIAD} for further information in this direction on bialgebroids and Hopf algebroids.

\subsection{The Ehresmann-Schauenburg Hopf algebroid}
\label{gewittrigheute}
Given a faithfully flat Hopf-Galois extension $B:=A^{\mathrm{co}H}\subseteq A$ in the sense of \S\ref{fuellfederhalter}, one can construct a {\em right} Hopf algebroid over $B$ on the space $\mathcal{H}:=(A\otimes A)^{\mathrm{co}H}$ of right $H$-coinvariants under the diagonal $H$-coaction if the antipode $S$ of $H$ is involutive. Instead of assuming $S^2=\mathrm{id}$, one can consider a version of the Ehresmann-Schauenburg bialgebroid based on \textit{left} coinvariants. We show in Remark \ref{leftpeft} that this becomes a right Hopf algebroid without the need of additional assumptions.

The underlying left bialgebroid structure was given in \cite[Thm.~6.3]{Schau98},
and known as the \textit{Ehresmann-Schauenburg bialgebroid}, see, for example, \cite[\S34.14]{BrzWis:CAC},
while in \cite[\S4.2]{HanMajid2} it is shown that this is a {\em left} Hopf algebroid in the sense mentioned above. Here, we briefly recall its construction and show afterwards that it is also a {\em right} Hopf algebroid if $S^2 = \id$. More precisely,
$$
\mathcal{H} = (A\otimes A)^{\mathrm{co}H} := \{a\otimes a'\in A\otimes A~|~a_{(0)}\otimes a'_{(0)}\otimes a_{(1)}a'_{(1)}=a\otimes a'\otimes 1\}
$$ 
becomes a left bialgebroid over $B$ with source and target maps
\begin{equation*}
	s,t\colon B\to\mathcal{H},\qquad s(b):=b\otimes 1,\qquad t(b):=1\otimes b,
\end{equation*}
with algebra structure
\begin{equation*}
	(a\otimes a')(c\otimes c'):=ac\otimes c'a',\qquad
	1_\mathcal{H}:=1\otimes 1,
\end{equation*}
and $B$-coring structure
\begin{equation}
	\label{blauetinte}
	\begin{split}
	  \Delta\colon\mathcal{H}&\to\mathcal{H}\otimes_B\mathcal{H},
          \\
		\varepsilon\colon\mathcal{H}&\to B,
	\end{split}\quad
	\begin{split}
		a\otimes a'&\mapsto a_{(0)}\otimes\tau(a_{(1)})\otimes a'
		=(a_{(0)}\otimes a_{(1)}{}^{\!\!\langle 1\rangle})\otimes_B(a_{(1)}{}^{\!\!\langle 2\rangle}\otimes a'),
                \\
		a\otimes a'&\mapsto aa'.
	\end{split}
\end{equation}
This left bialgebroid can be promoted to a right Hopf algebroid in the following way:

\begin{lemma}
	\label{quarkspeise}
If $S^2 = \id$ holds for the antipode of the Hopf algebra $H$, then the Ehresmann-Schauenburg bialgebroid $\cH$ as defined above is a right Hopf algebroid, with translation map \eqref{toujours}  given by   
	\begin{equation}
		\label{internetgehtnich}
		(a \otimes a')_\smap \otimes_B (a \otimes a')_\smam
		:= (a'_{(1)}{}^{\!\!\langle 1 \rangle} \otimes a'_{(0)} ) \otimes_B (a'_{(1)}{}^{\!\!\langle 2 \rangle} \otimes a)  
	\end{equation}
	for an element $a \otimes a' \in \cH$, as a relation in $\cH_\bract \otimes_B \due \cH \lact {}$.
\end{lemma}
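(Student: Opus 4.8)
The plan is to prove that the canonical map $\ga$ of \eqref{eq:HG} (here over $R=B$) is bijective by writing down its candidate inverse explicitly and checking it is two-sided. Recalling that for a right Hopf algebroid $\ga^{-1}(h\otimes_B h')=h'_\smap\otimes_B h'_\smam h$, I would define
\[
\gamma\colon \cH_\ract\otimes_B{}_\lact\cH\longrightarrow\cH_\bract\otimes_B{}_\lact\cH,\qquad h\otimes_B h'\longmapsto h'_\smap\otimes_B h'_\smam h,
\]
with $(-)_\smap\otimes_B(-)_\smam$ given by the proposed formula \eqref{internetgehtnich}; once $\gamma$ inverts $\ga$, the translation map \eqref{toujours} is read off as $\gamma(1_\cH\otimes_B h)$, which is exactly \eqref{internetgehtnich}. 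First I would dispose of well-definedness, in three steps. (i) The legs $h_\smap=a'_{(1)}{}^{\langle 1\rangle}\otimes a'_{(0)}$ and $h_\smam=a'_{(1)}{}^{\langle 2\rangle}\otimes a$ must be right $H$-coinvariant, so as to lie in $\cH=(A\otimes A)^{\mathrm{co}H}$: the coaction on the leg $a'_{(1)}{}^{\langle 2\rangle}$ is governed by the third line of \eqref{allthat} and that on $a'_{(1)}{}^{\langle 1\rangle}$ by the fourth (which introduces the antipode), and together with coassociativity of the $H$-coaction and the antipode axiom the $H$-components telescope to $1$. (ii) The assignment $h\mapsto h_\smap\otimes_B h_\smam$ must descend to $\cH_\bract\otimes_B{}_\lact\cH$ and land in the Takeuchi subspace (cf.\ \eqref{Tch1}); this follows from the $B$-bimodule centrality of the image of the translation map $\tau$ of $A$. (iii) Finally $\gamma$ must respect the $B$-balancing of its source, which reduces to a direct computation with \eqref{internetgehtnich}, analogous to identity \eqref{Tch9} for the source and target maps $s(b)=b\otimes 1$, $t(b)=1\otimes b$.

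It then remains to check $\gamma\circ\ga=\id$ and $\ga\circ\gamma=\id$. For $\gamma\circ\ga=\id$ it suffices, since an arbitrary right factor $k$ is carried along unchanged, to prove $h_{(2)\smap}\otimes_B h_{(2)\smam}h_{(1)}=h\otimes_B 1_\cH$ (the instance \eqref{Tch3}). Writing $h=a\otimes a'$ and reading off $h_{(1)}=a_{(0)}\otimes a_{(1)}{}^{\langle 1\rangle}$, $h_{(2)}=a_{(1)}{}^{\langle 2\rangle}\otimes a'$ from \eqref{blauetinte}, one applies \eqref{internetgehtnich} to $h_{(2)}$ and multiplies out in $\cH$; here the factor $a_{(1)}{}^{\langle 1\rangle}a_{(1)}{}^{\langle 2\rangle}=\varepsilon(a_{(1)})1_A$ from \eqref{allthat} makes the second tensor leg collapse, leaving $(a'_{(1)}{}^{\langle 1\rangle}\otimes a'_{(0)})\otimes_B(a'_{(1)}{}^{\langle 2\rangle}a\otimes 1)$, which the defining coinvariance relation $a_{(0)}\otimes a'_{(0)}\otimes a_{(1)}a'_{(1)}=a\otimes a'\otimes 1$ of $\cH$ — together with the $B$-centrality of $\tau(a'_{(1)})$ and once more $h^{\langle 1\rangle}h^{\langle 2\rangle}=\varepsilon(h)1_A$ — recombines into $(a\otimes a')\otimes_B 1_\cH$. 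The opposite composite $\ga\circ\gamma=\id$, equivalently $\ga(h_\smap\otimes_B h_\smam)=1_\cH\otimes_B h$ (the instance \eqref{Tch2}), is handled in the same spirit, but now forces the use of the coaction-compatibility identity in the fourth line of \eqref{allthat}, so that the antipode of $H$ makes an explicit appearance.

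The principal obstacle is the bookkeeping forced by the nested translation maps: the coproduct \eqref{blauetinte} of $\cH$ is itself written through $\tau$, so applying $\Delta$ to $h_\smap$ (or to $h$) reintroduces a second, independent copy of $\tau$, and disentangling the two copies requires the coaction-compatibility identities of \eqref{allthat}, which unavoidably bring in the antipode $S$ of $H$. Keeping consistent track of the many Sweedler indices of the iterated $H$-coaction and recognising the telescoping cancellations — $h^{\langle 1\rangle}h^{\langle 2\rangle}=\varepsilon(h)1_A$, the counit and antipode axioms, and the coinvariance of $\cH$ — that collapse everything back to $1_\cH\otimes_B h$ is the delicate point. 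Throughout, the faithful flatness of $B\subseteq A$ guarantees that the $B$-balanced tensor products involved behave well, and conceptually the invertibility of $\ga$ is ultimately underpinned by that of the Hopf-Galois map $\chi$ of the extension, that is, by the very existence of $\tau$.
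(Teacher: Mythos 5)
Your proposal is correct and takes essentially the same route as the paper: you posit the explicit formula \eqref{internetgehtnich} and show it inverts the canonical map \eqref{eq:HG} by verifying the identities \eqref{Tch2} and \eqref{Tch3} through the translation-map relations \eqref{allthat} and the coinvariance defining $\cH$, deferring the routine well-definedness checks, exactly as the paper does --- the only (cosmetic) difference being that the paper computes \eqref{Tch2} in detail and declares \eqref{Tch3} ``along the same lines'', whereas you detail \eqref{Tch3} and defer \eqref{Tch2}. One small caveat: the final slide-across-$\otimes_B$ step in your \eqref{Tch3} computation (that the correlated element $(a'_{(1)}{}^{\!\!\langle 1 \rangle} \otimes a'_{(0)})\otimes_B a'_{(1)}{}^{\!\!\langle 2 \rangle}a$ effectively lies in $\cH\otimes_B B$) is precisely the ``detailed but standard'' justification the paper also omits, and, as in the computation displayed in Remark \ref{rem:S-1Galois}, it too requires the antipode lines of \eqref{allthat} rather than only coinvariance, $B$-centrality of $\tau$, and $h^{\langle 1\rangle}h^{\langle 2\rangle}=\varepsilon(h)1_A$ --- a point your closing paragraph implicitly concedes when it says the antipode is unavoidable.
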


\reversemarginpar

\begin{proof}
Let us show first that, in spite of its appearance, the map \eqref{internetgehtnich} {\em is} well-defined, indeed, in the sense that
      $
      (a \otimes a')_\smap \otimes_B (a \otimes a')_\smam \in (A\otimes A)^{\mathrm{co}H} \otimes_B (A\otimes A)^{\mathrm{co}H},
$      
      that is, that the tensor factors are still coinvariant. To this end, let us first prove the equivalence, in case $S^2 = \id$,
      \begin{equation}
        \label{flipped1}
a_{(0)}\otimes a'_{(0)}\otimes a_{(1)}a'_{(1)}=a\otimes a'\otimes 1 \ \iff \ a_{(0)}\otimes a'_{(0)}\otimes a'_{(1)}a_{(1)}=a\otimes a'\otimes 1,
      \end{equation}
      that is, the $H$-components in the codiagonal coaction can be flipped.
      Said equivalently,
      $$
a \otimes a' \in (A\otimes A)^{\mathrm{co}H} \ \iff \ a' \otimes a \in (A\otimes A)^{\mathrm{co}H}
      $$
if $S^2 = \id$.
This results as follows: applying $S$ to the last factor on both sides of the identity on the left hand side in \eqref{flipped1} and subsequently coacting on the respective second tensor factor yields
            $
      a_{(0)}  \otimes a'_{(0)} \otimes a'_{(1)} \otimes S(a'_{(2)}) S(a_{(1)}) = a \otimes a'_{(0)} \otimes a'_{(1)} \otimes 1,
      $
and multiplying the respective last two tensor factors results into:
      \begin{equation}
        \label{flipped2}
a_{(0)} \otimes a' \otimes S(a_{(1)})  = a \otimes a'_{(0)} \otimes a'_{(1)},
      \end{equation}
      as already observed in \cite[Eq.~(3.1)]{Schau98}. Therefore,
      \begin{eqnarray*}
        a \otimes a' \in (A\otimes A)^{\mathrm{co}H}
        &
\stackrel{\eqref{flipped2}}{\iff} 
        &
a_{(0)} \otimes a' \otimes S(a_{(1)})  = a \otimes a'_{(0)} \otimes a'_{(1)}
\\[.1mm]
        &
\stackrel{\scriptscriptstyle S^2 = \id}{\iff} 
        &
a_{(0)} \otimes a' \otimes a_{(1)}  = a \otimes a'_{(0)} \otimes S(a'_{(1)})
\\
        &
\stackrel{}{\iff} 
        &
a'_{(0)} \otimes a \otimes S(a'_{(1)})
=
a' \otimes a_{(0)} \otimes a_{(1)} 
\\
        &
\stackrel{\eqref{flipped2}}{\iff} 
        &
 a' \otimes a \in (A\otimes A)^{\mathrm{co}H},
      \end{eqnarray*}
      as desired. Denoting this tensor flip by $\gs$, observe that $\gs((a \otimes a') \ract b) = \gs(a \otimes a'b) = a'b \otimes a = (a' \otimes a) \bract b$ for all $a \otimes a' \in (A\otimes A)^{\mathrm{co}H}$ and $b \in B$, that is, $\gs$ is right $B$-linear in the indicated sense.     
      Using this, the translation map in 
      \eqref{internetgehtnich} can then be written as the composition
      $$
      (A\otimes A)^{\mathrm{co}H} \xrightarrow{\ \gs \ } (A\otimes A)^{\mathrm{co}H}
      \xrightarrow{\ \gD \ } (A\otimes A)^{\mathrm{co}H} \otimes_B (A\otimes A)^{\mathrm{co}H} \xrightarrow{\, \gs \otimes_B  (A\otimes A)^{\mathrm{co}H}} (A\otimes A)^{\mathrm{co}H} \otimes_B (A\otimes A)^{\mathrm{co}H}  
      $$
      that is to say,
$$
(a \otimes a')_\smap \otimes_B (a \otimes a')_\smam \in (A\otimes A)^{\mathrm{co}H} \otimes_B (A\otimes A)^{\mathrm{co}H}
        $$
        if
        $a \otimes a' \in (A\otimes A)^{\mathrm{co}H}$,
        as claimed.
        Also observe that $s(b)_\smap \otimes_B s(b)_\smam = (1 \otimes 1) \otimes_B (1 \otimes b) = (1 \otimes 1) \otimes_B t(b)$ for $b \in B$ yields the well-definedness over $\otimes_B$ of (what is going to be) the inverse
$$
\ga^{-1} \colon        \cH_\ract \otimes_B \due \cH \lact {} \to
\cH_\bract \otimes_B \due \cH \lact {}, \quad
        (c \otimes c') \otimes_B (a \otimes a') \mapsto (a \otimes a')_\smap \otimes_B (a \otimes a')_\smam (c \otimes c')
       $$
of the Hopf-Galois map  as in \eqref{eq:HG}.
   Next, by using the relations for $\tau$ listed in Eqs.~\eqref{allthat}, let us verify the identity \eqref{Tch2}. Indeed,
	\begin{eqnarray*}
			(a \otimes a')_{\smap(1)} (a \otimes a')_\smam \otimes_B (a \otimes a')_{\smap(2)}
			&\overset{\eqref{internetgehtnich}}{=}&
			(a'_{(1)}{}^{\!\!\langle 1 \rangle} \otimes a'_{(0)} )_{(1)} (a'_{(1)}{}^{\!\!\langle 2 \rangle} \otimes a)  \otimes_B (a'_{(1)}{}^{\!\!\langle 1 \rangle} \otimes a'_{(0)} )_{(2)}
			\\
&\overset{\eqref{blauetinte}}{=}&
			(a'_{(1)}{}^{\!\!\langle 1 \rangle}{}_{(0)} \otimes a'_{(1)}{}^{\!\!\langle 1 \rangle}{}_{(1)}{}^{\!\!\langle 1 \rangle} ) (a'_{(1)}{}^{\!\!\langle 2 \rangle} \otimes a)  \otimes_B (  a'_{(1)}{}^{\!\!\langle 1 \rangle}{}_{(1)}{}^{\!\!\langle 2 \rangle} \otimes a'_{(0)} )
			\\
&\overset{\eqref{allthat}}{=}&
			\big(a'_{(2)}{}^{\!\!\langle 1 \rangle} \otimes S(a'_{(1)}){}^{\langle 1 \rangle}\big) \big(a'_{(2)}{}^{\!\!\langle 2 \rangle} \otimes a\big)  \otimes_B \big(  S(a'_{(1)}){}^{\langle 2 \rangle} \otimes a'_{(0)} \big)
			\\
&\overset{\eqref{allthat}}{=}&
			\big(1 \otimes a S(a'_{(1)}){}^{\langle 1 \rangle}\big) \otimes_B \big(  S(a'_{(1)}){}^{\langle 2 \rangle} \otimes a'_{(0)} \big)
			\\
                        &\overset{\eqref{flipped2}}{=}&
                        \big(1 \otimes a_{(0)} a_{(1)}{}^{\langle 1 \rangle}\big) \otimes_B \big(  a_{(1)}{}^{\langle 2 \rangle} \otimes a' \big)
	                \\
                        &\overset{\eqref{allthat}}{=}&
			(1 \otimes 1) \otimes_B (a \otimes a' )
			\end{eqnarray*}
	in $\cH_\ract \otimes_B \due \cH \lact {}$, using also $S^2 = \id$ in the penultimate step, and which is \eqref{Tch2}.
    Eq.~\eqref{Tch3} can be proven along the same lines:
    \begin{eqnarray*}        
			& &
			(a \otimes a')_{(2)\smap} \otimes_B (a \otimes a')_{(2)\smam} (a \otimes a')_{(1)}
			\\
              &\overset{\eqref{blauetinte}}{=}&
			(a_{(1)}{}^{\!\!\langle 2 \rangle} \otimes a')_{\smap} \otimes_B (a_{(1)}{}^{\!\!\langle 2 \rangle} \otimes a')_{\smam} (a_{(0)} \otimes a_{(1)}{}^{\!\!\langle 1 \rangle} )
			\\
                   &\overset{\eqref{internetgehtnich}}{=}&
			(a'_{(1)}{}^{\!\!\langle 1 \rangle} \otimes a'_{(0)}) \otimes_B (a'_{(1)}{}^{\!\!\langle 2 \rangle} \otimes a_{(1)}{}^{\!\!\langle 2 \rangle} ) (a_{(0)} \otimes a_{(1)}{}^{\!\!\langle 1 \rangle} )
			\\
                   &\overset{\eqref{allthat}}{=}&
			(a'_{(1)}{}^{\!\!\langle 1 \rangle} \otimes a'_{(0)}) \otimes_B (a'_{(1)}{}^{\!\!\langle 2 \rangle} a \otimes 1) 
			\\
                   &\overset{\eqref{flipped1}}{=}&
			(a \otimes a') \otimes_B (1 \otimes 1) 
     \end{eqnarray*}
	in $\cH_\bract \otimes_B \due \cH \lact {}$.
        Here, the last step is justified by the equivalence \eqref{flipped1} that holds for coinvariant elements $a \otimes a'$, by which one shows \begin{eqnarray*}        
    (a'_{(1)}{}^{\!\!\langle 1 \rangle} \otimes a'_{(0)}) \otimes_B \rho_A(a'_{(1)}{}^{\!\!\langle 2 \rangle} a)
   & = &
    (a'_{(1)}{}^{\!\!\langle 1 \rangle} \otimes a'_{(0)}) \otimes_B (a'_{(1)}{}^{\!\!\langle 2 \rangle}\!{}_{(0)} a_{(0)} \otimes a'_{(1)}{}^{\!\!\langle 2 \rangle}\!{}_{(1)} a_{(1)})
    \\
   & \overset{\eqref{allthat}}{=} &
    (a'_{(1)}{}^{\!\!\langle 1 \rangle} \otimes a'_{(0)}) \otimes_B (a'_{(1)}{}^{\!\!\langle 2 \rangle} a_{(0)} \otimes a'_{(2)} a_{(1)})
\\
    & \overset{\eqref{flipped1}}{=} &
    (a'_{(1)}{}^{\!\!\langle 1 \rangle} \otimes a'_{(0)}) \otimes_B (a'_{(1)}{}^{\!\!\langle 2 \rangle} a \otimes 1),
     \end{eqnarray*}
and hence $(a'_{(1)}{}^{\!\!\langle 1 \rangle} \otimes a'_{(0)}) \otimes_B (a'_{(1)}{}^{\!\!\langle 2 \rangle} a \otimes 1) \in \cH \otimes_B s(B)$.
Summing up, this shows the invertibility of the Hopf-Galois map \eqref{eq:HG} by means of \eqref{internetgehtnich}.
\end{proof}

\begin{remark}
	In \cite[Thm.~4.4]{HanMajid2}, it was shown that $\cH$ is also a left Hopf algebroid; hence, by the lemma above, it is left and right Hopf if $S^2 = \id$. In this specific situation, this is equivalent to being a {\em full} Hopf algebroid (see \cite{BoeSzl:HAWBAAIAD} for details of this notion) in the sense that it admits (sort of) an antipode, which here is simply the tensor flip. This applies, for example, to the case of the noncommutative Hopf fibration \cite[\S5.1]{HanLandi}, which we revisit in Example \ref{ex:HopfFibration} right below.
\end{remark}

    \begin{remark}
      \label{leftpeft}
Although the construction in Lemma \ref{quarkspeise} is sufficient for our examples, one can also renounce on the condition $S^2 = \id$ by starting from a mirrored version of the Ehresmann-Schauenburg bialgebroid defined by means of
  {\em left} coinvariants. Let us indicate the crucial steps, skipping a few details and proofs. 
 Here, $H$ is a Hopf algebra (not necessarily with involutive antipode), $A$ is a left $H$-comodule algebra, $B := {}^{\co H}\! A$ is, accordingly, the subalgebra of left $H$-coinvariants,   and one asks 
 $$
\gamma \colon  A \otimes_B A \to H \otimes A, \quad a \otimes a' \mapsto a_{(-1)} \otimes a_{(0)} a',
 $$
   to be bijective, which leads to a translation map 
 \begin{equation}
	\label{berlinale3}
\gamma^{-1} \circ (H \otimes \eta_A) \colon H \to A \otimes_B A, \quad h \mapsto h^{[1]} \otimes_B h^{[2]}.
   \end{equation}
Similarly to \eqref{allthat}, one can prove that, among others, the identities
\begin{equation}
	\label{allthat2}
	\begin{array}{rcl}
	  {h^{[ 1] }}_{\! (-1)}  \otimes  {h^{[ 1] }}_{\! (0)} {h^{[ 2] }} = h \otimes 1_A,
          &&
{a_{(-1)}}^{\! [ 1] } \otimes_B   {a_{(-1)}}^{\! [ 2 ] } a_{(0)} 
		 = a \otimes_B 1_A,
		\\[1mm]
		h^{[ 1] } h^{[ 2] } = \gve(h)1_A, &&
		(hg)^{[ 1] } \otimes_B (hg)^{[ 2] }
		= h^{[ 1 ] }g^{[ 1] } \otimes_B g^{[ 2] } h^{[ 2] },
		\\[1mm]
 {h^{[ 1] }}_{\!(-1)} \otimes  {h^{[ 1] }}_{\!(0)} \otimes_B h^{[2]}
                  & = &
h_{(1)}  \otimes
 {h_{(2)}}^{\! [ 1] } \otimes_B  {h_{(2)}}^{\! [ 2] },
		\\[1mm]
	        Sh_{(2)}
                \otimes
	  {h_{(1)}}^{\! [ 1] } \otimes_B  {h_{(1)}}^{\! [ 2] }
          & = &
          {h^{[ 2] }}_{\! (-1)} \otimes h^{[ 1] } \otimes_B  {h^{[ 2 ]}}_{\! (0)} ,
	\end{array}
\end{equation}
hold for all $h,g\in H$ and $a\in A$. If, in addition, $A$ is faithfully flat as a right $B$-module, this allows to define an Ehresmann-Schauenburg bialgebroid of {\em left} coinvariants, which is a left bialgebroid over $B = {}^{\co H}\! A$: define
$$
\cH = {}^{\mathrm{co}H} \! (A\otimes A)
:= \{a\otimes a'\in A\otimes A \mid a_{(-1)}a'_{(-1)} \otimes  a_{(0)}\otimes a'_{(0)} = 1 \otimes a\otimes a' \},
$$
with source and target map 
$s(b):=b\otimes 1$ resp.\ $t(b):=1\otimes b$,
multiplication
$
	(a\otimes a')(c\otimes c') := ac\otimes c'a'
$,
unit
$1_\mathcal{H}:=1\otimes 1$,
and finally $B$-coring structure
\begin{equation}
	\label{blauetinte2}
	\begin{split}
	  \Delta\colon\mathcal{H}&\to\mathcal{H}\otimes_B\mathcal{H},
          \\
		\varepsilon\colon\mathcal{H}&\to B,
	\end{split}
        \quad
	\begin{split}
		a\otimes a'&\mapsto 
                \big (a \otimes a'_{(-1)}{}^{\! [1]} \big) \otimes_B
                \big( a'_{(-1)}{}^{\! [2]} \otimes a'_{(0)} \big),
                \\
		a\otimes a'&\mapsto aa'.
	\end{split}
\end{equation}
This version of an Ehresmann-Schauenburg bialgebroid can, without any further assumptions, be turned into a right Hopf algebroid by defining
\begin{equation}
  \label{internetgehtnich2}
(a\otimes a')_\smap \otimes_B  (a\otimes a')_\smam
:= 
                \big (a_{(-1)}{}^{\! [1]} \otimes a'\big) \otimes_B
                \big(a_{(-1)}{}^{\! [2]} \otimes a_{(0)} \big).
                \end{equation}
                We skip the verification of this fact since the necessary computations are similar to those in the proof of Lemma \ref{quarkspeise}. However, observe that as a crucial step for verifying \eqref{Tch3}, in the spirit of \eqref{flipped2} one derives again an equation of the type
                \begin{equation}
                  \label{again}
  S(a'_{(-1)}) \otimes a \otimes a'_{(0)} = a_{(-1)} \otimes a_{(0)} \otimes a'
\end{equation}
for $a \otimes a' \in {}^{\co H}\!(A \otimes A)$, from which one obtains
                $
S(a'_{(-1)})^{[1]} \otimes S(a'_{(-1)})^{[2]} a \otimes a'_{(0)} = a \otimes 1 \otimes a'
                $
                with the help of Eqs.~\eqref{allthat2}, which is needed during the verification.

                Interestingly enough,
                in case $S^2 = \id$, one has an isomorphism
                $$
\Xi \colon {}^{\co H}\! (A \otimes A) \to (A \otimes A)^{\co H}, \quad a \otimes a' \mapsto a \otimes a',
                $$
of left bialgebroids resp.\ right Hopf algebroids, which connects this construction to the situation in Lemma \ref{quarkspeise}. More in detail, this works as follows: it is well-known that defining
$m_{(0)} \otimes m_{(1)} := m_{(0)} \otimes S^{-1}(m_{(-1)})$ for a left $H$-comodule $M$
yields a monoidal functor $F \colon {}^H \! \cM \to \cM^{H^\op}$, where by the latter we mean the category of right $H$-comodules but with monoidal structure given by $(m \otimes n)_{(0)} \otimes (m \otimes n)_{(1)} := m_{(0)} \otimes n_{(0)} \otimes n_{(1)} m_{(1)}$ for two right $H$-comodules $M, N$.
Hence, the functor $F$ induces an isomorphism ${}^{\co H}\! (A \otimes A) \cong (A \otimes A)^{\co H^\op}$, while in case $S^2 = \id$ the identity \eqref{flipped1} yields the isomorphism $(A \otimes A)^{\co H^\op} \cong (A \otimes A)^{\co H}$.
Now, the compatibility of $\Xi$ above with the bialgebroid structures \eqref{blauetinte} resp.\ \eqref{blauetinte2} is obvious for the source, target, and counit maps. As for the coproducts, let us first state that for $h \in H$,
\begin{equation}
  \label{glaciere}
h^{\langle 1 \rangle} \otimes_B h^{\langle 2 \rangle} = h^{[1]} \otimes_B h^{[2]} 
\end{equation}
with respect to the two translation maps from \eqref{berlinale2} and \eqref{berlinale3} if the right $H$-coaction arises from the left one via $S^{-1}$ as above. For this, by definition it is enough to verify the two
 equations in the first line of \eqref{allthat}; for example,
 $$
 h^{\langle 1\rangle } {h^{\langle 2\rangle }}_{\!\!(0)} \otimes  {h^{\langle 2\rangle }}_{\!\!(1)}
 \, = \,
 h^{[ 1] } {h^{[ 2 ]}}_{\!(0)} \otimes  S^{-1}({h^{[ 2] }}_{\!(-1)})
 \, \stackrel{\eqref{allthat2}}{=} \,
    {h_{(1)}}^{\![ 1] }
 {h_{(1)}}^{\![ 2] } \otimes  S\big(S^{-1}(h_{(2)})\big) 
 \, \stackrel{\eqref{allthat2}}{=} \,
 1_A  \otimes  h, 
 $$
 which is the first one, and the second one follows analogously.
 Hence, suppressing $\Xi$ in notation, one has, starting from the coproduct \eqref{blauetinte2}, 
 \begin{equation*}
   \begin{split}
 \big (a \otimes a'_{(-1)}{}^{\! [1]} \big) \otimes_B
 \big( a'_{(-1)}{}^{\! [2]} \otimes a'_{(0)} \big)
 \, & \stackrel{\eqref{again}}{=} \,
 \big (a_{(0)} \otimes S^{-1}(a_{(-1)})^{ [1]} \big) \otimes_B
 \big(S^{-1}(a_{(-1)})^{ [2]} \otimes a' \big)
\\
\, & \stackrel{\eqref{glaciere}}{=} \,
 \big (a_{(0)} \otimes S^{-1}(a_{(-1)})^{ \langle 1 \rangle} \big) \otimes_B
 \big(S^{-1}(a_{(-1)})^{\langle 2 \rangle } \otimes a' \big)
\\
\, & \stackrel{\phantom{\eqref{glaciere}}}{=} \,
 \big (a_{(0)} \otimes {a_{(1)}}^{\!\! \langle 1 \rangle} \big) \otimes_B
 \big( {a_{(1)}}^{\!\! \langle 2 \rangle } \otimes a' \big),
   \end{split}
 \end{equation*}
 which is the coproduct given in \eqref{blauetinte}.
This proves the compatibility of $\Xi$ with the two coproducts and hence that it is a left bialgebroid isomorphism. A similarly simple argument (using $S^2 = \id$ again) shows that $\Xi$ also intertwines the two translation maps \eqref{internetgehtnich} and 
\eqref{internetgehtnich2}, which finally proves that it is an isomorphism of right Hopf algebroids.
\end{remark}

\begin{example}[The Ehresmann-Schauenburg Hopf algebroid of the Hopf fibration]
  \label{ex:HopfFibration}
	Fix a non-zero complex number $q\in\mathbb{C}$ which is not a root of unity, and let $A :=\mathcal{O}_q(\mathrm{SL}_2(\mathbb{C}))$ be the free (associative unital) algebra generated by $a,b,c,d$ modulo the Manin relations
	\begin{equation}
          \label{Manin}
		\begin{split}
			ab&=qba,\\
			bc&=cb,
		\end{split}\qquad
		\begin{split}
			bd&=qdb,\\
			ac&=qca,
		\end{split}\qquad
		\begin{split}
			cd&=qdc,\\
			ad-da&=(q-q^{-1})bc,
		\end{split}
	\end{equation}
	and the quantum determinant relation
	\begin{equation}
          \label{Manin2}
		ad-q bc=1 = da - q^{-1} bc.
	\end{equation}
	Denote by $H=\mathcal{O}(U(1))=\mathbb{C}[t,t^{-1}]$ the algebra of rational complex polynomials in one variable $t$ with inverse $t^{-1}$. Then $H$ is a Hopf algebra with $t$ seen as a grouplike element, while $A$ can be given a right $H$-comodule algebra structure with coaction determined on generators by
	$$
	\begin{pmatrix}
		a & b\\
		c & d
	\end{pmatrix}\mapsto
	\begin{pmatrix}
		a & b\\
		c & d
	\end{pmatrix}\otimes
	\begin{pmatrix}
		t & 0\\
		0 & t
	\end{pmatrix},
	$$
        and
        extended to all of $A$ as an algebra morphism.
        It is well-known (see \cite[Ex.~6.26]{BJM} or \cite[\S3.3]{HanLandi} for the precise setting we are following here) that the right $H$-coinvariants $B =A^{\mathrm{co}H}$ coincide with the Podle\'s sphere and that $B\subseteq A$ is a faithfully flat Hopf-Galois extension.
For later use, 
        denote the generators of $B=\mathcal{O}_q(S^2)$ by
	\begin{equation*}
		B_0:=-q^{-1}bc,\qquad\qquad
		B_+:=cd,\qquad\qquad
		B_-:=-q^{-1}ab,
	\end{equation*}
which can be seen to satisfy the relations
        \reversemarginpar
	\begin{equation*}
		\begin{split}
			B_-B_0
			&=q^2B_0B_-,\\
			B_+B_0
			&=q^{-2}B_0B_+,
		\end{split}\qquad\qquad
		\begin{split}
			B_-B_+
			&=q^2B_0(1-q^2B_0),\\
			B_+B_-
			&=B_0(1-B_0).
		\end{split}
	\end{equation*}
Let us now	consider the Ehresmann-Schauenburg bialgebroid $\mathcal{H}=(A\otimes A)^{\mathrm{co}\mathcal{O}(U(1))}$.
	According to \cite[\S5.1]{HanLandi}, this is a left bialgebroid over $B$ with generators
	\begin{equation}
		\label{napoleon}
		\begin{split}
			\ga & = a \otimes d,\\
			\tilde\ga & = -q^{-1} a \otimes b,
		\end{split}\qquad
		\begin{split}
			\gb &= -q^{-1} b \otimes c,\\
			\tilde\gb &= d \otimes c,
		\end{split}\qquad
		\begin{split}
			\gamma &= -q^{-1} c \otimes b,\\
			\tilde\gamma &= c \otimes d,
		\end{split}\qquad
		\begin{split}
			\gd &= d \otimes a,\\
			\tilde\gd &= -q^{-1} b \otimes a,
		\end{split}
	\end{equation}
	satisfying certain relations resulting from Eqs.~\eqref{Manin},
and coproduct given on generators by
	\begin{equation*}
		\begin{split}
			\Delta \alpha
			&=\alpha\otimes_B\alpha+\tilde{\alpha}\otimes_B\tilde{\gamma},\\
			\Delta \beta
			&=q^2\beta\otimes_B\beta+\tilde{\delta}\otimes_B\tilde{\beta},\\
			\Delta\gamma
			&=\gamma\otimes_B\gamma+\tilde{\gamma}\otimes_B\tilde{\alpha},\\
			\Delta\delta
			&=\delta\otimes_B\delta+q^2\tilde{\beta}\otimes_B\tilde{\delta},
		\end{split}
		\qquad\qquad
		\begin{split}
			\Delta\tilde\alpha
			&=\alpha\otimes_B\tilde\alpha+\tilde{\alpha}\otimes_B\gamma,\\
			\Delta\tilde\beta
			&=q^2\tilde\beta\otimes_B\beta+\delta\otimes_B\tilde{\beta},\\
			\Delta\tilde\gamma
			&=\gamma\otimes_B\tilde\gamma+\tilde{\gamma}\otimes_B\alpha,\\
			\Delta\tilde\delta
			&=\tilde\delta\otimes_B\delta+q^2\beta\otimes_B\tilde{\delta},
		\end{split}
	\end{equation*}
	while the counit reads
	\begin{equation*}
		\begin{split}
			\varepsilon(\alpha)
			&=1-q^2B_0,
			\\
			\varepsilon(\tilde{\alpha})
			&=B_-,
		\end{split}
		\qquad
		\begin{split}
			\varepsilon(\beta)
			&=B_0,\\
			\varepsilon(\tilde{\beta})
			&=q^{-1}B_+,
		\end{split}
		\qquad
		\begin{split}
			\varepsilon(\gamma)
			&=B_0,\\
			\varepsilon(\tilde{\gamma})
			&=B_+,
		\end{split}
		\qquad
		\begin{split}
			\varepsilon(\delta)
			&=1-B_0,
			\\
			\varepsilon(\tilde{\delta})
			&=q^{-1}B_-.
		\end{split}
	\end{equation*}
\end{example}

\noindent These relations allow to explicitly write down the right Hopf structure on $\cH$:

\begin{lemma}
	The Ehresmann-Schauenburg bialgebroid 
	$\mathcal{H}=(A\otimes A)^{\mathrm{co}\mathcal{O}(U(1))}$ of the Hopf-Galois extension $\mathcal{O}_q(S^2)=\mathcal{O}_q(\mathrm{SL}_2(\mathbb{C}))^{\mathrm{co}\mathcal{O}(U(1))}\subseteq\mathcal{O}_q(\mathrm{SL}_2(\mathbb{C}))$
	is a right Hopf algebroid over $B$.
	Explicitly, the translation map \eqref{toujours} is given on generators as:
	\begin{equation*}
		\begin{array}{rclcrcl}
			\ga_{\smap } \otimes_B \ga_{\smam }
			&=&
			\ga \otimes_B \gd + q^2 \tilde\gamma  \otimes_B \tilde\gd, 
			&&
			\tilde \ga_{\smap } \otimes_B \tilde \ga_{\smam }
			&=&
			\tilde \ga    \otimes_B  \gd +  q^2 \gamma \otimes_B \tilde \gd,
			\\[1mm]
			\gb_{\smap } \otimes_B \gb_{\smam }
			&=&
			\gb     \otimes_B  \gamma +   \tilde \gb \otimes_B \tilde \ga,
			&&
			\tilde        \gb_{\smap } \otimes_B \tilde \gb_{\smam }
			&=&
			\gb    \otimes_B  \tilde \gamma +   \tilde \gb \otimes_B \ga,
			\\[1mm]
			\gamma_{\smap } \otimes_B  \gamma_{\smam }
			&=&
			q^2 \gamma       \otimes_B  \gb +   \tilde \ga \otimes_B \tilde \gb,
			&&
			\tilde        \gamma_{\smap } \otimes_B \tilde \gamma_{\smam }
			&=&
			q^2 \tilde \gamma         \otimes_B  \gb +  \ga \otimes_B \tilde \gb,
			\\[1mm]
			\gd_{\smap } \otimes_B   \gd_{\smam }
			&=&
			\gd        \otimes_B  \ga +   \tilde \gd \otimes_B \tilde \gamma,
			&&
			\tilde   \gd_{\smap } \otimes_B \tilde \gd_{\smam }
			&=&
			\gd    \otimes_B  \tilde \ga +   \tilde \gd \otimes_B \gamma.
		\end{array}
	\end{equation*}
\end{lemma}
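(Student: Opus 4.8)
The plan is to observe first that the right Hopf property itself is immediate: it is the special case of Lemma~\ref{quarkspeise} applied to the faithfully flat Hopf-Galois extension $\mathcal{O}_q(S^2)\subseteq\mathcal{O}_q(\mathrm{SL}_2(\mathbb{C}))$. Hence the genuine content of the statement is the explicit description of the translation map \eqref{toujours} on the generators \eqref{napoleon}, and my strategy is to specialise the general formula \eqref{internetgehtnich} to each of them. This requires only two inputs about the base extension $B\subseteq A$: the $\mathcal{O}(U(1))$-coaction on $A$ and the translation map $\tau$ of $A$.

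First I would record the coaction, which on generators reads $\rho(a)=a\otimes t$, $\rho(c)=c\otimes t$, $\rho(b)=b\otimes t^{-1}$, $\rho(d)=d\otimes t^{-1}$, so that $a,c$ carry $t$-degree $+1$ and $b,d$ degree $-1$; this is exactly what makes each monomial in \eqref{napoleon} a coinvariant, the two degrees cancelling in every case. Since the second tensor factor of every generator is a single letter of $t$-degree $\pm1$, I only need $\tau$ on $t$ and $t^{-1}$, which I would pin down by solving $\chi\circ\tau=\mathrm{id}$ with $\chi(x\otimes_By)=xy_{(0)}\otimes y_{(1)}$, using the determinant relations \eqref{Manin2} together with $bc=cb$ from \eqref{Manin}. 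The values to be verified are
\[
\tau(t)=d\otimes_Ba-q^{-1}b\otimes_Bc,
\qquad
\tau(t^{-1})=a\otimes_Bd-qc\otimes_Bb .
\]

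Next I would insert these into \eqref{internetgehtnich} generator by generator. To display the mechanism, for $\ga=a\otimes d$ the relevant (second) leg is $d$, with $d_{(0)}=d$ and $d_{(1)}=t^{-1}$, so
\[
\ga_\smap\otimes_B\ga_\smam
=(t^{-1}{}^{\langle1\rangle}\otimes d)\otimes_B(t^{-1}{}^{\langle2\rangle}\otimes a)
=(a\otimes d)\otimes_B(d\otimes a)-q\,(c\otimes d)\otimes_B(b\otimes a),
\]
and re-expressing the simple tensors through \eqref{napoleon}, namely $a\otimes d=\ga$, $d\otimes a=\gd$, $c\otimes d=\tilde\gamma$, $b\otimes a=-q\tilde\gd$, would collapse this to $\ga\otimes_B\gd+q^2\tilde\gamma\otimes_B\tilde\gd$, as claimed. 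The other seven entries follow the identical recipe: $\tau(t)$ is used when the second tensor factor lies in $\{a,c\}$ and $\tau(t^{-1})$ when it lies in $\{b,d\}$, and in the four tilded cases I would pull out the scalar $-q^{-1}$ at the very start.

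I do not expect a real obstacle, as the computation is bookkeeping. The two points that do require attention are tracking the $-q^{-1}$ normalisations of the tilded generators in \eqref{napoleon} and checking that each simple tensor produced by the substitution (such as $c\otimes d$ or $b\otimes a$) is genuinely $\mathcal{O}(U(1))$-coinvariant, so that it legitimately names a generator of $\cH$. Well-definedness of the resulting elements in $\cH_\bract\otimes_B\due\cH\lact{}$ need not be re-established, since they are particular values of the translation map already provided by Lemma~\ref{quarkspeise}.
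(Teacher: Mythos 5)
Your proposal is correct, and I verified the arithmetic: with the grading $\deg a=\deg c=+1$, $\deg b=\deg d=-1$, your values $\tau(t)=d\otimes_Ba-q^{-1}b\otimes_Bc$ and $\tau(t^{-1})=a\otimes_Bd-qc\otimes_Bb$ do satisfy $\chi\circ\tau=\mathrm{id}$ by \eqref{Manin2} and $bc=cb$, and substituting them into \eqref{internetgehtnich} reproduces all eight displayed formulas, including the $q^2$ factors coming from the $-q^{-1}$ normalisations of the tilded generators. Where you differ from the paper: the paper also opens by declaring the lemma a special instance of Lemma \ref{quarkspeise}, but its displayed derivation of the explicit formulas goes through the \emph{full} Hopf algebroid structure of $\cH$ (the involutive antipode $S$ from the Hopf-fibration literature), using \eqref{rothko}, $h_{\smap}\otimes_B h_{\smam}=S\big((Sh)_{(1)}\big)\otimes_B(Sh)_{(2)}$, to read off, e.g., $\ga_{\smap}\otimes_B\ga_{\smam}=S\gd_{(1)}\otimes_B\gd_{(2)}$ from the coproduct table, and then independently verifies the Hopf-Galois identity \eqref{Tch2} by a direct computation with the Manin relations \eqref{Manin}--\eqref{Manin2}. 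Your route instead carries out the specialisation of \eqref{internetgehtnich} that the paper only asserts: it needs no antipode and no coproduct table, only the base-extension data ($\tau$ on $t^{\pm1}$ and the coaction), and it inherits well-definedness and the identities \eqref{Tch2}--\eqref{Tch3} wholesale from Lemma \ref{quarkspeise}, so no separate verification is required; what the paper's antipode computation buys in exchange is an explicit display of the extra (full Hopf) structure and an independent consistency check of the formulas. One small point of care that you correctly flag and that indeed holds: every simple tensor produced by the substitution pairs a degree-$(+1)$ letter with a degree-$(-1)$ letter, so each is genuinely a coinvariant and hence a scalar multiple of one of the generators \eqref{napoleon}.
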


\begin{proof}
	This is a special instance of Lemma \ref{quarkspeise}.
	Alternatively, this follows by the fact that, in this case, $\cH$ is even a {\em full} Hopf algebroid in the sense of \cite[Def.~4.1]{BoeSzl:HAWBAAIAD}, that is, equipped with an invertible (and in this case involutive) antipode $S$. As such, it is in particular a right Hopf algebroid as proven in {\em op.~cit.}, Prop.~4.2, while the translation map~\eqref{toujours}
        is obtained via
	\begin{equation}
		\label{rothko}
		h_{\smap } \otimes_B h_{\smam } := S \big((S^{-1} h)_{(1)}\big) \otimes_B (S^{-1}h)_{(2)}
		= S \big((S h)_{(1)}\big) \otimes_B (S h)_{(2)}
	\end{equation}
	for any $h \in \cH$. This already concludes the proof. Nevertheless, let us exemplify this on the generator $\ga$ and show that this indeed inverts the Hopf-Galois map \eqref{eq:HG} by proving the relation \eqref{Tch2}. To start with, from the coproduct and \eqref{rothko}, we obtain
	\begin{equation*}
		\begin{split}
			\ga_{\smap } \otimes_B \ga_{\smam }
			&
			= S \big((S \ga)_{(1)}\big) \otimes_B (S \ga )_{(2)}
			= S \gd_{(1)}  \otimes_B \gd_{(2)}
			=
			S \gd \otimes_B \gd + S(q^2 \tilde\gb)  \otimes_B \tilde\gd 
			\\
			&
			=
			\ga \otimes_B \gd + q^2 \tilde\gamma  \otimes_B \tilde\gd. 
		\end{split}
	\end{equation*}
	With this, one computes:
	\begin{equation*}
		\begin{split}
			\ga_{\smap (1)} \ga_{\smam } \otimes_B \ga_{\smap (2)}
			&
			=
			\ga_{(1)}\gd  \otimes_B \ga_{(2)} + q^2 \tilde \gamma_{(1)} \tilde \gd  \otimes_B \tilde \gamma_{(2)} 
			\\
			&
			=
			\ga \gd  \otimes_B \ga + \tilde \ga \gd  \otimes_B \tilde \gamma
			+ q^2 \tilde \gamma \tilde \gd  \otimes_B \ga + q^2 \gamma \tilde \gd  \otimes_B \tilde \gamma 
			\\
			&
			=
			(\ga \gd  + q^2 \tilde \gamma \tilde \gd) \otimes_B \ga
			+ (\tilde \ga \gd  + q^2 \gamma \tilde \gd )\otimes_B \tilde \gamma
			\\
			&
			=
			(1 \otimes ad) \otimes_B (a \otimes d)
			-  q^{-1} (1 \otimes ab) \otimes_B (c \otimes d).
		\end{split}
	\end{equation*}
	In the last step, we used the expressions \eqref{napoleon} that give for the first tensor factor in the first summand, 
	\begin{equation*}
		\begin{split}
			\ga \gd  + q^2 \tilde \gamma \tilde \gd
			& =
			(a \otimes d)(d \otimes a) + q^2 (c \otimes d)(-q^{-1} b \otimes a)
			=
			ad \otimes ad  - q cb \otimes ad = 1 \otimes ad
		\end{split}
	\end{equation*}
	by means of Eqs.~\eqref{Manin} \& \eqref{Manin2}.
        A similar computation yields
	$
	\tilde \ga \gd  + q^2 \gamma \tilde \gd = -q^{-1} (1 \otimes ab) 
	$
	for the second summand above. Considering now the relevant tensor product, that is, $\cH_\ract \otimes_B \due \cH \lact {}$, 
	\begin{equation*}
		\begin{split}
			\ga_{\smap (1)} \ga_{\smam } \otimes_B \ga_{\smap (2)}
			&
			=
			(1 \otimes ad) \otimes_B (a \otimes d)
			-  q^{-1} (1 \otimes ab) \otimes_B (c \otimes d).
			\\
			&
			=
			(1 \otimes 1) \otimes_B (ada \otimes d)
			-  q^{-1} (1 \otimes 1) \otimes_B (abc \otimes d).
			\\
			&
			=
			(1 \otimes 1) \otimes_B (a(da - q^{-1} bc)  \otimes d)
			\\
			&
			=
			1 \otimes_B \ga,
		\end{split}
	\end{equation*}
	where we still used Eq.~\eqref{Manin2},
        and which was to show.
	Verifying the relation \eqref{Tch3} as well as the respective computations for all other generators is similar.
\end{proof}

\subsection{Yetter-Drinfel'd modules and the scalar extension Hopf algebroid}
\label{adapter}
Let us recall the notion of scalar extension Hopf algebroid from \cite[\S4]{BrzMil:BBAD}. For a $\Bbbk$-Hopf algebra $H$ with invertible antipode $S$, the category ${}_H\mathcal{YD}^H$ of (left-right) \textit{Yetter-Drinfel'd modules} consists of left $H$-modules $M$ which are at the same time right $H$-comodules, subject to the compatibility condition
\begin{equation*}
	h_{(1)} m_{(0)}\otimes h_{(1)}m_{(1)}
	=(h_{(2)} m)_{(0)}\otimes(h_{(2)} m)_{(1)}h_{(1)}
\end{equation*}
for all $m\in M$ and $h\in H$. Morphisms in this category are left $H$-linear and right $H$-colinear. It is well-known that ${}_H\mathcal{YD}^H$ is braided monoidal (and equivalent to the {\em right weak} monoidal centre of ${}_H \cM$)
via the customary diagonal action on the tensor product, while
the braiding $\sigma^\mathcal{YD}$ of ${}_H\mathcal{YD}^H$ is given on elements $m\in M$ and $n\in N$ for $M,N\in{}_H\mathcal{YD}^H$ by
$$
\sigma^\mathcal{YD}_{M,N}(m\otimes n)
:=n_{(0)}\otimes n_{(1)}  m.
$$
A \textit{braided commutative algebra} in ${}_H\mathcal{YD}^H$ is a $\Bbbk$-algebra $(A,m,\eta)$ such that $A$ is an object and $m\colon A\otimes A\to A$ and $\eta\colon\Bbbk\to A$ are morphisms in ${}_H\mathcal{YD}^H$, and such that $m\circ\sigma^\mathcal{YD}_{A,A}=m$.

For every braided commutative algebra $A$ in ${}_H\mathcal{YD}^H$ we obtain a (full) Hopf algebroid $\mathcal{H}:=A\#H$ over $A$, the \textit{scalar extension Hopf algebroid} \cite[Thm.~4.1]{BrzMil:BBAD}. 
As such, it is automatically a right Hopf algebroid, and from the (full) antipode formula given in {\em loc.~cit.} one could deduce the explicit form of the translation map as in \eqref{toujours} by deploying \cite[Prop.~4.2 \& Ex.~4.14]{BoeSzl:HAWBAAIAD}, but it is technically simpler to just give a candidate and then prove that it does the job, see below.
As a $\Bbbk$-algebra, $\mathcal{H}$ is the smash product algebra $A\#H$, that is, the $\Bbbk$-module $A\otimes H$ (one usually writes $a\#h$ instead of $a\otimes h$ in this case) endowed with the product
\begin{equation*}
  m_\#\colon A\#H\otimes A\#H\to A\#H,
  \quad
	(a\#h)\otimes(b\#g)\mapsto(a\#h) (b\#g):=a(h_{(1)} b)\#h_{(2)}g
\end{equation*}
and unit $1\# 1$. Source and target $s,t \colon A \to \mathcal{H}$ of are defined by
\begin{equation}
	\label{ratagnan}
	s(a):=a\#1,
        \qquad\qquad
	t(a):=a_{(0)}\#a_{(1)},
\end{equation}
while the comultiplication $\Delta\colon\mathcal{H}\to\mathcal{H}\otimes_A\mathcal{H}$ and counit $\varepsilon\colon\mathcal{H}\to A$ are given as
\begin{equation*}
	\Delta(a\#h):=(a\#h_{(1)})\otimes_A(1\#h_{(2)}),\qquad\qquad
	\varepsilon(a\#h):=\varepsilon_H(h)a
\end{equation*}
for all $a\in A$ and $h\in H$. Finally, the Hopf-Galois map
$$
\alpha((a\#h)\otimes_A(b\#g))
=(a\#h_{(1)}) (b\#g)\otimes_A(1\#h_{(2)})
=(a(h_{(1)} b)\#h_{(2)}g)\otimes_A(1\#h_{(3)})
$$
is invertible with inverse determined by the translation map
\begin{equation*}
	(a\#h)_{\smap }\otimes_A(a\#h)_{\smam }
	=(a_{(0)} \#h_{(2)})\otimes_A \big( 1 \#S^{-1}(h_{(1)}) a_{(1)}\big),
\end{equation*}
which is the map \eqref{toujours} that defines right Hopf algebroids (over a left bialgebroid).
This is, in fact, the case since Eqs.~\eqref{Tch2} \& \eqref{Tch3} hold; the first is easy to see by explicitly using the target map \eqref{ratagnan} in the tensor product,
\begin{align*}
	(a\#h)_{\smap (1)} (a\#h)_{\smam } \otimes_A (a\#h)_{\smap (2)}
	&=
	(a_{(0)} \# h_{(2)})\big(1 \# S^{-1}(h_{(1)})a_{(1)}\big) \otimes_A (1 \# h_{(3)})
	\\
	&=
	(a_{(0)} \# a_{(1)}\big) \otimes_A (1 \# h)
	\\
	&=
	(1 \# 1) \otimes_A (a\#h),
\end{align*}
while the second one follows from 
\begin{align*}
	(a\#h)_{(2)\smap } \otimes_A (a\#h)_{(2)\smam }(a\#h)_{(1)}
	&=
	(1\#h_{(3)})  \otimes_A \big(1 \# S^{-1}(h_{(2)})\big) (a \# h_{(1)})
	\\
	&=
	(1\#h_{(2)})  \otimes_A \big(S^{-1}(h_{(1)})(a) \# 1\big)
	\\
	&=
	(1\#h_{(2)})\big(S^{-1}(h_{(1)})(a) \# 1\big) \otimes_A (1 \# 1)
	\\
	&=
	(a\#h) \otimes_A (1 \# 1).
\end{align*}

\begin{example}
There is a construction of scalar extension bialgebroids based on coquasitriangular Hopf algebras, see, for example, \cite[Ex.~4.2]{BrzMil:BBAD}. We recall that a {\em coquasitriangular bialgebra} is a $\Bbbk$-bialgebra $H$ together with a convolution invertible map $\mathcal{R}\colon H\otimes H\to\Bbbk$, the {\em universal $\mathcal{R}$-form}, which makes $H$ quasi-commutative, that is, $\mathcal{R}(h_{(1)}\otimes h'_{(1)})h_{(2)}h'_{(2)}=h'_{(1)}h_{(1)}\mathcal{R}(h_{(2)}\otimes h'_{(2)})$ for all $h,h'\in H$, and which is subject to the hexagon relations
\begin{equation*}
	\mathcal{R}(hh'\otimes h'')
	=\mathcal{R}(h\otimes h''_{(1)})\mathcal{R}(h'\otimes h''_{(2)}),\qquad\qquad
	\mathcal{R}(h\otimes h'h'')
	=\mathcal{R}(h_{(2)}\otimes h')\mathcal{R}(h_{(1)}\otimes h'').
\end{equation*}
We call $H$ {\em cotriangular} if, in addition, the convolution inverse of $\mathcal{R}$ is given by $\mathcal{R}^{-1}=\mathcal{R}^{\mathrm{op}}$. It is well-known that the monoidal category $\mathcal{M}^H$ of right $H$-comodules is braided if $H$ is coquasitriangular with respective braiding
\begin{equation*}
	\sigma^\mathcal{R}_{M,N}\colon M\otimes N\to N\otimes M,\quad
	m\otimes n \mapsto n_{(0)}\otimes m_{(0)}\mathcal{R}(m_{(1)}\otimes n_{(1)})
\end{equation*}
for $M,N \in \cM^H$,
where $\mathcal{R}$ is the corresponding universal $\mathcal{R}$-form of $H$. Finally, the braided monoidal category $\mathcal{M}^H$ is symmetric if and only if $H$ is cotriangular.

For a coquasitriangular bialgebra 
$(H,\mathcal{R})$ and a braided commutative algebra $A$ in $\mathcal{M}^H$, the braided commutativity explicitly reads 
$$
ba=a_{(0)}b_{(0)}\mathcal{R}(b_{(1)}\otimes a_{(1)})
$$
for all $a,b\in A$. We can endow $A$ with the left $H$-action
\begin{equation*}
  \cdot\colon H\otimes A\to A,
  \quad
	h\cdot a:=a_{(0)}\mathcal{R}(a_{(1)}\otimes h),
\end{equation*}
and one easily verifies that in this way $A$ becomes a braided commutative algebra in ${}_H\mathcal{YD}^H$. In particular, if $H$ is coquasitriangular and $A$ braided commutative as before, then we obtain a scalar extension bialgebroid $A\#H$ from the previous considerations. A class of particular interest is given by the FRT bialgebras \cite{FRT}, see \cite[Prop.\ 4.3]{BrzMil:BBAD}. 
This kind of con\-struc\-tion on scalar extension Hopf algebroids will give one of the main classes of examples for differential calculi in \S\ref{sec:scalar-calc}.
\end{example}

\subsection{Comodules and Hopf-Galois comodules}\label{mittagleffler}
Given a left bialgebroid $(\mathcal{H},R, s, t, \Delta,\varepsilon)$, recall that a \textit{left $\mathcal{H}$-comodule} is a pair $(M,\lambda_M)$, where
$M\in{}_R\mathcal{M}$ and $\lambda_M\colon M\to \cH_\ract \otimes_RM, \ m \mapsto m_{(-1)}\otimes_Rm_{(0)}$ is a
left $R$-linear left coaction over the underlying coring $\cH$, which generates a right $R$-action $m r:=\varepsilon(m_{(-1)}\bract r) m_{(0)}$ to which we refer as the {\em induced} right $R$-action on $M$, and with respect to which the coaction becomes $R$-linear as well. One therefore has
\begin{equation}
	\label{ha}
	\lambda_M(r m r')
	=r\lact m_{(-1)}\bract r'\otimes_Rm_{(0)},\qquad\qquad
	m_{(-1)}\otimes_Rm_{(0)} r=r\blact m_{(-1)}\otimes_Rm_{(0)},
\end{equation}
where the second identity reflects the fact that
$\lambda_M$ corestricts to the Takeuchi subspace
\begin{equation*}
\cH \times_R M = \big\{ \textstyle\sum_i h^i \otimes m^i \in \cH_\ract \otimes_R M \mid  \sum_i r \blact h^i \otimes m^i = \sum_i h^i \otimes m^ir, \ \forall r \in R  \, \big\},
\end{equation*}
similar to the coproduct in $\cH$ itself.
Let us denote by
${}^\mathcal{H}\!\mathcal{M}$
the category of left $\mathcal{H}$-comodules, while
\begin{equation}
	\label{waterman}
	{}^{\mathrm{co}\mathcal{H}\!}M:=\big\{ m\in M \mid \lambda_M(m)=1\otimes_Rm \big\}
\end{equation}
denotes the $R$-bimodule of left $\mathcal{H}$-{\em coinvariant} elements.

{\em Right} $\cH$-comodules are defined analogously, 
the full details of which we omit, forming a category $\cM^\cH$:
one starts from a right $R$-module $N$ equipped with a map $\rho_N \colon N \to N \otimes_R \due \cH \lact {}$, denoted $n \mapsto n_{(0)} \otimes_R n_{(1)}$, which induces a left $R$-action $rn := n_{(0)} \gve(r \blact n_{(1)})$ such that $\rho_N$ becomes $R$-bilinear with respect to $\blact, \ract$, and similarly $\mathrm{im}(\rho_N) \subseteq N \times_R \cH$ defined analogously.

    The {\em cotensor product} $N \bx_\cH M$ of a right resp.\ left $\cH$-comodule is, as always, given as the equaliser of $\rho_N \otimes_R M$ and $N \otimes_R \gl_M$, that is, the kernel of the difference map.
    Observe that
    \begin{equation}
      \label{callisto}
    N \bx_\cH M \subseteq N \times_R M,
    \end{equation}
    where 
$
N \times_R M = \big\{ \textstyle\sum_i n^i \otimes m^i \in N \otimes_R M \mid  \sum_i r n^i \otimes m^i = \sum_i n^i \otimes m^ir, \ \forall r \in R  \, \big\},
$
analogously to the above. Indeed, for $n \otimes_R  m \in N \bx_\cH M$, we have
$$
rn \otimes_R m = n_{(0)} \gve(r \blact n_{(1)}) \otimes_R m =
n_{(0)} \otimes_R \gve(r \blact n_{(1)}) m = n \otimes_R \gve(m_{(-1)} \bract r) m_{(0)} = n \otimes_R mr. 
$$
This way, one actually deals with the two maps $\rho_N \times_R M$ and $N \times_R \gl_M$ with, a priori, images in $(N \times_R \cH) \times_R M$ resp.\
$N \times_R (\cH \times_R M)$. These are, in general, not isomorphic but can be both mapped to $N \times_R \cH \times_R M$, where they can be compared (as one does for giving a sense to the coassociativity of a bialgebroid coproduct). See \cite{Tak:GOAOAA} for all technical details, in particular on the nonassociativity of the product $\times_R$.

\begin{definition}
	\label{minestra}
        The \textit{Galois} map
for a left comodule $M$ over a left bialgebroid $(\cH, R)$ 
        is defined as the map
\begin{equation}
	\label{duracell}
	\alpha_M \colon M\otimes_R\mathcal{H}\to \mathcal{H}\otimes_RM, \quad m\otimes_Rh\mapsto m_{(-1)}h\otimes_Rm_{(0)}.
\end{equation}
If it  is invertible, we
	call
	$M$ a {\em Hopf-Galois comodule} and 
	write
	$$
	m_{\smap }\otimes_R m_{\smam }:= \alpha_M^{-1}(1_\cH\otimes_R m)
	$$
	for the {\em translation map} $M \to M \otimes_R \cH$ on $M$.
\end{definition}

In such a case, {\em i.e.}, if $M$ is a Hopf-Galois comodule, the following identities  taken from \cite[Prop.~3.0.6]{Che:ITFLHLB}
are satisfied:
\begin{align}
	m_{\smap }\otimes_R m_{\smam }&\in M \times_R\mathcal{H},\label{Mch1}
	\\
        \label{Mch2}
	m_{\smap (-1)} m_{\smam }\otimes_Rm_{\smap (0)}
	&=1\otimes_Rm,
	\\
        \label{Mch3}
	m_{(0)\smap }\otimes_R m_{(0)\smam } m_{(-1)}
	&= m \otimes_R 1,
	\\
\label{Mch4}
        m_{\smap \smap }\otimes_R m_{\smap \smam }\otimes_R m_{\smam }
	&=m_{\smap }\otimes_Rm_{\smam (1)}\otimes_Rm_{\smam (2)},
        	\\
\label{Mch5}
        (rmr')_{\smap }\otimes_{R}(rmr')_{\smam }
	&=m_{\smap }\otimes_{R} r \blact m_{\smam } \ract r',
		\\
\label{Mch6}
        m_{\smap } \varepsilon(m_{\smam })
	&=m,
        \end{align}
for $r,r' \in R$ and $m \in M$,
where in \eqref{Mch1} we mean the Takeuchi subspace
$$
M \times_R \cH = \big\{ \textstyle\sum_i m^i \otimes h^i \in M \otimes_R \due \cH \lact {} \mid  \sum_i rm^i \otimes h^i = \sum_i m^i \otimes h^i \bract r, \ \forall r \in R  \, \big\}.
$$
It follows from \eqref{Mch4} and \eqref{Mch6} that $\rho_M := \ga^{-1}_M (1_\cH \otimes_R -)$, that is, the map
\begin{equation}
	\label{fabercastell}
	\rho_M \colon M \to M \otimes_R \cH, \quad m \mapsto m_{\smap } \otimes_R m_{\smam }
\end{equation}
turns the left $\cH$-comodule $M$ one started with into a right one.
The invertibility of the Galois map of a left $\mathcal{H}$-comodule is automatic if $\mathcal{H}$ is a right Hopf algebroid,
as we will show now.
The following is an enhancement of Theorem 3.0.4 in \cite{Che:ITFLHLB}.

\begin{lemma}
    \label{lem:GalInv}
    A left bialgebroid $(\cH, R)$ is a right Hopf algebroid
        if and only if
	any left $\cH$-comodule is a Hopf-Galois comodule, that is to say, has a bijective Galois map as in Eq.~\eqref{duracell}.
\end{lemma}

\begin{proof}
  Assume first that $(\cH, R)$ is a right Hopf algebroid and $M \in {}^\cH \! \cM$. Observe then that both domain and codomain of the map $\ga$ in \eqref{eq:HG} are right $\cH$-comodules: the first by using the coproduct on the first factor along with the right $R$-action
  $(\cH_\bract \otimes_R \due \cH \lact {})_R := (\cH_\ract)_\bract \otimes_R \due \cH \lact {}$, that is, by means of the map $h \otimes_R h' \mapsto (h_{(1)} \otimes_R h') \otimes_R h_{(2)}$;
  the second by using the coproduct on the second factor and right $R$-action
  $(\cH_\ract \otimes_R \due \cH \lact {})_R := \cH_\ract \otimes_R \due {(\cH_\ract)} \lact {}$, that is, via the map $h \otimes_R h' \mapsto (h \otimes_R h'_{(1)} ) \otimes_R h'_{(2)}$.
With this, let us show that one obtains a commutative diagram
\begin{equation}
  \label{fai}
		\begin{tikzcd}
(\cH_\bract \otimes_R \due \cH \lact {}) \bx_\cH M
\arrow{rr}{\ga \kasten{5pt}_\cH M}
\arrow{d}[swap]{\phi}
& &
(\cH_\ract \otimes_R \due \cH \lact {}) \bx_\cH M 
                  \arrow{d}{\psi}
                   \\
M \otimes_R \due \cH \lact {}
 \arrow{rr}{\ga_M}
 & & 
\cH_\ract \otimes_R M,
		\end{tikzcd}
\end{equation}
where the vertical arrows are isomorphisms given by
$$
\phi \colon (\cH_\bract \otimes_R \due \cH \lact {}) \bx_\cH M \to M \otimes_R \due \cH \lact {}, \quad (h \otimes_R h') \otimes_R m \mapsto \gve(h)m \otimes_R h'
$$
with inverse $ m \otimes_R h \mapsto (m_{(-1)} \otimes_R h) \otimes_R m_{(0)} $, as well as
$$
\psi \colon (\cH_\ract \otimes_R \due \cH \lact {}) \bx_\cH M \to \cH_\ract \otimes_R M, \quad (h \otimes_R h') \otimes_R m \mapsto h \otimes_R \gve(h')m,
$$
with inverse $h \otimes_R m \mapsto (h \otimes_R m_{(-1)}) \otimes_R m_{(0)}$. While it is obvious that $\psi$ is well-defined, this is less evident for $\phi$; after a little thought, this becomes clear from the fact that the cotensor product $N \bx_\cH M$ of any two (right and left) comodules lives in the Takeuchi subspace $N \times_R M$, see \eqref{callisto}, and hence, for
$
(h \otimes_R h') \otimes_R m
\in (\cH_\bract \otimes_R \due \cH \lact {}) \bx_\cH M,
$
one has $(r \blact h \otimes_R h') \otimes_R m = (h \otimes_R h') \otimes_R mr$, and therefore
$$
\gve(h\bract r)m \otimes_R h' = \gve(r \blact h)m \otimes_R h'
= \gve(h)mr \otimes_R h' = \gve(h)m \otimes_R r \lact h', 
$$
which makes $\phi$ well-defined. Next, a direct computation yields
\begin{equation*}
  \begin{split}
    \big(\psi \circ (\ga \bx_\cH M) \circ \phi^{-1}\big)(m \otimes_R h)
    &=
    \big(\psi \circ (\ga \bx_\cH M)\big)\big((m_{(-1)} \otimes_R h) \otimes_R m_{(0)}\big)
    \\
    &=
    \psi\big((m_{(-2)}h \otimes_R m_{(-1)} )\otimes_R m_{(0)}\big) 
  \\
    &=
  m_{(-1)}h \otimes_R m_{(0)}
  \\
    &=
 \ga_M(m \otimes_R h), 
  \end{split}
\end{equation*}
and thus \eqref{fai} commutes. 
Assuming now that $\ga$ is invertible, then so is
$\ga \bx_\cH M$
  for any left $\cH$-comodule $M$, and the commutativity of the above diagram yields the invertibility of $\ga_M$ as well, which was to prove.
  Moreover, we can explicitly compute
\begin{equation*}
  \begin{split}
\ga_M^{-1}(h \otimes_R m) 
&=
\big(\phi \circ (\ga \bx_\cH M)^{-1} \circ \psi^{-1} \big)(h \otimes_R m)
\\
&=
\big(\phi \circ (\ga^{-1} \bx_\cH M) \big)\big((h \otimes_R m_{(-1)}) \otimes_R m_{(0)} \big)
    \\
    &=
\phi\big((m_{(-1)\smap} \otimes_R m_{(-1)\smam}h) \otimes_R m_{(0)} \big)
   \\
    &=
\varepsilon(m_{(-1)\smap }) m_{(0)} \otimes_R m_{(-1)\smam } h
  \end{split}
\end{equation*}
  for the inverse of the Hopf-Galois map, which amounts to saying that
\begin{equation}
	\label{fabercastell0}
	m_{\smap } \otimes_R m_{\smam } :=  \varepsilon(m_{(-1)\smap }) m_{(0)} \otimes_R m_{(-1)\smam }
\end{equation}
for the right coaction \eqref{fabercastell} on $M$ in this case. This expression was already found in \cite[Thm.~4.1.1]{CheGavKow:DFOLHA} in a different way, 
but subject to a projectivity assumption.

The opposite implication is obvious: if $\ga_M$ is invertible for any $M \in {}^\cH \! \cM$, then it is so, in particular, for the $\cH$-comodule given by $(\cH, \gD)$ itself. This amounts to the invertibility of the map \eqref{eq:HG} as a defining property of right Hopf algebroids.
\end{proof}

\subsection{Hopf modules}
\label{sec:HopfModules}
Having introduced the category ${}^\mathcal{H}\!\mathcal{M}$ of left comodules over left bialgebroids, it is natural to study those left comodules which, in addition, admit a compatible (left or right) $\mathcal{H}$-action: the so-called {\em Hopf modules}, and related to it, {\em covariant bimodules}.
The second part of the subsequent definition of two types of Hopf modules
appeared essentially in \cite{Boe:ITFHA}:

\begin{definition}
  \label{genzano}
  Let $(\mathcal{H}, R)$ be a left bialgebroid and let $M$ be a left $\cH$-comodule with its {\em natural} left resp.\ {\em induced} right $R$-action written
  $
  r \otimes m \otimes  r' \mapsto rmr'
  $
  for $r,r' \in R$ and $m \in M$.
  \begin{enumerate}
\compactlist{99}
\item
  \label{genzano1}
  If $M$ is, in addition, a right $\cH$-module such that
  \begin{enumerate}
    \compactlist{99}
       \item
the induced right $R$-action on $M$ as a left $\cH$-comodule {\em coincides} with the right $R$-action induced by the right $\cH$-action, that is, $m r= m\bract r$;
\item
  the natural left $R$-action on $M$ as a left $\cH$-comodule {\em commutes} with the right $\cH$-action on $M$;
\item
  the equation
  $\lambda_M(m h)=\lambda_M(m)\Delta(h)$ holds as a product in $\cH \times_R M$,
  \end{enumerate}
then
we call $M$ a \textit{right-left Hopf module} over $\cH$, and
write $M\in{}^\mathcal{H}\!\mathcal{M}_\mathcal{H}$.
\item
  \label{genzano2}
If $M$ is, in addition, a left $\cH$-module such that
\begin{enumerate}
  \compactlist{99}
\item
  the natural left $R$-action on $M$ as a left $\cH$-comodule {\em coincides} with the left $R$-action induced by the left $\cH$-action, that is, 
  $r m = r \lact m$;
\item
  the induced right $R$-action on $M$ as a left $\cH$-comodule {\em commutes} with the left $\cH$-action on $M$;
\item
  the equation
  $\lambda_M(hm)=\gD(h) \lambda_M(m)$ holds as a product in $\cH \times_R M$, 
\end{enumerate}
then
  we call $M$ a \textit{left-left Hopf module} over $\cH$, and 
write $M\in{}^\mathcal{H}_\mathcal{H}\mathcal{M}$.
\end{enumerate}
  \end{definition}

\begin{remark}
  \label{roslagen}
  Condition (b) in part (ii) is actually redundant as this follows from (c), in striking contrast to the respective requirements in part (i), where it is needed to guarantee well-definedness. We wrote it this way to have a symmetric picture; however, this reveals  a certain structural difference between left-left and right-left Hopf modules in the bialgebroid case: while ${}_\cH \mathcal{M}$ for a left bialgebroid is a monoidal category, $\cM_\cH$ is not. This allows to define ${}^\mathcal{H}_\mathcal{H}\mathcal{M}$ as the category of left $\cH$-comodules in ${}_\mathcal{H}\mathcal{M}$, while additional compatibility conditions have to be enforced in order to define ${}^\mathcal{H}\!\mathcal{M}_\mathcal{H}$.
    \end{remark}

If a (say) left-left Hopf module is, in addition, a Hopf-Galois comodule in the sense of Definition \ref{minestra} that the corresponding Galois map is invertible, then we can add a technical statement that will be needed later on:

\reversemarginpar

\begin{lemma}
\label{minestrone}
Let $(\cH, R)$ be a right Hopf algebroid
and $M\in {}^\cH\hskip -2pt\cM_\cH$ be a left-right Hopf module. Then 
\begin{equation*}
\label{scalea1}
  \rho_M(mh)
= (mh)_{\smap } \otimes_R (mh)_{\smam }
  =  m_{\smap } h_{\smap } \otimes_R h_{\smam } m_{\smam }  
  \end{equation*}
holds 
with respect to the right $\cH$-coaction
\eqref{fabercastell0} for all $m \in M$, $h \in \cH$.
 Likewise, if $M\in {}^\cH_\mathcal{H}\mathcal{M}$ is a left-left Hopf module, then
\begin{equation*}
  \label{scalea2}
   \rho_M(hm)
 = (hm)_{\smap } \otimes_R (hm)_{\smam }
   = h_{\smap } m_{\smap } \otimes_R m_{\smam } h_{\smam } 
   \end{equation*}
holds for all $m \in M$ and $h \in \cH$.
\end{lemma}

\begin{proof}
Let us only prove the first case, the second one being similar (but easier).
  Both are a direct computation using Definition \ref{genzano} (i) along with Eq.~\eqref{fabercastell0}. Indeed,
  \begin{equation*}
    \begin{split}
      (mh)_{\smap } \otimes_R (mh)_{\smam }
      &=
      \varepsilon\big((mh)_{(-1)\smap }\big)  (mh)_{(0)} \otimes_R (mh)_{(-1)\smam }
      \\
      &=
      \varepsilon(m_{(-1)\smap }h_{(1)\smap })  (m_{(0)}h_{(2)}) \otimes_R h_{(1)\smam } m_{(-1)\smam }
      \\
      &=
      \big(\varepsilon(m_{(-1)\smap }h_{(1)\smap })  m_{(0)}\big) h_{(2)} \otimes_R h_{(1)\smam } m_{(-1)\smam }
 \\
      &=
      \big(\varepsilon(m_{(-1)\smap }h_{\smap(1)})  m_{(0)}\big) h_{\smap(2)} \otimes_R h_{\smam } m_{(-1)\smam }
      \\
      &=
      \pig(\varepsilon\big(\gve(h_{\smap(1)}) \blact m_{(-1)\smap }\big)  m_{(0)}\pig) h_{\smap(2)} \otimes_R h_{\smam } m_{(-1)\smam }
          \end{split}
  \end{equation*}
      \begin{equation*}
    \begin{split}
\phantom{       (mh)_{\smap } \otimes_R (mh)_{\smam }}
      &=
    \big(  \varepsilon(m_{(-1)\smap })  m_{(0)}  \gve(h_{\smap(1)}) \big) h_{\smap(2)} \otimes_R h_{\smam } m_{(-1)\smam }
      \\
      &=
    \big(  \varepsilon(m_{(-1)\smap })  m_{(0)}\big)\big(\gve(h_{\smap(1)}) \lact h_{\smap(2)}\big) \otimes_R h_{\smam } m_{(-1)\smam }
    \\
      &=
     m_{\smap }  h_{\smap } \otimes_R h_{\smam } m_{\smam },
          \end{split}
  \end{equation*}
  where in the second step we used
both  Definition \ref{genzano}, part\,(i)\,(c), and \eqref{Tch6}, in step three Definition \ref{genzano}, part\,(i)\,(b), 
then Eq.~\eqref{Tch4} in the fourth step,
the properties \eqref{counityippieh}
  of a bialgebroid counit in the fifth,
the 
Takeuchi property \eqref{ha} for a left $\cH$-comodule
along with \eqref{Tch9} in the sixth,
again 
Definition \ref{genzano}, part\,(i)\,(b), in the seventh, and finally
 counitality of $\cH$ along with \eqref{fabercastell0} to conclude in the eighth.
 \end{proof}

For (right-left) Hopf modules with invertible Galois map one can prove a  \textit{Fundamental Theorem}, which generalises the classical fundamental theorem of Hopf modules \cite[Thm.~4.1.1]{Swe:HA} for  Hopf algebras.
The following is a slight extension and reformulation of \cite[Thm.~5.0.1]{Che:ITFLHLB}, which is why we omit a full proof here:

\begin{theorem}
  \label{thm:HopfModules}
  Let $(\cH,R)$ be a left bialgebroid such that $\cH_\ract$ is flat
  over $R$, and
  let $M \in {}^\cH \!\mathcal{M}_\cH$ be a right-left Hopf module that is simultaneously a Hopf-Galois comodule over $\cH$.
  Then
\begin{enumerate}
 \compactlist{50}
\item
  one has $m_{\smap }m_{\smam }\in {}^{\mathrm{co}\mathcal{H}\!}M$ for all $m\in M$;
\vskip 1.2pt
\item
  the module of coinvariants  ${}^{\mathrm{co}\mathcal{H}\!}M$ is a left $R$-module by means of $r\blact {m}= {m}t(r)$ for $m \in {}^{\mathrm{co}\mathcal{H}\!}M$ and $r \in R$;
  \vskip 1.2pt
\item if $(\mathcal{H},R)$ is a right Hopf algebroid, then
the functors
  $$
  {}^{\mathrm{co}\mathcal{H}\!}(-) \colon {}^\cH \!\mathcal{M}_\cH
\quad \raisebox{-3pt}{$\longleftrightarrows$} \quad {}_R \cM \ \colon \!  \cH_\ract \otimes_R - 
  $$
are mutually adjoint and  establish an equivalence of categories. In particular, the counit of the adjunction is given by the isomorphism 
  $$
\xi \colon  \mathcal{H}_\ract  \otimes_R{}^{\mathrm{co}\mathcal{H}\!}M \to  M, \quad h\otimes_R {m}\mapsto {m}h, 
$$
in the category ${}^\mathcal{H}\!\mathcal{M}_\mathcal{H}$ of right-left Hopf modules, with inverse
$$
M \to \mathcal{H}_\ract  \otimes_R {}^{\mathrm{co}\mathcal{H}\!}M, \quad
m\mapsto m_{(-1)}\otimes_R m_{(0)\smap } m_{(0)\smam },
$$
while the
unit results as the isomorphism
$
\eta \colon V \to {}^{\mathrm{co}\mathcal{H}} (\mathcal{H}_\ract \otimes_RV),
\
v\mapsto 1_\cH \otimes_Rv,
$
with inverse
$
h \otimes_R v \mapsto \varepsilon(h) v.
$
\end{enumerate}
\end{theorem}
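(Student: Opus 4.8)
The plan is to follow the classical proof of the fundamental theorem of Hopf modules, with the translation map of the Hopf-Galois comodule $M$ playing the rôle that the antipode plays for Hopf algebras: the assignment $P\colon m\mapsto m_\smap m_\smam$ will serve as the \emph{coinvariant projection}. Under the standing hypotheses of part (iii), Lemma~\ref{lem:GalInv} guarantees that \emph{every} left $\cH$-comodule---in particular each free module $\cH_\ract\otimes_R V$---is a Hopf-Galois comodule, so all relevant translation maps exist and the functor $\cH_\ract\otimes_R-$ indeed lands in ${}^\cH\!\cM_\cH$; checking that $\cH_\ract\otimes_R V$ is a right-left Hopf module with action $(h\otimes_R v)h'=hh'\otimes_R v$ and coaction $h\otimes_R v\mapsto h_{(1)}\otimes_R(h_{(2)}\otimes_R v)$ is routine. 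The core of the argument is then to exhibit the unit $\eta$ and counit $\xi$ as mutually inverse natural isomorphisms, which simultaneously yields the adjunction and the equivalence.

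For part (i), I would compute $\lambda_M(m_\smap m_\smam)$ directly. Axiom (c) of Definition~\ref{genzano}(i) gives $\lambda_M(m_\smap m_\smam)=\lambda_M(m_\smap)\Delta(m_\smam)$; rewriting $m_\smap\otimes_R m_{\smam(1)}\otimes_R m_{\smam(2)}$ by the coassociativity identity \eqref{Mch4} as $m_{\smap\smap}\otimes_R m_{\smap\smam}\otimes_R m_\smam$ and then applying \eqref{Mch2} to the element $m_\smap$ collapses the expression to $1\otimes_R m_\smap m_\smam$, so $P(m)\in{}^{\mathrm{co}\cH}M$. Part (ii) is short: since $\Delta(t(r))=1\otimes_R t(r)$, axiom (c) gives $\lambda_M(mt(r))=(1\otimes_R m)(1\otimes_R t(r))=1\otimes_R mt(r)$, so ${}^{\mathrm{co}\cH}M$ is closed under $r\blact m:=mt(r)$, and the anti-multiplicativity of $t$ turns this into a left $R$-action.

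Turning to part (iii), the unit $\eta_V\colon v\mapsto 1_\cH\otimes_R v$ has image in ${}^{\mathrm{co}\cH}(\cH\otimes_R V)$ because $\Delta(1)=1\otimes_R1$; that $h\otimes_R v\mapsto\varepsilon(h)v$ is a two-sided inverse follows from counitality, the nontrivial direction being that for a coinvariant $\sum_i h^i\otimes_R v^i$ the identity $h_{(1)}\ract\varepsilon(h_{(2)})=h$ forces $\sum_i h^i\otimes_R v^i=\sum_i 1\otimes_R\varepsilon(h^i)v^i$. For the counit I would set $\xi_M\colon h\otimes_R m\mapsto mh$; it is well defined because the balancing of $\cH_\ract\otimes_R{}^{\mathrm{co}\cH}M$ over the actions $\ract$ and $r\blact m=mt(r)$ matches the associativity $m(t(r)h)=(mt(r))h$, and it is a morphism of right-left Hopf modules (right $\cH$-linearity is immediate, and left colinearity uses $\lambda_M(mh)=(1\otimes_R m)\Delta(h)$ for coinvariant $m$). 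Its candidate inverse is $\zeta_M\colon m\mapsto m_{(-1)}\otimes_R m_{(0)\smap}m_{(0)\smam}$, which lands in the correct space by part (i). The identity $\xi_M\circ\zeta_M=\id_M$ is immediate from \eqref{Mch3}: applying the action map to $m_{(0)\smap}\otimes_R m_{(0)\smam}m_{(-1)}=m\otimes_R1$ gives $\xi_M\zeta_M(m)=m$.

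The main obstacle is the reverse identity $\zeta_M\circ\xi_M=\id$. Here I would first establish the right-left analogue of Lemma~\ref{minestrone}, namely
\[
\rho_M(mh)=m_\smap h_\smap\otimes_R h_\smam m_\smam,
\]
proved by the same manipulation of \eqref{fabercastell0} using \eqref{Tch6}, \eqref{Tch4}, and the counit axioms \eqref{counityippieh}. Specialising to a coinvariant $m$, where $\rho_M(m)=m\otimes_R1_\cH$, this gives $\rho_M(mh)=mh_\smap\otimes_R h_\smam$, hence by \eqref{Tch7} the projection of $mh$ is $P(mh)=m(h_\smap h_\smam)=m\,t\varepsilon(h)=\varepsilon(h)\blact m$. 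Consequently
\[
\zeta_M(\xi_M(h\otimes_R m))=\zeta_M(mh)=h_{(1)}\otimes_R P(mh_{(2)})=\bigl(h_{(1)}\ract\varepsilon(h_{(2)})\bigr)\otimes_R m=h\otimes_R m,
\]
the last step being counitality together with the balancing over $t$. Naturality of $\eta$ and $\xi$ is routine, and a direct check of the triangle identities promotes the pair to an adjoint equivalence, so $\cH_\ract\otimes_R-$ and ${}^{\mathrm{co}\cH}(-)$ are mutually quasi-inverse. I expect the genuine work to sit in the analogue of Lemma~\ref{minestrone} and in the careful handling of the Takeuchi subspaces and of the fact that $\xi_M,\zeta_M$ are linear only over the $t$-twisted (not the $s$-twisted) $R$-actions.
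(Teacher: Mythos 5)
Your proposal is correct, but there is a caveat about the comparison: the paper deliberately \emph{omits} a proof of Theorem~\ref{thm:HopfModules}, stating that it is a slight extension and reformulation of \cite[Thm.~5.0.1]{Che:ITFLHLB}. So what you have done is supply the missing argument, and you do so along the expected lines: parts (i) and (ii) are the standard computations with \eqref{Mch2}, \eqref{Mch4} and $\Delta(t(r))=1\otimes_R t(r)$, and your unit and counit coincide with the formulas displayed in the statement itself. The one genuinely substantive ingredient you add is the right-left analogue of Lemma~\ref{minestrone}, namely $\rho_M(mh)=m_{\smap}h_{\smap}\otimes_R h_{\smam}m_{\smam}$, which the paper proves only for left-left Hopf modules (where it is used for Theorem~\ref{rhodia}); your formula is correct, and besides your route through \eqref{fabercastell0} it can be checked even more directly by applying the invertible Galois map $\alpha_M$ to the right-hand side: axiom (c) of Definition~\ref{genzano}\,(i) together with \eqref{Tch2} and \eqref{Mch2} yields $1\otimes_R mh$, so invertibility of $\alpha_M$ forces the claimed identity. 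Two points you flag but do not execute deserve explicit mention in a full write-up: (a) well-definedness of $\zeta_M=(\cH\otimes_R P)\circ\lambda_M$ over the balanced tensor product requires $P(rm)=r\blact P(m)$, which is exactly \eqref{Mch5} followed by the action map; (b) the substitutions of \eqref{Mch4}, \eqref{Mch2}, \eqref{Tch2} and of $\rho_M(m)=m\otimes_R 1_\cH$ for coinvariant $m$ (itself a consequence of \eqref{fabercastell0} and \eqref{Tch9}) inside larger tensor expressions are only legitimate thanks to the Takeuchi conditions \eqref{Tch1} and \eqref{Mch1}, and one should convince oneself once that the relevant multilinear maps are balanced. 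Since the paper performs the analogous manipulations in the proof of Lemma~\ref{minestrone} at precisely this level of rigour, I see no genuine gap: your argument is complete in the same sense, and it correctly identifies where the real work sits — in the right-left translation-map lemma and in the $t$-twisted $R$-linearity of $\xi_M$ and $\zeta_M$.
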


\begin{proof}
We only add to {\em loc.~cit.}\ a discussion of the unit: the right-left Hopf module structure on $\cH_\ract \otimes_R V$ for a left $R$-module $V$ arises from right multiplication on the first tensor factor $\cH$, along with 
    $\gD \otimes_R V$ as  left $\cH$-coaction, which has image in
    $\cH \times_R (\cH_\ract \otimes_R M)$, and where the Takeuchi product on the right factor is defined with respect to the source maps on $\cH$ in $\cH \otimes_R M$. The condition in Definition \ref{genzano}\,(i)\,(c) is then easily verified.
    Finally, while obviously $\eta^{-1} \circ \eta = \id_V$, one has
    $(\eta \circ \eta^{-1})(h \otimes_R v)
= 1_\cH \otimes_R \gve(h)v = 1_\cH \ract \gve(h) \otimes_R v = h_{(1)} \ract \gve(h_{(2)}) \otimes_R v = h \otimes_R v$, using $h \otimes_R v \in {}^{\mathrm{co}\cH} (\cH_\ract \otimes_R V)$ in the third step.
  \end{proof}

\noindent
There is an analogue of the above theorem for the category ${}^\mathcal{H}_\mathcal{H}\mathcal{M}$, which follows from homological arguments rather than from explicit maps.
A version of this fundamental theorem for full Hopf algebroids, {\em i.e.}, those with a sort of antipode, has been given in \cite[Thm.~4.2]{Boe:ITFHA}, and a very general form of such a statement has been proven in \cite[Thm.~5.6]{Brz:TSOC}, see also \cite[\S28.19]{BrzWis:CAC} in the framework of corings. An even further reaching generalisation to the realm of bimonads appears in \cite[Thm.~6.11]{BruLacVir:HMOMC}, see also \cite[Thm.~5.3.1]{AguCha:GHMFB} for later development.

\subsection{Covariant bimodules}\label{sec:CovBim}

Next, let us discuss covariant bimodules: they will be of partic\-ular interest since they will model the module of one-forms for right Hopf algebroids later~on.

\begin{definition}
\label{laparolacheciunisce}
  Let $(\mathcal{H}, R)$ be a left bialgebroid. An object $M$ in ${}^\mathcal{H}_\mathcal{H}\mathcal{M} \cap {}^\mathcal{H}\!\mathcal{M}^{\phantom{\mathcal{H}}}_\mathcal{H}$ is called a
  \textit{left covariant bimodule} over $\cH$ if
  \begin{enumerate}
  \compactlist{50}
\item
  $r m r'= r \lact m \bract r'$ for all $r, r' \in R$;
\item
  the left and right $\mathcal{H}$-action commute.
\end{enumerate}
In this situation, we write $M\in{}^\mathcal{H}_\mathcal{H}\mathcal{M}^{\phantom{\mathcal{H}}}_\mathcal{H}$.
\end{definition}

\noindent In particular, for every object $M$ in ${}^\mathcal{H}_\mathcal{H}\mathcal{M}^{\phantom{\mathcal{H}}}_\mathcal{H}$, the identity
$$
\lambda_M(h m h')=\Delta(h)\lambda_M(m)\Delta(h')
$$
holds as an equation in $\cH \times_R M$.
Observe that part (ii) in this definition automatically resolves the issue raised in Remark~\ref{roslagen} resp.\ Definition \ref{genzano}\,(i)\,(b) of commuting actions of $R$ and $\cH$.

{\em Right} $\mathcal{H}$-coactions and the corresponding {\em right} covariant bimodules are defined similarly.
Interestingly enough, Theorem \ref{thm:HopfModules} can be adapted to the situation of Definition \ref{laparolacheciunisce}, that is, to objects in ${}^\mathcal{H}_\mathcal{H}\mathcal{M}^{\phantom{\mathcal{H}}}_\mathcal{H}$. This is essentially due to the fact that the module of coinvariants for such an $M$ becomes a left $\cH$-module by means of an adjoint action, which turns out to be well-defined on $ {}^{\mathrm{co}\mathcal{H}\!}M$ while this is not so on $M$ itself. More precisely, we have:

\begin{theorem}
  \label{rhodia}
  Let $\cH$ be a right Hopf algebroid such that $\cH_\ract$ is flat
  over $R$, and
  let $M\in {}^\mathcal{H}_\mathcal{H}\mathcal{M}^{\phantom{^\mathcal{H}}}_\mathcal{H}$ be a left covariant $\cH$-bimodule.
  \begin{enumerate}
    \compactlist{99}
    \item
  Then the module $ {}^{\mathrm{co}\mathcal{H}\!}M$ of coinvariants becomes a left
  $\RRe$-module by means of 
  $$
  \RRe \otimes {}^{\mathrm{co}\mathcal{H}\!}M \to {}^{\mathrm{co}\mathcal{H}\!}M, \quad (r,r') \otimes m \mapsto  r' \blact m \ract r,
  $$
  which extends to a well-defined left $\cH$-action
  \begin{equation}
    \label{sogei1}
\due \cH \blact \bract \otimes_{\RRe} {}^{\mathrm{co}\mathcal{H}\!}M \to {}^{\mathrm{co}\mathcal{H}\!}M, \quad h \otimes_{\RRe} m \mapsto h_{\smap } m h_{\smam }.
  \end{equation}
\item
  The categorical equivalence from Theorem \ref{thm:HopfModules}\,(iii) by means of the adjoint functors carries over to this case with respect to the categories ${}^\mathcal{H}_\mathcal{H}\mathcal{M}^{\phantom{}}_\mathcal{H}$ and ${}_\mathcal{H}\mathcal{M}$. In particular, the
  map
   \begin{equation}
    \label{sogei3}
\xi \colon  \mathcal{H}_\ract  \otimes_R {}^{\mathrm{co}\mathcal{H}\!}M \to  M, \quad h\otimes_R {m}\mapsto {m}h, 
\end{equation}
  from Theorem \ref{thm:HopfModules} can be promoted to an isomorphism in 
  the category ${}^\mathcal{H}_\mathcal{H}\mathcal{M}^{\phantom{^\mathcal{H}}}_\mathcal{H}$ of left covariant bimodules over $\cH$,
with inverse
  $
m\mapsto m_{(-1)}\otimes_R m_{(0)\smap } m_{(0)\smam }
$.
\end{enumerate}
\end{theorem}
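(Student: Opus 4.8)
The plan is to bootstrap from Theorem \ref{thm:HopfModules}, whose equivalence ${}^\cH\!\cM_\cH\cong{}_R\cM$ already takes care of the underlying right-left Hopf module structure; the only genuinely new content is to promote the $R$-module ${}^{\mathrm{co}\cH}M$ to a left $\cH$-module and to verify that the structure maps $\xi,\eta$ intertwine these refined actions. Accordingly, for part (i) I would first record the $\RRe$-module structure: the assignment $(r,r')\otimes m\mapsto r'\blact m\ract r=t(r)\,m\,t(r')$ lands in coinvariants and yields commuting $R$- and $R^{\mathrm{op}}$-actions, which is immediate from Definition \ref{laparolacheciunisce} together with Theorem \ref{thm:HopfModules}\,(ii). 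The core is then the adjoint action \eqref{sogei1}. Its well-definedness over $\RRe$ follows by computing translations of $s,t$-elements: using \eqref{Tch6} to split the translation map of a product, and \eqref{Tch9} together with the commutativity of the images of $s$ and $t$, one checks $(ht(r)s(r'))_\smap\,m\,(ht(r)s(r'))_\smam=h_\smap\,\big(t(r)\,m\,t(r')\big)\,h_\smam$, which matches the $\RRe$-action on the coinvariant factor. Associativity of \eqref{sogei1} is exactly \eqref{Tch6}, and unitality reduces to $1_\smap\otimes_R 1_\smam=1\otimes_R 1$.

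The main obstacle is showing that $h_\smap m h_\smam$ is again coinvariant. Since $M$ is a \emph{left covariant bimodule}, one has $\lambda_M(h_\smap m h_\smam)=\Delta(h_\smap)\,\lambda_M(m)\,\Delta(h_\smam)$, which for coinvariant $m$ collapses to $h_{\smap(1)}h_{\smam(1)}\otimes_R h_{\smap(2)}\,m\,h_{\smam(2)}$, and the assertion is that this equals $1\otimes_R h_\smap m h_\smam$. This is the one step that must be ground out by hand, combining the translation identities \eqref{Tch2}, \eqref{Tch4}, and \eqref{Tch5}; the degenerate case $m=1$ is already instructive, since there the claim reduces via \eqref{Tch7} to $\Delta(t\varepsilon(h))=1\otimes_R t\varepsilon(h)$, a general property of target maps in a left bialgebroid. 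A cleaner route I would also try is the characterisation that $x$ is coinvariant if and only if $\rho_M(x)=x\otimes_R 1$ for the induced right coaction \eqref{fabercastell0} (which follows at once from $\alpha_M(x\otimes_R 1_\cH)=\lambda_M(x)$), so that the claim becomes a statement about the behaviour of $\rho_M$ under the left and right actions in the spirit of Lemma \ref{minestrone}.

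For part (ii) I would exhibit the quasi-inverse explicitly: given $V\in{}_\cH\cM$, equip $\cH_\ract\otimes_R V$ with the right-left Hopf module structure of Theorem \ref{thm:HopfModules} and the \emph{diagonal} left action $g\cdot(h\otimes_R v):=g_{(1)}h\otimes_R g_{(2)}v$. Its well-definedness over $\otimes_R$ is precisely the Takeuchi property \eqref{takeuchiyippieh} of the coproduct, and one checks directly that this action commutes with the right $\cH$-action and satisfies $\lambda(g\cdot x)=\Delta(g)\lambda(x)$, whence $\cH_\ract\otimes_R V\in{}^\cH_\cH\cM^{\phantom{\cH}}_\cH$. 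It then remains to verify that the isomorphisms of Theorem \ref{thm:HopfModules} are morphisms in ${}^\cH_\cH\cM^{\phantom{\cH}}_\cH$. That $\xi(h\otimes_R m)=mh$ is left $\cH$-linear amounts to $(g_{(2)\smap}\,m\,g_{(2)\smam})(g_{(1)}h)=g(mh)$, which I would deduce from \eqref{Tch3} together with the commuting actions of Definition \ref{laparolacheciunisce}\,(ii); dually, $\eta(v)=1_\cH\otimes_R v$ is left $\cH$-linear by \eqref{Tch2}, since the adjoint action sends $1_\cH\otimes_R v$ to $1_\cH\otimes_R g v$.

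Since both $\xi$ and $\eta$ already underlie the equivalence at the level of right-left Hopf modules, their compatibility with the additional left $\cH$-actions is exactly what upgrades the adjunction of Theorem \ref{thm:HopfModules}\,(iii) to the claimed equivalence ${}^\cH_\cH\cM^{\phantom{\cH}}_\cH\cong{}_\cH\cM$, with $\xi$ of \eqref{sogei3} and its inverse $m\mapsto m_{(-1)}\otimes_R m_{(0)\smap}m_{(0)\smam}$ now isomorphisms of left covariant bimodules. In summary, all the translation-map identities \eqref{Tch2}--\eqref{Tch9} enter, but the only delicate point is the closure of the adjoint action \eqref{sogei1} inside the coinvariants; once that is settled, parts (i) and (ii) are a matter of organised bookkeeping on top of the established fundamental theorem.
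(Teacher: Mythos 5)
Your route is essentially the paper's: you set up the adjoint action on ${}^{\mathrm{co}\mathcal{H}\!}M$ via the translation map, reduce closure in the coinvariants to the collapse $\gl_M(h_{\smap} m h_{\smam})=h_{\smap(1)}h_{\smam(1)}\otimes_R h_{\smap(2)}\,m\,h_{\smam(2)}$, and then promote $\xi$ by equipping $\cH_\ract\otimes_R{}^{\mathrm{co}\mathcal{H}\!}M$ with the diagonal action and checking left $\cH$-linearity via \eqref{Tch3} and the commuting actions — all of which is exactly what the paper does, including your balancedness check over $\otimes_{\RRe}$ via \eqref{Tch6} and \eqref{Tch9} and the unit check via \eqref{Tch2}. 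The computation you defer to ``grinding out'' is the paper's third and fourth step: apply \eqref{Tch5} to rewrite $h_{\smap}\otimes_R h_{\smam(1)}\otimes_R h_{\smam(2)}$ as $h_{\smap\smap}\otimes_R h_{\smap\smam}\otimes_R h_{\smam}$ and then conclude with \eqref{Tch2}; identity \eqref{Tch4} is not actually needed. Your alternative route via the induced right coaction \eqref{fabercastell0} and Lemma \ref{minestrone} is plausible, but note it would additionally require a compatibility of $\rho_M$ with the \emph{right} $\cH$-action, which the paper does not record, so it is not automatically shorter.

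There is, however, one genuine gap: you never verify that the expression $h_{\smap} m h_{\smam}$ is well defined with respect to the \emph{inner} tensor product of the translation map. Since $h_{\smap}\otimes_R h_{\smam}$ lives in $\cH_\bract\otimes_R{}_\lact\cH$, where $h\,s(r)\otimes_R h'$ is identified with $h\otimes_R s(r)h'$, the assignment requires, for each coinvariant $m$, that $g\,(s(r)m)\,g'=g\,(m\,s(r))\,g'$, i.e.\ $r\lact m=m\bract r$ on ${}^{\mathrm{co}\mathcal{H}\!}M$ — a nontrivial fact, as the two $R$-actions differ on general elements of $M$. Your $\RRe$-balancedness computation with \eqref{Tch6} and \eqref{Tch9} concerns only the outer $\otimes_{\RRe}$ and does not touch this. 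The paper opens its proof with precisely this point: from $\gl_M(m\bract r)=m_{(-1)}s(r)\otimes_R m_{(0)}=s(r)\otimes_R m$ for coinvariant $m$, applying $\varepsilon\otimes_R M$ yields $mr=rm$, and Definition \ref{laparolacheciunisce}\,(i) then gives $m\bract r=r\lact m$. Without this step the formula \eqref{sogei1} is not even well posed; once you insert it, the remainder of your argument goes through and coincides with the paper's proof.
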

\begin{proof}
  Let us first show that the map \eqref{sogei1} is well-defined indeed over the Sweedler presentation \eqref{Tch1}: for $m \in {}^{\mathrm{co}\mathcal{H}\!}M \subset M$ and $r \in R$, one has, using the right $R$-action on $M$,
  \begin{equation}
    \label{gaudia}
\gl_M(mr) = \gl_M(m \bract r) = m_{(-1)} s(r) \otimes_R m_{(0)} = s(r) \otimes_R m 
  \end{equation}
since $M \in {}_\mathcal{H}\mathcal{M}_\mathcal{H}$ or directly from the $R$-linearity \eqref{ha} of $\gl_M$; the last step follows from $m$ being coinvariant, {\em cf.}\ Eq.\ \eqref{waterman}. Applying $\varepsilon \otimes_R M$ to both sides and identifying $R \otimes_R M \cong M$ by means of $r \otimes_R m \mapsto rm$ yields $mr = rm$ and since $M \in {}^\mathcal{H}_\mathcal{H}\mathcal{M}^{\phantom{^\mathcal{H}}}_\mathcal{H}$, Definition \ref{laparolacheciunisce} then implies
$$
mr = m \bract r = r \lact m = rm
$$
for
$m \in {}^{\mathrm{co}\mathcal{H}\!}M$,
which gives the well-definedness of the map \eqref{sogei1}. Observe that Eq.~\eqref{gaudia} also implies that $mr = rm \notin {}^{\mathrm{co}\mathcal{H}\!}M$ for $m \in {}^{\mathrm{co}\mathcal{H}\!}M$. Next, we have to show that the map \eqref{sogei1} lands in ${}^{\mathrm{co}\mathcal{H}\!}M$. Indeed, for $m \in {}^{\mathrm{co}\mathcal{H}\!}M$ and $h \in \cH$, we have by using the fact that $M$ is a left covariant $\cH$-bimodule:
\begin{equation*}
\begin{split}
  \gl_M(h_{\smap } m h_{\smam }) &= h_{\smap (1)} m_{(-1)} h_{\smam (1)} \otimes_R h_{\smap (2)} m_{(0)} h_{\smam (2)}
  \\
&  = h_{\smap (1)} h_{\smam (1)} \otimes_R h_{\smap (2)} m h_{\smam (2)}
  \\
&  = h_{\smap \smap (1)} h_{\smap \smam } \otimes_R h_{\smap \smap (2)} m h_{\smam }
  \\
&  = 1 \otimes_R h_{\smap } m h_{\smam },
\end{split}
  \end{equation*}
using the identities \eqref{Tch5} and \eqref{Tch2} in the third and fourth step.
That the so-defined map \eqref{sogei1} then defines a left $\cH$-action is immediate from \eqref{Tch6} and the statement regarding the $R$-bimodule structure follows from \eqref{Tch9}.
Thanks to Theorem \ref{thm:HopfModules}, the only things left to show are that
$\cH \otimes_R {}^{\mathrm{co}\mathcal{H}\!}M$ is a left-left Hopf module, {\em i.e.}, an object in ${}^\mathcal{H}_\mathcal{H}\mathcal{M}$, and that the isomorphism $\xi$ from \eqref{sogei3} is a morphism of left $\cH$-modules.
As for the first, the left $\cH$-coaction in question is simply given by
$
\gl_{\cH \otimes_R {}^{\mathrm{co}\mathcal{H}\!}M} := \gD \otimes_R {}^{\mathrm{co}\mathcal{H}\!}M,
$
while the left $\cH$-action is by the diagonal action on the tensor product,
that is, 
$
h(g \otimes_R m) :=
h_{(1)}g \otimes_R h_{(2)\smap } m h_{(2)\smam }.
$
From this, the requirements in Definition \ref{genzano}\,(ii) for a left-left Hopf module structure on
$
\cH \otimes_R {}^{\mathrm{co}\mathcal{H}\!}M
$
are immediate. As for the left $\cH$-linearity of $\xi$ in \eqref{sogei3}, we simply compute
$$
\xi\big(h(g \otimes_R m)\big) = 
\xi\big(h_{(1)}g \otimes_R h_{(2)\smap } m h_{(2)\smam }\big)
=  h_{(2)\smap } m h_{(2)\smam } h_{(1)}g
= hmg = h\xi(g \otimes_R m),
$$
where we used \eqref{Tch3} in the third step.
\end{proof}

With all the technical preparations at hand, in the next section we can fully dedicate our attention to one of our main objects of study, that is, {\em noncommutative differential calculi}.

\section{Covariant calculi on Hopf algebroids}

\subsection{First order differential calculi on bialgebroid comodule algebras}
In this section, we introduce differential structures in the bialgebroid context. This is done in two steps: to start with, on $R$-rings $B$ we define first order differential calculi as $B$-bimodules $\Omega$, the modules of \emph{one-forms}, together with a derivation $\mathrm{d}\colon B\to\Omega$, the \emph{differential}, such that $\mathrm{d}(B)$ generates $\Omega$ as a $B$-module. Apart from working over $\Bbbk$-algebras $R$ rather than fields, this is very much in line with the current literature. However, in a second step
we consider left $\mathcal{H}$-comodule algebras $B$ over left bialgebroids $(\mathcal{H}, R)$ and
introduce first order differential calculi on the former by requiring additional compatibility with the coactions.
As a technical feature, we have to assume that the $R$-actions on $\Omega$ coincide with the ones induced by the $\mathcal{H}$-coaction. Let us therefore define first a differential structure on a ring:

\begin{definition}
  \label{FODC}
Let $B$ be an $R$-ring. Given a $B$-bimodule $\Omega$ and an $R$-bilinear map $\mathrm{d}\colon B\to\Omega$, we call the pair $(\Omega,\mathrm{d})$ a \emph{first order differential calculus on $B$} if
\begin{enumerate}
  \compactlist{50}
\item
the {\em Leibniz rule}
$$
\dd  \circ m_B = R_B \circ (\dd  \otimes_R B) + L_B \circ (B \otimes_R \dd )
$$
holds on $B \otimes_R B$, where $m_B$ denotes the ($R$-balanced) multiplication on $B$, whereas $R_B$ and $L_B$ the right resp.\ left $B$-actions on $\gG$.
On elements, this simply reads as
\begin{equation}
	\label{wasserrutsche}
	\dd (bb')=(\dd b)b'+b(\dd b')
\end{equation}
for all $b,b'\in B$;
\item
the map $B\otimes_R B\to\Omega$,\, $b\otimes_R b'\mapsto b\hspace{1pt}\dd b'$ is surjective, compactly formulated as $\Omega=B\hspace{1pt}\dd B$.
\end{enumerate}
A \textit{morphism} $(\Omega,\dd )\to(\Omega',\dd ')$
of first order differential calculi is a morphism $\phi\colon\Omega\to\Omega'$ of $B$-bimodules such that $\dd =\dd '\circ\phi$. Then, \textit{quotients} of first order differential calculi correspond to surjective morphisms.
\end{definition}

It remains to incorporate the possible coaction of a bialgebroid.
Recall that, given a left bialgebroid $(\mathcal{H},R)$, a \emph{left $\mathcal{H}$-comodule algebra} is a left $\mathcal{H}$-comodule $B$ with an $R$-ring structure $\iota\colon R\to B$ such that $\lambda_B\colon B\to\mathcal{H}\otimes_RB$ is an algebra morphism.
If we denote the respective $R$-actions on $B$ from left and right simply by juxtaposition as we usually do for the multiplication in $B$, a little thought reveals that by $R$-bilinearity of $\gl_B$ the useful properties
$$
(rb)b' = r(bb'), \qquad b(b'r) = (bb')r, \qquad (br)b'=b(rb'),
$$
hold for all $r \in R$ and $b, b' \in B$.
For example, the last identity in this line follows from
\begin{eqnarray*}
  (br)b'
  = (\gve \otimes_R B) \circ \gl_B ( (br)b' )
  &=& (\gve \otimes_R B)\big(\big((br)b'\big)_{(-1)} \otimes_R \big((br)b'\big)_{(0)} \big)
  \\
  &=&
  (\gve \otimes_R B)\big((br)_{(-1)} b'_{(-1)} \otimes_R (br)_{(0)} b'_{(0)}\big)
\\
  &\overset{\eqref{ha}}{=}&
  (\gve \otimes_R B)\big(b_{(-1)} s(r) b'_{(-1)} \otimes_R b_{(0)} b'_{(0)}\big)
\\
  &\overset{\eqref{ha}}{=}&
  (\gve \otimes_R B)\big(b_{(-1)} (r b')_{(-1)} \otimes_R b_{(0)} (rb')_{(0)}\big)
\\
&=& 
  (\gve \otimes_R B)\big(\big(b(r b')\big)_{(-1)} \otimes_R \big(b(rb')\big)_{(0)}\big)
\\
&=& 
b(rb'),
\end{eqnarray*}
where we suppressed the canonical isomorphism $R \otimes_R B \cong B$ to express counitality and used the comodule algebra property in step two and five.
This leads to a natural generalisation of Definition \ref{laparolacheciunisce}.

\begin{definition}
Let $(\mathcal{H}, R)$ be a left bialgebroid and $B$ a left $\mathcal{H}$-comodule algebra. An object $M$ in ${}^\mathcal{H}\!\mathcal{M}\cap{}_B\mathcal{M}_B$ with coaction $\lambda_M\colon M\to\mathcal{H}\otimes_RM$ is called a \textit{left $\mathcal{H}$-covariant $B$-bimodule} if
\begin{enumerate}
\compactlist{99}
\item
$r m r'=\iota(r) m \iota(r')$;
\item
$\lambda_M(b m b')=\lambda_B(b)\lambda_M(m)\lambda_B(b')$.
\end{enumerate}
{\em Morphisms} of left $\mathcal{H}$-covariant $B$-bimodules are $B$-bilinear maps which are compatible with the left $\mathcal{H}$-coactions.
The corresponding category is denoted by ${}_B^\mathcal{H}\mathcal{M}^{\phantom{}}_B$ of which there is a subcategory ${}_B^\mathcal{H}\mathcal{M}$ defined the obvious way, where we do not require the existence of a right $B$-action.
\end{definition}

This turns out to be the appropriate category for one-forms of a covariant first order differential calculus in the bialgebroid framework.

  \begin{definition}
    \label{covFODC}
For a left bialgebroid $(\mathcal{H},R)$ and a left $\mathcal{H}$-comodule algebra $B$, we call a first order differential calculus $(\Omega,\mathrm{d})$ on the $R$-ring $B$ \emph{left $\mathcal{H}$-covariant} if
\begin{enumerate}
\compactlist{50}
\item $\Omega$ is an object in
${}_B^{\cH}\cM^{\phantom{\cG}}_{\mkern -2mu B}$;
\item $\dd \colon B\to\Omega$ is a left $\mathcal{H}$-colinear map.
\end{enumerate}
A \textit{morphism}
$(\Omega,\dd )\to(\Omega',\dd ')$
of left $\mathcal{H}$-covariant first order differential calculi  is a morphism $\phi\colon\Omega\to\Omega'$ in ${}_B^{\mkern 0 mu \cH}\cM^{\phantom{\cH}}_{\mkern -2mu B}$ such that $\dd =\dd '\circ\phi$, while \textit{quotients} of left $\mathcal{H}$-covariant first order differential calculi correspond to surjective morphisms.
  \end{definition}

  For every $R$-ring $B$ there exists a first order differential calculus, the so-called \emph{universal calculus}, with the universal property that all first order differential calculi on $B$ can be realised as quotients of it. If $B$ is a left $\mathcal{H}$-comodule algebra for a left bialgebroid $(\mathcal{H},R)$,
where $\cH_\ract$ is a flat right $R$-module,
  then its universal calculus is left $\mathcal{H}$-covariant and admits the universal property for left $\mathcal{H}$-covariant first order differential calculi on $B$. This follows in complete analogy to the bialgebra context, see \cite[Prop.~1.1]{Woronowicz1989}.

  \begin{example}[Universal calculus]
    \label{freibadoderbadesee}
    For every $R$-ring $B$, the pair $(\gO^u_B,\mathrm{d}_u)$ is a first order differential calculus, where $\gO^u_B:=\ker(m_B \colon B\otimes_R B\to B)\subseteq B\otimes_R B$ is the kernel of the multiplication, naturally a  $B$-bimodule via $b(b^i\otimes_R \tilde b^i)b':=bb^i\otimes_R \tilde b^ib'$ and $R$-bimodule structure induced by the forgetful functor, while the differential reads
$$
\dd _u\colon B\to\gO^u_B,\quad b \mapsto 1\otimes_R b-b\otimes_R 1.
$$
In case $B$ is a left $\cH$-comodule algebra and, as above, $\cH_\ract$ a flat right $R$-module, $\gO^u_B$ is an object in ${}_B^{\cH}\cM^{\phantom{\cH}}_{\mkern -2mu B}$	with respect to the diagonal left $\cH$-coaction
$
\gO^u_B \to \cH \otimes_R \gO^u_B, \
b^i\otimes_R \tilde b^i\mapsto b^i_{(-1)}\tilde b^i_{(-1)}\otimes_R (b^i_{(0)}\otimes_R \tilde b^i_{(0)})$, while $\mathrm{d}_u$ turns out to be left $\mathcal{H}$-colinear.
Thus, in this case $(\gO^u_B,\mathrm{d}_u)$ is a left $\mathcal{H}$-covariant calculus.
\end{example}

As mentioned, this universal first order differential calculus admits a universal property:

\begin{proposition}
  \label{schweineschmalz}
Every first order differential calculus $(\Omega,\dd )$ on an $R$-ring $B$ is a quotient of the universal calculus of Example \ref{freibadoderbadesee}: there is a subobject $M\subseteq\gO^u_B$ in 
${}_B\cM_{\mkern -2mu B}$
along with an isomorphism
$\phi\colon\gO^u_B/M\xrightarrow{\cong}\Omega$ in
${}_B\cM_{\mkern -2mu B}$
such that
$$
\dd =\phi\circ\mathrm{pr}\circ \dd _u,
$$
where $\mathrm{pr}\colon\gO^u_B\to\gO^u_B/M$ denotes the canonical projection. 
If, in addition, $(\Omega,\mathrm{d})$ is left $\cH$-covariant, then $M$ can be chosen in ${}_B^{\cH}\cM^{\phantom{\cG}}_{\mkern -2mu B}$ and $\phi$ becomes an isomorphism in this category.
\end{proposition}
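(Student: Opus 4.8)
The plan is to adapt the classical universal-property argument \cite{Woronowicz1989} to the $R$-ring setting. First I would produce the comparison map. Define $\pi\colon B\otimes_R B\to\Omega$ by $b\otimes_R b'\mapsto b\,\dd b'$. To see that $\pi$ is well defined, i.e.\ $R$-balanced, note that the Leibniz rule \eqref{wasserrutsche} applied to $b=b'=1$ gives $\dd 1=0$, whence by the $R$-bilinearity of $\dd$ one gets $\dd\iota(r)=\iota(r)\,\dd 1=0$ for all $r\in R$. Consequently $\dd(\iota(r)b')=\dd\iota(r)\,b'+\iota(r)\,\dd b'=\iota(r)\,\dd b'$, so that $\pi(b\iota(r)\otimes_R b')=\pi(b\otimes_R\iota(r)b')$, as required for descent to $B\otimes_R B$.

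Next I would restrict $\pi$ to the universal calculus of Example \ref{freibadoderbadesee} and read off the subobject $M$. Set $\phi:=\pi|_{\gO^u_B}\colon\gO^u_B\to\Omega$. Left $B$-linearity of $\phi$ is immediate. For the right $B$-linearity, take $\omega=\sum_i b^i\otimes_R\tilde b^i\in\gO^u_B$, so that $\sum_i b^i\tilde b^i=0$; then
\[
\textstyle \phi(\omega c)=\sum_i b^i\,\dd(\tilde b^i c)=\big(\sum_i b^i\,\dd\tilde b^i\big)c+\big(\sum_i b^i\tilde b^i\big)\dd c=\phi(\omega)c,
\]
the offending Leibniz term dropping out precisely because $\omega$ lies in $\ker m_B$. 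Thus $\phi$ is a morphism in ${}_B\cM_B$. It is surjective: by Definition \ref{FODC}\,(ii) every element of $\Omega$ has the form $\sum_i b^i\,\dd\tilde b^i$, and $\sum_i b^i\,\dd_u\tilde b^i=\sum_i b^i\otimes_R\tilde b^i-(\sum_i b^i\tilde b^i)\otimes_R 1$ lies in $\gO^u_B$ and is sent by $\phi$ to $\sum_i b^i\,\dd\tilde b^i$. Moreover $\phi\circ\dd_u=\dd$ since $\phi(\dd_u b)=1\cdot\dd b-b\,\dd 1=\dd b$. Putting $M:=\ker\phi\subseteq\gO^u_B$, a subobject in ${}_B\cM_B$, the first isomorphism theorem yields the induced isomorphism $\phi\colon\gO^u_B/M\xrightarrow{\cong}\Omega$ in ${}_B\cM_B$ with $\dd=\phi\circ\mathrm{pr}\circ\dd_u$, proving the first assertion.

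For the covariant refinement I would only need to upgrade $\phi$ to a colinear map. By Example \ref{freibadoderbadesee}, $\gO^u_B$ carries the diagonal coaction and $\dd_u$ is colinear; moreover $\Omega$ is a left $\cH$-covariant $B$-bimodule and $\dd$ is colinear by Definition \ref{covFODC}. Using the covariance identity $\lambda_\Omega(bmb')=\lambda_B(b)\lambda_\Omega(m)\lambda_B(b')$ together with the colinearity of $\dd$, a direct computation gives, for $\sum_i b^i\otimes_R\tilde b^i\in\gO^u_B$,
\[
\textstyle \lambda_\Omega\big(\sum_i b^i\,\dd\tilde b^i\big)=\sum_i b^i_{(-1)}\tilde b^i_{(-1)}\otimes_R b^i_{(0)}\,\dd\tilde b^i_{(0)}=(\id\otimes_R\phi)\big(\lambda(\sum_i b^i\otimes_R\tilde b^i)\big),
\]
so that $\phi$ is left $\cH$-colinear. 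Being the kernel of a morphism in ${}_B^{\cH}\cM_B$, the subobject $M=\ker\phi$ is then itself an object of ${}_B^{\cH}\cM_B$, the quotient coaction on $\gO^u_B/M$ is the induced one, and the isomorphism $\phi$ is colinear, hence an isomorphism in ${}_B^{\cH}\cM_B$.

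I expect the main subtlety to be the well-definedness of $\phi$ as a $B$-\emph{bi}module map: left linearity and $R$-balancedness are formal, but right linearity genuinely relies on restricting to $\gO^u_B=\ker m_B$, where the Leibniz correction term vanishes. In the covariant step, the one point requiring care is that $M=\ker\phi$ be a bona fide subcomodule; this is where one uses that $\phi$ is colinear, so that the diagonal coaction of $\gO^u_B$ descends to $M$ and to the quotient $\gO^u_B/M\cong\Omega$.
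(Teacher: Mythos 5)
Your proposal is correct and takes essentially the same route as the paper's proof: the comparison map $\phi(b^i\otimes_R\tilde b^i)=b^i{\mkern 1mu}\dd\tilde b^i$ restricted to $\gO^u_B=\ker m_B$, right $B$-linearity via the vanishing of the Leibniz correction term on the kernel of multiplication, surjectivity via the witness $b^i\otimes_R b_i-b^ib_i\otimes_R1$, the identity $\phi\circ\dd_u=\dd$, and left $\cH$-colinearity of $\phi$ through the diagonal coaction in the covariant case, with $M=\ker\phi$ and the induced isomorphism on the quotient. Your added details (the $R$-balancedness check via $\dd\iota(r)=0$ and the explicit first-isomorphism-theorem step) merely spell out what the paper leaves implicit.
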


\begin{proof}
Let $(\Omega, \dd )$ be a first order differential calculus on $B$. Then there is an $R$-bilinear map
\begin{equation*}
	\phi\colon\gO^u_B \to \Omega, \quad b^i\otimes_R \tilde b^i \mapsto b^i \dd \tilde b^i.
\end{equation*}
Clearly, $\phi$ is left $B$-linear but also right $B$-linear since 
$$
\phi\big((b^i\otimes_R \tilde b^i) b'\big)
=\phi(b^i\otimes_R \tilde b^i b')
=b^i\dd (\tilde b^i b')
=b^i \dd(\tilde b^i)b' + b^i \tilde b^i \dd b'
=\phi(b^i\otimes_R \tilde b^i)b'
$$
by the Leibniz rule \eqref{wasserrutsche} and the property $b^i\tilde b^i=0$, by the very definition of $\gO^u_B$.
Let us show that $\phi$ is a surjection. For an arbitrary element $b^i \dd b_i \in \Omega$ with $b^i, b_i\in B$, we have
$
b^i\otimes_R b_i - b^i b_i \otimes_R 1 \in\gO^u_B
$
as well as
$
\phi(b^i\otimes_R b_i- b^ib_i \otimes_R 1 ) = b^i \dd b_i,
$
that is, $\phi\colon\gO^u_B\to\Omega$ is a surjection of $B$-bimodules.
Moreover, it intertwines the differentials since
$$
\phi(\dd_ub)
=\phi (1 \otimes_R b - b \otimes_R 1)
=
\dd b+0
$$
for all $b\in B$. Thus, $(\Omega, \dd)$ is a quotient of $(\gO^u_B, \dd_u)$.

\noindent In the left $\mathcal{H}$-covariant case, $\phi$ is left $\cH$-colinear, as simply seen by
\begin{align*}
	\gl_\Omega \big(\phi(b^i\otimes_R \tilde b^i)\big)
	=\gl_\Omega (b^i \dd\tilde b^i)
	=b^i_{(-1)}\tilde b^i_{(-1)}\otimes_R b^i_{(0)} \dd b^i_{(0)}
	=(\cH \otimes_R \phi) \big(\gl_{\gO^u_B}(b^i\otimes_R \tilde b^i)\big),
\end{align*}
using the left $\cH$-covariance of $(\Omega,\dd)$.
\end{proof}

Note that Proposition \ref{schweineschmalz} recovers the universal property of the universal calculus of $\Bbbk$-algebras $A$ and $H$-comodule algebras for a $\Bbbk$-Hopf algebra $H$ as proven in \cite[Prop.~1.1]{Woronowicz1989}.

\subsection{Woronowicz classification for Hopf algebroids}\label{sec:Wor}
In this section, we classify covariant calculi on Hopf algebroids, using an analogue of the fundamental theorem of Hopf modules adapted to Hopf algebroids.
To start with, we obtain differential structures on left bialgebroids $(\mathcal{H},R)$ from Definitions \ref{FODC} and \ref{covFODC} by considering the case $B=\mathcal{H}$ with coaction given by the coproduct; given the importance of this definition we spell it out in detail. To this end,
recall the notation from Eqs.~\eqref{soedermalm}, which expresses the presence of forgetful functors from left resp.\ right $\cH$-modules to $\RRe$-modules.

\begin{definition}
  \label{def:FODC}
We will refer to a pair $(\Omega,\dd )$ as \textit{first order differential calculus} on a left bialgebroid $(\cH, R)$ if
	\begin{enumerate}
		\compactlist{50}
		\item
		$\Omega\in{}_\mathcal{H}\mathcal{M}_\mathcal{H}$;
		\item
		$\dd \colon \due {\mathcal{H}} \lact \bract \to \due \Omega \lact \bract$ is $\RRe$-linear such that the Leibniz rule
		\begin{equation*}
			\dd \circ m_\mathcal{H}
			=R_\mathcal{H}\circ(\dd \otimes_R\mathcal{H})
			+L_\mathcal{H}\circ(\mathcal{H}\otimes_R\dd )
		\end{equation*}
		holds as an equation of $\RRe$-linear maps
                $\cH_\bract \otimes_R \due \cH \lact {} \to \Omega$;
	      \item the map
                $\cH_\bract \otimes_R \due \cH \lact {}
                \to \Omega$, $h\otimes_Rg\mapsto h {\mkern 1mu} \dd g$ is surjective.
	\end{enumerate}
	If there is, in addition, a left $\mathcal{H}$-coaction $\lambda_\Omega\colon \due \Omega \lact {} \to \mathcal{H}_\ract \otimes_R \due \Omega \lact {}$ on $\Omega$ such that $\Omega\in{}_\mathcal{H}^\mathcal{H}\mathcal{M}^{\phantom{\cH}}_\mathcal{H}$ and
	\begin{equation}
          \label{dleftcol}
		\lambda_\Omega\circ\dd 
		=(\mathcal{H}\otimes_R\dd )\circ\Delta
                	\end{equation}
holds as maps $\mathcal{H}\to\mathcal{H}\otimes_R\Omega$,
        the first order differential calculus on $\mathcal{H}$ will be called \textit{left covariant}.
	A \textit{morphism} $F\colon(\Omega,\dd )\to(\Omega',\dd ')$ of first order differential calculi on $\mathcal{H}$
        is a morphism $F\colon\Omega\to\Omega'$ in ${}_\mathcal{H}\mathcal{M}_\mathcal{H}$ such that $F\circ\dd =\dd '$. Such a morphism is called a morphism of covariant first order differential calculi if it is, in addition, left $\mathcal{H}$-colinear. 
\end{definition}

\begin{remark}
Observe that the $\RRe$-linearity in Definition \ref{def:FODC} refers to the left resp.\ right $R$-action coming from the source map only, while in general $\dd$ is not $\RRe$-linear with respect to the left and right $R$-actions arising from the target map. As a consequence, one has $\dd(s(r)) = 0$ in contrast to $\dd(t(r)) \neq 0$, which means that in general the differential $\dd$ is not trivial on the base algebra $R$. 
  \end{remark}

If $\gO$ is a covariant calculus and hence, in particular, a left $\cH$-comodule, as in \eqref{duracell} we can define a corresponding Galois map by
\begin{equation}
  \label{alphaOmega}
	\alpha_\Omega \colon \Omega\otimes_R\mathcal{H}\to \mathcal{H}\otimes_R\Omega, \quad \omega\otimes_R h' \mapsto \omega_{(-1)} h' \otimes_R \omega_{(0)}.
\end{equation}
As in \S\ref{mittagleffler}, if this map invertible, let us write
$\omega_{\smap }\otimes_R \omega_{\smam }:= \alpha_\Omega^{-1}(1\otimes_R \omega)$ for the translation map and therefore
$(\alpha_\Omega)^{-1}(h' \otimes_R\omega)=\omega_{\smap }\otimes_R\omega_{\smam }h'$;
or, by using the surjectivity of Definition \ref{def:FODC} (iii) and therefore writing $\go = h {\mkern 1mu} \dd g$, one can write $(\alpha_\Omega)^{-1}(h' \otimes_R  h {\mkern 1mu} \dd g)
=(h {\mkern 1mu} \dd g)_{\smap } \otimes_R (h {\mkern 1mu} \dd g)_{\smam }h'$.

Let us assume now that the left bialgebroid $(\cH, R)$ happens to be, in addition, a right Hopf algebroid. Then from 
Lemma \ref{minestrone}, along with Eqs.~\eqref{fabercastell0} and \eqref{dleftcol},
it follows 
that 
\begin{equation*}
  (h {\mkern 1mu} \dd g)_{\smap }\otimes_R(h {\mkern 1mu} \dd g)_{\smam }
  =h_{\smap }\dd g_{\smap } \otimes_Rg_{\smam }h_{\smam },
\end{equation*}
which we use to define the \textit{Maurer-Cartan form} on a covariant calculus by
\begin{equation}
  \label{MaurerCartan}
\varpi\colon\mathcal{H}^+\to{}^{\mathrm{co}\mathcal{H}}\Omega, \quad
  h \mapsto (\dd h)_{\smap } (\dd h)_{\smam }
	=\dd (h_{\smap }) \mkern 2mu h_{\smam }
\end{equation}
for all $h\in\mathcal{H}^+:=\ker\varepsilon\subset\mathcal{H}$, where
as in \eqref{waterman}
\begin{equation*}
  {}^{\mathrm{co}\mathcal{H}}\Omega
  := \big\{\omega\in\Omega \mid \lambda_\Omega(\omega)=1\otimes_R\omega\big\}
\end{equation*}
denotes the $R$-bimodule of (left) coinvariants.
By Theorem \ref{thm:HopfModules}\,(i), the image of $\varpi$ is in ${}^{\mathrm{co}\mathcal{H}}\Omega$, indeed.
Note that
\begin{align*}
	(\dd h)_{\smap } (\dd h)_{\smam }
	=\dd (h_{\smap }) \mkern 2mu h_{\smam }
	=\dd (h_{\smap }h_{\smam })-h_{\smap }\dd h_{\smam }
	=\dd (t\varepsilon(h))-h_{\smap }\dd h_{\smam }
\end{align*}
for all $h\in\mathcal{H}$.

\begin{lemma}
	\label{lem:MaurerCartan}
	The Maurer-Cartan form of a left covariant first order differential calculus $(\Omega,\dd )$ on a right Hopf algebroid $(\cH, R)$ is well-defined, left $\mathcal{H}$-linear, and surjective. Thus, $I:=\ker\varpi\subseteq\mathcal{H}^+$ is a left $\mathcal{H}$-ideal such that ${}^{\mathrm{co}\mathcal{H}}\Omega\cong\mathcal{H}^+/I$.
        \end{lemma}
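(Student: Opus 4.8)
The plan is to recognise the Maurer-Cartan form as the composite $\varpi = P \circ \dd|_{\mathcal{H}^+}$, where $P\colon \Omega \to {}^{\mathrm{co}\mathcal{H}}\Omega$, $\omega \mapsto \omega_{\smap}\omega_{\smam}$, is the projection onto coinvariants furnished by the Fundamental Theorem. Indeed, specialising the translation-map formula recorded just before \eqref{MaurerCartan} (which rests on the colinearity \eqref{dleftcol}) to $h=1$ gives $(\dd h)_{\smap}\otimes_R(\dd h)_{\smam}=\dd(h_{\smap})\otimes_R h_{\smam}$, so that $\varpi(h)=\dd(h_{\smap})\,h_{\smam}=P(\dd h)$. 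Since $\dd$ is well-defined and, by Theorem \ref{thm:HopfModules}\,(i) applied to the right-left Hopf module $\Omega$, one has $m_{\smap}m_{\smam}\in{}^{\mathrm{co}\mathcal{H}}M$ for every $m$, this settles both well-definedness and the fact that $\varpi$ lands in ${}^{\mathrm{co}\mathcal{H}}\Omega$.

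For left $\mathcal{H}$-linearity, recall that $\mathcal{H}^+$ is a left ideal (by the generalised character property \eqref{counityippieh} of $\varepsilon$) and that ${}^{\mathrm{co}\mathcal{H}}\Omega$ carries the adjoint left action $h\cdot m=h_{\smap}mh_{\smam}$ of Theorem \ref{rhodia}\,(i). I would compute $\varpi(hg)$ for $h\in\mathcal{H}$, $g\in\mathcal{H}^+$ by first expanding $(hg)_{\smap}\otimes_R(hg)_{\smam}$ via the relation \eqref{Tch6}, and then applying the Leibniz rule to $\dd(h_{\smap}g_{\smap})$. This produces two terms: the first carries the factor $g_{\smap}g_{\smam}=t\varepsilon(g)$ by \eqref{Tch7}, which vanishes precisely because $g\in\mathcal{H}^+$, while the second rearranges, using that the left and right $\mathcal{H}$-actions commute (Definition \ref{laparolacheciunisce}\,(ii)), to $h_{\smap}\,\varpi(g)\,h_{\smam}=h\cdot\varpi(g)$. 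Hence $\varpi(hg)=h\cdot\varpi(g)$.

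Surjectivity is where the real content lies. Extending the formula to all of $\mathcal{H}$, set $\tilde\varpi(h):=\dd(h_{\smap})h_{\smam}=P(\dd h)$; since $\dd\circ s=0$ and \eqref{Tch9} yields $(s(r))_{\smap}\otimes_R(s(r))_{\smam}=1\otimes_R t(r)$, one checks $\tilde\varpi(s\varepsilon(h))=0$, whence by $\Bbbk$-linearity $\tilde\varpi(h)=\varpi(h^+)$ with $h^+:=h-s\varepsilon(h)\in\mathcal{H}^+$. The key computation is that for arbitrary $a,b\in\mathcal{H}$ the projection satisfies $P(a\,\dd b)=a\cdot\tilde\varpi(b)=\varpi(a\,b^+)$, obtained from $(a\,\dd b)_{\smap}\otimes_R(a\,\dd b)_{\smam}=a_{\smap}\dd(b_{\smap})\otimes_R b_{\smam}a_{\smam}$ together with commutativity of the two actions and the left $\mathcal{H}$-linearity just established. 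Now given $\omega\in{}^{\mathrm{co}\mathcal{H}}\Omega$, I use generation $\Omega=\mathcal{H}\dd\mathcal{H}$ (Definition \ref{def:FODC}\,(iii)) to write $\omega=\sum_i a^i\,\dd b_i$; since $P$ restricts to the identity on coinvariants (verified directly from \eqref{fabercastell0} together with $1_{\smap}\otimes_R1_{\smam}=1\otimes_R1$, which is the case $r=r'=1$ of \eqref{Tch9}), applying $P$ gives $\omega=P(\omega)=\sum_i\varpi(a^i b_i^+)=\varpi\big(\sum_i a^i b_i^+\big)$, with $\sum_i a^i b_i^+\in\mathcal{H}^+$.

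Finally, $I:=\ker\varpi$ is a left $\mathcal{H}$-submodule of the left ideal $\mathcal{H}^+$ by left $\mathcal{H}$-linearity, hence itself a left ideal of $\mathcal{H}$ contained in $\mathcal{H}^+$; the isomorphism ${}^{\mathrm{co}\mathcal{H}}\Omega\cong\mathcal{H}^+/I$ of left $\mathcal{H}$-modules then follows from the first isomorphism theorem applied to the surjective $\mathcal{H}$-linear map $\varpi$. I expect the surjectivity argument — specifically combining the generation property $\Omega=\mathcal{H}\dd\mathcal{H}$ with the reduction $\dd b=\dd(b^+)$ and the projection identity $P|_{{}^{\mathrm{co}\mathcal{H}}\Omega}=\mathrm{id}$ — to be the main obstacle; the remaining points are bookkeeping with the translation-map identities \eqref{Tch6}--\eqref{Tch9}.
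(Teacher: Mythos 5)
Your proposal is correct and takes essentially the same route as the paper's proof: left $\mathcal{H}$-linearity (hence the ideal property of $\ker\varpi$) via \eqref{Tch6}, the Leibniz rule and \eqref{Tch7}; the reduction $h\mapsto h-s\varepsilon(h)$ justified by $\dd(1)\cdot t\varepsilon(h)=0$ from \eqref{Tch9}; and surjectivity from the fact that coinvariants are recovered by $\omega\mapsto\omega_{\smap}\omega_{\smam}$ combined with the generation property $\Omega=\mathcal{H}\,\dd\,\mathcal{H}$. The only cosmetic differences are that you organise the argument around the projection $P(\omega)=\omega_{\smap}\omega_{\smam}$ (which handles sums $\omega=\sum_i a^i\,\dd b_i$ a bit more explicitly than the paper's single-term notation) and that you verify $P|_{{}^{\mathrm{co}\mathcal{H}}\Omega}=\mathrm{id}$ via \eqref{fabercastell0} and \eqref{Tch9}, where the paper deduces it directly from coinvariance together with \eqref{Mch3}.
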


\begin{proof}
	The kernel $I=\ker\varpi$ is a left ideal in $\mathcal{H}$ since for $h\in \mathcal{H}$ and $h'\in I$, we have
	\begin{align*}
		\varpi(hh')
		&=\dd ((hh')_{\smap }) \mkern 2mu (hh')_{\smam }
                \\
		&=\dd (h_{\smap }h'_{\smap }) \mkern 2mu (h'_{\smam }h_{\smam })
                \\
		&=\dd (h_{\smap }) \mkern 2mu (h'_{\smap }h'_{\smam }h_{\smam })
		+ h_{\smap } \dd (h'_{\smap }) \mkern 2mu (h'_{\smam }h_{\smam })
                \\
		&=\dd (h_{\smap })  (t\varepsilon(h') h_{\smam })
		+h_{\smap } \varpi(h')  h_{\smam }\\
		&=0+0.
	\end{align*}
	Furthermore, $\varpi$ is surjective because
	for every $\omega\in{}^{\mathrm{co} \cH}\Omega$, 
	we have $\omega=\omega_{\smap } \omega_{\smam }$ since
	$$
	\omega_{\smap }\omega_{\smam }
	= \omega_{(0)\smap } \mkern 2mu (\omega_{(0)\smam }\omega_{(-1)})
	=\omega,
	$$
	where we used the coinvariance $1\otimes_R\omega=\omega_{(-1)}\otimes_R\omega_{(0)}$
	along with the property \eqref{Mch3} of Hopf-Galois comodules.
	Then, for $\omega=h\dd g \in{}^{\mathrm{co}\mathcal{H}}\Omega$ we have
	\begin{align*}
		\omega=\omega_{\smap }  \omega_{\smam }
		=h_{\smap }\dd (g_{\smap })  \mkern 2mu (g_{\smam }h_{\smam })
		=h_{\smap } \mkern 2mu \varpi(g-s\varepsilon(g)) \mkern 2mu h_{\smam }
		=\varpi\big(h(g-s\varepsilon(g))\big),
	\end{align*}
	where in the third equation we used that
	$
        \dd (s\varepsilon(g)_{\smap }) \mkern 2mu
        s\varepsilon(g)_{\smam }
	=\dd (1) \mkern 2mu t\varepsilon(g)=0
        $
        by \eqref{Tch9}, 
	and that $g-s\varepsilon(g)\in\ker\varepsilon$, which is
	the domain of $\varpi$.
	It follows that ${}^{\mathrm{co}\mathcal{H}}\Omega\cong\mathcal{H}^+/I$ as left
	$\mathcal{H}$-modules.
\end{proof}

Invoking Lemma \ref{lem:MaurerCartan} together with Theorem \ref{thm:HopfModules}, given a left covariant first order differential calculus $(\Omega,\dd )$ on a right Hopf algebroid $\mathcal{H}$, where ${\mathcal{H}}_\ract$ is $R$-flat, it follows that there is a left $\mathcal{H}$-ideal $I =\ker\varpi\subseteq\mathcal{H}^+$ such that the left covariant $\mathcal{H}$-bimodule
\begin{equation*}
	\mathcal{H}\otimes_R{}^{\mathrm{co}\mathcal{H}}\Omega
	\cong\mathcal{H}\otimes_R\mathcal{H}^+/I
\end{equation*}
is isomorphic to $\Omega$.
Moreover, the $R$-bilinear map $\dd '\colon\mathcal{H}\to\mathcal{H}\otimes_R\mathcal{H}^+/I$ defined by
\begin{equation*}
	\dd ' h :=(\mathcal{H}\otimes_R\pi)(\Delta h - h\otimes_R1),
\end{equation*}
where $\pi\colon\mathcal{H}^+\to\mathcal{H}^+/I$ denotes the projection, turns $(\mathcal{H}\otimes_R{}^{\mathrm{co}\mathcal{H}}\Omega,\dd ')$ into a left covariant first order differential calculus on $\mathcal{H}$, which is isomorphic to $(\Omega,\dd )$ as a calculus.

The following classification theorem states that {\em all} left covariant first order differential calculi on $\mathcal{H}$ arise this way.

\begin{theorem}
  \label{thm:Wor}
	Given a right Hopf algebroid $\mathcal{H}$ such that ${\mathcal{H}}_\ract$ is $R$-flat, there is a bijective correspondence
	\begin{equation}
          \label{WorCor}
		\begin{Bmatrix}
		  \text{left covariant first order}
                  \\
			\text{differential calculi on }\mathcal{H}
		\end{Bmatrix}\xleftrightarrow{1:1}
		\begin{Bmatrix}
		  \text{left ideals}
                  \\
			I\subseteq\mathcal{H}^+
		\end{Bmatrix}.
	\end{equation}
\end{theorem}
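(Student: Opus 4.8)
The plan is to exhibit two mutually inverse assignments between the two families in \eqref{WorCor}, the backbone being the fundamental theorem in its covariant-bimodule form (Theorem \ref{rhodia}) together with the Maurer-Cartan form (Lemma \ref{lem:MaurerCartan}). Conceptually, the equivalence ${}^{\mathcal{H}}_{\mathcal{H}}\mathcal{M}^{\phantom{\mathcal{H}}}_{\mathcal{H}}\cong{}_{\mathcal{H}}\mathcal{M}$ reduces a left covariant calculus to its module of coinvariants ${}^{\mathrm{co}\mathcal{H}}\Omega$, a left $\mathcal{H}$-module, and the Maurer-Cartan form realises this module as a quotient of $\mathcal{H}^+$; the content of the theorem is then that left $\mathcal{H}$-module quotients of $\mathcal{H}^+$ are the same thing as left ideals $I\subseteq\mathcal{H}^+$.

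From a calculus to an ideal: given a left covariant first order differential calculus $(\Omega,\dd)$, the $R$-flatness of $\mathcal{H}_\ract$ and Lemma \ref{lem:GalInv} make $\Omega$ a Hopf-Galois comodule, so the Maurer-Cartan form $\varpi$ of \eqref{MaurerCartan} is defined; by Lemma \ref{lem:MaurerCartan} it is surjective and left $\mathcal{H}$-linear, whence $I:=\ker\varpi\subseteq\mathcal{H}^+$ is a left ideal. From an ideal to a calculus: for a left ideal $I\subseteq\mathcal{H}^+$ I would put $\Omega_I:=\mathcal{H}_\ract\otimes_R(\mathcal{H}^+/I)$, which by Theorem \ref{rhodia} is a left covariant $\mathcal{H}$-bimodule (right $\mathcal{H}$-action by right multiplication on the first leg, diagonal left $\mathcal{H}$-action with the adjoint action \eqref{sogei1} on the second leg, and coaction $\Delta\otimes_R(\mathcal{H}^+/I)$), and define $\dd'h:=(\mathcal{H}\otimes_R\pi)(\Delta h-h\otimes_R 1)$, where $\pi\colon\mathcal{H}^+\to\mathcal{H}^+/I$ is the projection. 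Counitality gives $\Delta h-h\otimes_R 1=h_{(1)}\otimes_R(h_{(2)}-s\varepsilon(h_{(2)}))$ with second leg in $\mathcal{H}^+$, so $\dd'$ is well-defined; left $\mathcal{H}$-colinearity \eqref{dleftcol} is immediate from coassociativity and the shape of the coaction, and the Leibniz rule \eqref{wasserrutsche} is a direct check from comultiplicativity of $\Delta$ and the adjoint action.

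The decisive step is then to compute the Maurer-Cartan form of $(\Omega_I,\dd')$ explicitly: using \eqref{Tch2} one finds, for $x\in\mathcal{H}^+$, that $\varpi_{\Omega_I}(x)=\dd'(x_{\smap})\cdot x_{\smam}=1\otimes_R\pi(x)$. This single identity yields both round-trips at once. On the one hand $\ker\varpi_{\Omega_I}=\ker\pi=I$, so the ideal-to-calculus-to-ideal composite is the identity; it also supplies the generation axiom of Definition \ref{def:FODC}\,(iii) that was not yet checked, since every element of $\Omega_I$ is of the form $h\otimes_R\pi(x)=(1\otimes_R\pi(x))h=\dd'(x_{\smap})(x_{\smam}h)$, so that $\Omega_I=(\dd'\mathcal{H})\mathcal{H}=\mathcal{H}(\dd'\mathcal{H})$ by the Leibniz rule, confirming that $(\Omega_I,\dd')$ is a genuine calculus. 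On the other hand, the calculus-to-ideal-to-calculus composite is precisely the isomorphism recorded just before the theorem: the map $\xi$ of \eqref{sogei3} identifies $\Omega\cong\mathcal{H}_\ract\otimes_R{}^{\mathrm{co}\mathcal{H}}\Omega\cong\mathcal{H}_\ract\otimes_R(\mathcal{H}^+/I)=\Omega_I$ and intertwines $\dd$ with $\dd'$, hence is an isomorphism of calculi.

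The main obstacle is establishing that $(\Omega_I,\dd')$ is a bona fide left covariant calculus for an \emph{arbitrary} left ideal $I$: one cannot borrow the calculus structure from a pre-existing $\Omega$, since surjectivity of the correspondence is exactly what is at stake, so the Leibniz rule and generation must be verified intrinsically. Throughout, the genuine difficulty is the careful bookkeeping of the source/target $R$-actions ($\lact,\ract,\blact,\bract$) and of the Takeuchi balancing in every tensor product: unlike the Hopf-algebra case, $\dd'$ need not vanish on the base $R$ and the left and right $R$-actions do not coincide, so the identities \eqref{Tch2}--\eqref{Tch3} and the adjoint-action formula \eqref{sogei1} have to be applied with care to see that the abstract coinvariants produced by Theorem \ref{rhodia} agree with the explicit image of $\varpi$, and that the map $\pi\circ(\mathrm{id}-s\varepsilon)$ is admissible on the second leg of $\mathcal{H}_\ract\otimes_R{}_\lact\mathcal{H}$.
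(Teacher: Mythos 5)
Your proposal is correct and follows essentially the same route as the paper's proof: the Maurer--Cartan kernel in one direction, the explicit model $\mathcal{H}_\ract\otimes_R(\mathcal{H}^+/I)$ with $\dd' h=(\mathcal{H}\otimes_R\pi)(\Delta h-h\otimes_R 1)$ in the other, Theorem \ref{rhodia} for the calculus-to-ideal-to-calculus round trip, and the key identity $\varpi(h)=1\otimes_R\pi(h)$ (which the paper likewise uses, and which likewise yields the generation axiom) for the ideal-to-calculus-to-ideal round trip. One small correction: the left $\mathcal{H}$-action on the second leg of $\Omega_I$ is the diagonal action with \emph{left multiplication} there, $h\cdot(g\otimes_R[x])=h_{(1)}g\otimes_R[h_{(2)}x]$ as in the paper's \eqref{yellowcup}, not literally the adjoint action \eqref{sogei1} --- which on $\mathcal{H}^+/I$ would not even be well defined, since $I$ is only a left ideal and $\mathcal{H}^+$ is not a right ideal --- although the two agree after transport along the Maurer--Cartan identification $\mathcal{H}^+/I\cong{}^{\mathrm{co}\mathcal{H}}\Omega$, and your subsequent computations do use the correct structure.
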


\begin{proof}
	Given a left covariant first order differential calculus $(\Omega,\dd )$ on a right Hopf algebroid $(\mathcal{H}, R)$, we obtain a left $\mathcal{H}$-ideal $I=\ker\varpi\subseteq\mathcal{H}^+$ according to Lemma \ref{lem:MaurerCartan}. On the other hand, given a left $\mathcal{H}$-ideal $I\subseteq\mathcal{H}^+$, consider the $R$-bimodule
	\begin{equation}
          \label{WorOmega}
		\Omega:={\mathcal{H}}_\ract\otimes_R\due {\mathcal{H}^+/I} \lact {}
	\end{equation}
	with $R$-actions $r (h\otimes_R\pi(g)) r':= r \lact h \bract r'\otimes_R\pi(g)$
	for all $r,r'\in R$, $h\in\mathcal{H}$, and $g\in\mathcal{H}^+$.
	Furthermore, consider the $R$-bilinear map $\dd \colon\due {\mathcal{H}} \lact\bract\to\Omega$ defined by
	\begin{equation}
          \label{WorD}
		\dd h:=(\mathcal{H}\otimes_R\pi)(\Delta h-h\otimes_R1),
	\end{equation}
	where $\pi\colon\mathcal{H}^+\to\mathcal{H}^+/I$ denotes the quotient map. Note that the map \eqref{WorD} is well-defined since $\Delta h-h\otimes_R1 \in\mathcal{H}\otimes_R\mathcal{H}^+$, as one easily verifies by applying $\mathcal{H}\otimes_R\varepsilon$ to the latter, using $R$-flatness. Let us prove that $(\Omega,\dd )$ is a left covariant first order differential calculus on $\mathcal{H}$.
        First of all, the module \eqref{WorOmega} is an $\mathcal{H}$-bimodule with respect to the $\mathcal{H}$-actions
	\begin{equation}
          \label{yellowcup}
	  h \mkern 1mu
          (g\otimes_R[g']):=h_{(1)}g\otimes_R[h_{(2)}g'],\qquad\qquad
		(g\otimes_R[g']) \mkern 1mu h:=gh\otimes_R[g']
	\end{equation}
	for all $h,g\in\mathcal{H}$, $g'\in\mathcal{H}^+$, writing
        $[g'] := \pi(g')$.
        Note that the left $\mathcal{H}$-action is well-defined because $I$ is a left $\mathcal{H}$-ideal in $\mathcal{H}^+$. Furthermore, on $\gO$ there is the left $\mathcal{H}$-coaction $\lambda_\Omega:=\Delta\otimes_R\mathcal{H}^+/I\colon\Omega\to\mathcal{H}\otimes_R\Omega$, which is coassociative and counital as $\Delta$ is. Then $\lambda_\Omega$ is compatible with the $\mathcal{H}$-bimodule structure  since
	\begin{align*}
		\lambda_\Omega(h \mkern 1mu (g\otimes_R[g']))
		&=\lambda_\Omega(h_{(1)}g\otimes_R[h_{(2)}g'])\\
		&=h_{(1)}g_{(1)}\otimes_Rh_{(2)}g_{(2)}\otimes_R[h_{(3)}g']\\
		&=h_{(1)}g_{(1)}\otimes_Rh_{(2)} \mkern 1mu (g_{(2)}\otimes_R[g'])
		\, = \, h \mkern 2mu \lambda_\Omega(g\otimes_R[g'])
	\end{align*}
	and
	\begin{align*}
		\lambda_\Omega((g\otimes_R[g']) \mkern 1mu h)
		&=\lambda_\Omega(gh\otimes_R[g'])\\
		&=g_{(1)}h_{(1)}\otimes_Rg_{(2)}h_{(2)}\otimes_R[g']\\
		&=g_{(1)}h_{(1)}\otimes_R(g_{(2)}\otimes_R[g']) \mkern 1mu h_{(2)}
		\, = \, \lambda_\Omega(g\otimes_R[g']) \mkern 1mu h
	\end{align*}
	for all $h,g\in\mathcal{H}$ and $g'\in\mathcal{H}^+$. Thus, $\Omega$ is a left $\mathcal{H}$-covariant $\mathcal{H}$-bimodule. Moreover, $\dd $ satisfies 
	\begin{align*}
		\dd(h) \mkern 1mu g+h \mkern 1mu \dd g
		&=(\mathcal{H}\otimes_R\pi)(\Delta h -h\otimes_R1)\mkern 1mu g
		+h  \mkern 1mu (\mathcal{H}\otimes_R\pi)(\Delta g -g\otimes_R1)
                \\
		&=(\mathcal{H}\otimes_R\pi)(h_{(1)} g\otimes_R h_{(2)} - hg\otimes_R1
		+\Delta(hg)- h_{(1)} g\otimes_R h_{(2)})
                \\
		&=(\mathcal{H}\otimes_R\pi)(\Delta(hg)-hg\otimes_R1)\\
		&=\dd (hg),
	\end{align*}
        that is, the Leibniz rule
	for all $h,g\in\mathcal{H}$. Finally, $(\Omega,\dd )$ is surjective because we can recover any element $h \otimes_R [g]\in\Omega$ with $h\in\mathcal{H}$ and $g\in\mathcal{H}^+$ by
	\begin{align*}
		\dd (g_{\smap }g_{\smam }h)
		-g_{\smap }\dd (g_{\smam }h)
		&=\dd (g_{\smap })  \mkern 1mu (g_{\smam }h)
                \\
		&=(\mathcal{H}\otimes_R\pi)(g_{\smap (1)}g_{\smam }h\otimes_Rg_{\smap (2)}
		-g_{\smap }g_{\smam }h\otimes_R1)\\
		&=(\mathcal{H}\otimes_R\pi)(h\otimes_Rg
		-h\ract \varepsilon(g)\otimes_R1)\\
		&=h\otimes_R[g],
	\end{align*}
	using the Leibniz rule and Eqs.~\eqref{Tch3} \& \eqref{Tch8}.
	It remains to prove that the correspondence \eqref{WorCor} is one-to-one.
	Starting with a left covariant first order differential calculus $(\Omega,\dd )$ on $\mathcal{H}$, we obtain a
	left $\mathcal{H}$-ideal $I=\ker\varpi\subseteq\mathcal{H}^+$ and we construct $(\Omega',\dd ')$ as before. Let us show that $(\Omega,\dd )$ and $(\Omega',\dd ')$ are isomorphic as left covariant first order differential calculi: by the fundamental theorem of Hopf modules, Theorem \ref{rhodia}, there is an isomorphism
	\begin{equation*}
	  \begin{split}	\Xi\colon\Omega\xrightarrow{\cong}\mathcal{H}\otimes_R{}^{\mathrm{co}\mathcal{H}}\Omega,
            \quad
			\omega\mapsto\omega_{(-1)}\otimes_R\omega_{(0)\smap }  \mkern 1mu \omega_{(0)\smam }
		\end{split}
	\end{equation*}
	in ${}^\mathcal{H}_\mathcal{H}\mathcal{M}^{\phantom{\cH}}_\mathcal{H}$.
	This isomorphism intertwines the differentials and is thus an isomorphism of left covariant first order differential calculi since for all $h\in\mathcal{H}$,
	\begin{eqnarray*}
		\Xi(\dd h)
		&= & \dd (h)_{(-1)}\otimes_R\dd (h)_{(0)\smap }  \mkern 1mu \dd (h)_{(0)\smam }
                \\
	        &\overset{\eqref{dleftcol}}{=}&
                h_{(1)}\otimes_R\dd (h_{(2)})_{\smap }  \mkern 1mu \dd (h_{(2)})_{\smam }
                \\
		&\overset{\eqref{MaurerCartan}}{=}&
                h_{(1)}\otimes_R\varpi(h_{(2)}-s \varepsilon(h_{(2)}))
                \\
                &=&                (\mathcal{H}\otimes_R\varpi)(\Delta h-h_{(1)}\otimes_R s\varepsilon(h_{(2)}))
                \\
		&=&
                (\mathcal{H}\otimes_R\varpi)(\Delta h -h\otimes_R1)
                \\
		&=&
                \dd' h,
	\end{eqnarray*}
	        using the Maurer-Cartan form \eqref{MaurerCartan}, which induces an isomorphism ${}^{\mathrm{co}\mathcal{H}}\Omega\cong\mathcal{H}^+/I$.
	        On the other hand, starting from a left $\mathcal{H}$-ideal $I\subseteq\mathcal{H}^+$, constructing $(\Omega,\dd )$ and then $I':=\ker\varpi\subseteq\mathcal{H}^+$, let us show that $I=I'$. Recall that
                $\Omega:=\mathcal{H}\otimes_R\mathcal{H}^+/I$, by construction;
                thus, for an arbitrary $h\in\mathcal{H}^+$, we obtain
	\begin{align*}
		\varpi(h)
		&=\dd (h_{\smap })  \mkern 1mu h_{\smam }\\
		&=(\mathcal{H}\otimes_R\pi)(\Delta h_{\smap } -h_{\smap }\otimes_R1)  \mkern 1mu h_{\smam }\\
		&=(\mathcal{H}\otimes_R\pi)(h_{\smap (1)}h_{\smam }\otimes_Rh_{\smap (2)}-h_{\smap }h_{\smam }\otimes_R1)\\
		&=(\mathcal{H}\otimes_R\pi)(1\otimes_Rh-t \varepsilon(h) \otimes_R1)\\
		&=1\otimes_R\pi(h),
	\end{align*}
	where we used \eqref{yellowcup} in the third equation and \eqref{Tch3} \& \eqref{Tch8} in the fourth. This shows that $I=\ker\pi=\ker\varpi=I'$, which concludes the proof.
\end{proof}

\subsection{The Ehresmann-Schauenburg Hopf algebroid calculus}
\label{sec:EScalc}
Let us now apply our results on calculi to the Ehresmann-Schauenburg Hopf algebroid $(\cH,B)$ based on left coinvariants $\mathcal{H}={}^{\mathrm{co}H}\!(A\otimes A)$ for a left $H$-Galois extension $B={}^{\mathrm{co}H}\!A\subseteq A$, discussed in Remark \ref{leftpeft} in \S\ref{gewittrigheute}.
Since we proved there 
that this constitutes, in particular, a right Hopf algebroid, we can apply Theorem \ref{thm:Wor} to describe the covariant first order differential calculi on $\mathcal{H}$ in terms of left $\mathcal{H}$-ideals $\mathcal{H}^+ =\ker\varepsilon$. Given the particular form of the counit \eqref{blauetinte} in this case, this reveals a connection between the covariant differential calculi on $\mathcal{H}$ and left $H$-covariant first order differential calculi on $A$. As in \S\ref{gewittrigheute}, in the following $H$ is understood as a Hopf algebra over a field $\Bbbk$.

\begin{theorem}
  \label{prop:ES}
	Let $B={}^{\mathrm{co}H}\!A\subseteq A$ be a faithfully flat Hopf-Galois extension and consider the Ehresmann-Schauenburg Hopf algebroid $\mathcal{H}={}^{\mathrm{co}H}\!(A\otimes A)$ over $B$. Then
	\begin{equation*}
		\mathcal{H}^+ = \ker\varepsilon = {}^{\mathrm{co}H}\!(\gO^u_A),
	\end{equation*}
	that is, the kernel of the counit coincides with the left
        $H$-coinvariant elements of the universal first order differential calculus on $A$. Moreover, the correspondence
	\begin{equation}
		\label{1:1ES}
		\begin{Bmatrix}
			\text{left covariant first order}\\
			\text{differential calculi on }\mathcal{H}
		\end{Bmatrix}\xleftrightarrow{1:1}
		\begin{Bmatrix}
		  \text{left $H$-covariant first order}
                  \\
			\text{differential calculi on } A
		\end{Bmatrix}
	\end{equation}
        is one-to-one.
	Explicitly,
	\begin{enumerate}
		\compactlist{99}
		\item
		  given a left covariant first order differential calculus $\gO(\mathcal{H})\cong\mathcal{H}\otimes_B\mathcal{H}^+/I$ on $\mathcal{H}$, with left $\mathcal{H}$-ideal $I\subseteq\mathcal{H}^+$, we obtain a
                  left $H$-covariant first order differential calculus 
		$$
		\gO(A):=\gO^u_A/J
		$$ 
		on $A$, where $J:=AI\subseteq\gO^u_A$;
		\item
		given a left $H$-covariant first order differential calculus $\gO(A) \cong \gO^u_A/J$ on $A$, with an $A$-subbimodule and left $H$-coideal $J\subseteq\gO^u_A$,
		we obtain a left covariant first order differential calculus
		$$
		\gO(\mathcal{H}):=\mathcal{H}\otimes_B\mathcal{H}^+/I
		$$ 
		on $\mathcal{H}$, where $I:=
                {}^{\mathrm{co}H}\!J \subseteq\mathcal{H}^+$.
	\end{enumerate}
\end{theorem}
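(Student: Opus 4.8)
The plan is to reduce the claimed $1{:}1$ correspondence to the two classification results already at our disposal, and then to match the corresponding spaces of ideals by faithfully flat descent.

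First I would identify $\mathcal{H}^+$. Since $\varepsilon(a\otimes a')=aa'$ by \eqref{blauetinte}, an element $a\otimes a'\in\mathcal{H}=(A\otimes A)^{\mathrm{co}H}$ lies in $\mathcal{H}^+=\ker\varepsilon$ precisely when $aa'=0$, that is, when $a\otimes a'\in\gO^u_A=\ker m_A$. As $\rho_A$ is an algebra map, applying $m_A\otimes\mathrm{id}_H$ shows that the diagonal coaction restricts to $\gO^u_A$, and coinvariance for this restricted coaction is exactly the defining condition of $\mathcal{H}$; hence $\mathcal{H}^+=(\gO^u_A)^{\mathrm{co}H}$. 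Next I would invoke, on the one side, Theorem \ref{thm:Wor} applied to the right Hopf algebroid $\mathcal{H}$ (Lemma \ref{quarkspeise}), which puts left covariant calculi $\gO(\mathcal{H})\cong\mathcal{H}\otimes_B\mathcal{H}^+/I$ in bijection with left $\mathcal{H}$-ideals $I\subseteq\mathcal{H}^+$; and, on the other side, the right-handed comodule-algebra analogue of Proposition \ref{schweineschmalz} (the classical universal property \cite{Woronowicz1989}), which puts right $H$-covariant calculi $\gO(A)\cong\gO^u_A/J$ in bijection with $A$-subbimodules $J\subseteq\gO^u_A$ that are right $H$-subcomodules. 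It thus remains to establish a bijection between left $\mathcal{H}$-ideals $I\subseteq\mathcal{H}^+$ and $A$-subbimodule $H$-coideals $J\subseteq\gO^u_A$, via $I=J^{\mathrm{co}H}$ and $J=AI$; this is exactly the content of (i) and (ii), and yields \eqref{1:1ES}.

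For the bijection I would view $A\otimes A$, with diagonal coaction and left $A$-action on the first leg, as an object of the category ${}_A\mathcal{M}^H$ of relative Hopf modules. Schneider's structure theorem \cite{Schneider} then gives an isomorphism $\mu\colon A\otimes_B\mathcal{H}\xrightarrow{\cong}A\otimes A$, $a\otimes_B(c\otimes c')\mapsto ac\otimes c'$, with inverse $n\mapsto n_{(1)}^{\langle1\rangle}\otimes_B n_{(1)}^{\langle2\rangle}n_{(0)}$, restricting to $\mu\colon A\otimes_B\mathcal{H}^+\xrightarrow{\cong}\gO^u_A$. Consequently $J\mapsto J^{\mathrm{co}H}$ and $I\mapsto AI=\mu(A\otimes_B I)$ are mutually inverse bijections between left $A$-submodule subcomodules of $\gO^u_A$ and left $B$-submodules of $\mathcal{H}^+$. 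It then suffices to see that, under this bijection, right $A$-stability of $J$ corresponds to the left $\mathcal{H}$-ideal property of $I$. The forward implication is immediate: for $x=c\otimes c'\in J^{\mathrm{co}H}$ and $h=p\otimes p'\in\mathcal{H}$, the product in $\mathcal{H}$ is $h\cdot x=pc\otimes c'p'=p\cdot(x\cdot p')$, which lies in $J$ by right then left $A$-stability and is coinvariant since $\mathcal{H}$ is a subalgebra of $A\otimes A$; hence $h\cdot x\in J\cap\mathcal{H}=J^{\mathrm{co}H}$.

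The main obstacle is the reverse implication, namely that $AI$ is right $A$-stable whenever $I$ is a left $\mathcal{H}$-ideal. Here I would transport the right $A$-action through $\mu$: using that $\rho_A$ is an algebra map and the coinvariance of $x\in\mathcal{H}^+$, one first finds $\rho_{A\otimes A}(ac\otimes c'a'')=a_{(0)}c\otimes c'a''_{(0)}\otimes a_{(1)}a''_{(1)}$, and then, with the anti-multiplicativity of the translation map from \eqref{allthat}, that $\mu^{-1}\big((a\cdot x)\cdot a''\big)=(a_{(1)}a''_{(1)})^{\langle1\rangle}\otimes_B w\cdot x$, where $w:=(a_{(1)})^{\langle2\rangle}(a''_{(1)})^{\langle2\rangle}a_{(0)}\otimes a''_{(0)}$ and $w\cdot x$ denotes its left $\mathcal{H}$-action on $x$. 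Provided $w$ is coinvariant, i.e.\ $w\in\mathcal{H}$, the left $\mathcal{H}$-ideal property gives $w\cdot x\in I$, so the transported action preserves $A\otimes_B I$ and $AI$ is indeed an $A$-subbimodule (it is a subcomodule automatically, being a subobject of $\gO^u_A$ in ${}_A\mathcal{M}^H$). The delicate point, which I expect to be the real work, is precisely the verification that $w\in\mathcal{H}$; this is where the coaction–translation compatibilities of \eqref{allthat} are needed, everything else being bookkeeping. Combining the two implications shows that the bijection of the previous paragraph restricts to the desired correspondence $I\leftrightarrow J$, and reading it through Theorem \ref{thm:Wor} and the classical classification produces the explicit descriptions (i) and (ii).
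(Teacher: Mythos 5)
Your overall route coincides with the paper's: identify $\mathcal{H}^+=(\gO^u_A)^{\mathrm{co}H}$ from $\varepsilon=m|_{(A\otimes A)^{\mathrm{co}H}}$, classify both sides (Theorem \ref{thm:Wor} for $\mathcal{H}$, the classical Woronowicz universal property for $A$), and match left $\mathcal{H}$-ideals $I\subseteq\mathcal{H}^+$ with $A$-subbimodule right $H$-coideals $J\subseteq\gO^u_A$ through Schneider's equivalence via $I=J^{\mathrm{co}H}$ and $J=AI$; your forward implication and the final bijectivity argument are exactly the paper's. The genuine gap sits in the reverse implication, at the very point you flag as ``the real work''. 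Your inverse for the structure isomorphism $\mu\colon A\otimes_B\mathcal{H}\to A\otimes A$ is wrong: the formula $n\mapsto n_{(1)}^{\langle1\rangle}\otimes_B n_{(1)}^{\langle2\rangle}n_{(0)}$ is the one adapted to \emph{right} relative Hopf modules $\mathcal{M}_A^H$, where it rests on the identity $a_{(0)}a_{(1)}^{\langle1\rangle}\otimes_B a_{(1)}^{\langle2\rangle}=1\otimes_B a$ of \eqref{allthat}; for objects of ${}_A\mathcal{M}^H$, such as $A\otimes A$ with the left action on the first leg, one must use the $S^{-1}$-twisted translation map, $n\mapsto S^{-1}(n_{(1)})^{\langle1\rangle}\otimes_B S^{-1}(n_{(1)})^{\langle2\rangle}n_{(0)}$ --- this is precisely what Remark \ref{rem:S-1Galois} is set up to provide. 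Indeed $a_{(1)}^{\langle2\rangle}a_{(0)}$ is \emph{not} coinvariant: the third identity of \eqref{allthat} yields $a_{(1)}^{\langle1\rangle}\otimes_B\rho\big(a_{(1)}^{\langle2\rangle}a_{(0)}\big)=a_{(2)}^{\langle1\rangle}\otimes_B a_{(2)}^{\langle2\rangle}a_{(0)}\otimes a_{(3)}a_{(1)}$, with leftover $H$-leg $a_{(3)}a_{(1)}\neq 1$; concretely, for $H=\Bbbk[t,t^{-1}]$ and $a$ homogeneous of degree $m$, the element $a_{(1)}^{\langle2\rangle}a_{(0)}$ has degree $2m$, so your candidate inverse does not even land in $A\otimes_B\mathcal{H}$.

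Consequently the element $w=a_{(1)}^{\langle2\rangle}a''_{(1)}^{\langle2\rangle}a_{(0)}\otimes a''_{(0)}$ fails to lie in $\mathcal{H}$ in general (in the graded example its diagonal degree is $2(m+n)$ for $a,a''$ of degrees $m,n$), so the deferred verification $w\in\mathcal{H}$ is false as stated --- and since the left $\mathcal{H}$-ideal property of $I$ can only be exploited through a coinvariant element acting on $x$, the whole reverse implication collapses at this step. With the corrected, antipode-twisted inverse, the transported right action becomes the paper's formula \eqref{eq:rightaction}, namely $(a\otimes_B x)\cdot c=aS^{-1}(c_{(1)})^{\langle1\rangle}\otimes_B\big(S^{-1}(c_{(1)})^{\langle2\rangle}\otimes c_{(0)}\big)\cdot x$, and the required coinvariance $S^{-1}(c_{(1)})^{\langle1\rangle}\otimes_B S^{-1}(c_{(1)})^{\langle2\rangle}\otimes c_{(0)}\in A\otimes_B\mathcal{H}$ is exactly the displayed antipode computation in the paper's proof (same pattern as in Remark \ref{rem:S-1Galois}), the isomorphism $J\cong A\otimes_B I$ being taken from \cite[Thm.~3.17]{AFLW}. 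So your plan is repairable by inserting $S^{-1}$ throughout and actually carrying out that computation, but as written the central step would fail.
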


\begin{proof}
  Recall first that $\mathcal{H}$ is a right Hopf algebroid over $B$ according to Remark \ref{leftpeft}
  with counit
  $\varepsilon=m|_{{}^{{\rm co} H} \! (A \otimes A)} \colon \mathcal{H} \to B$,
where $m\colon A\otimes A\to A$ denotes the product,
  from which we obtain
	$$
	\cH^+ = \ker m|_{{}^{{\rm co} H} \! (A \otimes A)} = {}^{{\rm co} H}\! (\ker m) = {}^{{\rm co} H}\! (\gO^u_A),
	$$
                where the second equation follows from the fact that 
	        $\ker m|_{{}^{{\rm co} H}(A \otimes A)} = {}^{{\rm co} H}\!(A \otimes A)
                \cap \ker m$,
	and the third equation follows by the definition of $\gO^u_A=\ker m$. To prove the one-to-one correspondence \eqref{1:1ES}, let us verify the constructions (i) and (ii) above.
	
	According to Theorem~\ref{thm:Wor},
        for
        a left covariant first order differential calculus $\gO(\mathcal{H})$ on $\mathcal{H}$,
        there is a left $\mathcal{H}$-ideal $I\subseteq\mathcal{H}^+$ such that $\gO(\mathcal{H})\cong\mathcal{H}\otimes_B\mathcal{H}^+/I$ is an isomorphism of calculi. Then the left $A$-module
        $J := AI \subseteq\gO^u_A$,
        generated by $I$ via the left $A$-action on the first tensor factor,
        is an $A$-subbimodule and a left $H$-coideal. In fact, $J\subseteq\gO^u_A=\ker m$ as
        $I\subseteq\mathcal{H}^+ = {}^{\mathrm{co}H}\!(\gO^u_A)
        \subseteq\gO^u_A$,
        and $J=AI$ is by definition closed under the left $A$-action but also closed under the right $A$-action as we show now.
       To this end, recall first (the left $H$-Galois variant of) Schneider's equivalence \cite{Schneider}: given a faithfully flat Hopf-Galois extension $B = {}^{\mathrm{co}H}\!A \subseteq A$,
            where $H$ has an invertible antipode and $\Bbbk$ is a field, the functors ${}^{{\rm co} H}(-)$ and $A\otimes_B-$ provide an equivalence between the categories ${}_A^H\mathcal{M}$ and ${}_B\mathcal{M}$, of which we will need in a moment that the inverse of the counit $\epsilon^{-1}_N \colon N \to A \otimes_B {}^{\co H} \! N$
            of this adjunction is given by $n \mapsto {n_{(-1)}}\!^{[1]} \otimes_B {n_{(-1)}}\!^{[2]} n_{(0)}$.

            Using now the isomorphism $J \cong A\otimes_BI$ from \cite[Thm.~3.17]{AFLW} adapted to left $H$-Galois extensions, the module $A\otimes_BI$ becomes a right $A$-module via
	\begin{equation}
          \label{eq:rightaction}
		(a\otimes_Bx) \mkern 1 mu  c:= ac_{(-1)}\!{}^{[1]}\otimes_Bc_{(-1)}\!{}^{[2]}xc_{(0)}
	\end{equation}
	for $a\otimes_Bx\in A\otimes_BI$ and $c\in A$. Indeed, 
        $ac_{(-1)}\!{}^{[1]}\otimes_Bc_{(-1)}\!{}^{[2]}xc_{(0)}
        \in A\otimes_BI$
        since
$c_{(-1)}\!{}^{[1]}\otimes_Bc_{(-1)}\!{}^{[2]}\otimes c_{(0)} = \epsilon^{-1}_{A \otimes A}(1 \otimes c) \in A\otimes_B \cH$ 
                and $I$ is a left $\mathcal{H}$-ideal.
 	Moreover, it is a right $A$-action since
            \begin{eqnarray*}
		((a\otimes_Bx) c) e
	      & = &
              (ac_{(-1)}\!{}^{[1]}\otimes_Bc_{(-1)}\!{}^{[2]}xc_{(0)}) e
              \\
              &=& ac_{(-1)}\!{}^{[1]}e_{(-1)}\!{}^{[1]}\otimes_Be_{(-1)}\!{}^{[2]}c_{(-1)}\!{}^{[2]}xc_{(0)}e_{(0)}
              \\
              &\overset{\eqref{allthat2}}{=}&
              a(ce)_{(-1)}\!{}^{[1]}\otimes_B(ce)_{(-1)}\!{}^{[2]}x(ce)_{(0)}
              \\
	      &=&
              (a\otimes_Bx) (ce)
	    \end{eqnarray*}
	for all $a\otimes_Bx\in A\otimes_BI$, $c,e\in A$,
	and the above operation is obviously unital. Also note that under the isomorphism $AI \cong A\otimes_BI$,
        the right $A$-action \eqref{eq:rightaction} becomes  the usual right multiplication on the second tensor factor, {\em i.e.}, the right $A$-module structure of $\gO^u_A$.
	It remains to prove that $J$ is a left $H$-coideal. This is the case since, given $a\in A$ and $x\otimes y \in I\subseteq\mathcal{H}^+ = {}^{\mathrm{co}H}\!(\gO^u_A)$, we obtain 
	$\lambda_\otimes(ax\otimes y)
	=a_{(-1)}(x\otimes y)_{(-1)}\otimes a_{(0)}(x\otimes y)_{(0)}
	=a_{(-1)}\otimes a_{(0)}x\otimes y$.
	Thus, $J\cong A\otimes_BI$ is an $A$-subbimodule left  $H$-coideal of $\gO^u_A$ and determines a left  $H$-covariant first order differential calculus $\gO(A):=\gO^u_A/J$ on $A$.
	
	On the other hand, let $\gO(A)\cong\gO^u_A/J$ be a left  $H$-covariant first order differential calculus on $A$, where $J\subseteq\gO^u_A$ is the corresponding $A$-subbimodule and left  $H$-coideal. We claim that $I:= {}^{\mathrm{co}H}\!J$ is a left $\mathcal{H}$-ideal in $\mathcal{H}^+$.
        In fact, $0\to I\to\mathcal{H}^+= {}^{\mathrm{co}H}\!(\gO^u_A)$
        is exact since $0\to J\to\gO^u_A$ is exact and the functor
	\begin{equation*}
	{}^{{\rm co} H}(-) = \Bbbk \bx_H -
	\end{equation*}
	is exact since it is an invertible functor by (the Hopf algebraic version of) the fundamental theorem of Hopf modules.
        Moreover, $\mathcal{H}I\subseteq I$ follows from $(a\otimes c)(x\otimes y)=ax\otimes yc\in I$ for $a\otimes c\in\mathcal{H}$ and $x\otimes y\in I$. To see this, first note that $I = {}^{\mathrm{co}H}\!J
        \subseteq J$
        and thus $ax\otimes yc\in J$ since $J$ is an $A$-bimodule. Secondly,
	\begin{align*}
		(ax\otimes yc)_{(-1)}\otimes(ax\otimes yc)_{(0)}
		&=a_{(-1)}(x\otimes y)_{(-1)}c_{(-1)}\otimes a_{(0)}(x\otimes y)_{(0)}c_{(0)}\\
		&=a_{(-1)}c_{(-1)}\otimes a_{(0)}x\otimes yc_{(0)}\\
		&=1\otimes ax\otimes yc
	    \end{align*}
         	shows that $ax\otimes yc \in {}^{\mathrm{co}H}\!J = I$,
        where we used $x\otimes y\in I$ in the second equation. Therefore, we obtain a left covariant first order differential calculus $\gO(\mathcal{H}) =\mathcal{H}\otimes_B\mathcal{H}^+/I$ via the Woronowicz construction of Theorem \ref{thm:Wor}.

	    The one-to-one correspondence is again a consequence of the left $H$-Galois variant of Schneider's equivalence mentioned above: this manifests here as
$A\otimes_B{{}^{\mathrm{co}H}\!J}\cong J$ and $ {}^{\mathrm{co}H}\!(AI) \cong {}^{\mathrm{co}H}\!(A\otimes_BI) \cong I$ in the categories
        ${}_A^H\mathcal{M}$
        and ${}_B\mathcal{M}$, respectively. From the above consideration we see that
        $A\otimes_B {}^{\mathrm{co}H}\!J
        \cong J$
        and ${}^{\mathrm{co}H}\!(AI) \cong I$ are even isomorphic as $A$-subbimodule left  $H$-coideals and left $\mathcal{H}$-ideals, respectively, which concludes the proof.
\end{proof}

In the following, we exemplify the previous theorem in case of the noncommutative Hopf fibration, as in Example \ref{ex:HopfFibration}. Let us recall ({\em cf.}\ Remark \ref{leftpeft}) that in this case the Ehresmann-Schauenburg Hopf algebroid based on right coinvariants coincides with the counterpart based on left coinvariants used in the above theorem.

\begin{example}
	From Theorem \ref{thm:Wor}, we infer that left covariant first order differential calculi on $\mathcal{H}$ are in one-to-one correspondence with left $\mathcal{H}$-ideals $I\subseteq\mathcal{H}^+$. For example, for $I=0$ we obtain the universal first order differential calculus
	$\Omega=\mathcal{H}\otimes_B\mathcal{H}^+$ with differential $\dd \colon\mathcal{H}\to\Omega$ determined on generators by
	\begin{equation*}
		\begin{split}
			\dd \alpha
			&=\alpha\otimes_B(\alpha-1)+\tilde{\alpha}\otimes_B\tilde{\gamma},\\
			\dd \beta
			&=\beta\otimes_B(q^2\beta-1)+\tilde{\delta}\otimes_B\tilde{\beta},\\
			\dd \gamma
			&=\gamma\otimes_B(\gamma-1)+\tilde{\gamma}\otimes_B\tilde{\gamma},\\
			\dd \delta
			&=\delta\otimes_B(\delta-1)+q^2\tilde{\beta}\otimes_B\tilde{\delta},
		\end{split}\qquad
		\begin{split}
			\dd \tilde\alpha
			&=\tilde{\alpha}\otimes_B(\gamma-1)+\alpha\otimes_B\tilde{\alpha},\\
			\dd \tilde\beta
			&=\tilde\beta\otimes_B(q^2\beta-1)+\delta\otimes_B\tilde{\beta},\\
			\dd \tilde\gamma
			&=\tilde{\gamma}\otimes_B(\alpha-1)+\gamma\otimes_B\tilde{\gamma},\\
			\dd \tilde\delta
			&=\tilde{\delta}\otimes_B(\delta-1)+q^2\beta\otimes_B\tilde{\delta},
		\end{split}
	\end{equation*}
	and extended by the Leibniz rule. Projecting the generators via $\pi_\varepsilon\colon\mathcal{H}\to\mathcal{H}^+$, $h\mapsto h-\varepsilon(h)\lact 1_\mathcal{H}$, the Maurer-Cartan form can be expressed as
		\begin{align*}
			\varpi(\pi_\varepsilon(\alpha))
			&=\varpi(\alpha+q^2B_0-1)\\
			&=\dd (\alpha_{\smap }) \mkern 2mu \alpha_{\smam  }+0\\
			&=\dd (\alpha) \mkern 2mu  \delta+q^2
                        \dd (\tilde{\gamma}) \mkern 2mu  \tilde{\delta}\\
			&=(\alpha\otimes_B(\alpha-1)+\tilde{\alpha}\otimes_B\tilde{\gamma}) \mkern 2mu  \delta
+q^2(\tilde{\gamma}\otimes_B(\alpha-1)+\gamma\otimes_B\tilde{\gamma}) \mkern 2mu  \tilde{\delta}\\
			&=(\alpha\delta+q^2\tilde{\gamma}\tilde{\delta})\otimes_B(\alpha-1)
			+(\tilde{\alpha}\delta+q^2\gamma\tilde{\delta})\otimes_B\tilde{\gamma},
		\end{align*}
		and similarly
		\begin{align*}
			\varpi(\pi_\varepsilon(\beta))
			&=(\beta\gamma+\tilde{\beta}\tilde{\alpha})\otimes_B(q^2\beta-1)
			+(\tilde{\delta}\gamma+\delta\tilde{\alpha})\otimes_B\tilde{\beta},\\
			\varpi(\pi_\varepsilon(\gamma))
			&=(q^2\gamma\beta+\tilde{\alpha}\tilde{\beta})\otimes_B(\gamma-1)
			+q^2\tilde{\gamma}\beta\otimes_B\tilde{\gamma}
			+\alpha\tilde{\beta}\otimes_B\tilde{\alpha},\\
			\varpi(\pi_\varepsilon(\delta))
			&=\delta^2\otimes_B(\delta-1)
			+q^2\tilde{\beta}\tilde{\delta}\otimes(q^2\beta-1)
			+q^2\tilde{\beta}\delta\otimes_B\tilde{\delta}
			+q^2\delta\tilde{\delta}\otimes_B\tilde{\beta}.
		\end{align*}
Analogous expressions of the Maurer-Cartan form on the projected generators $\tilde{\alpha},\tilde{\beta},\tilde{\gamma},\tilde{\delta}$ can be deduced the same way.
Non-trivial quotients of $\Omega=\mathcal{H}\otimes_B\mathcal{H}^+$ can then be obtained via Theorem \ref{prop:ES} by considering suitable quotients of the universal calculus on $A$. For example, one may consider the $3$-dimensional left covariant calculus on $A$ or one of the $4$-dimensional bicovariant calculi on $A$, as described, {\em e.g.}, in \cite[Exs.\ 2.32 \& 2.59]{BeggsMajid}.
\end{example}

\subsection{Covariant calculi on scalar extension Hopf algebroids}
\label{sec:scalar-calc}

Let us come back to the situation of \S\ref{adapter}, where we discussed scalar extension Hopf algebroids of the form $\mathcal{H} = A \# H$. Thanks to
Theorem \ref{thm:Wor}, we are now in a position to construct covariant noncommutative differential calculi on them.

\begin{proposition}
  \label{prop:smashCalc}
	Let $H$ be a Hopf algebra (with invertible antipode) over a field $\Bbbk$ and $A$ a braided commutative algebra in ${}_H\mathcal{YD}^H$. 
	Then for a left $A$-ideal and 
	left $H$-submodule 
	$I_A\subseteq A$ and a left $H$-ideal $I_H\subseteq H^+$,
		there exists a left covariant first order differential calculus $(\Omega,\dd )$ on $\mathcal{H}=A\#H$. More precisely, set 
		\begin{equation}
                  \label{Omegasmash}
			\Omega:=\mathcal{H}\otimes_A\mathcal{H}^+/(I_A\#I_H)
		\end{equation}
		along with the differential $\dd \colon A\#H\to\Omega$ defined by
		\begin{equation}
                  \label{dsmash}
			\dd (a\#h):=(\mathcal{H}\otimes_A\pi)((a\# h_{(1)})\otimes_A(1\#h_{(2)})-(a\#h)\otimes_A(1\#1)),
		\end{equation}
		where $\pi\colon\mathcal{H}^+\to\mathcal{H}^+/(I_A\#I_H)$ denotes the projection.
\end{proposition}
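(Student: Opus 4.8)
The plan is to realise $(\Omega,\dd)$ as the image, under the ideal-to-calculus construction carried out in the proof of Theorem~\ref{thm:Wor}, of a suitable left $\mathcal{H}$-ideal $I\subseteq\mathcal{H}^+$; concretely I will take $I:=I_A\#I_H$. Since $\mathcal{H}=A\#H$ was recalled in \S\ref{adapter} to be a right Hopf algebroid over $A$ (the invertible antipode of $H$ producing the translation map~\eqref{toujours}), the only hypothesis of Theorem~\ref{thm:Wor} still to be secured is the $A$-flatness of $\mathcal{H}_\ract$.

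First I would establish this flatness by trivialising $\mathcal{H}_\ract$ over the free module. Writing the relevant right action as $h\ract a=t(a)h=(a_{(0)}\#a_{(1)})h$, the right $A$-linear map determined by $1\otimes h\mapsto 1\#h$ is
$$
\Theta\colon A\otimes H\to\mathcal{H}_\ract,\qquad a\otimes h\mapsto t(a)(1\#h)=a_{(0)}\#a_{(1)}h,
$$
where $A\otimes H$ carries the free right $A$-action on its first leg. One checks that $\Theta$ is a $\Bbbk$-linear bijection with inverse $b\#g\mapsto b_{(0)}\otimes S(b_{(1)})g$, the two composites collapsing by the antipode axioms $\sum h_{(1)}S(h_{(2)})=\varepsilon(h)1=\sum S(h_{(1)})h_{(2)}$ applied to the comultiplied $H$-component of the coaction. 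Hence $\mathcal{H}_\ract$ is free, in particular flat, over $A$.

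The core step is then to show that $I:=I_A\#I_H$ is a left $\mathcal{H}$-ideal contained in $\mathcal{H}^+$. Since the counit is $\varepsilon(a\#h)=\varepsilon_H(h)a$, decomposing $H=\Bbbk1\oplus H^+$ identifies $\mathcal{H}^+=\ker\varepsilon$ with $A\#H^+$; as $I_A\subseteq A$ and $I_H\subseteq H^+$, this yields $I_A\#I_H\subseteq\mathcal{H}^+$. For the ideal property I would compute, for $a\#h\in\mathcal{H}$ and $x\#y\in I_A\#I_H$,
$$
(a\#h)(x\#y)=a(h_{(1)}\cdot x)\#h_{(2)}y,
$$
and observe that $h\mapsto a(h_{(1)}\cdot x)\otimes h_{(2)}y$ factors through $\Delta_H$ followed by the tensor product of the maps $g\mapsto a(g\cdot x)$ and $g'\mapsto g'y$. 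The first lands in $I_A$, since $I_A$ is stable under the $H$-action (left $H$-ideal) and under left multiplication by $A$ (left $A$-ideal); the second lands in $I_H$, since $I_H$ is a left ideal of the algebra $H$. Thus the product lies in $I_A\otimes I_H=I$, and by linearity $\mathcal{H}\cdot I\subseteq I$.

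With both points in place, Theorem~\ref{thm:Wor} applies directly: the construction in its proof turns the left $\mathcal{H}$-ideal $I=I_A\#I_H\subseteq\mathcal{H}^+$ into a left covariant first order differential calculus $\Omega=\mathcal{H}\otimes_A\mathcal{H}^+/I$ with differential $\dd h=(\mathcal{H}\otimes_A\pi)(\Delta h-h\otimes_A1)$. Substituting $\Delta(a\#h)=(a\#h_{(1)})\otimes_A(1\#h_{(2)})$ reproduces verbatim \eqref{Omegasmash} and \eqref{dsmash}, which finishes the proof. The main obstacle is the flatness verification, specifically recognising the correct antipode-twisted trivialisation $\Theta$ of $\mathcal{H}_\ract$; once this is granted, the ideal check, although it must genuinely invoke all three hypotheses on $I_A$ and $I_H$ through the smash-product multiplication, is routine.
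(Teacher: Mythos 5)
Your proposal is correct and follows essentially the same route as the paper: identify $\mathcal{H}^+=A\otimes H^+$ via the counit $\varepsilon(a\#h)=\varepsilon_H(h)a$ over the field $\Bbbk$, check through the smash-product multiplication $(a\#h)(x\#y)=a(h_{(1)}\cdot x)\#h_{(2)}y$ that $I_A\#I_H$ is a left $\mathcal{H}$-ideal in $\mathcal{H}^+$ (using exactly the three hypotheses on $I_A$ and $I_H$), and then invoke Theorem~\ref{thm:Wor}. The only difference is that you explicitly verify the $A$-flatness hypothesis of Theorem~\ref{thm:Wor} by trivialising $\mathcal{H}_\ract$ as a free module via $\Theta(a\otimes h)=t(a)(1\#h)$ — a correct check (right $A$-linearity follows from $t$ being an algebra map on $A^{\mathrm{op}}$, and the stated antipode-twisted inverse works), which the paper leaves implicit.
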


\begin{proof}
	Since the counit is given by $\varepsilon\colon A\#H\to A$, $\varepsilon(a\#h)=\varepsilon_H(h)a$ and $A$ is an algebra over the field $\Bbbk$, it follows that $\mathcal{H}^+=\ker\varepsilon=A\otimes\ker\varepsilon_H=A\otimes H^+$.
        Let $I_A\subseteq A$ be a left $A$-ideal and left $H$-ideal and let $I_H\subseteq H^+$ be a left $H$-ideal. Then $I_A\otimes I_H\subseteq\mathcal{H}^+$ is a left $\mathcal{H}$-ideal since for $b\#g\in I_A\otimes I_H\subseteq A\#H$ it follows that
	$$
	(a\#h)\mkern 1mu  (b\#g)
	=\underbracket{a(h_{(1)} \mkern 1mu  b)}_{\in I_A}\#\underbracket{h_{(2)}g}_{\in I_H}
	$$
	for all $a\#h\in A\#H$. Thus, $I_A\otimes I_H\subseteq\mathcal{H}^+$ is a left $\mathcal{H}$-ideal and Theorem \ref{thm:Wor} implies that \eqref{Omegasmash} and \eqref{dsmash} form a left covariant first order differential calculus on $\mathcal{H}$.
\end{proof}

	There is another construction of noncommutative differential calculi on smash product algebras and, more in general, on crossed product algebras in \cite{AndreaThomas}, where $A\#H$ is understood as a right $H$-comodule algebra: thus, $A$ is solely required to be a module algebra rather than a braided commutative algebra in ${}_H\mathcal{YD}^H$. Accordingly, right $H$-covariant calculi on $A\#H$ are considered in contrast to the left $A\#H$-covariant calculi above, which admit a larger symmetry. It would be interesting to investigate if the scalar extension Hopf algebroid construction and Proposition \ref{prop:smashCalc} admit generalisations to crossed product algebras, where weak actions and $2$-cocycles are taken into account, as in \cite{AndreaThomas}.

\section{Takeuchi-Schneider equivalence for Hopf algebroids}\label{sec:TakSchMain}

The Takeuchi-Schneider equivalence \cite{Tak:Rel,Schneider} is a generalisation of the fundamental theorem of Hopf modules to principal homogeneous spaces. In the following, we extend this theorem to the realm of Hopf algebroids.

\subsection{Hopf kernels for Hopf algebroids}
\label{schreibtinte}
Consider two left bialgebroids over the same base algebra $R$,  denoted here by $(\mathcal{G},R,s_\cG, t_\cG, \Delta_\cG,\gve_\cG)$ resp.\ $(\mathcal{H},R,s_\cH, t_\cH, \Delta_\mathcal{H},\varepsilon_\mathcal{H})$ for better distinction, and 
let $\mathcal{G} \stackrel{\pi}{\to} \mathcal{H} \to 0$ be an exact sequence of left bialgebroids (with the identity map $R \to R$ on the base algebra).
The map 
\begin{equation*}
	\rho_\mathcal{G} := (\mathcal{G} \otimes_R  \pi) \circ \Delta_\cG \colon \mathcal{G} \to \mathcal{G} \otimes_R \mathcal{H}
\end{equation*}
defines a right $\mathcal{H}$-coaction on $\mathcal{G}$ with respect to which $\mathcal{G}$ becomes a right $\mathcal{H}$-comodule algebra. Note that, as customary, the image of $\rho_\mathcal{G}$ lands in the Takeuchi subspace, that is,  $\rho_\mathcal{G} (\mathcal{G}) \subseteq {\mathcal{G}}_\ract \times_{\!R} \due {\mathcal{H}} \lact {}$, see \S\ref{mittagleffler}.

\begin{definition}[Left Hopf kernel]
	\label{boing}
	Consider the exact sequence $\mathcal{G} \stackrel{\pi}{\to} \mathcal{H} \to 0$ of left bialgebroids over $R$, with the identity map on this base algebra.
	The set
	\begin{equation*}
		B \coloneqq \mathcal{G}^{\mathrm{co}\mathcal{H}}
                \coloneqq
                \big\{g \in \mathcal{G} \mid \rho_\mathcal{G} (g) = g \otimes_R  1_\mathcal{H} \in \mathcal{G} \otimes_R \mathcal{H}\big\} 
	\end{equation*}
	is called the {\em left Hopf kernel} of $\pi$. Put $B^+:=B\cap\ker\gve_\cG$.
\end{definition}

\noindent This definition is a straightforward extension to left bialgebroids of the corresponding definition for Hopf algebras (resp.\ bialgebras) in \cite[Def.\  4.12]{BlattnerCohenMontgomery}.

\begin{remark}
\label{tobeornottobeakernel}
  The left Hopf kernel of $\pi\colon \cG \to \cH$ can be seen as the kernel of the difference map ({\em i.e.}, the equaliser)
  of 
  \begin{equation}
    \label{kartoffelsuppe}
\varphi \colon \xymatrix@C=25pt{ B \ar[r] & \due \cG \lact \ract \ar@<+0.7ex>[rr]^-{(\cG \otimes_R \pi) \circ \Delta_\cG } \ar@<-0.7ex>[rr]_-{\cG \, \otimes_R \, s_\cH} & & \, \due \cG {} \ract \! \times_{\!R}  \due \cH \lact {}}
\end{equation}
  in the category of left $R$-modules, and hence its name: therefore, $B$ is a left $R$-submodule of $\mathcal{G}$ with action induced by $ \due {\mathcal{G}} \lact {}$, that is, by the source map, but in general not a right $R$-submodule with action induced by $\cG_\ract$, that is, by the target map (in general, there is no reason why $t_\cG(r)b$ or $bt_\cG(r)$ should be elements in $B$).
  Moreover, observe that in the category of left bialgebroids (over the same base algebra $R$) the bialgebroid $\RRe = R \otimes R^\op$, by means of the map $\eta(\cdot \otimes \cdot\cdot) := s(\cdot)t(\cdot\cdot)$, is initial  but not terminal; that is to say, there is, in general, no projection $\pi \colon \cG \to \RRe$, which, in turn, implies that $B$ cannot equal $\cG$ itself, in striking contrast to the bialgebra case. This observation will become important in our later discussion of differential calculi, see Remark \ref{noworo}.
\end{remark}

The following is an analogue of \cite[Lem.~4.13]{BlattnerCohenMontgomery} and proven in \cite[Lem.~2.7]{BekKowSar:UEAOLRACPCAC}.

\begin{lemma}
	\label{wasserkessel}
	The left Hopf kernel $B \subseteq \mathcal{G}$ is an $R$-subring of $\mathcal{G}$ with unit map $s_\cG \colon R \to B$. 
	The elements of $B$ in $\mathcal{G}$ commute with the elements of $t_\cG(R)$, that is, 
	\begin{equation}
		\label{ohyes}
		b \ract r = 
		t_\cG(r)\, b = b\, t_\cG(r)
		= r \blact  b
	\end{equation}
	for all $b \in B$ and $r \in R$. 
\end{lemma}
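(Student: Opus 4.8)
The plan is to exploit two features of the right $\cH$-coaction $\rho_\cG=(\cG\otimes_R\pi)\circ\gD$: it is a morphism of algebras into the Takeuchi product $\cG_\ract\times_R\due{\cH}\lact{}$ (whose multiplication is factorwise), and it is a split monomorphism, the splitting being provided by the counit of $\cH$. The subring assertions will follow from multiplicativity, while the commutation \eqref{ohyes} will follow by showing that $t(r)b$ and $bt(r)$ have equal image under $\rho_\cG$ and then using injectivity.

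First I would record the closure properties that make $B$ an $R$-subring. Since $\rho_\cG$ is multiplicative and factorwise on the Takeuchi product, the identities $\rho_\cG(1_\cG)=1_\cG\otimes_R\pi(1_\cG)=1_\cG\otimes_R 1_\cH$ and, for $b,b'\in B$, $\rho_\cG(bb')=\rho_\cG(b)\rho_\cG(b')=(b\otimes_R 1_\cH)(b'\otimes_R 1_\cH)=bb'\otimes_R 1_\cH$ show that $1_\cG\in B$ and $bb'\in B$. For the unit map, $R$-tetralinearity of $\gD$ gives $\gD(s(r))=s(r)\otimes_R 1_\cG$, whence $\rho_\cG(s(r))=s(r)\otimes_R\pi(1_\cG)=s(r)\otimes_R 1_\cH$, so $s(R)\subseteq B$. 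Together with the fact that $B$ is a $\Bbbk$-submodule (being the equalizer of $\Bbbk$-linear maps, see Remark~\ref{tobeornottobeakernel}), this exhibits $B$ as an $R$-subring of $\cG$ with unit map $s\colon R\to B$.

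The heart of the matter is the commutation $t(r)b=bt(r)$, which is the only non-formal assertion in \eqref{ohyes} (the outer equalities $b\ract r=t(r)b$ and $r\blact b=bt(r)$ are just the definitions \eqref{soedermalm} of the $R$-actions). Using $\pi\circ t=t_\cH$ and $\gD(t(r))=1_\cG\otimes_R t(r)$, again by $R$-tetralinearity, I would compute $\rho_\cG(t(r))=1_\cG\otimes_R t_\cH(r)$. Since $\rho_\cG$ is multiplicative and $\rho_\cG(b)=b\otimes_R 1_\cH$, the factorwise product yields
\[
\rho_\cG(t(r)b)=(1_\cG\otimes_R t_\cH(r))(b\otimes_R 1_\cH)=b\otimes_R t_\cH(r)=(b\otimes_R 1_\cH)(1_\cG\otimes_R t_\cH(r))=\rho_\cG(bt(r)).
\]

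It then remains to deduce $t(r)b=bt(r)$ from the equality of images, i.e.\ to argue that $\rho_\cG$ is injective; this is the step carrying the real content, and it comes from counitality. Because $\pi$ is a bialgebroid morphism, $\varepsilon_\cH\circ\pi=\varepsilon$, so
\[
(\cG\otimes_R\varepsilon_\cH)\circ\rho_\cG=(\cG\otimes_R(\varepsilon_\cH\circ\pi))\circ\gD=(\cG\otimes_R\varepsilon)\circ\gD=\mathrm{id}_\cG
\]
after the identification $\cG\otimes_R R\cong\cG$. Thus $\rho_\cG$ has a left inverse and is injective, and applying it to the previous display gives $t(r)b=bt(r)$, completing \eqref{ohyes}. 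The only point demanding care, and the main (mild) obstacle, is the bialgebroid bookkeeping: one must verify that the product used above is genuinely the factorwise multiplication on the Takeuchi subspace $\cG_\ract\times_R\due{\cH}\lact{}$ in which $\rho_\cG$ lands, and that each written tensor is a legitimate representative there. Once the comodule-algebra structure is spelled out explicitly this is routine.
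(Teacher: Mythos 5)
Your proof is correct and complete; note that the paper does not prove Lemma \ref{wasserkessel} in the text but defers to \cite[Lem.~2.7]{BekKowSar:UEAOLRACPCAC}, so your argument serves as a legitimate self-contained substitute. All the ingredients you use are available: $\rho_\cG$ is an algebra map into the Takeuchi product $\cG_\ract\times_R\due{\cH}\lact{}$ (stated explicitly after the definition of $\rho_\cG$ in \S\ref{schreibtinte}), $\pi\circ s=s_\cH$, $\pi\circ t=t_\cH$, $\varepsilon_\cH\circ\pi=\varepsilon$ since $\pi$ is a bialgebroid morphism over $\id_R$, and tetralinearity gives $\Delta(s(r))=s(r)\otimes_R 1$ and $\Delta(t(r))=1\otimes_R t(r)$, so your computations of $\rho_\cG(s(r))$, $\rho_\cG(t(r))$, and the split injectivity $(\cG\otimes_R\varepsilon_\cH)\circ\rho_\cG=\id_\cG$ all check out; you are also right that the outer equalities in \eqref{ohyes} are pure notation from \eqref{soedermalm}. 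Worth knowing: there is an even shorter route to the commutation that bypasses both multiplicativity and injectivity of $\rho_\cG$, using only the Takeuchi membership $\rho_\cG(b)\in\cG_\ract\times_R\due{\cH}\lact{}$. Indeed, coinvariance $\rho_\cG(b)=b\otimes_R 1_\cH$ together with the defining condition of the Takeuchi subspace gives, in $\cG_\ract\otimes_R\due{\cH}\lact{}$,
\begin{equation*}
b\,t(r)\otimes_R 1_\cH \;=\; r\blact b\otimes_R 1_\cH \;=\; b\otimes_R 1_\cH\bract r \;=\; b\otimes_R s_\cH(r) \;=\; b\ract r\otimes_R 1_\cH \;=\; t(r)\,b\otimes_R 1_\cH,
\end{equation*}
and applying $\cG\otimes_R\varepsilon_\cH$ yields $b\,t(r)=t(r)\,b$ directly. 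Your version buys a cleaner conceptual packaging (everything reduced to ``$\rho_\cG$ is a split monic algebra map''), at the mild cost of having to verify that the factorwise products are taken between legitimate representatives in the Takeuchi subspace, which you correctly flag and which indeed holds since all factors involved are images under $\rho_\cG$.
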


Having introduced $B$ allows for one of the main notions we are going to use, that is, that of Hopf-Galois extension in the context of bialgebroid surjections, see also \cite{Boe:GTFHA}:

\begin{definition}
  \label{erbsensuppe}
	Let $(\pi, \id_R) \colon (\cG,R) \to (\cH,R)$ be a surjection of left bialgebroids and $B$ the corresponding (left) Hopf kernel. 
	We call $B\subseteq \cG$ a \textit{Hopf-Galois extension} if the linear map
	\begin{equation}
        		\label{ohno}
	\chi\colon \cG\otimes_B\cG\to \cG\otimes_R \cH,\quad g\otimes_B g'\mapsto g_{(1)}g'\otimes_R\pi(g_{(2)})
	\end{equation}
	is a bijection.
We denote by
    \begin{equation}
      \label{cuffie}
		\mathcal{T} \colon\mathcal{H}\to\mathcal{G}\otimes_B\mathcal{G},\qquad
			h\mapsto h^{\scriptscriptstyle \langle+\rangle}\otimes_Bh^{\scriptscriptstyle  \langle-\rangle}:=\chi^{-1}(1_\mathcal{G}\otimes_Rh)
	\end{equation}
the corresponding translation map.
\end{definition}

\begin{remark}
  The map \eqref{ohno} is not the direct analogue of the  
customary map \eqref{berlinale} in the Hopf algebra context, which one would choose if starting from {\em right} bialgebroids, but it rather corresponds to the second one mentioned in \eqref{Galois2}. Also, we refrain from formulating Definition \ref{erbsensuppe} in full generality by starting from an $\cH$-comodule algebra instead of $\cG$ as this shall not be used in the sequel.
 Nevertheless,   note that $\chi$ is $B$-balanced since 
	$$
	g_{(1)}b_{(1)}g'\otimes_R\pi(g_{(2)}b_{(2)})
	=g_{(1)}b_{(1)}g'\otimes_R\pi(g_{(2)})\pi(b_{(2)})
	=g_{(1)}bg'\otimes_R\pi(g_{(2)}),
	$$
	where we used the fact that $\pi$ is a ring morphism.
\end{remark}

\subsection{Surjections of scalar extension Hopf algebroids}\label{sec:HomogeneousScalarExt}
Let $\pi\colon G\to H$ be a Hopf algebra surjection. In such a situation, one can slightly extend the notion of (left-right) YD modules in the following sense: define the category ${}_H\mathcal{YD}^G$ of $H$-$G$-Yetter-Drinfel'd modules with objects $M$ being left $H$-modules and simultaneously right $G$-comodules, subject to the compatibility
\begin{equation*}
	h_{(1)} m_{(0)}\otimes h_{(2)}\pi(m_{(1)})
	=(h_{(2)}  m)_{(0)}\otimes\pi((h_{(2)}  m)_{(1)})h_{(1)}
\end{equation*}
for all $h\in H$ and $m\in M$. Morphisms in this category are simply left $H$-linear and right $G$-colinear maps.
The following lemma is then straightforward:

\begin{lemma}
  For any Hopf algebra surjection $\pi\colon G\to H$, the category ${}_H\mathcal{YD}^G$ is braided mon\-oidal, with monoidal structure defined on objects $M,N$ in ${}_H\mathcal{YD}^G$ by $M\otimes N$ with diagonal left $H$-action, right $G$-coaction $m\otimes n\mapsto m_{(0)}\otimes n_{(0)}\otimes n_{(1)}m_{(1)}$, and where
	\begin{equation*}
\sigma^\mathcal{YD}_{M,N} \colon M\otimes N \to N\otimes M, \quad		m\otimes n \mapsto n_{(0)}\otimes\pi(n_{(1)})  m
	\end{equation*}
        defines the corresponding braiding.
\end{lemma}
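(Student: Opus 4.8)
The plan is to mimic, step by step, the classical proof that ${}_H\mathcal{YD}^H$ is braided monoidal, inserting the surjection $\pi$ wherever a $G$-comodule element has to act through $H$, and checking at each stage that the structural properties of $\pi$ as a Hopf algebra morphism---namely $\pi(gg')=\pi(g)\pi(g')$, $\pi(1_G)=1_H$, $(\pi\otimes\pi)\circ\Delta_G=\Delta_H\circ\pi$, $\varepsilon_H\circ\pi=\varepsilon_G$, and $\pi\circ S_G=S_H\circ\pi$---are exactly what make the standard manipulations survive the passage from $G$ to $H$.

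First I would check that the monoidal structure is well defined. The diagonal left $H$-action is associative, and the ground field $\Bbbk$, equipped with the action via $\varepsilon_H$ and the coaction $1\mapsto 1\otimes 1_G$, is an object of ${}_H\mathcal{YD}^G$; the unit constraints are then inherited from ${}_\Bbbk\mathcal{M}$. For the stated coaction $m\otimes n\mapsto m_{(0)}\otimes n_{(0)}\otimes n_{(1)}m_{(1)}$ on $M\otimes N$, coassociativity and counitality follow from those of $M$ and $N$ together with coassociativity of $\Delta_G$, while the associativity of the product of $G$ is precisely what renders the associator of $\Bbbk$-modules right $G$-colinear (it is manifestly $H$-linear). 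The remaining point is the Yetter-Drinfel'd compatibility for $M\otimes N$: since this identity is an equation in $M\otimes N\otimes H$ that sees the $G$-coactions only through $\pi$, expanding the diagonal action and using that $\pi$ is multiplicative reduces it to the compatibilities of $M$ and $N$ separately.

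Next I would show that $\sigma^{\mathcal{YD}}_{M,N}$ is a morphism in ${}_H\mathcal{YD}^G$ and an isomorphism. Left $H$-linearity is immediate: expanding $\sigma^{\mathcal{YD}}_{M,N}\big(h\cdot(m\otimes n)\big)$ with the diagonal action and collapsing the action of $\pi\big((h_{(2)}\cdot n)_{(1)}\big)h_{(1)}$ on $m$, the compatibility condition for $N$ turns this into $h\cdot\sigma^{\mathcal{YD}}_{M,N}(m\otimes n)$. Invertibility I would establish by exhibiting the inverse $\sigma_{M,N}^{-1}(n\otimes m)=\pi\big(S_G(n_{(1)})\big)\cdot m\otimes n_{(0)}$, the two composites collapsing by means of $\pi\big(S_G(n_{(1)})\big)\pi(n_{(2)})=\varepsilon_G(n_{(1)})1_H$ and its mirror identity---note that only the antipode $S_G$ is needed, never its inverse. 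The two hexagon identities are then a direct computation: one of them uses multiplicativity of $\pi$ applied to the coaction $p_{(1)}n_{(1)}$ on $N\otimes P$, the other uses comultiplicativity of $\pi$ applied to the diagonal action on $M\otimes N$, in both cases together with (co)associativity.

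The step I expect to be the genuine obstacle is right $G$-colinearity of the braiding, that is, the identity $\rho_{N\otimes M}\circ\sigma^{\mathcal{YD}}_{M,N}=(\sigma^{\mathcal{YD}}_{M,N}\otimes\mathrm{id})\circ\rho_{M\otimes N}$. Unlike the $H$-linearity and the hexagons, this forces one to control the \emph{genuine} $G$-coaction $\rho_M(h\cdot m)$ of an acted element and to match it---after using coassociativity of $\rho_N$ to replace the single coaction on $n$ by the iterated one---against $\pi(n_{(1)})\cdot m_{(0)}\otimes n_{(2)}m_{(1)}$; this is exactly the place where the Yetter-Drinfel'd compatibility must be invoked in its fullest form and where the interplay between $\rho_M$, the $H$-action, and the possibly non-injective $\pi$ is most delicate. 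Once colinearity is secured, naturality of $\sigma^{\mathcal{YD}}$ in both variables is routine, since it follows at once from morphisms in ${}_H\mathcal{YD}^G$ being $H$-linear and $G$-colinear, and the proof is complete.
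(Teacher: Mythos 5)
Your steps for the monoidal structure, the $H$-linearity of $\sigma^{\mathcal{YD}}$, both hexagons, and the inverse $\sigma^{-1}(n\otimes m)=\pi\big(S_G(n_{(1)})\big)\cdot m\otimes n_{(0)}$ are all correct (the paper itself offers no proof -- the lemma is declared straightforward -- so these checks are exactly what a proof must contain). The problem is the one step you yourself single out as the genuine obstacle: you never actually prove right $G$-colinearity of the braiding, you only assert that the Yetter-Drinfel'd compatibility ``in its fullest form'' will handle it. Written out, colinearity is the identity
\begin{equation*}
n_{(0)}\otimes\big(\pi(n_{(2)})\cdot m\big)_{(0)}\otimes\big(\pi(n_{(2)})\cdot m\big)_{(1)}\,n_{(1)}
\;=\;
n_{(0)}\otimes\pi(n_{(1)})\cdot m_{(0)}\otimes n_{(2)}\,m_{(1)}
\end{equation*}
in $N\otimes M\otimes G$, whereas the compatibility defining ${}_H\mathcal{YD}^G$ is an identity in $M\otimes H$: it controls the genuine $G$-coaction of an acted element $h\cdot m$ only \emph{through} $\pi$. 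Applying $\mathrm{id}\otimes\mathrm{id}\otimes\pi$ to the display recovers precisely the instance $h=\pi(n_{(1)})$ of the stated compatibility, but since $\pi$ is surjective and in general not injective, you cannot go back.

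This is not merely a step resisting your method; under the definition as displayed it is false. Take $H=\Bbbk$ and $\pi=\varepsilon_G$, a perfectly admissible Hopf algebra surjection: the compatibility condition becomes vacuous, so ${}_\Bbbk\mathcal{YD}^G$ is all of $\mathcal{M}^G$, and $\sigma^{\mathcal{YD}}_{M,N}(m\otimes n)=n_{(0)}\,\varepsilon_G(n_{(1)})\otimes m=n\otimes m$ is the flip, which fails to be colinear for the coaction $m\otimes n\mapsto m_{(0)}\otimes n_{(0)}\otimes n_{(1)}m_{(1)}$ whenever $G$ is noncommutative: for $M=N=G=\Bbbk S_3$ with the regular coaction the two sides have third legs $gh$ versus $hg$. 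The repair is to read the compatibility in its strong, $G$-valued form: for all $g\in G$ and $m\in M$,
\begin{equation*}
\pi(g_{(1)})\cdot m_{(0)}\otimes g_{(2)}\,m_{(1)}
\;=\;
\big(\pi(g_{(2)})\cdot m\big)_{(0)}\otimes\big(\pi(g_{(2)})\cdot m\big)_{(1)}\,g_{(1)}
\quad\in M\otimes G,
\end{equation*}
whose image under $M\otimes\pi$ is the displayed condition. With this, colinearity of $\sigma^{\mathcal{YD}}$ is literally the condition at $g=n_{(1)}$ after one application of coassociativity, and the strong form is in any case what the subsequent corollary requires for ${}_\pi F(A)$ to be a Yetter-Drinfel'd module over $G$ -- so it is evidently the intended reading. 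Your plan is complete once you state and use this strengthened compatibility; as written, the decisive step is missing and, taken at face value, unprovable.
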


Note that there are functors ${}_H\mathcal{YD}^H\xleftarrow{F^\pi}{}_H\mathcal{YD}^G\xrightarrow{{}_\pi F}{}_G\mathcal{YD}^G$ with right $G$-coaction $\rho\colon M\to M\otimes G$ inducing a right $H$-coaction by $(M \otimes\pi)\circ\rho\colon M\to M\otimes H$ and left $H$-action 
$H\otimes M\to M$, inducing a left $G$-action by 
$G\otimes M\to M$ by precomposing it with $(\pi\otimes M)$.

Let $A$ be an algebra object in ${}_H\mathcal{YD}^G$, that is, the multiplication $m\colon A\otimes A\to A$ and unit $\eta\colon\Bbbk\to A$ are morphisms in ${}_H\mathcal{YD}^G$. We say that $A$ is braided commutative if $m\circ\sigma^\mathcal{YD}_{A,A}=m$. On elements $a,b\in A$, the braided commutativity then reads $b_{(0)}(\pi(b_{(1)}) \mkern 1 mu a)=ab$.

\begin{corollary}
	If $A$ is a braided commutative algebra in ${}_H\mathcal{YD}^G$, then $F^\pi(A)$ is a braided commutative algebra in ${}_H\mathcal{YD}^H$ and ${}_\pi F(A)$ is a braided commutative algebra in ${}_G\mathcal{YD}^G$.
\end{corollary}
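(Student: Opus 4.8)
The plan is to show that both $F^\pi$ and ${}_\pi F$ are \emph{strict braided monoidal} functors; since any braided (lax) monoidal functor sends braided commutative algebra objects to braided commutative algebra objects, the corollary then follows at once. Concretely, the multiplication and unit of $A$, being morphisms in ${}_H\mathcal{YD}^G$, are carried to morphisms in the target category, so the images become algebra objects, and the identity $m=m\circ\sigma^\mathcal{YD}_{A,A}$ is transported verbatim, yielding braided commutativity.

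First I would treat $F^\pi$. On objects it leaves the left $H$-action unchanged and replaces the right $G$-coaction $m\mapsto m_{(0)}\otimes m_{(1)}$ by the right $H$-coaction $m\mapsto m_{(0)}\otimes\pi(m_{(1)})$. To see that $F^\pi$ is strict monoidal, I would compare the two monoidal structures directly on the underlying space $M\otimes N$: the $H$-action is diagonal on both sides, while the $H$-coaction of $F^\pi(M\otimes N)$ sends $m\otimes n$ to $m_{(0)}\otimes n_{(0)}\otimes\pi(n_{(1)}m_{(1)})$ and that of $F^\pi(M)\otimes F^\pi(N)$ sends it to $m_{(0)}\otimes n_{(0)}\otimes\pi(n_{(1)})\pi(m_{(1)})$; these agree because $\pi$ is an algebra map. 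The unit object $\Bbbk$ is preserved since $\pi(1)=1$, so all structure morphisms are identities. Preservation of the braiding is then immediate: applying the ${}_H\mathcal{YD}^H$-braiding to $F^\pi(M)\otimes F^\pi(N)$ gives $m\otimes n\mapsto n_{(0)}\otimes\pi(n_{(1)})\cdot m$, which is precisely $\sigma^\mathcal{YD}_{M,N}$ of ${}_H\mathcal{YD}^G$.

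The argument for ${}_\pi F$ is entirely parallel, now dualising the roles of action and coaction: ${}_\pi F$ keeps the right $G$-coaction and turns the left $H$-action into the left $G$-action $g\cdot m:=\pi(g)\cdot m$. Strict monoidality rests on $\pi$ being a coalgebra map, so that the diagonal $G$-action $g\cdot(m\otimes n)=\pi(g_{(1)})\cdot m\otimes\pi(g_{(2)})\cdot n$ agrees on ${}_\pi F(M\otimes N)$ and on ${}_\pi F(M)\otimes{}_\pi F(N)$, while counitality of $\pi$ handles the unit. The ${}_G\mathcal{YD}^G$-braiding on ${}_\pi F(M)\otimes{}_\pi F(N)$ evaluates to $n_{(0)}\otimes\pi(n_{(1)})\cdot m$, again matching $\sigma^\mathcal{YD}_{M,N}$.

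I expect no serious obstacle here: the only points requiring care are the strict matching of the two monoidal structures, which hinges on $\pi$ being simultaneously an algebra and a coalgebra map (that is, a bialgebra morphism), and the bookkeeping of which datum -- action versus coaction -- each functor modifies. In particular, braided commutativity of $A$, spelled out on elements as $b_{(0)}(\pi(b_{(1)})\cdot a)=ab$, is literally the same equation in each of the three categories once the respective braidings are written out, so once the functors are known to be braided monoidal there is nothing further to compute.
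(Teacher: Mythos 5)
Your proposal is correct and is exactly the argument the paper leaves implicit: the corollary is stated without proof as an immediate consequence of the preceding lemma and the observation that there are functors ${}_H\mathcal{YD}^H\xleftarrow{F^\pi}{}_H\mathcal{YD}^G\xrightarrow{{}_\pi F}{}_G\mathcal{YD}^G$, and your verification that both functors are strict braided monoidal (strict monoidality from $\pi$ being an algebra map for $F^\pi$, a counital coalgebra map for ${}_\pi F$, with the braidings matching on the nose) is precisely the intended route, formalised cleanly. In particular your closing remark that the braided commutativity $b_{(0)}(\pi(b_{(1)})\cdot a)=ab$ is literally the same equation in all three categories is the element-level content the paper relies on.
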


For simplicity, let $\Bbbk$ be a field in the next proposition (or, more generally, let all appearing Hopf algebras be flat over $\Bbbk$).

\begin{proposition}\label{prop:smashhomogeneous}
	Let $\pi\colon G\to H$ be a Hopf algebra surjection, assume that $G$ and $H$ have invertible antipodes, and let $A$ be a braided commutative algebra in ${}_H\mathcal{YD}^G$. Then:
	\begin{enumerate}
		\compactlist{99}
		\item
		There is a right Hopf algebroid surjection
		$$
		\pi_\#\colon{}_\pi F(A)\# G\to F^\pi(A)\# H, \quad a\# g \mapsto a\#\pi(g)
		$$
		between the scalar extension Hopf algebroids.
\end{enumerate}
        In the following, for better readability, we mostly omit the functors ${}_\pi F$ and $F^\pi$.
	\begin{enumerate}
\setcounter{enumi}{1}
	  \compactlist{99}
\item
		Viewing $A\#G$ as a right $A\#H$-comodule algebra via the coaction
		$$
		\rho_\#:=((A \# G)\otimes_A\pi_\#)\circ\Delta_{A\#G}\colon A\#G\to(A\#G)\otimes_A(A\#H),
		$$
		the left Hopf kernel of $A\#G$ is isomorphic to $A\otimes B$, where $B:=G^{\mathrm{co}H}$.
		\item
		If $B =G^{\mathrm{co}H} \subseteq G$ is a Hopf-Galois extension as in \S\ref{fuellfederhalter}, then $(A\#G)^{\mathrm{co}A\#H}\subseteq A\#G$ is a Hopf-Galois extension over the bialgebroid $A \# H$ in the sense of Definition \ref{erbsensuppe}.
	\end{enumerate}
\end{proposition}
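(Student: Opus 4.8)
The plan is to dispatch the first two parts by explicit identification and to concentrate the real work on part (iii).

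For part (i), I would verify directly that $\pi_\#\colon a\#g\mapsto a\#\pi(g)$ is a surjective morphism of left bialgebroids over $A$; since both source and target are right Hopf algebroids by \S\ref{adapter}, right Hopfness of the map requires nothing beyond its being a bialgebroid surjection. Surjectivity is inherited from that of $\pi$, and using the explicit structures recalled in \S\ref{adapter} one checks that $\pi_\#$ is an algebra map intertwining the source maps $s(a)=a\#1$, the target maps $t(a)=a_{(0)}\#a_{(1)}$, the counits $\varepsilon(a\#g)=\varepsilon_G(g)a$, and the coproducts $\Delta(a\#g)=(a\#g_{(1)})\otimes_A(1\#g_{(2)})$. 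Each identity reduces to the compatibility of $\pi$ with $\Delta$, $\varepsilon$ and the units, together with $\varepsilon_H\circ\pi=\varepsilon_G$ and the fact that $F^\pi(A)$ carries the coaction $(\mathrm{id}\otimes\pi)\circ\rho_G$.

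For part (ii), the key point is that $A\#H$ is \emph{free} as a left $A$-module through $s_H$, since $(a\#1)(b\#h)=ab\#h$. This yields a $\Bbbk$-linear isomorphism
$$
\Xi\colon (A\#G)\otimes_A(A\#H)\xrightarrow{\ \cong\ }(A\#G)\otimes H, \qquad (a\#g)\otimes_A(1\#h)\mapsto (a\#g)\otimes h,
$$
under which, identifying $A\#G=A\otimes G$ as vector spaces, the coaction $\rho_\#$ becomes $\mathrm{id}_A\otimes\rho_G$ and the trivial coaction $x\mapsto x\otimes_A(1\#1)$ becomes $x\mapsto x\otimes 1_H$. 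Hence $x$ is $A\#H$-coinvariant precisely when $(\mathrm{id}_A\otimes\rho_G)(x)=x\otimes 1_H$, and since $\Bbbk$ is a field the exact functor $A\otimes(-)$ preserves the defining equaliser, giving $(A\#G)^{\mathrm{co}A\#H}=A\otimes G^{\mathrm{co}H}=A\otimes B$. I would also record that this coinvariant subring $B_\#$ equals $A\otimes B$ with $B=G^{\mathrm{co}H}$ a right coideal subalgebra (so $\Delta_G(B)\subseteq G\otimes B$), a fact needed below.

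For part (iii), the strategy is to reduce the bialgebroid Galois map $\chi$ of Definition \ref{erbsensuppe} to the second Galois map $\tilde\chi_G\colon G\otimes_B G\to G\otimes H$, $g\otimes_B g'\mapsto g_{(1)}g'\otimes\pi(g_{(2)})$, of the Hopf-algebra extension $B\subseteq G$, which is bijective by Remark \ref{rem:S-1Galois} because $H$ has invertible antipode. Alongside $\Xi$ I introduce
$$
\Lambda\colon A\otimes(G\otimes_B G)\to (A\#G)\otimes_{B_\#}(A\#G), \qquad a\otimes(g\otimes_B g')\mapsto (a\#g)\otimes_{B_\#}(1\#g').
$$
Well-definedness over $\otimes_B$ follows from $(a\#g)(1\#b)=a\#gb$ and $(1\#b)(1\#g')=1\#bg'$ for $b\in B$ (using $\pi(b_{(1)})\cdot 1_A=\varepsilon_H(\pi(b_{(1)}))1_A$ and counitality), while surjectivity follows by writing $(c\#g')=(c\#1)(1\#g')$ with $c\#1\in B_\#$. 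The decisive computation is the commuting triangle
$$
\Xi\circ\chi\circ\Lambda\bigl(a\otimes(g\otimes_B g')\bigr)=a\otimes g_{(1)}g'\otimes\pi(g_{(2)})=(\mathrm{id}_A\otimes\tilde\chi_G)\bigl(a\otimes(g\otimes_B g')\bigr),
$$
in which the potential smash-product twist $g_{(1)}\cdot(-)$ collapses precisely because the unit $1_A$ occupies the second leg. Since $\mathrm{id}_A\otimes\tilde\chi_G$ is injective, this identity forces $\Lambda$ to be injective, hence bijective, and therefore $\chi=\Xi^{-1}\circ(\mathrm{id}_A\otimes\tilde\chi_G)\circ\Lambda^{-1}$ is a composite of bijections. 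The main obstacle is the careful handling of the $\otimes_{B_\#}$-balancing and of the smash-product twist in constructing $\Lambda$; the design choice of placing $1_A$ in the second tensor leg is exactly what makes the twist vanish and lets the whole of part (iii) rest on the already-known invertibility of $\tilde\chi_G$ (together with flatness of $A$ over the field $\Bbbk$).
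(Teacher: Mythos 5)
Your proposal is correct, and for parts (i) and (ii) it coincides with the paper's own argument: (i) is the same direct verification that $\pi_\#$ intertwines the source, target, (co)unit, multiplication, and coproduct (no extra condition is needed for ``right Hopfness'' of the map, since a bialgebroid morphism automatically intertwines the canonical Galois maps and hence their inverses), while (ii) is precisely the paper's one-line observation that over a field $\ker(\mathrm{id}_A\otimes\varphi)=A\otimes\ker\varphi$ for the equaliser map $\varphi$ of \eqref{kartoffelsuppe} --- you merely unpack this via the freeness of $A\#H$ as a left $A$-module through $s_\cH$, which is a harmless elaboration. For part (iii), however, you take a genuinely different route. The paper exhibits the inverse explicitly,
$$
\chi_\#^{-1}\big((a'\#g')\otimes_A(a\#h)\big)
=(a_{(0)}\#S^{-1}(h)^{\langle 1\rangle})\otimes_{A\otimes B}(1\#S^{-1}(h)^{\langle 2\rangle}a_{(1)})(a'\#g'),
$$
and verifies both composites by Sweedler computations with the identities \eqref{allthat}, including a balancedness step proved ``in complete analogy to Remark \ref{rem:S-1Galois}''. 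You instead factor $\chi_\#$ through the structural isomorphisms $\Xi$ and $\Lambda$ and reduce to $\mathrm{id}_A\otimes\tilde\chi_G$, the smash twist collapsing in $(a\#g_{(1)})(1\#g')=a\#g_{(1)}g'$ because $g\cdot 1_A=\varepsilon(g)1_A$; your logic (injectivity of $\Lambda$ from injectivity of the composite, surjectivity of $\Lambda$ from $(c\#g')=(c\#1)(1\#g')$ with $c\#1\in A\otimes B$, then $\chi_\#=\Xi^{-1}\circ(\mathrm{id}_A\otimes\tilde\chi_G)\circ\Lambda^{-1}$) is sound, and the well-definedness checks $(a\#g)(1\#b)=a\#gb$ and $(1\#b)(1\#g')=1\#bg'$ are exactly the points needing care and are correct. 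What each approach buys: the paper's computation yields the explicit translation-map formula for $\chi_\#^{-1}$, in keeping with its hands-on use of translation maps for constructing calculi, whereas your reduction is shorter, invokes Remark \ref{rem:S-1Galois} once as a black box instead of redoing the $S^{-1}$-coinvariance computation, and is in spirit a specialisation of the later factorisation $\cG\otimes_B\cG\cong\cG_\ract\otimes_R\overline{\cG}$ of Lemma \ref{Warschau}. Two minor remarks: bijectivity of $\mathrm{id}_A\otimes\tilde\chi_G$ needs no flatness at all (tensor the inverse with $\mathrm{id}_A$) --- the field hypothesis enters only in (ii); and the coideal property $\Delta_G(B)\subseteq G\otimes B$ you record is never actually used in your part (iii).
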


\begin{proof}
  \
  \begin{enumerate}
    \compactlist{99}
\item
    The claimed surjectivity is immediate from that of $\pi$. We then have to show that $\pi_\#$ commutes with all bialgebroid structure maps. To start with, it is obvious that $\pi_\#\circ s_{{}_\pi F(A)\#G}=s_{F^\pi(A)\#H}$ and $\pi_\#\circ t_{{}_\pi F(A)\#G}=t_{F^\pi(A)\#H}$ as well as the compatibility of the units resp.\ counits. Next, 
		\begin{align*}
			\pi_\#((a\#g)(a'\#g'))
			&=\pi_\#(a(\pi(g_{(1)}) \mkern 1 mu  a')\# g_{(2)}g')\\
			&=a(\pi(g_{(1)}) \mkern 1 mu a')\#\pi(g_{(2)}g')\\
			&=a(\pi(g)_{(1)} \mkern 1 mu a')\#\pi(g)_{(2)}\pi(g')\\
			&=(a\#\pi(g))(a'\#\pi(g'))
			=\pi_\#(a\#g)\pi_\#(a'\#g')
		\end{align*}
proves compatibility with the multiplication, while
		\begin{align*}
			(\pi_\#\otimes_A\pi_\#)(\Delta_{A\#G}(a\#g))
			&=\pi_\#(a\#g_{(1)})\otimes_A\pi_\#(1\#g_{(2)})\\
			&=(a\#\pi(g_{(1)}))\otimes_A(1\#\pi(g_{(2)}))\\
			&=(a\#\pi(g)_{(1)})\otimes_A(1\#\pi(g)_{(2)})
			=\Delta_{A\#H}(\pi_\#(a\otimes g))
		\end{align*}
                that for the comultiplication.
\item
Using $(A \# G) \otimes_A (A \# H) \cong A \otimes G \otimes H$ for the codomain of the coaction $\rho_\#$ shows that, up to an isomorphism, the Hopf kernel is given by 
  $\ker (A \otimes \gvf)$, where $\gvf$ is the map described in \eqref{kartoffelsuppe}; as we assume here that $\Bbbk$ is a field, this is then equal to
  $A  \otimes  \ker \gvf = A \otimes B$.
  \item
  We claim that the
  inverse of the Galois map
		\begin{eqnarray*}
			\chi_\#\colon(A\#G)\otimes_{A\otimes B}(A\#G) &\to& (A\#G)\otimes_A(A\#H),
			\\
			(a\#g)\otimes_{A\otimes B}(a'\#g')
			&\mapsto& (a\#g_{(1)}) (a'\#g') \otimes_A(1\#\pi(g_{(2)}))
		\end{eqnarray*}
	as in Eq.~\eqref{ohno}	will be given by
\begin{equation}
\label{jour}
  \begin{array}{rcl}
			\chi^{-1}_\# \colon (A\#G)\otimes_A  (A\#H) &\to& (A\#G)\otimes_{A\otimes B}(A\#G),
			\\
			(a' \# g') \otimes_A	(a\#h)
			&\mapsto& (a_{(0)}\#S^{-1}(h)^{\langle 1\rangle})\otimes_{A\otimes B}(1\#S^{-1}(h)^{\langle 2\rangle}a_{(1)})(a' \# g'),  
\end{array}
                \end{equation}
		where $\tau\colon H\to A\otimes_BA$, $h \mapsto h^{\langle 1\rangle}\otimes_Bh^{\langle 2\rangle}$ denotes the translation map of \eqref{berlinale},
           that is, the respective translation map corresponding to $\chi_\#$ reads
	   \begin{equation}
             \label{nuit}
		A\#H \to (A\#G)\otimes_{A\otimes B}(A\#G),
		\quad
			a\#h
			\mapsto (a_{(0)}\#S^{-1}(h)^{\langle 1\rangle})\otimes_{A\otimes B}(1\#S^{-1}(h)^{\langle 2\rangle}a_{(1)}),
		\end{equation}
                which can be checked directly. Alternatively,
                let us show that $\chi_\#$ fits into the commutative diagram
\begin{equation*}
		\begin{tikzcd}
		  A \otimes (G \otimes_B G) 
\arrow{r}{A \otimes \tilde\chi_G}
\arrow{d}[swap]{\cong}
& 
		  A \otimes (G \otimes H) 
                  \arrow{d}{\cong}
                   \\
 (A \# G) \otimes_{A \otimes B} (A \# G)
 \arrow{r}{\chi_\#}
 &  
 (A \# G)_\ract \otimes_A \due {(A \# G)} \lact {},
		\end{tikzcd}
	\end{equation*}
where $\tilde\chi_G$ is the map from \eqref{Galois2} in case $A = G$.
Indeed, if we call the vertical isomorphisms from left to right $\phi$ and $\psi$ for the moment,
one has
\begin{equation*}
\begin{split}
& \big(\psi \circ (A \otimes \tilde\chi_G) \circ \phi^{-1}\big)\big((a \# g) \otimes_{A \otimes B} (a' \# g')\big)
  \\
  = \ &
  \big(\psi \circ (A \otimes \tilde\chi_G) \circ \phi^{-1}\big)\big((a \# g)(a' \# 1_G) \otimes_{A \otimes B} (1 \# g')\big)
  \\
  = \ &
  \big(\psi \circ (A \otimes \tilde\chi_G)\big)\big(a (g_{(1)} a') \otimes (g_{(2)} \otimes g')\big)
  \\
  = \ &
\big(a (g_{(1)} a') \otimes g_{(2)}g'\big) \otimes_A \big(1 \otimes \pi(g_{(3)})\big),
\end{split}
\end{equation*}
which is $\chi_\#$ given above. Hence, using the inverse \eqref{chi'inv} of $\tilde\chi_G$ and the target map \eqref{ratagnan},
\begin{equation*}
\begin{split}
  \chi_\#^{-1}\big((a' \# g') \otimes_A (a \# h)\big)
  & =
  \big(\phi \circ (A \otimes \tilde\chi_G)^{-1} \circ \psi^{-1}\big)\big((a' \# g') \otimes_A  (a \# h)\big)
  \\
&  = 
  \big(\phi \circ (A \otimes \tilde\chi_G)^{-1} \circ \psi^{-1}\big)\big((a_{(0)} \# a_{(1)})(a' \# g') \otimes_A  (1 \# h)\big)
  \\
&  = 
  \big(\phi \circ (A \otimes \tilde\chi^{-1}_G)\big)\big(a_{(0)} (a_{(1)}a') \otimes (a_{(2)} g' \otimes  h)\big)
  \\
&  = \big((a_{(0)} (a_{(1)}a') \# S^{-1}(h)^{\langle 1\rangle}\big) \otimes_{A \otimes B} \big(1 \# S^{-1}(h)^{\langle 2\rangle}  a_{(2)} g'\big),
\end{split}
\end{equation*}
which could be shown to be equal to \eqref{jour} by using the left $H$-colinearity of the canonical map \eqref{Galois2}, but even simpler is to observe that for $a' \# g' = 1 \# 1_G$ one obtains the translation map claimed in \eqref{nuit}. Then, by right $(A \# G)$-linearity of $\chi_\#$ resp.\ its inverse (as in the general case in Definition \ref{erbsensuppe}), one directly obtains the expression in \eqref{jour}.
\end{enumerate}
  This concludes the proof.
\end{proof}

\subsection{Classifying Hopf-Galois extensions}
Let us now pass to a classification of bialgebroid Hopf-Galois extensions in the spirit of what is known for Hopf algebras \cite[\S3]{Schneider}, \cite[\S1]{Tak:Rel}:

\begin{proposition}
	\label{prop:HG}
	Given a (right) Hopf algebroid surjection $\pi\colon \cG \to \cH $ with (left) Hopf kernel $B$ such that both $\mathcal{G}_\ract$ and $\due \cG \lact {}$ are $R$-flat, the following are equivalent:
	\begin{enumerate}
		\compactlist{99}
	      \item
                $\cG B^+=\ker\pi$;
	      \item
                $B\subseteq \cG $ is a Hopf-Galois extension.
	\end{enumerate}
\end{proposition}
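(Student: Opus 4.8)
The plan is to reduce both conditions to a single statement about the kernel of the canonical Galois map $\chi$ of \eqref{ohno}, exploiting that $\cG$ is itself a right Hopf algebroid. First I would record the elementary inclusion $\cG B^+\subseteq\ker\pi$, which holds unconditionally: applying $\varepsilon\otimes_R\id_\cH$ to the coinvariance relation $b_{(1)}\otimes_R\pi(b_{(2)})=b\otimes_R 1$ yields $\pi(b)=s_\cH(\varepsilon(b))=0$ for every $b\in B^+$, and since $\ker\pi$ is a two-sided ideal this gives $\cG B^+\subseteq\ker\pi$. The useful structural observation is that $\chi$ factors as $\chi\circ q=(\id_\cG\otimes_R\pi)\circ\alpha=:\Theta$, where $\alpha$ is the bijective Hopf-Galois map \eqref{eq:HG} of $\cG$ and $q\colon\cG_\bract\otimes_R{}_\lact\cG\to\cG\otimes_B\cG$ is the canonical projection. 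As $\alpha$ is bijective and $\pi$ surjective, $\Theta$ is surjective, hence so is $\chi$; therefore (ii) is equivalent to injectivity of $\chi$, and since $\ker q\subseteq\ker\Theta$ and $\ker\chi=q(\ker\Theta)$, this is in turn equivalent to $\ker\Theta=\ker q$. Using $R$-flatness of $\cG_\ract$ one has $\ker(\id\otimes\pi)=\cG\otimes_R\ker\pi$, so $\ker\Theta=\alpha^{-1}(\cG\otimes_R\ker\pi)$.

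For the implication (ii)$\Rightarrow$(i) I would argue as follows. Given $x\in\ker\pi$, the element $\alpha^{-1}(1\otimes_R x)=x_\smap\otimes_R x_\smam$ lies in $\ker\Theta=\ker q$, hence is a finite sum of balancing relations $g_j b_j\otimes_R g'_j-g_j\otimes_R b_j g'_j$ with $b_j\in B^+$. I then apply the map $\Lambda\colon g\otimes_R g'\mapsto g\bract\varepsilon(g')$, which is well defined on $\cG_\bract\otimes_R{}_\lact\cG$ by the counit axioms \eqref{counityippieh}. On the left-hand side $\Lambda(x_\smap\otimes_R x_\smam)=x$ by \eqref{Tch8}; on the right-hand side the generalised character property of $\varepsilon$ together with Lemma \ref{wasserkessel} gives $\varepsilon(b_j g'_j)=\varepsilon(b_j)\varepsilon(g'_j)=0$, so the second summands vanish and the first summands equal $g_j\cdot\big(b_j\bract\varepsilon(g'_j)\big)\in\cG B^+$. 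Thus $x\in\cG B^+$, proving $\ker\pi\subseteq\cG B^+$ and hence (i). The point to stress is that this argument produces the equality directly at the level of $\cG$, so that no faithful flatness is needed.

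For (i)$\Rightarrow$(ii) the task is to show $\ker\Theta\subseteq\ker q$. A general element of $\ker\Theta$ has the form $\alpha^{-1}(g\otimes_R x)=x_\smap\otimes_R x_\smam g$ with $x\in\ker\pi=\cG B^+$; writing $x=cb$ with $b\in B^+$ and using \eqref{Tch6}, this becomes $c_\smap b_\smap\otimes_R b_\smam c_\smam g$, and everything reduces to showing $b_\smap\otimes_B b_\smam=0$ in $\cG\otimes_B\cG$ for $b\in B^+$. The key lemma here is that the second leg of the internal translation map of a coinvariant is again coinvariant, that is, $b_\smam\in B$ whenever $b\in B$; granting this, one may move $b_\smam$ across the $B$-balanced tensor product and conclude $b_\smap\otimes_B b_\smam=b_\smap b_\smam\otimes_B 1=t\varepsilon(b)\otimes_B 1=0$ by \eqref{Tch7}.

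The hard part is precisely this lemma $b_\smam\in B$ for $b\in B$. I would extract it from the coinvariance $b_{(1)}\otimes_R\pi(b_{(2)})=b\otimes_R 1$ by applying the translation map to the first leg and combining the resulting identity with \eqref{Tch4} and \eqref{Tch5}; the delicate issue is the careful bookkeeping of the two $R$-bimodule structures ($\lact/\ract$ versus $\blact/\bract$) and of the Takeuchi constraint \eqref{Tch1}, which dictates when these manipulations are legitimate. Once this coinvariance statement is established, both inclusions $\ker\Theta\subseteq\ker q$ and $\ker q\subseteq\ker\Theta$ close up, $\chi$ is bijective, and the equivalence follows.
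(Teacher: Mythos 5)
Your reduction of bijectivity of $\chi$ to the single condition $\ker\Theta=\ker q$ is sound (with the conventions of the paper, $h\bract r=hs(r)$, so the domain of $\ga$ is precisely $\cG\otimes_{s(R)}\cG$ and the projection $q$ onto $\cG\otimes_B\cG$ exists), and your (ii)$\Rightarrow$(i) argument via the map $\Lambda\colon g\otimes_R g'\mapsto g\bract\varepsilon(g')$ is correct and genuinely different from the paper: the paper instead factors $\chi=(\cG\otimes_R f)\circ\gamma$ through the isomorphism $\gamma\colon\cG\otimes_B\cG\cong\cG_\ract\otimes_R\overline{\cG}$ of Lemma \ref{Warschau} and deduces injectivity of $f\colon\overline{\cG}\to\cH$ by restricting along $s\colon R\to\cG$, whereas your $\Lambda$-argument extracts $x\in\cG B^+$ directly from $x_{\smap}\otimes_R x_{\smam}\in\ker q$ using only \eqref{Tch8} and the multiplicativity $\varepsilon(bg')=\varepsilon(b)\varepsilon(g')$ for $b\in B$, which indeed follows from \eqref{counityippieh} and \eqref{ohyes}.

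However, the (i)$\Rightarrow$(ii) direction has a genuine gap: your key lemma ``$b_{\smam}\in B$ whenever $b\in B$'' is \emph{false} in general, and no amount of bookkeeping with \eqref{Tch4}--\eqref{Tch5} will prove it. What coinvariance of $b$ actually propagates to is the \emph{first} leg: applying \eqref{Tch4} one gets
\begin{equation*}
b_{\smap(1)}\otimes_R\pi(b_{\smap(2)})\otimes_R b_{\smam}
= b_{(1)\smap}\otimes_R\pi(b_{(2)})\otimes_R b_{(1)\smam}
= b_{\smap}\otimes_R 1_\cH\otimes_R b_{\smam},
\end{equation*}
i.e.\ $b_{\smap}\otimes_R b_{\smam}\in B\otimes_R\cG$ --- this is exactly the computation in the paper's proof of Lemma \ref{Warschau}. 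The second leg is not coinvariant: in the Hopf algebra case $b_{\smap}\otimes b_{\smam}=b_{(2)}\otimes S^{-1}(b_{(1)})$, and since $B$ is only a \emph{left} coideal subalgebra ($\Delta(B)\subseteq\cG\otimes_R B$), coinvariance controls $\pi$ on the second leg of $\Delta b$, never on the first; concretely, for the Podle\'s sphere inside $\mathcal{O}_q(\mathrm{SL}_2)$ and $b=cd$ one finds $b_{\smap}\otimes b_{\smam}$ is a combination of terms $ab\otimes c^2$, $ad\otimes ac$, $cb\otimes ca$, $cd\otimes a^2$ (up to powers of $q$), whose second legs all have $U(1)$-degree $2$, so $b_{\smam}\notin B$. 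Fortunately your argument survives the correction with the roles of the legs swapped: using the first-leg statement you move $b_{\smap}$ \emph{leftwards} across the $B$-balanced tensor product, so that for $x=cb$ with $b\in B^+$,
\begin{equation*}
q\big(\ga^{-1}(g\otimes_R x)\big)
= c_{\smap}b_{\smap}\otimes_B b_{\smam}c_{\smam}g
= c_{\smap}\otimes_B b_{\smap}b_{\smam}c_{\smam}g
= c_{\smap}\otimes_B t(\varepsilon(b))\,c_{\smam}g
= 0
\end{equation*}
by \eqref{Tch7} and $\varepsilon(b)=0$, which gives $\ker\Theta\subseteq\ker q$ as desired. With this repair your proof is complete and somewhat more economical than the paper's, which routes both directions through the quotient coring $\overline{\cG}$ and the full strength of Lemma \ref{Warschau}; but as written, the proposal hinges on an unproven and untrue statement at exactly the step you yourself flagged as ``the hard part.''
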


\noindent For its proof we shall need a couple of technical details in (almost) complete analogy to the case of Hopf algebras, which will be dealt with first.

\begin{lemma}
	\label{spinat}
	Let $B = \cG^{\mathrm{co}\cH}$ be a left Hopf kernel such that $\mathcal{G}_\ract$ is $R$-flat.
	\begin{enumerate}
		\compactlist{99}
		\item 
		Then $B$ is a left $\mathcal{G}$-comodule algebra by means of $\lambda_B:=\Delta|_B\colon B\to \mathcal{G}\otimes_R B\subseteq \mathcal{G}\otimes_R \mathcal{G}$.
		\item
		The restriction $\varepsilon|_B\colon B\to R$ is an algebra morphism. In particular, $B^+$ is a two-sided ideal in $B$ and $\mathcal{G}B^+\subseteq\ker\pi$.
		\item
		The set $\mathcal{G}B^+\subseteq \mathcal{G}$ is a left $\mathcal{G}$-module and also a coring coideal, that is, $\mathcal{G}B^+\subseteq\ker\varepsilon$ as well as
		$
		\Delta(\mathcal{G}B^+)\subseteq \mathcal{G}B^+\otimes_R \mathcal{G} + \mathcal{G}\otimes_R \mathcal{G}B^+
		$
		holds.
	\end{enumerate}
\end{lemma}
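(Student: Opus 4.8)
The plan is to prove the three assertions in the order stated, since (iii) rests on both (i) and (ii), while (ii) uses only Lemma \ref{wasserkessel}.

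For \emph{(i)}, the only substantial point is that $\Delta$ corestricts to $\cG\otimes_R B$; coassociativity, counitality and multiplicativity of $\lambda_B:=\Delta|_B$ are then inherited from the coproduct of $\cG$ together with the fact that $B$ is an $R$-subring (Lemma \ref{wasserkessel}). To get the corestriction, fix $b\in B$ and compute, using coassociativity and the coinvariance $\rho_\cG(b)=b\otimes_R 1_\cH$,
\[
(\cG\otimes_R\rho_\cG)\Delta(b)=b_{(1)}\otimes_R\rho_\cG(b_{(2)})=(\Delta\otimes_R\cH)\rho_\cG(b)=\Delta(b)\otimes_R 1_\cH .
\]
This says exactly that $\Delta(b)$ lies in the kernel of $\cG\otimes_R\psi$, where $\psi\colon\cG\to\cG\otimes_R\cH$, $g\mapsto\rho_\cG(g)-g\otimes_R1_\cH$, is the map whose kernel is $B$ (Definition \ref{boing}, Remark \ref{tobeornottobeakernel}). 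Here the hypothesis that $\cG_\ract$ is $R$-flat enters: applying the exact functor $\cG\otimes_R-$ to the left-exact sequence defining $B$ gives $\cG\otimes_R B=\ker(\cG\otimes_R\psi)$, whence $\Delta(b)\in\cG\otimes_R B$.

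For \emph{(ii)}, I would first show $\varepsilon|_B$ is multiplicative. For $b,b'\in B$, the generalised character property \eqref{counityippieh} gives $\varepsilon(bb')=\varepsilon(\varepsilon(b')\blact b)=\varepsilon(b\,t(\varepsilon(b')))$; by Lemma \ref{wasserkessel}, $b$ commutes with $t(R)$, so $b\,t(\varepsilon(b'))=t(\varepsilon(b'))\,b$ and the $R$-bilinearity of $\varepsilon$ yields $\varepsilon(bb')=\varepsilon(b)\varepsilon(b')$. As $\varepsilon(1_B)=1_R$, this makes $\varepsilon|_B\colon B\to R$ an algebra morphism, so its kernel $B^+$ is a two-sided ideal of $B$. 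To obtain $\cG B^+\subseteq\ker\pi$ it suffices to check $\pi(B^+)=0$: applying $\varepsilon_\cG\otimes_R\cH$ to $\rho_\cG(b)=b\otimes_R1_\cH$ and using counitality together with $\varepsilon_\cH\circ\pi=\varepsilon_\cG$ gives $\pi(b)=s_\cH(\varepsilon(b))$ for every $b\in B$, which vanishes on $B^+$; since $\pi$ is multiplicative, $\pi(\cG B^+)=\pi(\cG)\pi(B^+)=0$.

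For \emph{(iii)}, that $\cG B^+$ is a left $\cG$-module is clear, and $\cG B^+\subseteq\ker\varepsilon$ follows from $\varepsilon(gb)=\varepsilon(g\bract\varepsilon(b))=0$ for $b\in B^+$ via \eqref{counityippieh}. The coideal property is where (i) and (ii) combine, through the intermediate identity that for every $b\in B$,
\[
\Delta(b)-b\otimes_R1_\cG\in\cG\otimes_R B^+ .
\]
This I would prove by writing $\Delta(b)=b_{(1)}\otimes_R b_{(2)}$ with $b_{(2)}\in B$ by part (i), splitting $b_{(2)}=(b_{(2)}-s\varepsilon(b_{(2)}))+s\varepsilon(b_{(2)})$ where the first summand lies in $B^+$, and recognising $b_{(1)}\otimes_R s\varepsilon(b_{(2)})=t(\varepsilon(b_{(2)}))b_{(1)}\otimes_R1_\cG=b\otimes_R1_\cG$ by the $\lact/\ract$ identification over $R$ and counitality. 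Given this, for $g\in\cG$ and $b\in B^+$ one multiplies by $\Delta(g)$: the $b\otimes_R1_\cG$ summand contributes $g_{(1)}b\otimes_R g_{(2)}\in\cG B^+\otimes_R\cG$, while the $\cG\otimes_R B^+$ summand contributes $g_{(1)}x\otimes_R g_{(2)}y$ with $y\in B^+$, so $g_{(2)}y\in\cG B^+$ and this term lies in $\cG\otimes_R\cG B^+$. The main obstacle throughout is part (i): the bialgebroid bookkeeping — distinguishing the $\lact,\ract,\blact,\bract$ actions, keeping $\Delta(b)$ inside the relevant Takeuchi subspace, and the swap of source and target maps when transporting a factor across $\otimes_R$ — must be handled with care, and it is precisely here that $R$-flatness of $\cG_\ract$ is indispensable in order to identify $\cG\otimes_R B$ with a kernel. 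Once the corestriction of $\Delta$ is secured, (ii) and (iii) are essentially formal consequences of the counit axioms and Lemma \ref{wasserkessel}.
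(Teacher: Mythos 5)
Your proof is correct and follows the paper's own argument in all essentials: the corestriction $\Delta(B)\subseteq\cG\otimes_R B$ via coassociativity of $\Delta$ and coinvariance, with $R$-flatness of $\cG_\ract$ invoked exactly where the paper (tacitly) needs it to identify $\cG\otimes_R B$ with $\ker(\cG\otimes_R\psi)$; multiplicativity of $\varepsilon|_B$ via \eqref{counityippieh} and \eqref{ohyes}; and the split $b_{(2)}=(b_{(2)}-s\varepsilon(b_{(2)}))+s\varepsilon(b_{(2)})$ together with counitality for the coideal property. Your only deviations are cosmetic repackagings of the same ingredients: for $\cG B^+\subseteq\ker\pi$ you first extract $\pi|_B=s_\cH\circ\varepsilon|_B$ from coinvariance and counitality and then use multiplicativity of $\pi$, where the paper runs the corresponding computation directly on $\pi(gb^+)$, and in (iii) you isolate the identity $\Delta b-b\otimes_R 1\in\cG\otimes_R B^+$ before multiplying by $\Delta(g)$ (legitimate, since $\Delta(g)$ lies in the Takeuchi subspace), where the paper performs the split inside the computation of $\Delta(gb^+)$ in a single display.
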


\begin{proof}
	All of these are direct verifications that use the bialgebroid properties such as counitality of the coproduct, along with the properties of the left Hopf kernel in Lemma \ref{wasserkessel}. 
	\begin{enumerate}
		\compactlist{50}
		\item
		It suffices to prove that $\Delta(B)\subseteq \mathcal{G}\otimes_RB$. 
		Indeed,  since 
		$
		(\mathcal{G}\otimes_R\rho_\mathcal{G})(\Delta b)
		=b_{(1)}\otimes_R\rho_\cG (b_{(2)})
		=b_{(1)}\otimes_Rb_{(2)}\otimes_R\pi(b_{(3)})
		=(\Delta\otimes_R\mathcal{G})\rho_\mathcal{G}(b)
		=\Delta b \otimes_R1
		$
		by coassociativity of $\Delta$
and that $\cG \otimes_R \ker \varphi = \ker(\cG \otimes_R \varphi)$ thanks to the flatness of $\cG_\ract$ as a right $R$-module, where $\ker \varphi$ defines $B$ as in \eqref{kartoffelsuppe},
                                one deduces that 
		$\Delta b \in \mathcal{G}\otimes_RB$ for $b \in B$.
		\item
		  For $b,b'\in B$, one has $\varepsilon(bb')=\varepsilon(\varepsilon(b') \blact b)\overset{\eqref{ohyes}}{=}\varepsilon(b \ract \varepsilon(b'))=\varepsilon(b)\varepsilon(b')$, that is, on $B$ the counit behaves like an $R$-ring morphism, which on a general element in $\cG$ is not true.
                  Hence, $\varepsilon(bb^+)=\varepsilon(b^+b) =0$ for $b^+ \in B^+$.
		On the other hand, 
		\begin{align*}
			\pi(gb^+)
			&=\varepsilon(g_{(1)}b^+_{(1)}) \lact \pi(g_{(2)}b^+_{(2)})
			=\varepsilon(g_{(1)}b^+_{(1)}) \lact \pi(g_{(2)})\pi(b^+_{(2)})\\
			&=\varepsilon(g_{(1)}b^+) \lact \pi(g_{(2)})
			=\varepsilon\big(g_{(1)} \bract  \varepsilon(b^+)\big) \lact \pi(g_{(2)})
			=0
		\end{align*}
		for $g\in \mathcal{G}$, and thus $\mathcal{G}B^+\subseteq\ker\pi$.
		\item 
		Clearly, $\mathcal{G}B^+$ is a left $\mathcal{G}$-module and that $\mathcal{G}B^+\subseteq\ker\varepsilon$ has already been shown in part (ii). 
		To prove that $\Delta(\mathcal{G}B^+)\subseteq \mathcal{G}B^+\otimes_R \mathcal{G} + \mathcal{G}\otimes_R \mathcal{G}B^+$, let $g\in \mathcal{G}$ and $b^+\in B^+$. Then
		$$
		\Delta(gb^+)
		= g_{(1)}b^+_{(1)} \otimes_R g_{(2)}b^+_{(2)}
		= g_{(1)}b^+_{(1)} \otimes_R \underbracket{g_{(2)}(b^+_{(2)}-s(\varepsilon(b^+_{(2)})))}_{\in \mathcal{G}B^+}
		+ 
		\underbracket{g_{(1)}b^+}_{\in \mathcal{G}B^+}\otimes_Rg_{(2)}
		$$
		is an element of $\mathcal{G}B^+\otimes_R \mathcal{G} + \mathcal{G}\otimes_R \mathcal{G}B^+$, using $\Delta(B)\subseteq \mathcal{G}\otimes_RB$ from (i).
	\end{enumerate}
	This concludes the proof.
\end{proof}

The previous lemma implies that the quotient
$$
\overline{\cG }:=\cG /\cG B^+ \cong \cG \otimes_B R
$$
is a left $\cG $-module coring, the canonical projection
$\wp\colon \cG \to\overline{\cG }$ of which induces its structure maps: writing generic elements in $\overline \cG$ as $\wp(g)$, one sets   
\begin{equation}
	\label{pepe}
	g \cdot \wp(g'):=\wp(gg'),\qquad
	\Delta(\wp(g)):=\wp(g_{(1)})\otimes_R\wp(g_{(2)}),\qquad
	\varepsilon(\wp(g)):=\varepsilon(g).
\end{equation}
In particular, $\wp$ is an $\RRe$-module map, that is,
$
r \lact \wp(g):=\wp(r \lact g)$ as well as $\wp(g) \ract r := \wp(g \ract r).
$
Moreover, $\overline{\cG }$ is a right $\cH $-comodule with respect to the right $\cH $-coaction 
$$
\overline{\cG }\to\overline{\cG }\otimes_R\cH ,\quad
\wp(g)\mapsto\wp(g_{(1)})\otimes_R\pi(g_{(2)}).
$$
Note that this map is well-defined since $\cG B^+\subseteq\ker\pi$ and $\Delta(\cG B^+)\subseteq \cG B^+\otimes_R\cG +\cG \otimes_R\cG B^+$
as just proven in (ii) and (iii) of the previous lemma.

\begin{lemma}
	\label{Warschau}
	Let $\pi\colon \cG \to \cH$ be a surjection of right Hopf algebroids with Hopf kernel $B=\cG^{\mathrm{co}\cH}$, and let both $\mathcal{G}_\ract$ and $\due \cG \lact {}$ be $R$-flat.
Then
	\begin{equation}
		\label{Kleinseite}
		\gamma \colon \cG   \otimes_B \cG 
		\xrightarrow{\cong}
                \cG_\ract  \otimes_R \due {\overline{\cG}} \lact {}, 
		\quad g\otimes_B g'\mapsto g_{(1)}g' \otimes_R \wp(g_{(2)})
	\end{equation}
	is an isomorphism of the $R$-bimodules
        $
        \due {(\cG \otimes_B \cG)} \blact \ract :=
        \due \cG  \blact \ract \otimes_B \cG $
        and
        $
\due {(\cG  \otimes_R \overline{\cG})} \blact \ract :=
        \cG  \otimes_R \due {\overline{\cG}} \blact \ract,
$
        with inverse given by $g\otimes_R\wp(g')\mapsto g'_{\smap }\otimes_B g'_{\smam }g$. 
	This map is moreover a morphism of right $\cH$-comodules with respect to the coactions
	\begin{equation*}
		\begin{array}{rrrl}
			\rho_{\cG  \otimes_B \cG}\colon \!\!\!\!\! & \cG  \otimes_B \cG  \to (\cG  \otimes_B \cG ) \otimes_R \cH , 
			& g \otimes_B g' \!\!\! & \mapsto (g_{(1)} \otimes_B g') \otimes_R \pi(g_{(2)}), 
			\\
			\rho_{\cG \otimes_R\overline{\cG }}\colon \!\!\!\!\! & \cG  \otimes_R \overline{\cG } \to (\cG  \otimes_R \overline{\cG }) \otimes_R \cH , 
			& 
			g \otimes_B \wp(g') \!\!\! & \mapsto \big(g \otimes_B \wp(g'_{(1)})\big) \otimes_R \pi(g'_{(2)}). 
		\end{array}
	\end{equation*}
\end{lemma}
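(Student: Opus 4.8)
The plan is to realise $\gamma$ as the map \emph{induced} by the canonical Hopf-Galois map $\alpha$ of \eqref{eq:HG} after passing to suitable quotients of its domain and codomain, and to read off the inverse of $\gamma$ directly from $\alpha^{-1}$. Recall that, since $\cG$ is a right Hopf algebroid, the map $\alpha\colon\cG_\bract\otimes_R{}_\lact\cG\to\cG_\ract\otimes_R{}_\lact\cG$, $g\otimes_Rg'\mapsto g_{(1)}g'\otimes_Rg_{(2)}$, is bijective with inverse $\alpha^{-1}(g\otimes_Rg')=g'_{\smap}\otimes_Rg'_{\smam}g$. There are two canonical surjections: $q_1\colon\cG_\bract\otimes_R{}_\lact\cG\twoheadrightarrow\cG\otimes_B\cG$, which is legitimate because $s(R)\subseteq B$ (Lemma \ref{wasserkessel}) makes the $\otimes_R$-relations special cases of the $\otimes_B$-relations, and $q_2:=\cG\otimes_R\wp\colon\cG_\ract\otimes_R{}_\lact\cG\twoheadrightarrow\cG\otimes_R\overline{\cG}$, whose kernel is $\cG\otimes_R\cG B^+$ by $R$-flatness of $\cG_\ract$. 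A one-line check gives $q_2\circ\alpha=\gamma\circ q_1$, so $\gamma$ is exactly the descent of $\alpha$; it is then an isomorphism precisely when $\alpha(\ker q_1)=\ker q_2$, in which case its inverse is the descent of $\alpha^{-1}$, that is, $g\otimes_R\wp(g')\mapsto g'_{\smap}\otimes_Bg'_{\smam}g$, as claimed.

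First I would establish well-definedness of $\gamma$, i.e. the inclusion $\alpha(\ker q_1)\subseteq\ker q_2$. A spanning element of $\ker q_1$ is $gb\otimes_Rg'-g\otimes_Rbg'$ with $b\in B$; applying $\alpha$ and using multiplicativity of $\Delta$ turns this into $g_{(1)}b_{(1)}g'\otimes_Rg_{(2)}b_{(2)}-g_{(1)}bg'\otimes_Rg_{(2)}$. Here I invoke $\Delta(B)\subseteq\cG\otimes_RB$ from Lemma \ref{spinat}(i), split $b_{(2)}=s\varepsilon(b_{(2)})+\bigl(b_{(2)}-s\varepsilon(b_{(2)})\bigr)$ with the second summand in $B^+=B\cap\ker\varepsilon$, and collapse the $s\varepsilon(b_{(2)})$-term against $b_{(1)}$ by counitality together with the commutation $b\ract r=t(r)b$ from \eqref{ohyes}; what survives lies in $\cG\otimes_R\cG B^+=\ker q_2$. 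Dually, for injectivity I must show $\alpha^{-1}(\ker q_2)\subseteq\ker q_1$, equivalently that $g\otimes_R\wp(g')\mapsto g'_{\smap}\otimes_Bg'_{\smam}g$ is independent of the lift of $\wp(g')$: for $g'=g''b^+$ with $b^+\in B^+$, identity \eqref{Tch6} gives $(g''b^+)_{\smap}\otimes_R(g''b^+)_{\smam}=g''_{\smap}b^+_{\smap}\otimes_Rb^+_{\smam}g''_{\smam}$, and the resulting factor $b^+_{\smap}\otimes_Bb^+_{\smam}(\cdots)$ vanishes in $\cG\otimes_B\cG$ since $b^+\in B$ can be absorbed across $\otimes_B$ and then annihilated by $\varepsilon(b^+)=0$ via \eqref{Tch7}, \eqref{Tch8}, and \eqref{ohyes}.

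Granting the two kernel inclusions, $\gamma$ and its stated inverse are mutually inverse. As an independent confirmation one verifies the two composites directly on representatives: $\gamma^{-1}(\gamma(g\otimes_Bg'))=g_{(2)\smap}\otimes_Bg_{(2)\smam}g_{(1)}g'=g\otimes_Bg'$ by \eqref{Tch3}, and $\gamma(\gamma^{-1}(g\otimes_R\wp(g')))=g'_{\smap(1)}g'_{\smam}g\otimes_R\wp(g'_{\smap(2)})=g\otimes_R\wp(g')$ by \eqref{Tch2}. The $R$-bilinearity with respect to the $\blact,\ract$-structures stated for the two sides is then immediate from the tetralinearity of $\Delta$ and from \eqref{Tch9}. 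For the right $\cH$-comodule assertion I would simply compare the two coactions along $\gamma$: using coassociativity of $\Delta$ and the definition \eqref{pepe} of the coaction on $\overline{\cG}$, both $\rho_{\cG\otimes_R\overline{\cG}}(\gamma(g\otimes_Bg'))$ and $(\gamma\otimes_R\cH)(\rho_{\cG\otimes_B\cG}(g\otimes_Bg'))$ reduce to the single expression $g_{(1)}g'\otimes_R\wp(g_{(2)})\otimes_R\pi(g_{(3)})$, so $\gamma$ is $\cH$-colinear.

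I expect the genuine obstacle to be the kernel-matching $\alpha(\ker q_1)=\ker q_2$ of the first two paragraphs: conceptually it says that the $\otimes_B$-identifications on the source correspond under $\alpha$ exactly to quotienting by $\cG B^+$ on the target, and carrying it out cleanly requires careful bookkeeping of the four $R$-actions $\lact,\ract,\blact,\bract$ alongside the coinvariance $\Delta(B)\subseteq\cG\otimes_RB$ and the commutation relation \eqref{ohyes}. The translation-map computations \eqref{Tch2}, \eqref{Tch3}, \eqref{Tch6} and the comodule compatibility are, by contrast, routine once the module structures are pinned down.
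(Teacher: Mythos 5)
Your proposal follows essentially the same route as the paper's proof: the two well-definedness checks (of $\gamma$ over $\otimes_B$ and of the candidate inverse over the quotient by $\cG B^+$), the verification that the two maps are mutually inverse via \eqref{Tch2} and \eqref{Tch3}, and the colinearity computation are exactly the paper's steps, merely repackaged in the language of descending the Hopf--Galois map $\alpha$ of \eqref{eq:HG} along the quotients $q_1$, $q_2$. That gloss is a genuine (if modest) gain in clarity: it makes explicit where the $R$-flatness of $\cG_\ract$ enters, namely in the identification $\ker q_2 = \cG\otimes_R \cG B^+$, which the paper uses only tacitly when checking that the inverse is well defined on representatives.

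There is, however, one under-justified step in your second paragraph. After applying \eqref{Tch6} to $g''b^+$, the element to be killed in $\cG\otimes_B\cG$ is $g''_{\smap}b^+_{\smap}\otimes_B b^+_{\smam}g''_{\smam}g$, and what must be ``absorbed across $\otimes_B$'' is not $b^+$ itself but the first leg $b^+_{\smap}$ of its translation map. For this one needs the nontrivial fact that
\begin{equation*}
b_{\smap}\otimes_R b_{\smam}\in B\otimes_R\cG \qquad \text{for all } b\in B,
\end{equation*}
which does not follow from any of the identities you cite: \eqref{Tch7}, \eqref{Tch8} and \eqref{ohyes} only finish the computation \emph{after} the absorption, and the Takeuchi membership \eqref{Tch1} controls $R$-actions, not coinvariance. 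The paper proves this separately by applying \eqref{Tch4} to the coinvariance condition $b_{(1)}\otimes_R\pi(b_{(2)})\otimes_R\cdots = b\otimes_R 1_\cH\otimes_R\cdots$, yielding $b_{\smap(1)}\otimes_R\pi(b_{\smap(2)})\otimes_R b_{\smam}=b_{\smap}\otimes_R\pi(1)\otimes_R b_{\smam}$, whence the first leg is coinvariant. Once you insert this observation, your argument is complete; without it, the crucial cancellation $b^+_{\smap}b^+_{\smam}=t\varepsilon(b^+)=0$ via \eqref{Tch7} cannot be brought to bear, since the two legs sit on opposite sides of the $B$-balanced tensor sign.
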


\begin{proof}
	We first prove that \eqref{Kleinseite} is a well-defined map over the balanced tensor product. For $b\in B$, one has
	$$
\wp(b)=\wp\big(b+\underbracket{(s\varepsilon(b)-b)}_{\in\ker\varepsilon}\big)
	=\wp(s \varepsilon(b)),
	$$
        which is also immediate from the identification
        $\overline \cG \cong \cG \otimes_B R$.
	Furthermore, 
	recall that $\Delta(B)\subseteq \cG \otimes_RB$, so that $b_{(1)}\otimes_R\wp(b_{(2)})=b_{(1)}\otimes_R\wp(s\varepsilon(b_{(2)}))=b\,\otimes_R\wp(1)$, and both statements
	together with the left $\cG$-linearity of $\wp$ defined in \eqref{pepe} imply
	\begin{equation*}
		\begin{split}
			g_{(1)}b_{(1)}g' \otimes_R \wp(g_{(2)}b_{(2)})
			&=g_{(1)}b_{(1)} g'\otimes_R g_{(2)}\cdot\wp(b_{(2)})
			\\
			&=g_{(1)}b g'\otimes_R g_{(2)}\cdot\wp(1)
			=g_{(1)}b g'\otimes_R\wp(g_{(2)}).
		\end{split}
	\end{equation*}
	This proves that the map \eqref{Kleinseite}
	is well-defined. 
	That the (what is going to be)
	inverse map $\cG \otimes_R\overline{\cG }\to \cG \otimes_B\cG $ given by  $g\otimes_R\wp(g')\mapsto g'_{\smap }\otimes_B g'_{\smam }g$ is well-defined follows from the fact that for all 
	$b^+\in B^+$, we obtain
	\begin{equation*}
		\begin{split}
			(g'b^+)_{\smap }\otimes_B(g'b^+)_{\smam }g
			&=g'_{\smap }b^+_{\smap }\otimes_Bb^+_{\smam }g'_{\smam }g
\\
			&
                  =
                  g'_{\smap }\otimes_B b^+_{\smap }b^+_{\smam } g'_{\smam } g
			\\
			&
                        =g'_{\smap } \otimes_B (g'_{\smam } \ract \varepsilon(b^+)) g 
			=0,
		\end{split}
	\end{equation*}
	where we used that
	$b_{\smap }\otimes_Rb_{\smam }\in B\otimes_R\cG $.
This, in turn, holds since
\begin{equation}
  \label{torino}
    b_{\smap (1)} \otimes_R \pi(b_{\smap (2)}) \otimes_R b_\smam
    \, \overset{\eqref{Tch4}}{=} \,
     b_{(1)\smap} \otimes_R \pi(b_{(2)}) \otimes_R b_{(1)\smam}
     \, = \, 
      b_{\smap} \otimes_R 1_\cH \otimes_R b_{\smam},
     \end{equation}
from which one deduces
$b_{\smap }\otimes_Rb_{\smam  } \in B \otimes_R \mathcal{G}$, using $\ker( \gvf \otimes_R \cG) = \ker \gvf \otimes_R \cG$ by the left $R$-flatness of $\due \cG \lact {}$,
and where $\ker \varphi$ defines $B$ as in \eqref{kartoffelsuppe}.

        That $\gamma$ is invertible with the mentioned inverse simply follows from the properties \eqref{Tch2} \& \eqref{Tch3} of a right Hopf algebroid.
	Moreover, $\gamma$ is right $\cG$-colinear since
	\begin{equation*}
		\begin{split}
			\big((\gamma\otimes_R\mathrm{id}) \circ \rho_{\cG \otimes_B\cG }\big)(g\otimes_B g')
			&=\gamma(g_{(1)}\otimes_B g')\otimes_R \pi(g_{(2)})
			\\
			&=g_{(1)}g'\otimes_R\wp(g_{(2)})\otimes_R\pi(g_{(3)})
			\\
			&=\rho_{\cG \otimes_R\overline{\cG }}\big(g_{(1)}g'\otimes_R\wp(g_{(2)})\big)
						=(\rho_{\cG \otimes_R\overline{\cG}} \circ \gamma)(g\otimes_B g'),
		\end{split}
	\end{equation*}
	which concludes the proof of the lemma.
\end{proof}

\begin{proof}[Proof of Proposition \ref{prop:HG}]
	Writing $\overline{\cG} = \cG /\cG B^+$ as well as $\cH  = \cG /\ker \pi$, consider the map  $f$ defined by the diagram
	\begin{equation*}
		\begin{tikzcd}
			&    \cG
			\arrow{rd}{\pi} 
			\arrow{ld}[swap]{\wp}
			& 
			\\
			\overline{\cG}    \arrow{rr}{f} & & \cH, 
		\end{tikzcd}
	\end{equation*}
	that is,
	$f\colon\overline{\cG}\to \cH$, $\wp(g)\mapsto\pi(g)$. This is a well-defined morphism of $R$-modules since $\cG B^+\subseteq\ker\pi$ by Lemma \ref{spinat}\,(iii). Now stare at the diagram 
	\begin{equation*}
		\begin{tikzcd}
			\cG \otimes_B\cG  \arrow{rrr}{\cong\text{ by Lemma \ref{Warschau}}}
			\arrow{ddrrr}{\chi} & & & \cG \otimes_R\overline{\cG } \arrow{dd}{\cG \otimes_Rf}\\
			& & & \\
			& & & \cG \otimes_R\cH 
		\end{tikzcd}
	\end{equation*}
	and accept its superior truth.
	If $\cG B^+=\ker\pi$, then $f$ is an isomorphism, which implies that $\chi$ is an isomorphism by the commutativity of the second diagram. This means that (i) implies~(ii). If, on the other hand, $B\subseteq \cG $ is a Hopf-Galois extension, by definition $\chi$ is an isomorphism. By  commutativity of the second diagram, this implies that $\cG \otimes_Rf$ is an isomorphism. In particular, the restriction $(\cG \otimes_Rf)|_{R\otimes_R\overline{\cG}}\colon R\otimes_R\overline{\cG}\to R\otimes_R\cH$ is injective. Since $R \otimes_R -$ is the trivial functor, we can identify the previous map with $f$ and deduce that $f$ is injective. By construction, $f$ is surjective, and thus (ii) implies (i), proving the stated equivalence.
	Moreover, $f$ is a morphism of left $\cG$-module corings, providing an isomorphism of such, in case (i) or (ii) are satisfied.
\end{proof}

As an immediate consequence of the now proven Proposition \ref{prop:HG}, we obtain:

\begin{corollary}
  \label{cor:overlineG}
  If one (and thus both) of the above conditions in Proposition \ref{prop:HG} is satisfied, then $\overline{\cG}\cong \cH$ as left $\cG$-module corings.
\end{corollary}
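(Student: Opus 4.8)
The plan is to promote the $R$-linear map $f\colon\overline{\cG}\to\cH$, $\wp(g)\mapsto\pi(g)$, already introduced in the proof of Proposition \ref{prop:HG}, to an isomorphism of left $\cG$-module corings. Its bijectivity under hypothesis (i) (equivalently (ii)) is exactly what that proof establishes, so the only thing left is to check that $f$ intertwines the left $\cG$-actions and the coring structures. Recall that $\overline{\cG}=\cG/\cG B^+$ carries the left $\cG$-action, coproduct, and counit of \eqref{pepe}, while $\cH$ is a left $\cG$-module coring via the bialgebroid surjection $\pi$, with action $g\cdot h:=\pi(g)h$. Well-definedness of $f$ is already recorded in the proof of Proposition \ref{prop:HG} via $\cG B^+\subseteq\ker\pi$ (Lemma \ref{spinat}\,(iii)).

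First I would verify left $\cG$-linearity: for $g,g'\in\cG$ one has $f(g\cdot\wp(g'))=f(\wp(gg'))=\pi(gg')=\pi(g)\pi(g')=g\cdot f(\wp(g'))$, using the definition of the $\cG$-action in \eqref{pepe} and the fact that $\pi$ is a ring morphism. For the coring structure, the counit is immediate from $\varepsilon_\cH\circ\pi=\varepsilon$, giving $\varepsilon_\cH(f(\wp(g)))=\varepsilon_\cH(\pi(g))=\varepsilon(g)=\varepsilon(\wp(g))$; and compatibility with the coproduct follows from $\pi$ being a morphism of corings, that is, $\Delta_\cH\circ\pi=(\pi\otimes_R\pi)\circ\Delta$, so that
\[
\Delta_\cH(f(\wp(g)))=\pi(g_{(1)})\otimes_R\pi(g_{(2)})=(f\otimes_R f)\big(\wp(g_{(1)})\otimes_R\wp(g_{(2)})\big)=(f\otimes_R f)(\Delta\wp(g)),
\]
where $f\otimes_R f$ is well defined since $f$ is $R$-bilinear. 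Hence $f$ is a morphism of left $\cG$-module corings.

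Finally, as $f$ is a \emph{bijective} morphism of left $\cG$-module corings, its set-theoretic inverse $f^{-1}$ is automatically again such a morphism: left $\cG$-linearity of $f^{-1}$ is standard, and for the comultiplication one obtains $(f^{-1}\otimes_R f^{-1})\circ\Delta_\cH=\Delta\circ f^{-1}$ by pre- and post-composing the coproduct-compatibility of $f$ with $f^{-1}$ on the two sides. I expect no genuine obstacle here, since the whole content reduces to $\pi$ being a surjection of left bialgebroids over $R$ together with the structure maps \eqref{pepe}; the only point requiring a moment's care is the well-definedness of $f$, which is inherited from the proof of Proposition \ref{prop:HG}.
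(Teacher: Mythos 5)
Your proposal is correct and follows exactly the paper's route: the paper derives the corollary from the map $f\colon\overline{\cG}\to\cH$, $\wp(g)\mapsto\pi(g)$, whose bijectivity is established in the proof of Proposition \ref{prop:HG}, and simply asserts in the final sentence of that proof that $f$ is a morphism of left $\cG$-module corings. The routine verifications you spell out (left $\cG$-linearity, compatibility with $\Delta$ and $\varepsilon$ via $\pi$ being a morphism of left bialgebroids over $R$, and well-definedness of $f\otimes_R f$ from $R$-bilinearity) are precisely the details the paper leaves implicit, and they are all sound.
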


\begin{remark}
Let $B\subseteq \cG$ be a Hopf-Galois extension as in Definition \ref{erbsensuppe} such that both $\mathcal{G}_\ract$ and $\due \cG \lact {}$ are $R$-flat. Then we can express the corresponding translation map $\mathcal{T} \colon\mathcal{H}\to\mathcal{G}\otimes_B\mathcal{G}$, $h\mapsto h^{\scriptscriptstyle\langle+\rangle}\otimes_Bh^{\scriptscriptstyle\langle-\rangle}$ from Eq.~\eqref{cuffie} in terms of the translation map on $\mathcal{G}$, that is, for all $g\in\mathcal{G}$,
\begin{equation*}
	\mathcal{T}(\pi(g))=g_{\smap }\otimes_Bg_{\smam  }.
\end{equation*}
To show that this is well-defined, 
let $gb^+\in\ker\pi=\mathcal{G}B^+$ as in Proposition~\ref{prop:HG}.
Then
\begin{align*}
	(gb)_{\smap }\otimes_B(gb)_{\smam  }
	= g_{\smap }b_{\smap }\otimes_Bb_{\smam  }g_{\smam  }
	 \stackrel{\eqref{torino}}{=} g_{\smap }\otimes_Bb_{\smap }b_{\smam  }g_{\smam  }
	=g_{\smap }\otimes_Bt(\varepsilon(b))g_{\smam  }
	=0
\end{align*}
by \eqref{Tch6} along with \eqref{Tch7}.
From this, for all $h,h'\in\mathcal{H}$, and $g\in\mathcal{G}$, the identities
\begin{eqnarray*}
  {h^{\scriptscriptstyle \scriptscriptstyle\langle+\rangle}}_{\! (0)} h^{\scriptscriptstyle\langle-\rangle} \otimes_R {h^{\scriptscriptstyle\langle+\rangle}}_{\!(1)}  &=& 1 \otimes_R h \quad \in \mathcal{G}_{\ract} \! \otimes_R \! {}_\lact \mathcal{H},
  \\
	{g_{(1)}}^{\! \scriptscriptstyle\langle+\rangle} \otimes_B  {g_{(1)}}^{\! \scriptscriptstyle\langle-\rangle}g_{(0)}   &=& g \otimes_B 1
	\quad \in \cG 
	\otimes_B  \cG,
        \\
	{h^{\scriptscriptstyle\langle+\rangle}}_{\! (0)} \otimes_B h^{\scriptscriptstyle\langle-\rangle} \otimes_R {h^{\scriptscriptstyle\langle+\rangle}}_{\! (1)} &=& {h_{(1)}}^{\! \scriptscriptstyle\langle+\rangle} \otimes_B
	{h_{(1)}}^{\! \scriptscriptstyle\langle-\rangle} \otimes_R  h_{(2)},
        \\
	h^{\scriptscriptstyle\langle+\rangle \scriptscriptstyle\langle+\rangle} \otimes_B  h^{\scriptscriptstyle\langle+\rangle \scriptscriptstyle\langle-\rangle} \otimes_B h^{\scriptscriptstyle\langle-\rangle} &=&
	h^{\scriptscriptstyle\langle+\rangle} \otimes_B {h^{\scriptscriptstyle\langle-\rangle}}_{\! (1)} \otimes_B {h^{\scriptscriptstyle\langle-\rangle}}_{\! (2)},  \\
	(hh')^{\scriptscriptstyle\langle+\rangle} \otimes_B (hh')^{\scriptscriptstyle\langle-\rangle} &=& h^{\scriptscriptstyle\langle+\rangle} h'^{\mkern 1.5mu\scriptscriptstyle\langle+\rangle}
	\otimes_B h'^{\mkern 1.5mu \scriptscriptstyle\langle-\rangle}h^{\scriptscriptstyle\langle-\rangle},  \\
	h^{\scriptscriptstyle\langle+\rangle}h^{\scriptscriptstyle\langle-\rangle} &=& t_\cG \varepsilon (h),  \\
	h^{\scriptscriptstyle\langle+\rangle} \bract \varepsilon(h^{\scriptscriptstyle\langle-\rangle})  &=&  h,  \\
	(s (r) t (r'))^{\scriptscriptstyle\langle+\rangle} \otimes_B (s (r) t (r') )^{\scriptscriptstyle\langle-\rangle}
	&=& t(r') \otimes_B t(r),
\end{eqnarray*}
follow directly from the corresponding properties of the translation map of $\mathcal{G}$.
\end{remark}

\subsection{Takeuchi-Schneider equivalence}
\label{sec:TakSchSub}

In this section, we will prove an equivalence of categories for Hopf modules associated to Hopf algebroid quantum homogeneous spaces, generalising the well-known Takeuchi-Schneider equivalence in the Hopf algebra setup. For this, we first introduce the Takeuchi functors and show that they are mutually adjoint, continue with two technical lemmata, and then provide the aforementioned equivalence under additional (faithful) flatness assumptions. The latter will be encapsulated in the definition of a Hopf algebroid principal homogeneous space.

As a standing assumption for the rest of this article, we shall assume that all appearing bialgebroids $(\cG,R)$, resp.\ $(\cH,R)$, are flat with respect to the right $R$-action denoted $\ract$. This, in particular, guarantees that the category ${}^\cG\!\cM$ of left comodules is abelian and that the forgetful functor ${}^\cG \!\cM \to \cM_R$ is exact (see, for example, \cite[\S1.1.2]{Pos:HAOSMASCM}), which we tacitly use throughout. At times, we repeat this assumption to underline its importance.

Next, as in \S\ref{schreibtinte}, we assume that there is a projection $\pi \colon \cG \to \cH$ of right Hopf algebroids and we set $B:= \cG^{\mathrm{co}\cH}$ as in Definition \ref{boing}.
Then there is a functor ${}_B^{\mkern 3 mu \cG}\cM^{\phantom{\cG}}_{\mkern -2mu B}\to{}^\cH _B\mathcal{M}$, where we view an object $M$ in ${}_B^{\mkern 3 mu \cG}\cM^{\phantom{\cG}}_{\mkern -2mu B}$ as an object in ${}^\cH _B\mathcal{M}$ by forgetting the right $B$-action and replacing the left $\cG $-coaction $\lambda_M\colon M\to \cG \otimes_RM$ 
by the left $\cH $-coaction 
\begin{equation}
\label{Platzregen}
\overline{\lambda}_M:=(\pi\otimes_RM) \circ \lambda_M\colon M\to \cH \otimes_R M,
\end{equation}
that is, by the projection of $\lambda_M$. Using this, one can easily see:

\begin{lemma}
\label{lem:Phi}
Let $M$ be an object in ${}_B^{\mkern 3 mu \cG}\cM^{\phantom{\cG}}_{\mkern -2mu B}$.
The induced left $\cH $-coaction $\overline{\lambda}_M\colon M\to \cH \otimes_RM$ and the left $B$-action restrict to maps
$$
MB^+\to \cH \otimes_RMB^+
\qquad\text{and}\qquad
B\otimes_RMB^+\to MB^+,
$$
respectively. 
In particular, for $M = B$, one obtains that $B^+$ itself is a left $\cH $-comodule.
\end{lemma}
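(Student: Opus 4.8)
The plan is to prove the first (and main) assertion by a direct computation on a generating element $mb^+$ of $MB^+$, with $m\in M$ and $b^+\in B^+$, and then to read off the remaining two statements. Since $M$ lies in ${}_B^{\mkern 3 mu \cG}\cM^{\phantom{\cG}}_{\mkern -2mu B}$, the covariance identity $\lambda_M(bmb')=\lambda_B(b)\lambda_M(m)\lambda_B(b')$ with $b=1_B$, $b'=b^+$ gives $\lambda_M(mb^+)=\lambda_M(m)\lambda_B(b^+)$, and by Lemma~\ref{spinat}(i) one may take $\lambda_B=\Delta|_B$, so that $\lambda_B(b^+)=\Delta(b^+)=b^+_{(1)}\otimes_R b^+_{(2)}$ with $b^+_{(2)}\in B$.

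The key step is to split $\Delta(b^+)$ along the augmentation. Using counitality in the form $b^+_{(1)}\ract\varepsilon(b^+_{(2)})=b^+$, I would write
\[
\Delta(b^+)=\big(b^+_{(1)}\otimes_R(b^+_{(2)}-s\varepsilon(b^+_{(2)}))\big)+b^+\otimes_R 1_\cG.
\]
The first summand lies in $\cG\otimes_R B^+$: applying $\id_\cG\otimes_R\varepsilon$ returns $b^+_{(1)}\ract\varepsilon(b^+_{(2)})-b^+_{(1)}\ract\varepsilon(b^+_{(2)})=0$ (using $\varepsilon\circ s=\id_R$ and $b^+_{(2)}\in B$), so by $R$-flatness of $\cG_\ract$, our standing assumption and exactly the well-definedness argument used for \eqref{WorD}, it is annihilated by $\id\otimes_R\varepsilon$ and hence sits in $\cG\otimes_R B^+$. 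The second summand manifestly lies in $B^+\otimes_R B$.

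Acting with $\lambda_M(m)=m_{(-1)}\otimes_R m_{(0)}$ on this decomposition and then applying $\pi\otimes_R M$, the first summand yields elements of $\cH\otimes_R MB^+$, because the right $B$-action sends $m_{(0)}$ paired with $B^+$ into $MB^+$. The second summand produces $\pi(m_{(-1)}b^+)\otimes_R m_{(0)}$, which vanishes since $m_{(-1)}b^+\in\cG B^+\subseteq\ker\pi$ by Lemma~\ref{spinat}(ii). This establishes $\overline{\lambda}_M(MB^+)\subseteq\cH\otimes_R MB^+$, the flatness of $\cH_\ract$ ensuring that the right-hand side is a genuine subspace of $\cH\otimes_R M$. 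The restriction of the left $B$-action is then immediate from associativity, $b(mb^+)=(bm)b^+\in MB^+$ for $b\in B$. Finally, taking $M=B$, which is a left $\cG$-covariant $B$-bimodule over itself because $\lambda_B$ is an algebra morphism, and observing $BB^+=B^+$ since $B^+$ is a left ideal of $B$ containing $1_Bb^+$, the first assertion specialises to $\overline{\lambda}_B(B^+)\subseteq\cH\otimes_R B^+$; as $B$ is a left $\cH$-comodule via the corestriction $\overline{\lambda}_B$ along the coring morphism $\pi$, this exhibits $B^+$ as a left $\cH$-subcomodule.

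The computation is routine and I do not expect a genuine obstacle; the only point deserving care is the bookkeeping inside the balanced tensor products. Concretely, the passage $b^+_{(1)}\otimes_R s\varepsilon(b^+_{(2)})=b^+\otimes_R 1$ must be justified through the $\ract/\lact$ identification in $\cG_\ract\otimes_R{}_\lact\cG$, and the claim that the first summand of $\Delta(b^+)$ lies in $\cG\otimes_R B^+$ rather than merely in $\cG\otimes_R B$ is precisely where $R$-flatness is genuinely needed, since the Sweedler components are entangled and cannot be separated slot by slot. Everything else is supplied by Lemma~\ref{spinat}: that $\Delta(B)\subseteq\cG\otimes_R B$, that $\varepsilon|_B$ is multiplicative, and that $\cG B^+\subseteq\ker\pi$.
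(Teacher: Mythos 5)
Your proof is correct and takes essentially the same route as the paper's: the identical splitting $b^+_{(2)} = \big(b^+_{(2)} - s\varepsilon(b^+_{(2)})\big) + s\varepsilon(b^+_{(2)})$ (resting on $\Delta(B)\subseteq\cG\otimes_R B$ from Lemma \ref{spinat}), with the leftover term recombining via counitality into $\pi(m_{(-1)}b^+)\otimes_R m_{(0)}=0$, which you kill by $\cG B^+\subseteq\ker\pi$ and the paper kills by $\pi(b^+)=\varepsilon(b^+)\lact\pi(1)=0$ together with multiplicativity of $\pi$ --- the same fact. The only cosmetic difference is that the paper places each $b^+_{(2)}-s\varepsilon(b^+_{(2)})$ in $B^+$ term by term on a chosen representative, so your appeal to $R$-flatness of $\cG_\ract$ for the containment in $\cG\otimes_R B^+$ is not actually needed (though harmless, being a standing assumption).
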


\begin{proof}
  Let $m\in M$ and $b\in B^+$. Then
  \vskip -.8cm
$$
\overline{\lambda}_M(m b)
=\pi(m_{(-1)}b_{(1)})\otimes_R m_{(0)} b_{(2)}
=\pi(m_{(-1)}b_{(1)})\otimes_R\overbracket{m_{(0)} \big(b_{(2)}-s \varepsilon(b_{(2)})\big)}^{\in MB^+}
$$
since $\Delta(B)\subseteq \cG \otimes_RB$ and $\pi(b)= \varepsilon(b) \lact \pi(1)=0$.
The statement that the left $B$-action restricts to a map $B\otimes_RMB^+\to MB^+$ is obvious.
\end{proof}

\noindent As in the Hopf algebraic case, let us now define two functors: the first one will be
$$
\Phi\colon{}_B^{\mkern 3 mu \cG}\cM^{\phantom{\cG}}_{\mkern -2mu B}\to{}^\cH _B\mathcal{M}, \quad M \mapsto \overline{M}:=M/MB^+
$$
on objects, and on morphisms $\phi\colon M\to M'$ in ${}_B^{\mkern 3 mu \cG}\cM^{\phantom{\cG}}_{\mkern -2mu B}$ as the induced map $\Phi(\phi)\colon\overline{M}\to\overline{M'}$ on the respective quotients.
On objects, this is well-defined by Lemma \ref{lem:Phi} and well-defined on morphisms by construction: a morphism $\phi\colon M\to M'$ clearly satisfies $\phi(MB^+)\subseteq M'B^+$, while left $B$-linearity and left $\cH $-colinearity of $\Phi(\phi)$ are induced from the left $B$-linearity and left $\cG $-colinearity of $\phi$.
The second functor
\begin{equation}
  \label{baldessen1}
\Psi\colon{}^\cH _B\mathcal{M}\to{}_B^{\mkern 3 mu \cG}\cM^{\phantom{\cG}}_{\mkern -2mu B}, \quad N \mapsto \cG \bx_\cH N 
\end{equation}
is given on objects by the cotensor product 
\begin{equation*}
  \label{baldessen2}
\cG \bx_\cH N =
\big\{g^i\otimes_Rn^i\in \cG \otimes_RN \mid g^i_{(1)}\otimes_R\pi(g^i_{(2)})\otimes_Rn^i = g^i\otimes_R n^i_{(-1)}\otimes_R n^i_{(0)} \big\}.
\end{equation*}
If $\cG_\ract$ is flat over $R$, then $\cG \bx_\cH N$ is a left $\cG$-comodule (see, for example, \cite[\S22.3]{BrzWis:CAC}) with left $\cG $-coaction simply given by the coproduct on $\cG $, while the $B$-bimodule structure is defined by
\begin{equation}
\label{Regenschirm}
    b(g^i\otimes_Rn^i)b'
    :=b_{(1)} g^i b'\otimes_Rb_{(2)} n^i
\end{equation}
for all $b,b'\in B$ and $g^i\otimes_R n^i\in \cG \bx_\cH N$. Note again that $\Delta(B)\subseteq \cG \otimes_RB$ is crucial for this definition to work. On morphisms $\psi\colon N\to N'$ in ${}^\cH _B\mathcal{M}$, let us define $\Psi(\psi):= \cG  \otimes_R\psi\colon \cG \bx_\cH N\to \cG \bx_\cH N'$. 
It is not difficult to see that $\Psi$ is well-defined, essentially by construction.

\begin{proposition}
\label{prop:Adj}
If $\cG_\ract$ is flat over $R$,
then the functors $\Phi$ and $\Psi$ are mutually adjoint, that is, there is a natural isomorphism
$$
{}_B^\cH \mathrm{Hom}(\Phi(M),N)\cong {}_B^{\,\cG}\mathrm{Hom}^{\phantom{\cG}}_B(M,\Psi(N))
$$
for all $M\in{}_B^{\mkern 3 mu \cG}\cM^{\phantom{\cG}}_{\mkern -2mu B}$ and $N\in{}^\cH _B\mathcal{M}$.
\end{proposition}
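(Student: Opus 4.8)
The plan is to establish the adjunction $\Phi\dashv\Psi$ by exhibiting an explicit, natural bijection between the two Hom-sets. Write $p\colon M\to\overline{M}=M/MB^+$ for the canonical projection and recall from Lemma \ref{lem:Phi} that the coaction on $\overline{M}$ is $\overline\lambda_{\overline M}(p(m))=\pi(m_{(-1)})\otimes_R p(m_{(0)})$; note also that the standing flatness of $\cG_\ract$ is what makes $\cG\bx_\cH N$ a left $\cG$-comodule via $\Delta$, so that $\Psi(N)$ is a genuine object of ${}_B^{\mkern 3 mu \cG}\cM^{\phantom{\cG}}_{\mkern -2mu B}$. Given $f\in{}_B^\cH\mathrm{Hom}(\Phi(M),N)$ I would define
$$
\Theta(f)\colon M\to\cG\otimes_R N,\qquad m\mapsto m_{(-1)}\otimes_R f(p(m_{(0)})),
$$
and, conversely, given $\tilde f\in{}_B^{\,\cG}\mathrm{Hom}^{\phantom{\cG}}_B(M,\Psi(N))$ I would set
$$
\Theta^{-1}(\tilde f):=(\varepsilon\otimes_R N)\circ\tilde f\colon M\to N,\qquad m\mapsto\varepsilon(g^i)\lact n^i,
$$
writing $\tilde f(m)=g^i\otimes_R n^i\in\cG\bx_\cH N$.

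First I would verify that $\Theta(f)$ takes values in the cotensor subspace $\cG\bx_\cH N$: applying coassociativity of $\lambda_M$ and the left $\cH$-colinearity of $f$ turns both sides of the defining relation of $\cG\bx_\cH N$ into $m_{(-1)}\otimes_R\lambda_N(f(p(m_{(0)})))$. Left $\cG$-colinearity of $\Theta(f)$ is again coassociativity, while left $B$-linearity follows from $\lambda_M(bm)=\lambda_B(b)\lambda_M(m)$ together with the left $B$-linearity of $p$ and $f$ and the $B$-action \eqref{Regenschirm} on $\cG\bx_\cH N$. Dually, for $\Theta^{-1}(\tilde f)$ the crucial point is that it annihilates $MB^+$ and therefore descends to $\overline{M}$: for $b^+\in B^+$, right $B$-linearity of $\tilde f$ and \eqref{Regenschirm} give $\tilde f(mb^+)=g^i b^+\otimes_R n^i$, whence $(\varepsilon\otimes_R N)(\tilde f(mb^+))$ carries the factor $\varepsilon(g^i b^+)=\varepsilon(g^i\bract\varepsilon(b^+))=0$ by the generalised character property \eqref{counityippieh} and $\varepsilon(b^+)=0$; the left $B$-linearity and left $\cH$-colinearity of the descended map then follow routinely from those of $\tilde f$ and the counit axioms.

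That $\Theta$ and $\Theta^{-1}$ are mutually inverse is a short counit computation: $\Theta^{-1}(\Theta(f))(p(m))=\varepsilon(m_{(-1)})\lact f(p(m_{(0)}))=f(p(m))$ by counitality of the comodule $M$, while $\Theta(\Theta^{-1}(\tilde f))(m)=m_{(-1)}\otimes_R(\varepsilon\otimes_R N)(\tilde f(m_{(0)}))$ collapses to $\tilde f(m)$ upon rewriting $m_{(-1)}\otimes_R\tilde f(m_{(0)})=g^i_{(1)}\otimes_R g^i_{(2)}\otimes_R n^i$ by the left $\cG$-colinearity of $\tilde f$ and using the counit identity $g^i_{(1)}\ract\varepsilon(g^i_{(2)})=g^i$. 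Naturality in $M$ and $N$ is immediate from the explicit formulas, as both assignments are built solely out of the structure maps, which are intertwined by any morphism of the two categories.

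The step I expect to be the main obstacle is showing that $\Theta(f)$ is right $B$-linear. Here one computes $\Theta(f)(mb)=m_{(-1)}b_{(1)}\otimes_R f\big(p(m_{(0)}b_{(2)})\big)$ from $\lambda_M(mb)=\lambda_M(m)\lambda_B(b)$, uses $\Delta(B)\subseteq\cG\otimes_R B$ together with $m_{(0)}B^+\subseteq MB^+$ to reduce $p(m_{(0)}b_{(2)})$ to the induced right $R$-action $p(m_{(0)})\ract\varepsilon(b_{(2)})$, and must then transport this induced $R$-action from the $N$-leg back to the $\cG$-leg. This last move is precisely where one has to invoke the Takeuchi relation defining $\cG\bx_\cH N$ (to rewrite $n_{(-1)}\otimes_R n_{(0)}$ in terms of $\pi$ applied to the $\cG$-leg), the compatibility \eqref{ohyes} of $B$ with $t(R)$ from Lemma \ref{wasserkessel}, and the counit axioms \eqref{counityippieh}, in order to recover $m_{(-1)}b\otimes_R f(p(m_{(0)}))=\Theta(f)(m)\,b$. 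It is exactly this bookkeeping — rather than the bijection itself — that forms the technical heart of the proof.
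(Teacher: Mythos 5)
Your proposal is correct and coincides with the paper's own proof: the paper sets up exactly the same correspondence $f\mapsto F_f=(\cG\otimes_R f)\circ\lambda_M$ and $F\mapsto f_F=(\varepsilon\otimes_R N)\circ F$ and verifies the same points (image in the cotensor product via coassociativity and colinearity of $f$, vanishing of $f_F$ on $MB^+$ via $\varepsilon(g'\bract\varepsilon(b^+))=0$, mutual inversion by counitality and left $\cG$-colinearity, and naturality in both arguments). The only cosmetic difference is in the right $B$-linearity step, where the paper transports the induced right $R$-action using the Takeuchi property \eqref{hal} of $\lambda_M$ directly, rather than the cotensor relation of $\cG\bx_\cH N$ as you suggest — two equivalent ways of making the same move.
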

\begin{proof}
In this proof, we explicitly construct the above bijection of morphisms, following closely the strategy outlined in \cite[p.~455]{Tak:Rel} for the Hopf algebra case. Alternatively, one can describe the above adjunction as the composition of the following two adjunctions
$$
{}_B^{\,\mathcal{G}}\mathcal{M}_B \rightleftarrows {}_B^{\mathcal{H}}\mathcal{M}^{\phantom{\cH}}_B \rightleftarrows{}_B^{\mathcal{H}}\mathcal{M},
$$
the first adjunction being the corestriction of scalars together with the cotensor product functor $\mathcal{G}\bx_\cH-$, and the second the tensor product functor $-\otimes_BR$ together with the functor $\mathrm{Hom}_R(R,-)$, where we view $R$ as a left $B$-module via the algebra morphism $\varepsilon|_B\colon B\to R$.

Let us nevertheless give an explicit and detailed construction of these facts.
To this end, recall that $M\in{}_B^{\mkern 3 mu \cG}\cM^{\phantom{\cG}}_{\mkern -2mu B}$ can be understood as an object in ${}^\cH _B\mathcal{M}$ by forgetting the right $B$-action and using the coaction \eqref{Platzregen}. Let us assign to a morphism $f\colon M\to N$ in ${}^\cH _B\mathcal{M}$ the map
$$
F_f:=(\cG \otimes_Rf)\circ\lambda_M\colon M \to \cG \bx_\cH N\subseteq \cG \otimes_RN.
$$
This map is clearly left $\cG $-colinear and left $B$-linear with respect to the left $B$-action \eqref{Regenschirm}.
Let us show that $F_f$ is right $B$-linear as well if and only if $f(MB^+)=0$.
In fact, assuming right $B$-linearity, we obtain
\begin{align*}
    m_{(-1)}b_{(1)}\otimes_Rf(m_{(0)}b_{(2)})
    =F_f(mb)
    =F_f(m)b
    =m_{(-1)}b\otimes_Rf(m_{(0)})
\end{align*}
for all $b\in B$ and $m\in M$, and applying $\varepsilon\otimes_RM$ to the above yields $f(mb)=f(m\varepsilon(b))$; in particular, $f(MB^+)=0$.
Conversely, from $f(MB^+)=0$ we infer
\begin{align*}
    F_f(m b)
    &=m_{(-1)}b_{(1)}\otimes_Rf(m_{(0)}b_{(2)})\\
    &=m_{(-1)}b_{(1)}\otimes_Rf\big(m_{(0)}b_{(2)}-m_{(0)}(b_{(2)}-s \varepsilon(b_{(2)}))\big)\\
    &=m_{(-1)}b_{(1)}\otimes_Rf\big(m_{(0)}\varepsilon(b_{(2)})\big)\\
        &=m_{(-1)}t(\varepsilon(b_{(2)}))b_{(1)}\otimes_Rf(m_{(0)})\\
    &=m_{(-1)}b\otimes_Rf(m_{(0)})\\
    &=F_f(m)b,
\end{align*}
where we used \eqref{ha} in the fourth step, 
and which amounts to the right $B$-linearity of $F_f$. Thus, $F_f\colon M\to\Psi(N)$ is a morphism in ${}_B^{\mkern 3 mu \cG}\cM^{\phantom{\cG}}_{\mkern -2mu B}$ if and only if $f\colon M\to N$ descends to a map $\tilde{f}\colon\Phi(M)=M/MB^+\to N$ as a morphism in ${}^\cH _B\mathcal{M}$.
On the other hand, given a morphism $F\colon M\to \cG \bx_\cH N$, $m \mapsto g' \otimes_R n'$ in the category ${}_B^{\mkern 3 mu \cG}\cM^{\phantom{\cG}}_{\mkern -2mu B}$, define
$$
f_F:=(\varepsilon\otimes_RN)\circ F\colon M\to N.
$$
This map descends to the quotient as a map $f_F\colon\Phi(M)\to N$ since
$$
f_F(mb)
=(\varepsilon\otimes_RN)(F(mb))
=(\varepsilon\otimes_RN)(F(m)b)
=\varepsilon(g'b)n'
=\varepsilon\big(g' \bract \varepsilon(b)\big) n'
=0
$$
for all $m\in M$ and $b\in B^+$.
Furthermore, $f_F$ is a morphism in ${}^\cH _B\mathcal{M}$: it is clearly left $B$-linear but also left $\cH $-colinear since
\begin{align*}
    (f_F(m))_{(-1)}\otimes_R(f_F(m))_{(0)}
    &=\varepsilon(g') \lact n'_{(-1)}\otimes_Rn'_{(0)}\\
    &= \varepsilon(g'_{(1)}) \lact \pi(g'_{(2)})\otimes_Rn'\\
    &=\pi(g')\otimes_Rn'\\
    &=\pi(g'_{(1)})\otimes_R \varepsilon(g'_{(2)})n'\\
  &=\pi(F(m)_{(-1)})\otimes_R(\varepsilon\otimes_RN)(F(m)_{(0)})
  \\
  &=\pi(m_{(-1)})\otimes_R \big( (\varepsilon\otimes_RN) \circ F\big) (m_{(0)})
  =\pi(m_{(-1)})\otimes_Rf_F(m_{0}),
\end{align*}
using that $F(m)=g'\otimes_Rn'$ lies in the cotensor product $\cG \bx_\cH  N$, that is, in the subspace in which  $g'\otimes_Rn'_{(-1)}\otimes_Rn'_{(0)}=g'_{(1)}\otimes_R\pi(g'_{(2)})\otimes_Rn'$ holds.

It is easy to see that the assignments $f\mapsto F_f$ and $F\mapsto f_F$ are inverse to each other: starting with a morphism $f\colon\Phi(M)\to N$ in the category ${}^\cH _B\mathcal{M}$, we have
$$
f_{(F_f)}([m])
=\big((\varepsilon\otimes_RN) \circ F_f \big) (m)
=\varepsilon(m_{(-1)})f(m_{(0)})
=f(m)
$$
for all $m\in M$. On the other hand, given a morphism $F\colon M\to\Psi(N)$ in ${}_B^{\mkern 3 mu \cG}\cM^{\phantom{\cG}}_{\mkern -2mu B}$, it follows that
\begin{align*}
    F_{(f_F)}(m)
    &=m_{(-1)}\otimes_Rf_F(m_{(0)})
    \\
    &=m_{(-1)}\otimes_R \big((\varepsilon \otimes_R N) \circ F \big)(m_{(0)})
    \\
    &=F(m)_{(-1)}\otimes_R(\varepsilon\otimes_RN)(F(m)_{(0)})\\
    &=g'_{(1)}\otimes_R\varepsilon(g'_{(2)})n'\\
    &=F(m),
\end{align*}
using the left $\cG $-colinearity of $F$ and writing $F(m)=g'\otimes_Rn'$.

To conclude, let us still show that this isomorphism is natural:
given a morphism $\phi\colon M'\to M$ in the category ${}_B^\cH \mathcal{M}$, the diagram
\begin{equation*}
\begin{tikzcd}
{}_B^\cH \mathrm{Hom}(\Phi(M),N) \arrow{r}{\cong}
\arrow{d}[swap] {{}_B^\cH\mathrm{Hom}(\Phi(\phi),N)}
& {}_B^{\mkern 2mu \cG} \mathrm{Hom}^{\phantom{\cG}}_B(M,\Psi(N))
\arrow{d}{{}_B^{\mkern 2mu \cG} \mathrm{Hom}^{\phantom{\cG}}_B(\phi,\Psi(N))}\\
{}_B^\cH \mathrm{Hom}(\Phi(M'),N) \arrow{r}{\cong}
& {}_B^{\mkern 2mu \cG} \mathrm{Hom}^{\phantom{\cG}}_B(M',\Psi(N))
\end{tikzcd}
\end{equation*}
commutes since $F_{f\circ\Phi(\phi)}=F_f\circ\phi$ for all $f\in{}_B^\cH \mathrm{Hom}(\Phi(M),N)$, which is
true because
\begin{align*}
    (F_f \circ \phi)(m')
  &=\big( (\cG \otimes_Rf) \circ (\cG \otimes_R\pi_{\overline{M}}) \circ \lambda_M \circ \phi\big)(m')
  \\
  &=
  \big((\cG \otimes_Rf)\circ (\cG \otimes_R\Phi(\phi)) \circ (\cG \otimes_R\pi_{\overline{M'}}) \circ \lambda_{M'}\big)(m')
  =F_{f\circ\Phi(\phi)}(m')
\end{align*}
for all $m'\in M'$.
Moreover, given  a morphism $\psi\colon N\to N'$ in the category ${}_B^{\mkern 3 mu \cG}\cM^{\phantom{\cG}}_{\mkern -2mu B}$, the diagram
\begin{equation*}
\begin{tikzcd}
{}_B^\cH \mathrm{Hom}(\Phi(M),N) \arrow{r}{\cong}
\arrow{d}[swap]{{}_B^\cH \mathrm{Hom}(\Phi(M),\psi)}
& {}_B^{\mkern 2mu \cG} \mathrm{Hom}^{\phantom{\cG}}_B(M,\Psi(N))
\arrow{d}{{}_B^{\mkern 2mu \cG} \mathrm{Hom}^{\phantom{\cG}}_B(M,\Psi(\psi))}
\\
{}_B^\cH \mathrm{Hom}(\Phi(M),N') \arrow{r}{\cong}
& {}_B^{ \mkern 2mu \cG} \mathrm{Hom}^{\phantom{\cG}}_B(M,\Psi(N'))
\end{tikzcd}
\end{equation*}
commutes since $F_{\psi\circ f}=\Psi(\psi)\circ F_f$ for all $f\in{}_B^\cH \mathrm{Hom}(\Phi(M),N)$, which
holds since
\begin{align*}
\big(    \Psi(\psi)\circ F_f \circ \pi_{\overline{M}}\big)(m)
    &=
    \big((\cG \bx_\cH \psi) \circ (\cG \otimes_Rf) \circ (\cG \otimes_R \pi_{\overline{M}}) \circ \lambda_M\big)(m)
       =( F_{\psi\circ f} \circ \pi_{\overline{M}})(m)
\end{align*}
for all $m\in M$.
\end{proof}

Let us continue with two technical lemmata that are crucial for the proof of the Takeuchi-Schneider equivalence below, which will constitute the main result of this section. 

\begin{lemma}
  \label{lem:Tak1}
  Let $(\cG,R)$ be a right Hopf algebroid such that both $\cG_\ract$ and
  $\due \cG \lact {}$ are $R$-flat.
For $M \in {}_B^{\mkern 3 mu \cG}\cM^{\phantom{\cG}}_{\mkern -2mu B}$, 
the map
\begin{equation*}
\begin{split}
  \zeta \colon M \otimes_B \cG & \to
  \cG_\ract \otimes_R \overline{M}, \quad
    m \otimes_B g  \mapsto m_{(-1)}g\otimes_R\pi(m_{(0)})
\end{split}
\end{equation*}
is an isomorphism of the left $\RRe$-modules
$ \due  {(M \otimes_B \cG)} \lact \bract : = \due  M \lact {} \otimes_B \cG_\bract$
and
$
\due {(\cG \otimes_R \overline{M})} \lact \bract := \due \cG \lact \bract \otimes_R \overline{M},
$
where $\pi\colon M\to M/MB^+$ denotes the projection. The map
\begin{equation*}
\begin{split}
    \cG_\ract \otimes_R \overline{M} & \to M \otimes_B \cG,
    \quad
    g\otimes_R\pi(m) \mapsto m_{\smap }\otimes_B m_{\smam }g
\end{split}
\end{equation*}
yields its inverse.
\end{lemma}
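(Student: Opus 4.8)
The plan is to mimic the proof of Lemma \ref{Warschau}, of which $\zeta$ is the evident generalisation obtained by replacing the left tensor factor $\cG$ with an arbitrary $M\in{}_B^{\mkern 3mu \cG}\cM^{\phantom{\cG}}_{\mkern -2mu B}$. First note that, since $(\cG,R)$ is right Hopf with $\cG_\ract$ flat, Lemma \ref{lem:GalInv} guarantees that the underlying left $\cG$-comodule $M$ is automatically a Hopf-Galois comodule; hence the translation map $m_{\smap}\otimes_R m_{\smam}$ and the identities \eqref{Mch1}--\eqref{Mch6} are at our disposal, and the candidate inverse in the statement is meaningful. The strategy is then: (1) check that $\zeta$ is well defined over $\otimes_B$; (2) check that the candidate inverse descends to a well-defined map $\theta$ on $\cG_\ract\otimes_R\overline M$; (3) verify $\zeta\circ\theta=\id$ and $\theta\circ\zeta=\id$ by a single application each of \eqref{Mch2} and \eqref{Mch3}; and (4) dispatch the $\RRe$-linearity as routine bookkeeping of the $\lact/\bract$-actions.

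For step (1) I would argue exactly as in Lemma \ref{Warschau}. Writing $\lambda_M(mb)=\lambda_M(m)\Delta(b)=m_{(-1)}b_{(1)}\otimes_R m_{(0)}b_{(2)}$ and using $\Delta(B)\subseteq\cG\otimes_R B$ from Lemma \ref{spinat}\,(i), the projection $\pi$ is right $B$-linear and its induced right $B$-action on $\overline M=M/MB^+$ factors through $\varepsilon$, so that $\pi(m_{(0)}b_{(2)})=\pi(m_{(0)})\cdot\varepsilon(b_{(2)})$. The resulting element $m_{(-1)}b_{(1)}g\otimes_R\pi(m_{(0)})\cdot\varepsilon(b_{(2)})$ is then rewritten by the second Takeuchi identity in \eqref{hal}, namely $m_{(-1)}\otimes_R m_{(0)}s(r)=m_{(-1)}t(r)\otimes_R m_{(0)}$, applied with $r=\varepsilon(b_{(2)})$, followed by the counit law $t\varepsilon(b_{(2)})b_{(1)}=b$; this yields $m_{(-1)}bg\otimes_R\pi(m_{(0)})=\zeta(m\otimes_B bg)$, as needed.

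Step (2) is the main obstacle. First, as in Lemma \ref{Warschau} and using \eqref{Tch4} together with the coinvariance $\rho_\cG(b)=b\otimes_R1_\cH$, one checks $b_{\smap}\otimes_R b_{\smam}\in B\otimes_R\cG$ for $b\in B$. The candidate inverse is well defined over $\otimes_R$ directly from \eqref{Mch5}, since $(s(r)m)_{\smap}\otimes_R(s(r)m)_{\smam}=m_{\smap}\otimes_R m_{\smam}t(r)$ matches the $\cG_\ract$-balancing. The delicate point is that it must vanish on $\cG_\ract\otimes_R MB^+$, i.e.\ $(mb^+)_{\smap}\otimes_B(mb^+)_{\smam}g=0$ for $b^+\in B^+$. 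For this I would first establish the module analogue of the multiplicativity \eqref{Tch6} of the translation map -- the mirror image of Lemma \ref{minestrone} for the right $B$-action --
\[ (mb)_{\smap}\otimes_R(mb)_{\smam}=m_{\smap}\,b_{\smap}\otimes_R b_{\smam}\,m_{\smam},\qquad b\in B, \]
proved by the computation of Lemma \ref{minestrone} read with the roles of $m$ and $b$ interchanged, using $\lambda_M(mb)=\lambda_M(m)\Delta(b)$ together with \eqref{Tch6}, \eqref{Tch4}, the counit properties \eqref{counityippieh}, and the Takeuchi property \eqref{takeuchiyippieh}. Granting this identity, and since $b^+_{\smap}\in B$, one slides $b^+_{\smap}$ across the balanced tensor product and concludes with \eqref{Tch7}: $(mb^+)_{\smap}\otimes_B(mb^+)_{\smam}g=m_{\smap}\otimes_B b^+_{\smap}b^+_{\smam}\,m_{\smam}g=m_{\smap}\otimes_B t\varepsilon(b^+)\,m_{\smam}g=0$, because $\varepsilon(b^+)=0$. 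I expect the bookkeeping in the displayed multiplicativity -- keeping track of which $\lact/\bract$-actions the various Sweedler legs carry, exactly as in the proof of Lemma \ref{minestrone} -- to be the most laborious part.

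Finally, for step (3) the two composites collapse immediately: $\zeta(\theta(g\otimes_R\pi(m)))=m_{\smap(-1)}m_{\smam}g\otimes_R\pi(m_{\smap(0)})=g\otimes_R\pi(m)$ by \eqref{Mch2}, and $\theta(\zeta(m\otimes_B g))=m_{(0)\smap}\otimes_B m_{(0)\smam}m_{(-1)}g=m\otimes_B g$ by \eqref{Mch3}. The claim that $\zeta$ is $\RRe$-linear for the stated $\due{}\lact\bract$-structures reduces, via $\Delta(s(r))=s(r)\otimes_R1$ and the comodule axioms, to the identity $\zeta(s(r)m\otimes_B gs(r'))=s(r)m_{(-1)}gs(r')\otimes_R\pi(m_{(0)})$, which is immediate from the defining formula for $\zeta$.
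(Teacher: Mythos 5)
Your proposal is correct and follows essentially the same route as the paper: the well-definedness of $\zeta$ over $\otimes_B$ is verified by exactly the paper's manipulation (inserting $s\varepsilon(b_{(2)})-b_{(2)}\in B^+$, equivalently factoring the induced right action through $\varepsilon$, then invoking the Takeuchi identity \eqref{hal} and counitality), while the paper dismisses all remaining verifications as ``a standard computation''. The details you supply for that omitted part are sound --- in particular the module analogue $(mb)_{\smap}\otimes_R(mb)_{\smam}=m_{\smap}b_{\smap}\otimes_R b_{\smam}m_{\smam}$ of \eqref{Tch6} does hold for $b\in B$ (besides your mirror of Lemma \ref{minestrone}, one can see it directly by applying the invertible Galois map $\alpha_M$ and simplifying with \eqref{Tch2} and the Takeuchi property in \eqref{hal}), after which $b^+_{\smap}\in B$, the vanishing on $MB^+$ via \eqref{Tch7}, and the mutual-inverse checks via \eqref{Mch2} and \eqref{Mch3} go through exactly as in the paper's Lemma \ref{Warschau}, of which your argument is the verbatim generalisation.
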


\begin{proof}
We only check here that the first map is indeed balanced over $B$, that is, that both $mb \otimes_B g$ as well as $m \otimes_B bg$ have the same image. Using $s(R)\subseteq B$ and therefore the relation $mr = ms(r)$ between the induced right $R$-action on $M$ as a left $\cG$-comodule and the induced right $R$-action on $M$ as a right $B$-module, one sees 
\begin{align*}
mb \otimes_B g \qquad \mapsto \qquad & m_{(-1)}b_{(1)}g\otimes_R\pi(m_{(0)} b_{(2)})
\\[-.5cm]
= \ & m_{(-1)}b_{(1)}g\otimes_R\pi\big(m_{(0)}b_{(2)}
+ m_{(0)}\overbracket{(s \varepsilon(b_{(2)})-b_{(2)})}^{\in B^+} \big)
    \\
 = \ & m_{(-1)}t(\varepsilon(b_{(2)}) b_{(1)}g\otimes_R\pi(m_{(0)})
    \\
    = \ & m_{(-1)}bg\otimes_R\pi(m_{(0)}),
\end{align*}
where in the penultimate step we used the Takeuchi property \eqref{ha}. The last line is clearly also the image of the element $m \otimes_B bg$, which proves $B$-balancedness.
    Let us still check that for an element in $MB^+$ the given inverse is the zero map, that is, $(mb^+)_\smap \otimes_B (mb^+)_\smam g = 0$ for $b^+ \in B^+$. Indeed,
as already shown in \eqref{torino}, 
one has     $b_\smap \otimes_R b_\smam \in B \otimes_R \cG$ for any $b \in B$,
     and therefore,
in case of $b^+ \in B^+$, one obtains
\begin{equation*}
  \begin{split}
     (mb^+)_\smap \otimes_B (mb^+)_\smam g
  & \,  = \,
     m_\smap b^+_\smap \otimes_B b^+_\smam m_\smam g
\\
     &   \, = \,
    m_\smap \otimes_B b^+_\smap b^+_\smam m_\smam g
           \,  \overset{\eqref{Tch7}}{=} \,
      m_\smap \otimes_B t \gve(b^+) m_\smam g = 0
  \end{split}
\end{equation*}
in analogy to the computation below \eqref{torino},
by means of Lemma \ref{minestrone} in the first step.
That the two maps are mutually inverse is a standard computation which we omit.
\end{proof}

\noindent As a preparatory remark for the next lemma,
given a right Hopf algebroid surjection $\pi\colon\mathcal{G}\to\mathcal{H}$ such that $B\subseteq\mathcal{G}$ is a Hopf-Galois extension, observe that the faithful right $R$-flatness of  $\mathcal{G}_\ract$ along with the left $B$-flatness of $\mathcal{G}$ induces the right $R$-flatness of $\mathcal{H}_\ract$: indeed, by right $R$-flatness of $\cG_\ract$, the functor $\cG_\ract \otimes_R - $ is exact, and therefore by left $B$-flatness
$(\cG_\ract \otimes_R -) \otimes_B \cG$ as well. Using Lemma \ref{Warschau}, from this we obtain
that
$$
(\cG_\ract \otimes_R -) \otimes_B \cG \cong (\cG_\ract \otimes_B \cG) \otimes_R - \cong (\cG_\ract \otimes_R \overline \cG)_\ract \otimes_R - \cong
\cG_\ract \otimes_R (\overline \cG_\ract \otimes_R - )
$$
is an exact functor, and by faithful flatness of $\cG_\ract$, the exactness of the functor $\overline\cG_\ract \otimes_R -$ follows. Finally, by Corollary \ref{cor:overlineG}, we conclude that $\overline\cG_\ract \cong \cH_\ract$ is right flat over $R$, as claimed.

\begin{lemma}
\label{lem:Tak2}
Let 
$B\subseteq \cG $ be a Hopf-Galois extension, let $\cG $ be flat as a 
left $B$-module, and let moreover $\cG_\ract$ be faithfully flat as a right $R$-module and $\due \cG \lact {}$ flat as a left $R$-module.
\reversemarginpar
Then one has a chain of ($R$-bimodule) isomorphisms given by
\begin{equation}
  \label{dochnoch}
(\cG \bx_\cH N)\otimes_B\cG 
\overset{\cong}{\longrightarrow}
(\cG \otimes_B\cG )\bx_\cH N
\overset{\cong}{\longrightarrow}
(\cG \otimes_R\overline{\cG })\bx_\cH N
\overset{\cong}{\longrightarrow}
\cG \otimes_RN
\end{equation}
for any $N \in{}^\cH _B\mathcal{M}$,
which amounts to an isomorphism
\begin{equation}
  \label{coflat}
\xi\colon(\cG \bx_\cH N)\otimes_B\cG 
\overset{\cong}{\longrightarrow} \cG \otimes_RN, \quad (g\otimes_R n)\otimes_B g'\mapsto gg'\otimes_Rn.
\end{equation}
In particular, if $\cG$ is faithfully left $B$-flat, then $\cG$ is faithfully $\cH$-coflat.
\end{lemma}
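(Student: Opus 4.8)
The plan is to establish the three isomorphisms in \eqref{dochnoch} separately, read off $\xi$ by composing them, and then deduce the coflatness assertions formally from $\xi$ together with the (faithful) flatness hypotheses. For the \emph{first} arrow I would regard $\cG \bx_\cH N$ as the equalizer (kernel of a difference) of the two right $B$-linear maps $\rho_\cG \otimes_R N$ and $\cG \otimes_R \lambda_N$ from $\cG \otimes_R N$ to $\cG \otimes_R \cH \otimes_R N$, where the right $B$-action is right multiplication on the leading $\cG$-factor, exactly the action making $\cG \bx_\cH N$ an object of ${}_B^{\,\cG}\cM^{}_B$ via \eqref{Regenschirm}. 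Since $\cG$ is flat as a left $B$-module, the functor $-\otimes_B\cG$ is exact and hence commutes with this kernel; reshuffling each term via $(\cG \otimes_R X)\otimes_B\cG \cong (\cG \otimes_B \cG)\otimes_R X$ for $X\in\{N,\ \cH\otimes_R N\}$ — legitimate because, by Lemma \ref{wasserkessel}, elements of $B$ commute with $t(R)$, so the right $B$-action and the $\ract$-action attaching $X$ cooperate — identifies the outcome with the kernel defining $(\cG\otimes_B\cG)\bx_\cH N$ for the right $\cH$-coaction $\rho_{\cG\otimes_B\cG}$ of Lemma \ref{Warschau}.

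The \emph{second} arrow is obtained by applying the cotensor functor $-\bx_\cH N$ to the right $\cH$-colinear isomorphism $\gamma\colon \cG\otimes_B\cG \xrightarrow{\cong} \cG_\ract\otimes_R\overline{\cG}$ of Lemma \ref{Warschau}. For the \emph{third}, Corollary \ref{cor:overlineG} gives $\overline{\cG}\cong\cH$ as left $\cG$-module corings, and in particular as right $\cH$-comodules, since the map $f\colon\wp(g)\mapsto\pi(g)$ intertwines the coactions (both built from $\pi$); thus $\cG\otimes_R\overline{\cG}\cong\cG\otimes_R\cH$ with the coaction carried by the $\cH$-factor alone. Treating $\cG$ as a spectator — permissible because $\cG_\ract$ is $R$-flat, so $\cG\otimes_R-$ is exact — reduces the cotensor to $\cG\otimes_R(\cH\bx_\cH N)$, and the counit isomorphism $\cH\bx_\cH N\cong N$, $h\otimes_R n\mapsto \varepsilon_\cH(h)\cdot n$, finishes the chain. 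Composing the three arrows and using $\varepsilon_\cH\circ\pi=\varepsilon$ together with counitality $g_{(1)}\ract\varepsilon(g_{(2)})=g$, one verifies that $(g\otimes_R n)\otimes_B g'\mapsto gg'\otimes_R n$, which is precisely the map $\xi$ of \eqref{coflat}.

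For the coflatness statements I would transport exactness across the natural isomorphism $\xi$. The cotensor $\cG\bx_\cH-$ is always left exact, so given a short exact sequence of left $\cH$-comodules, applying $\cG\bx_\cH-$ and then $-\otimes_B\cG$ yields, via naturality of $\xi$, the sequence $\cG\otimes_R(-)$, which is exact since $\cG_\ract$ is $R$-flat. If $\cG$ is faithfully left $B$-flat, then $-\otimes_B\cG$ reflects exactness, forcing $\cG\bx_\cH(-)$ of the original sequence to be exact; hence $\cG$ is $\cH$-coflat. If moreover $\cG_\ract$ is faithfully right $R$-flat, then for any complex $C$ of left $\cH$-comodules with $\cG\bx_\cH C$ exact, left $B$-flatness makes $\cG\otimes_R C\cong(\cG\bx_\cH C)\otimes_B\cG$ exact, whence $C$ is exact by faithful $R$-flatness; thus $\cG\bx_\cH-$ also reflects exactness, i.e.\ $\cG$ is faithfully $\cH$-coflat.

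The hard part will be the first isomorphism. Although the flatness argument is conceptually routine, it demands a careful matching of the right $\cH$-comodule structure $\rho_{\cG\otimes_B\cG}$ with the reshuffled kernel, and meticulous bookkeeping of the four $R$-actions $\lact,\ract,\blact,\bract$ (source versus target) to justify the spectator and reshuffling identifications; this is the same kind of $R$-bimodule tracking already encountered in the proofs of Lemmata \ref{spinat}, \ref{Warschau}, and \ref{lem:Tak1}, and I would carry it out in that style.
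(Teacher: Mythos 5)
Your proposal is correct and matches the paper's proof essentially step for step: the equalizer-plus-left-$B$-flatness argument with the two flip isomorphisms (whose well-definedness rests on Lemma \ref{wasserkessel}/Eq.~\eqref{ohyes}) for the first arrow, cotensoring the right $\cH$-colinear isomorphism $\gamma$ of Lemma \ref{Warschau} with $N$ for the second, Corollary \ref{cor:overlineG} combined with tensor--cotensor associativity (from $R$-flatness of $\cG_\ract$) and the counit identification $\cH\bx_\cH N\cong N$ for the third, followed by the same Takeuchi-style transport of (faithful) exactness across $\xi$ for both coflatness claims. If anything, your placement of the induced right $\cH$-coaction on the \emph{first} factor of $\cG\otimes_B\cG$, as in Lemma \ref{Warschau}, is precisely the coaction the displayed flip maps produce and the one for which $\gamma$ is colinear, whereas the formula for $\rho_{\cG\otimes_B\cG}$ written inside the paper's proof coacts on the second factor --- a minor notational slip on the paper's side, not a gap in your argument.
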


\begin{proof}
  The second isomorphism in the sequence \eqref{dochnoch} is clearly induced by the one from Lemma \ref{Warschau}, whereas
the third isomorphism in \eqref{dochnoch} results from the
standard identification
\begin{equation*}
(\cG \otimes_R\overline{\cG })\bx_\cH N
\overset{\cong}{\longrightarrow}
(\cG \otimes_R\cH )\bx_\cH N
\overset{\cong}{\longrightarrow}
\cG \otimes_R (\cH \bx_\cH N)
\overset{\cong}{\longrightarrow}
\cG \otimes_RN
\end{equation*}
given by
$$
(g^i\otimes_Rh^i)\otimes_Rn^i\mapsto g^i \ract \varepsilon(h^i)  \otimes_R n^i,
$$
and
with inverse $g\otimes_Rn\mapsto(g\otimes_Rn_{(-1)})\otimes_Rn_{(0)}$, obtained from the left coaction on $N$,
where we used that $\overline{\cG }\cong \cH $ as $R$-corings according to Corollary \ref{cor:overlineG}, the fact that the right $\cH $-coaction on $\cG \otimes_R\cH $ is free ({\em i.e.}, simply given by the coproduct on $\cH $), and the tensor-cotensor associativity in the middle that results from the $R$-flatness of $\cG_\ract$.

It remains to prove the first isomorphism. To give it a sense, observe first that the right $B$-action on $\cG \bx_\cH N$ is defined as
$(g^i \otimes_R n^i) b := g^i b \otimes_R n^i$. Then this isomorphism is essentially the tensor-cotensor associativity which holds by the assumed $B$-flatnesses of $\cG$.
In more detail, by definition of the cotensor product as an equaliser of a pair of maps, we can consider the exact sequence
$$
0 \longrightarrow \cG \bx_\cH N \longrightarrow \cG \otimes_RN\xrightarrow{\Delta_\cG \otimes_R N \, - \, \cG  \otimes_R \gl_N } \cG \otimes_R\cH \otimes_RN.
$$
Since $\cG $ is flat as a left $B$-module, we obtain from this an exact sequence
\begin{equation}
\label{Nudeln}
0 \to (\cG \bx_\cH N)\otimes_B \cG 
\to (\cG \otimes_RN)\otimes_B \cG 
\xrightarrow{(\Delta_\cG \otimes_R N)\, \otimes_B \cG  \, - \, ({\cG }\otimes_R \gl_N)\, \otimes_B \cG }
(\cG \otimes_R\cH \otimes_RN)\otimes_B\cG ,
\end{equation}
which, in turn, can be reorganised using two flip isomorphisms, respectively given by the right $R$-module map
$$
(\due \cG  {} \ract \otimes_RN)\otimes_B\cG \to(\due \cG  {} \ract \otimes_B\cG )\otimes_RN,
\quad
(g\otimes_R n)\otimes_B g'\mapsto (g\otimes_B g')\otimes_R n,
$$ 
which also is an isomorphism of left $R$-modules when considering the left $R$-action $\due \cG  \blact {}$ on the first tensor factor, as well as 
$$
(\due \cG  {} \ract \otimes_R \due \cH  \lact \ract \otimes_R N)\otimes_B \cG  \to (\cG _\ract \otimes_B \cG ) \otimes_R \due \cH  \lact \ract \otimes_R N,
\quad
(g\otimes_R h\otimes_R n)\otimes_B g'\mapsto (g\otimes_B g')\otimes_R h\otimes_R n,
$$
so as to obtain from the exact sequence \eqref{Nudeln} the new exact sequence
\begin{equation}
\label{Reis}
0 \to (\cG \bx_\cH N)\otimes_B\cG 
\to(\cG \otimes_B\cG )\otimes_RN
\xrightarrow{\ \rho_{\cG \otimes_B\cG }\otimes_R N \, - \, \cG \otimes_B\cG \otimes_R \gl_N \ }
(\cG \otimes_B\cG )\otimes_R\cH \otimes_RN,
\end{equation}
where we denoted
$$
\rho_{\cG \otimes_B\cG }\colon \cG \otimes_B\cG
\to
(\cG \otimes_B\cG)_\ract \otimes_R\cH ,\quad g\otimes_B g' \mapsto (g\otimes_B g'_{(1)})\otimes_R \pi(g'_{(2)})
$$
with respect to the right $R$-action $(\cG \otimes_B \cG )_{\ract} :=\cG  \otimes_B \cG_\ract$, and which is well-defined by the property \eqref{ohyes}.
Now, by definition, the kernel of  $\rho_{\cG \otimes_B \cG }\otimes_R N - \cG \otimes_B \cG \otimes_R \gl_N$ is given by $(\cG \otimes_B \cG )\bx_\cH N$. On the other hand, by exactness of the sequence \eqref{Reis}, we deduce that the kernel of $\rho_{\cG \otimes_B \cG }\otimes_R N - \cG \otimes_B \cG \otimes_R \gl_N $ is precisely given by $(\cG \bx_\cH  N)\otimes_B \cG $. Thus, $(\cG \otimes_B \cG )\bx_\cH N$ and $(\cG \bx_\cH  N)\otimes_B \cG $ are isomorphic, as desired.

The statement on coflatness follows along the lines of the standard argument in \cite[p.~391]{Tak:ANOGRG}:
by $R$-flatness, the functor $\cG_\ract \otimes_R -$ preserves exactness, hence by the isomorphism \eqref{coflat} the functor $(G \bx_\cH -) \otimes_B \cG$ does so as well. Since now $- \otimes_B \cG$ reflects exactness by faithful $B$-flatness, the functor $G \bx_\cH -$ is exact as well, which by definition means that $\cG$ is $\cH$-coflat. As $\cG_\ract \otimes_R-$ moreover reflects exactness if $\cG_\ract$ is faithfully $R$-flat, the last statement follows by an analogous argument.
\end{proof}

\begin{definition}
  \label{def:PHS}
  Let $\pi\colon\cG\to\cH$ be a (right) Hopf algebroid surjection (over left $R$-bialgebroids) such that
  $\due \cG \lact {}$ is $R$-flat.
  If $\cG_\ract$ is faithfully $R$-flat and $\cG$ faithfully left $B$-flat, we call $B$ a \emph{Hopf algebroid principal homogeneous space} or, slightly more cumbersome, a {\em principal quantum homogeneous space for a right Hopf algebroid
surjection}.
\end{definition}

After these technical preparations, we are able to state (and prove) the main result in this section, that is, a Takeuchi-Schneider equivalence for (right) Hopf algebroids as an extension of classical bialgebra theory, see \cite[Thm.~3.7]{Schneider}, building on \cite[Thm.~1]{Tak:Rel}, and with additional $B$-module structure as mentioned, \emph{e.g.}, in \cite[\S2.3]{UlKr}. See also \cite[Thm.~2.11]{KaoGhoSarVer:CTFHAWATAG} for an interesting contribution in this direction in the Hopf algebroid setting.

\begin{theorem}[Takeuchi-Schneider equivalence for Hopf algebroids]
  \label{Kuchen}
If $B\subseteq \cG$ is a Hopf algebroid principal homogeneous space, then there is an equivalence
$$
{}_B^{\mkern 3 mu \cG}\cM^{\phantom{\cG}}_{\mkern -2mu B} \cong {}^\cH_B \cM
$$
of categories.
\end{theorem}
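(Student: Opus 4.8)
The plan is to upgrade the adjunction $\Phi \dashv \Psi$ of Proposition \ref{prop:Adj} to an adjoint equivalence by showing that its unit $\eta\colon \mathrm{Id}\Rightarrow\Psi\Phi$ and counit $\epsilon\colon\Phi\Psi\Rightarrow\mathrm{Id}$ are natural isomorphisms. Unwinding the bijection from the proof of Proposition \ref{prop:Adj} (take $F=\mathrm{id}_{\Psi(N)}$ resp.\ $f=\mathrm{id}_{\Phi(M)}$), the unit and counit are
\[
\eta_M\colon M\to \cG\bx_\cH\overline{M},\ \ m\mapsto m_{(-1)}\otimes_R\pi(m_{(0)}),
\qquad
\epsilon_N\colon\overline{\cG\bx_\cH N}\to N,\ \ [g\otimes_R n]\mapsto\varepsilon(g)n,
\]
for $M\in{}_B^{\cG}\cM_B$ and $N\in{}^\cH_B\cM$, where $\pi\colon M\to\overline M=M/MB^+$ is the projection. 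Since $B\subseteq\cG$ is a principal homogeneous space (Definition \ref{def:PHS}), all flatness hypotheses of Lemmas \ref{lem:Tak1} and \ref{lem:Tak2} are in force, and these two lemmas provide exactly the identifications needed.

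For the unit I would argue by faithfully flat descent along $-\otimes_B\cG$. Applying this functor to $\eta_M$ and postcomposing with the isomorphism $\xi$ of Lemma \ref{lem:Tak2} (for $N=\overline M$), a direct check gives
\[
\xi\circ(\eta_M\otimes_B\cG)=\zeta,
\]
where $\zeta\colon M\otimes_B\cG\to\cG_\ract\otimes_R\overline M$ is the isomorphism of Lemma \ref{lem:Tak1}: indeed both sides send $m\otimes_B g$ to $m_{(-1)}g\otimes_R\pi(m_{(0)})$. As $\xi$ and $\zeta$ are isomorphisms, so is $\eta_M\otimes_B\cG$. Because $\cG$ is faithfully flat as a left $B$-module, the functor $-\otimes_B\cG$ reflects isomorphisms, whence $\eta_M$ is an isomorphism; naturality is inherited from that of the adjunction. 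In particular, $\Phi$ is fully faithful.

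For the counit I would invoke the second triangle identity $\Psi(\epsilon_N)\circ\eta_{\Psi(N)}=\mathrm{id}_{\Psi(N)}$. Since $\eta$ is a natural isomorphism by the previous step, $\eta_{\Psi(N)}$ is invertible, and therefore $\Psi(\epsilon_N)=\cG\bx_\cH\epsilon_N$ is an isomorphism for every $N$. The principal homogeneous space assumptions — $\cG$ faithfully left $B$-flat together with $\cG_\ract$ faithfully right $R$-flat — guarantee, via Lemma \ref{lem:Tak2}, that $\cG$ is faithfully $\cH$-coflat, i.e.\ $\Psi=\cG\bx_\cH-$ is faithfully exact and hence reflects isomorphisms. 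Consequently $\epsilon_N$ is an isomorphism. With both $\eta$ and $\epsilon$ invertible, $(\Phi,\Psi)$ is an adjoint equivalence, which yields ${}_B^{\cG}\cM_B\cong{}^\cH_B\cM$.

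The main obstacle is the counit: unlike the unit, it does not descend directly under a single faithfully flat base change, and one is forced to combine the triangle identity with the \emph{faithful coflatness} of $\cG$ over $\cH$. This is precisely where both faithfulness hypotheses of Definition \ref{def:PHS} are genuinely used — left $B$-flatness for the unit, and the conjunction of faithful $B$-flatness and faithful $R$-flatness (through Lemma \ref{lem:Tak2}) for the counit — and it is the reason the statement requires the full strength of a \emph{principal} homogeneous space rather than a mere Hopf-Galois extension.
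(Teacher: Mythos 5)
Your proof is correct, and your unit argument is precisely the paper's: the identity $\xi\circ(\eta_M\otimes_B\cG)=\zeta$ is the paper's commuting square $\xi\circ(F\otimes_B\cG)=(\cG\otimes_R f)\circ\zeta$ — which the paper verifies once, for an \emph{arbitrary} adjoint pair $F\leftrightarrow f$ from Proposition \ref{prop:Adj} — specialized to $F=\eta_M$, $f=\mathrm{id}_{\Phi(M)}$, followed by the same descent along the faithfully flat $-\otimes_B\cG$. Where you diverge is the counit. The paper specializes the same square the other way, to $M=\Psi(N)$ and $F=\mathrm{id}$: this yields $\cG\otimes_R\epsilon_N=\xi\circ\zeta^{-1}$, an isomorphism, and then faithful right $R$-flatness of $\cG_\ract$ reflects it, so $\epsilon_N$ is invertible. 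In particular your closing claim — that the counit ``does not descend directly under a single faithfully flat base change'' — is inaccurate: it descends along $\cG_\ract\otimes_R-$, exactly symmetric to the unit's descent along $-\otimes_B\cG$, which is why the paper can phrase the entire proof as ``$F$ is an isomorphism if and only if $f$ is'' and then read off both special cases. Your alternative route — the triangle identity $\Psi(\epsilon_N)\circ\eta_{\Psi(N)}=\mathrm{id}_{\Psi(N)}$ combined with the faithful $\cH$-coflatness of $\cG$ from the final statement of Lemma \ref{lem:Tak2} — is nevertheless valid, but note two things: it silently uses that ${}^\cH_B\cM$ is abelian (the standing $R$-flatness assumption), so that the faithfully exact functor $\Psi$ reflects isomorphisms via vanishing of kernel and cokernel; and it invokes the heaviest conclusion of Lemma \ref{lem:Tak2}, whereas the paper's counit step needs only the isomorphism $\xi$ itself together with faithful $R$-flatness of $\cG_\ract$. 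What your route buys is a purely formal second half — once the unit is invertible and the right adjoint is conservative, the counit is invertible by abstract nonsense, a mechanism that would survive in settings where the explicit square is unavailable; what the paper's route buys is symmetry and explicitness, exhibiting the inverse of $\epsilon_N$ concretely and keeping coflatness out of the proof of the theorem proper.
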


\begin{proof}
Let $F\colon M\to \cG \bx_\cH N$ be a morphism in ${}_B^{\mkern 3 mu \cG}\cM^{\phantom{\cG}}_{\mkern -2mu B}$ and $f\colon\overline{M}\to N$ the corresponding morphism in ${}^\cH _B\mathcal{M}$ according to Proposition \ref{prop:Adj}.
The diagram
\begin{equation*}
\begin{tikzcd}
M\otimes_B\cG  \arrow{rrr}{\cong \text{ by Lem.~\ref{lem:Tak1}}}
\arrow{d}[swap]{F\otimes_B\cG }
& & & \cG \otimes_R\overline{M} \arrow{d}{\cG \otimes_Rf}\\
(\cG \bx_\cH N)\otimes_B\cG  \arrow{rrr}{\cong\text{ by Lem.~\ref{lem:Tak2}}}
& & & \cG \otimes_RN
\end{tikzcd}
\end{equation*}
commutes
because
\begin{align*}
	((\mathcal{G}\otimes_Rf)\circ\zeta)(m\otimes_Bg)
	&=(\mathcal{G}\otimes_Rf)(m_{(-1)}g\otimes_R\pi(m_{(0)}))\\
	&=m_{(-1)}g\otimes_Rf(\pi(m_{(0)}))\\
	&=\xi((m_{(-1)}\otimes_Rf(\pi(m_{(0)})))\otimes_Bg)\\
	&=(\xi\circ(F\otimes_B\mathcal{G}))(m\otimes_Bg).
\end{align*}
Then, by faithful flatness, $F$ is an isomorphism if and only if $f$ is an isomorphism. Consider the two special cases:
\begin{enumerate}
  \compactlist{99}
\item For $N=\overline{M}$ and $f=\mathrm{id}$, we have that
$$
F\colon M\xrightarrow{\cong}\cG \bx_\cH \overline{M}, \quad
m\mapsto m_{(-1)}\otimes_R\pi(m_{(0)})
$$
is an isomorphism, that is, $\Psi(\Phi(M))\cong M$.

\item For $M=\cG \bx_\cH N$ and $F=\mathrm{id}$, we have that
$$
f\colon\overline{\cG \bx_\cH N}\xrightarrow{\cong}N,\quad
\pi(g^i\otimes_Rn^i)\mapsto\varepsilon(g^i)n^i
$$
is an isomorphism, that is, $\Phi(\Psi(N))\cong N$.
\end{enumerate}
Together with Proposition \ref{prop:Adj}, this proves the desired equivalence.
\end{proof}

\section{Differential calculi for Hopf algebroid principal homogeneous spaces}

\subsection{The general case}
\label{sec:calcPHS}

Consider a right Hopf algebroid surjection $\pi\colon \cG \to \cH$ over the same base algebra $R$. As in Definition~\ref{boing}, denote the associated left Hopf kernel of $\pi$ by $B$ again and set $B^+=B\cap\ker\varepsilon$.
By Lem\-ma~\ref{spinat}\,(ii), the algebra $B$ is a left $\cG$-comodule algebra with $R$-bilinear coaction $\gl_B$. 
Recall that $B^+$ is an object in ${}^\cH_B\mathcal{M}$ via left multiplication and left $\cH$-coaction given by
$$
\gl_{B^+} :=(\pi\otimes_R B^+)\circ \Delta|_{B^+}\colon
   B^+ \to \cH \otimes_R B^+,
   $$
   where $\gD$ denotes the coproduct in $\cG$.
Analogously to the Hopf algebra case, we can then state:

\begin{proposition}
  Let $B$ be the 
  principal homogeneous space for a right Hopf algebroid surjection $\pi\colon \cG \to \cH$.
  For any subobject $I \subseteq B^+$ in ${}^\cH_B\mathcal{M}$, we obtain a left $\cG$-covariant first order differential calculus $(\Omega, \dd)$ on $B$.
  Explicitly, define
  $$
  \Omega:=\Psi(B^+/I)=\cG \bx_\cH B^+/I,
$$
  with $B$-bimodule and left $\cG$-comodule structure discussed before Proposition \ref{prop:Adj},
and where $\Psi$ denotes the functor introduced in Eq.~\eqref{baldessen1}.
The differential 
is given by
$$
\dd \colon B\to\Omega, \quad 
b \mapsto (\cG \otimes_R \pi_I)(\Delta b - b\otimes_R 1),
$$
where $\pi_I\colon B^+\to B^+/I$ denotes the canonical projection.
\end{proposition}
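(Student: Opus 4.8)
The plan is to verify, one by one, the defining conditions of a left $\cG$-covariant first order differential calculus on the $R$-ring $B$ (Definitions~\ref{FODC} and \ref{covFODC}), exploiting the machinery already set up. Since $I\subseteq B^+$ is a subobject in ${}^\cH_B\cM$, the quotient $N:=B^+/I$ is again an object of ${}^\cH_B\cM$, so $\Omega=\Psi(N)=\cG\bx_\cH N$ is \emph{by construction} an object of ${}_B^{\cG}\cM_B$ (recall the $B$-bimodule and left $\cG$-comodule structure on $\cG\bx_\cH N$ from \eqref{Regenschirm} and the discussion before Proposition~\ref{prop:Adj}, which needs $\cG_\ract$ to be $R$-flat). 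Thus condition~(i) of Definition~\ref{covFODC} is automatic. For the differential, I would first rewrite it in a more convenient form: by counitality $b=b_{(1)}\ract\varepsilon(b_{(2)})$ one has $\Delta b-b\otimes_R1=b_{(1)}\otimes_R\big(b_{(2)}-s\varepsilon(b_{(2)})\big)=b_{(1)}\otimes_R\pi_\varepsilon(b_{(2)})$, where $\pi_\varepsilon\colon B\to B^+$, $b\mapsto b-s\varepsilon(b)$ is the canonical projection. As $b_{(2)}\in B$ by Lemma~\ref{spinat}\,(i), and $\pi_\varepsilon(b_{(2)})\in B^+$, the expression $\dd b=b_{(1)}\otimes_R\pi_I(\pi_\varepsilon(b_{(2)}))$ is a well-defined element of $\cG\otimes_R N$ (equivalently, $\Delta b-b\otimes_R1\in\cG\otimes_RB^+$ by applying $\cG\otimes_R\varepsilon$ and using $R$-flatness).

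The conceptual heart of the argument is the observation that $\dd$ factors as
$\dd=\big(\cG\bx_\cH(\pi_I\circ\pi_\varepsilon)\big)\circ\lambda_B$,
where $\lambda_B=\Delta|_B\colon B\to\cG\bx_\cH B$ is the left $\cG$-coaction (it corestricts to the cotensor product precisely by coassociativity of $\Delta$, and lands in $\cG\otimes_R B$ by Lemma~\ref{spinat}\,(i)). First I would check that $\pi_\varepsilon\colon B\to B^+$ is a morphism in ${}^\cH_B\cM$: left $B$-linearity is clear on the relevant component, and $\cH$-colinearity is a short computation using $\Delta(s(r))=s(r)\otimes_R1$ together with $\pi\circ s=s_\cH\circ\varepsilon$ and $\pi(B^+)=0$, which shows $\overline\lambda_{B^+}\circ\pi_\varepsilon=(\cH\otimes_R\pi_\varepsilon)\circ\overline\lambda_B$. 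Consequently $\pi_I\circ\pi_\varepsilon$ is a morphism in ${}^\cH_B\cM$, and cotensoring a comodule morphism with $\cG$ produces a left $\cG$-colinear map. Hence $\dd b\in\cG\bx_\cH N=\Omega$ and $\dd$ is automatically left $\cG$-colinear, settling both the well-definedness of the codomain and condition~(ii) of Definition~\ref{covFODC} in one stroke.

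Next I would dispatch the $R$-bilinearity and the Leibniz rule. Since $\lambda_B$ is an algebra (hence $R$-bilinear) map and the cotensored projection is $R$-linear, $R$-bilinearity of $\dd$ follows; explicitly $\dd(s(r)b)=s(r)\,\dd b$ and $\dd(b\,s(r))=(\dd b)\,s(r)$ using $\Delta(s(r))=s(r)\otimes_R1$ and the bimodule actions \eqref{Regenschirm}. The Leibniz rule $\dd(bb')=(\dd b)b'+b\,\dd b'$ is the verbatim analogue of the computation in the proof of Theorem~\ref{thm:Wor}: expanding with the right action $(g\otimes_Rn)b'=gb'\otimes_Rn$ and the diagonal left action $b(g\otimes_Rn)=b_{(1)}g\otimes_Rb_{(2)}n$ (well-defined because $b_{(2)}\in B$ acts on $N$), and using that $\lambda_B$ is multiplicative, the two cross terms $\pm\,b_{(1)}b'\otimes_R b_{(2)}$ cancel, leaving $(\cG\otimes_R\pi_I)\big(\Delta(bb')-bb'\otimes_R1\big)=\dd(bb')$. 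These are routine and I would only indicate the cancellation.

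The main obstacle, and the step I would treat most carefully, is surjectivity $\Omega=B\,\dd B$, which I would handle not by an ad hoc translation-map manipulation but through the Takeuchi-Schneider equivalence (Theorem~\ref{Kuchen}). The subspace $B\,\dd B$ is a sub-$B$-bimodule (closed on the right by the Leibniz rule) and a left $\cG$-subcomodule (by colinearity of $\dd$ and covariance of $\Omega$), hence a subobject of $\Omega$ in ${}_B^{\cG}\cM_B$; let $j\colon B\,\dd B\hookrightarrow\Omega$ be the inclusion. Applying $\Phi$ together with the isomorphism $\Phi(\Omega)=\overline{\cG\bx_\cH N}\cong N=B^+/I$ of Theorem~\ref{Kuchen}\,(ii), namely $\pi(g\otimes_Rn)\mapsto\varepsilon(g)n$, one computes $\Phi(\dd b)=\pi_I(\pi_\varepsilon(b))$ (using $\varepsilon(b_{(1)})\lact\pi_\varepsilon(b_{(2)})=\pi_\varepsilon(b)$ by counitality); as $b$ ranges over $B$, $\pi_\varepsilon(b)$ ranges over all of $B^+$, so already $\Phi(j)$ is surjective. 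Since $\Phi$ is part of an equivalence it preserves monomorphisms and reflects isomorphisms, whence $\Phi(j)$ is an isomorphism and therefore so is $j$, i.e.\ $B\,\dd B=\Omega$. This yields condition~(ii) of Definition~\ref{FODC} and completes the proof that $(\Omega,\dd)$ is a left $\cG$-covariant first order differential calculus on $B$. The delicate point throughout is reconciling the two $B$-actions on the cotensor product with the comodule-theoretic picture, and the factorisation of $\dd$ through the colinear projection $\pi_I\circ\pi_\varepsilon$ is what makes both well-definedness and surjectivity fall out of the equivalence.
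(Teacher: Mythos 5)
Your proof is correct, but it takes a genuinely different route from the paper's, most visibly in the surjectivity step. The paper first treats the case $I=0$ entirely by hand: it verifies Leibniz and colinearity on $\cG\bx_\cH B^+$ directly, proves surjectivity by an explicit translation-map computation (showing $b^i_{\smap}\otimes_R b^i_{\smam}g^i\in B\otimes_R B$ via \eqref{Tch4}, \eqref{Tch5}, \eqref{Tch3}, and then $b^i_{\smap}\,\dd(b^i_{\smam}g^i)=-\,g^i\otimes_R b^i$), and constructs an explicit isomorphism $\xi\colon\Psi(B^+)\xrightarrow{\cong}\gO^u_B$ with the universal calculus; the general case then follows by exactness of $\Psi$ (the $\cH$-coflatness of $\cG$ from Lemma \ref{lem:Tak2}), realising $\Omega\cong\Psi(B^+)/\Psi(I)$ as a quotient of the universal calculus. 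You instead work with general $I$ from the start: your factorisation $\dd=\bigl(\cG\bx_\cH(\pi_I\circ\pi_\varepsilon)\bigr)\circ\lambda_B$ through the $\cH$-colinear map $\pi_I\circ\pi_\varepsilon$ is a clean functorial device that settles well-definedness and $\cG$-colinearity simultaneously, and your surjectivity argument — apply $\Phi$ to the inclusion $j\colon B\,\dd B\hookrightarrow\Omega$, note $\Phi(\dd b)=\pi_I(\pi_\varepsilon(b))$ under the counit isomorphism $\Phi(\Psi(B^+/I))\cong B^+/I$ of Theorem \ref{Kuchen}, and reflect along the equivalence — replaces the paper's translation-map manipulation by abstract nonsense. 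Both routes consume the principal-homogeneous-space hypotheses legitimately (you need the full equivalence, hence faithful flatness, even for $I=0$, whereas the paper's $I=0$ computation uses only the right Hopf structure and flatness; but under the stated hypotheses this costs nothing). What the paper's approach buys, and yours does not, is the explicit isomorphism $\xi$ of \eqref{ichholjetztdochnochnKaffee} identifying $\Psi(B^+)$ with $\gO^u_B$ as calculi, which is re-used verbatim in the classification Theorem \ref{thm:herm}; what your approach buys is a shorter, computation-free surjectivity proof that makes the r\^ole of the Takeuchi-Schneider equivalence transparent.

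One small inaccuracy worth flagging: $\pi_\varepsilon\colon B\to B^+$ is \emph{not} a morphism in ${}^\cH_B\cM$, since it fails left $B$-linearity — indeed $\pi_\varepsilon(bb')=b\,\pi_\varepsilon(b')+\pi_\varepsilon(b)\,s\varepsilon(b')$, using that $\varepsilon|_B$ is an algebra map (Lemma \ref{spinat}\,(ii)). It is only left $R$-linear and left $\cH$-colinear, and fortunately that is all your factorisation requires, since $\cG\bx_\cH(-)$ is a functor on $\cH$-comodules; so the slip is harmless, but the claim as stated should be weakened to colinearity. The remaining steps — the Leibniz cancellation (which indeed mirrors the computation in Theorem \ref{thm:Wor}), the verification that $B\,\dd B$ is a subobject in ${}_B^{\mkern 3mu\cG}\cM^{\phantom{\cG}}_{\mkern -2mu B}$, and the mono-plus-epi-implies-iso argument in the abelian category ${}^\cH_B\cM$ — are all sound under the paper's standing flatness assumptions.
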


\begin{proof}
We begin by showing that for $I=0\subseteq B^+$, we obtain a left $\cG$-covariant first order differential calculus on $B$ given by $\gG = \Psi(B^+)=\cG\bx_\cH B^+$, with differential defined as $\dd b:=\Delta b -b\otimes_R 1$ for all $b\in B$. 
Explicitly, its $B$-bimodule structure reads
\begin{equation}
  \label{sandhamn}
b (g^i\otimes_R \tilde b^i) b'
:=b_{(1)}g^ib'\otimes_R b_{(2)} \tilde b^i
\end{equation}
for $\tilde b^i \in B^+$, while the left $\cG$-coaction is simply given by $\gl_\Omega :=\Delta\otimes_R B^+$, where as before $\gD$ denotes the coproduct in $\cG$.
The differential is well-defined since $\Delta b - b\otimes_R 1 \in \cG \otimes_R B^+$ for all $b\in B$ and moreover satisfies the Leibniz rule \eqref{wasserrutsche}
\begin{align*}
    (\dd b)b'+b \dd {\mkern 1mu} b'
  &=(b_{(1)}\otimes_R b_{(2)} - b\otimes_R 1) b'+b (b'_{(1)}\otimes_R b'_{(2)}-b'\otimes_R1)
  \\
  &=
  b_{(1)}b'\otimes_R b_{(2)} - bb'\otimes_R 1
  + b_{(1)}b'_{(1)}\otimes_R b_{(2)}b'_{(2)} - b_{(1)}b'\otimes_R b_{(2)}
  \\
  &=b_{(1)}b'_{(1)}\otimes_R b_{(2)}b'_{(2)} - bb'\otimes_R 1
  \\
    &=\dd (bb')
\end{align*}
as required, where the $B$-bimodule structure used in the second step is that from \eqref{sandhamn}.
The differential is also left $\cG$-colinear since
\begin{align*}
    \gl_\Omega(\dd b)
    =\gl_\Omega (b_{(1)}\otimes_R b_{(2)} - b\otimes_R 1)
    = b_{(1)}\otimes_R b_{(2)} \otimes_R b_{(3)} - b_{(1)}\otimes_R b_{(2)}\otimes_R 1
    =(\cG \otimes_R \dd) (\Delta b).
\end{align*}
It remains to prove surjectivity of $\mathrm{d}$ in case $\cG$ is a right Hopf algebroid (over a left bialgebroid):
let $g^i\otimes_R b^i \in \cG \bx_\cH B^+$, that is to say, 
\begin{equation}
  \label{immernochsoheisz}
  g^i_{(1)}\otimes_R \pi(g^i_{(2)}) \otimes_R b^i=g^i \otimes_R \pi(b^i_{(1)}) \otimes_R b^i_{(2)},
\end{equation}
and 
  let us show that $b^i_{\smap } \otimes_R b^i_{\smam } g^i \in  B_\bract \otimes_R \due B \lact {}$.
First of all, we show
\begin{align*}
    b^i_{\smap } \otimes_R  b^i_{\smam (1)}g^i_{(1)} \otimes_R  \pi(b^i_{\smam (2)} g^i_{(2)})
    &= b^i_{\smap \smap } \otimes_R  b^i_{\smap \smam }g^i_{(1)} \otimes_R  \pi(b^i_{\smam })\pi(g^i_{(2)})
    \\
    &= b^i_{(2)\smap \smap } \otimes_R  b^i_{(2)\smap \smam }g^i \otimes_R  \pi(b^i_{(2)\smam }b^i_{(1)})
    \\
    &= b^i_{\smap } \otimes_R  b^i_{\smam }g^i \otimes_R  1_\cH,
\end{align*}
where we used Eqs.~\eqref{Tch5}, \eqref{immernochsoheisz}, and \eqref{Tch3}, which means that $b^i_{\smap } \otimes_R  b^i_{\smam }g^i\in \cG \otimes_R  B$.
Then, as elements in
$(\cG_\ract \otimes_R \due \cH \lact {})_\bract \otimes_R \due B \lact {} :=
\cG_{\ract, \bract} \otimes_R \due \cH \lact {} \otimes_R \due B \lact {}$,
by using
\eqref{Tch4} one sees that
\begin{align*}
    b^i_{\smap (1)} \otimes_R  \pi(b^i_{\smap (2)}) \otimes_R  b^i_{\smam }g^i        
    = b^i_{(1)\smap } \otimes_R  \pi(b^i_{(2)}) \otimes_R  b^i_{(1)\smam }g^i
    = b^i_{\smap } \otimes_R  1_\cH \otimes_R   b^i_{\smam }g^i,
\end{align*}
 and hence
$b^i_{\smap } \otimes_R  b^i_{\smam }g^i \in \ker (\gvf \otimes_R B)$.
From the faithful left $B$-flatness of $\cG$ along with the left $R$-flatness of $\due \cG \lact {}$, we obtain the flatness of $B$ as a left $R$-module and therefore $\ker(\gvf \otimes_R B) = \ker \gvf \otimes_R B = B \otimes_R B$, and thus $b^i_{\smap } \otimes_R  b^i_{\smam }g^i \in B\otimes_R  B$, as desired.
Finally,
\begin{align*}
    b^i_{\smap } \dd (b^i_{\smam } g^i)
    &=b^i_{\smap }(b^i_{\smam (1)} g^i_{(1)}\otimes_R b^i_{\smam (2)} g^i_{(2)} - b^i_{\smam } g^i\otimes_R 1)\\
    &=b^i_{\smap (1)}b^i_{\smam (1)} g^i_{(1)}\otimes_R b^i_{\smap (2)}b^i_{\smam (2)} g^i_{(2)} - b^i_{\smap (1)}b^i_{\smam } g^i\otimes_R b^i_{\smap (2)}\\
    &= \varepsilon(b^i) \lact g^i_{(1)}\otimes_R g^i_{(2)} - g^i\otimes_R b^i\\
    &=- g^i\otimes_R b^i,
\end{align*}
using \eqref{Tch7} and $b^i \in B^+$, which shows surjectivity of the calculus. In summary, $(\cG \bx_\cH B^+, \dd)$ is a left $\cG$-covariant first order differential calculus on $B$.
Furthermore, $(\cG \bx_\cH B^+, \dd)$ is isomorphic to the universal calculus $(\gO^u_B, \dd_u)$ on $B$ by means of the isomorphism
\begin{align}
  \label{ichholjetztdochnochnKaffee}
  \xi\colon\Psi(B^+)=\cG \bx_\cH B^+  \xrightarrow{\cong}\gO^u_B, \quad g^i\otimes_R b^i  \mapsto b^i_{\smap }\otimes_R b^i_{\smam }g^i,
\end{align}
with inverse $\gO^u_B\to \cG \bx_\cH B^+$, $b^i\otimes_R  \tilde b^i\mapsto b^i_{(1)} \tilde b^i\otimes_R b^i_{(2)}$ for $b^i, \tilde b^i \in B$.
Note that $\xi$ is well-defined since $b^i_{\smap }\otimes_R b^i_{\smam }g^i\in B\otimes_R B$
and
$b^i_{\smap }b^i_{\smam }g^i = g^i \ract \varepsilon(b^i)=0$
for all $g^i\otimes_R b^i\in \cG\bx_\cH B^+$.
Moreover, $\xi$ is left $B$-linear since
\begin{align*}
    \xi(b(g^i\otimes_R b^i))
    &=\xi(b_{(1)}g^i\otimes_R b_{(2)}b^i)
    \\
    &=(b_{(2)}b^i)_{\smap }\otimes_R (b_{(2)}b^i)_{\smam }b_{(1)}g^i\\
    &=b_{(2)\smap } b^i_{\smap }\otimes_R b^i_{\smam }b_{(2)\smam }b_{(1)}g^i\\
    &=bb^i_{\smap }\otimes_R b^i_{\smam }g^i\\
    &=b\xi(g^i\otimes_R b^i),
\end{align*}
using Eqs.~\eqref{Tch6} \& \eqref{Tch3},
while $\xi$ is clearly right $B$-linear as well.
To show that $\xi$ is, moreover, left $\cG$-colinear, recall that $\cG\bx_\cH B^+$ is endowed 
with the left $\cG$-coaction $\Delta\otimes_R  B^+$ if $\cG_\ract$ is $R$-flat, while $\gO^u_B$ is a left $\cG$-comodule by means of the diagonal coaction $\gl_{\gG^u_B}$. Then, as elements in $\cG_\ract \otimes_R (B_\bract \otimes_R \due B \lact {})$, we have:
\begin{align*}
    \gl_{\Omega^u_B} (\xi(g^i\otimes_R b^i))
    &=\gl_{\Omega^u_B} (b^i_{\smap }\otimes_R b^i_{\smam }g^i)
    \\
    &=b^i_{\smap (1)}(b^i_{\smam }g^i)_{(1)}\otimes_R  b^i_{\smap (2)}\otimes_R (b^i_{\smam }g^i)_{(2)}
    \\
    &=b^i_{(1)\smap }(b^i_{(1)\smam }g^i)_{(1)}\otimes_R b^i_{(2)}\otimes_R (b^i_{(1)\smam }g^i)_{(2)}
    \\
    &=b^i_{(1)\smap \smap }b^i_{(1)\smap \smam }g^i_{(1)}\otimes_R b^i_{(2)}\otimes_R b^i_{(1)\smam }g^i_{(2)}
    \\
    &=g^i_{(1)} \ract \varepsilon(b^i_{(1)\smap }) \otimes_R b^i_{(2)}\otimes_R b^i_{(1)\smam }g^i_{(2)}
    \\
    &=g^i_{(1)}\otimes_R b^i_{\smap }\otimes_R b^i_{\smam }g^i_{(2)}
    \\
    &=g^i_{(1)}\otimes_R \xi(g^i_{(2)}\otimes_R b^i)
    \\
    &=(\cG \otimes_R \xi)\big(\gl_{\mathcal{G}{\scalebox{0.7}{$\bx$}}_\mathcal{H} B^+} (g^i\otimes_R b^i)\big),
\end{align*}
using, once more, the relations \eqref{Tch4}--\eqref{Tch7} for the translation map, which 
proves that $\xi$ is left $\cG$-colinear. That $\xi$ is invertible follows from
\begin{align*}
    \xi^{-1}(\xi(g^i\otimes_R b^i)) =\xi^{-1}(b^i_{\smap }\otimes_R b^i_{\smam }g^i) =b^i_{\smap (1)}b^i_{\smam }g^i\otimes_R b^i_{\smap (2)} =g^i\otimes_R b^i,
\end{align*}
and likewise the other way round.
Furthermore, the left $\cG$-colinear $B$-bimodule isomorphism $\xi$ intertwines the differentials,  
\begin{align*}
    \xi( \dd b)
    =\xi(\Delta b -b\otimes_R 1)
    =b_{(2)\smap }\otimes_R b_{(2)\smam }b_{(1)}-1_{\smap }\otimes_R 1_{\smam }b
    =b\otimes_R 1-1\otimes_R b
    = - \dd_u b,
\end{align*}
and, hence, also is
an isomorphism of left $\cG$-covariant first order differential calculi on $B$.
Finally, let $I\subseteq B^+$ be any subobject in ${}_B^\cH\mathcal{M}$; in particular, there is a short exact sequence
$$
0\to I\hookrightarrow B^+\twoheadrightarrow B^+/I\to 0
$$
in ${}_B^\cH\mathcal{M}$.
Since by the resulting $\cH$-coflatness of $\cG$, see Lemma \ref{lem:Tak2}, the functor $\Psi$ is exact, 
we therefore obtain a short exact sequence
$$
0\to \Psi(I)\hookrightarrow \Psi(B^+)\twoheadrightarrow \Psi(B^+/I)\to 0
$$
in
${}_B^{\mkern 3 mu \cG}\cM^{\phantom{\cG}}_{\mkern -2mu B}$.
Thus,
$$
\Omega:=\Psi(B^+/I)=\cG\bx_\cH B^+/I\cong\Psi(B^+)/\Psi(I)
$$
are isomorphic as left $\cG$-covariant $B$-bimodules.
As a quotient of the universal calculus $\gO^u_B\cong\Psi(B^+)$, the left $\cG$-covariant $B$-bimodule $\Omega:=\Psi(B^+/I)$ becomes a left $\cG$-covariant first order differential calculus on $B$ with differential induced by the universal differential $\dd_u$. Explicitly, $\dd_u b=(\cG \otimes_R \pi_I)(\Delta b -b\otimes_R 1)$, as claimed.
\end{proof}

\begin{proposition}
  Let $B$ be the
  principal homogeneous space for a right Hopf algebroid
surjection $\pi\colon \cG \to \cH $. Furthermore, let $(\Omega, \dd)$ be a left $\cG $-covariant first order differential calculus on $B$ and let $M\subseteq\gO^u_B$ be, as in Proposition \ref{schweineschmalz}, the subobject in ${}_B^{\mkern 3 mu \cG}\cM^{\phantom{\cG}}_{\mkern -2mu B}$ of the universal calculus on $B$ such that $\Omega\cong\gO^u_B/M$. Then we obtain a subobject $I\subseteq B^+$ in ${}^\cH _B\mathcal{M}$ such that
\begin{align*}
  \Phi(M)=M/MB^+\xrightarrow{\cong}I,
  \quad
[b {\mkern 1mu} \dd _u b'] \mapsto b b' - b \bract \gve(b')
\end{align*}
is an isomorphism.
\end{proposition}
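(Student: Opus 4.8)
The plan is to run the construction of the preceding proposition backwards, combining the isomorphism $\xi$ from \eqref{ichholjetztdochnochnKaffee} with the Takeuchi-Schneider equivalence of Theorem~\ref{Kuchen}. By Proposition~\ref{schweineschmalz} the left $\cG$-covariant calculus $(\Omega,\dd)$ is a quotient $\Omega\cong\gO^u_B/M$ for a subobject $M\subseteq\gO^u_B$ in ${}_B^{\mkern 3 mu \cG}\cM^{\phantom{\cG}}_{\mkern -2mu B}$. I would then set $I:=\Phi(M)=M/MB^+$ and exhibit it as a subobject of $B^+$ in ${}^\cH_B\mathcal{M}$ via the indicated assignment. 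Since $\Phi$ is one half of the categorical equivalence of Theorem~\ref{Kuchen}, it is exact, and hence sends the inclusion $M\hookrightarrow\gO^u_B$ to a monomorphism $\Phi(M)\hookrightarrow\Phi(\gO^u_B)$ in ${}^\cH_B\mathcal{M}$; so it suffices to identify $\Phi(\gO^u_B)$ with $B^+$ and to track the resulting map on generators.

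The identification I would use is the composite
$$
\Phi(\gO^u_B)\xrightarrow[\cong]{\Phi(\xi^{-1})}\Phi\big(\Psi(B^+)\big)\xrightarrow[\cong]{}B^+,
$$
where the first arrow applies $\Phi$ to the inverse of the isomorphism $\xi$ of \eqref{ichholjetztdochnochnKaffee}, and the second is the counit of the equivalence, which by Theorem~\ref{Kuchen}\,(ii) reads $\pi(g^i\otimes_R n^i)\mapsto\varepsilon(g^i)n^i$. Well-definedness on the quotient $M/MB^+$ is automatic from the functoriality of $\Phi$ (recall $\Phi$ is well defined by Lemma~\ref{lem:Phi}), so no separate verification that $MB^+$ is annihilated is required, and $I$ is, by construction, the image of a monomorphism into $B^+$, hence a subobject in ${}^\cH_B\mathcal{M}$ isomorphic to $\Phi(M)$.

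It then remains to evaluate this composite on a generator $[b\,\dd_u b']=[b\otimes_R b'-bb'\otimes_R1]$. Applying $\xi^{-1}$ termwise yields $b_{(1)}b'\otimes_R b_{(2)}-(bb')_{(1)}\otimes_R(bb')_{(2)}$, and the counit on the first leg produces
$$
\varepsilon(b_{(1)}b')\lact b_{(2)}-\varepsilon\big((bb')_{(1)}\big)\lact(bb')_{(2)}.
$$
The second summand collapses to $bb'$ by the counit axiom. For the first, I would use the generalised character property \eqref{counityippieh} to write $\varepsilon(b_{(1)}b')=\varepsilon(b_{(1)}\bract\varepsilon(b'))$, together with $\Delta\big(b\,s(\varepsilon(b'))\big)=\big(b_{(1)}\bract\varepsilon(b')\big)\otimes_R b_{(2)}$ (since $\Delta$ is an algebra map and $\Delta(s(r))=s(r)\otimes_R1$) and again the counit axiom, to reduce it to $b\bract\varepsilon(b')$. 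Thus the composite sends $[b\,\dd_u b']\mapsto b\bract\varepsilon(b')-bb'$; post-composing with the automorphism $-\mathrm{id}_{B^+}$ of $B^+$ in ${}^\cH_B\mathcal{M}$ (which is visible already in the sign of $\xi(\dd b)=-\dd_u b$ from the previous proposition) gives precisely the stated assignment $[b\,\dd_u b']\mapsto bb'-b\bract\varepsilon(b')$. That the image lies in $B^+$ follows from the multiplicativity of $\varepsilon$ on $B$ (Lemma~\ref{spinat}\,(ii)), since $\varepsilon\big(bb'-b\bract\varepsilon(b')\big)=\varepsilon(b)\varepsilon(b')-\varepsilon(b)\varepsilon(b')=0$, while $bb'$ and $b\bract\varepsilon(b')=b\,s(\varepsilon(b'))$ both lie in the subring $B$ as $s(R)\subseteq B$ by Lemma~\ref{wasserkessel}.

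The only genuine obstacle is the $R$-linearity bookkeeping in the counit computation: one must keep careful track of whether scalars act through $\lact$ or $\bract$, i.e.\ through the source or the target map, which is exactly the point at which the bialgebroid case departs from the classical Hopf-algebra setting. Once the identification $\Phi(\gO^u_B)\cong B^+$ and the explicit form of $\xi^{-1}$ are in place, the fact that $\Phi(M)\xrightarrow{\cong}I$ is an isomorphism in ${}^\cH_B\mathcal{M}$ and that $I$ is a subobject of $B^+$ are formal consequences of $\Phi$ being an equivalence.
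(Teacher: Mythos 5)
Your proposal is correct, and it reaches the conclusion by a genuinely different route for the key step. The paper establishes the identification $\Phi(\gO^u_B)\cong B^+$ by writing down the explicit map $\zeta\colon [b\,\dd_u b']\mapsto bb'-b\bract\gve(b')$ together with its inverse $b\mapsto[\dd_u b]$ (Eq.~\eqref{Kekse}) and then verifying by hand that $\zeta$ lands in $B^+$, is well defined on the quotient $\gO^u_B/\gO^u_BB^+$, and that $\zeta^{-1}$ is left $B$-linear and left $\cH$-colinear; only afterwards does it invoke exactness of $\Phi$ (via faithful $\cH$-coflatness from Lemma~\ref{lem:Tak2}) on the sequence $0\to M\to\gO^u_B\to\Omega\to 0$ and set $I:=\zeta(\Phi(M))$. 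You instead manufacture the same identification structurally, as $\Phi(\xi^{-1})$ followed by the counit $\Phi(\Psi(B^+))\xrightarrow{\cong}B^+$, $\pi(g^i\otimes_R n^i)\mapsto\gve(g^i)n^i$, of the Takeuchi--Schneider equivalence, thereby recycling the isomorphism $\xi$ of \eqref{ichholjetztdochnochnKaffee} from the preceding proposition. This buys you all the linearity and colinearity checks for free (they are inherited from morphisms already known to live in the right categories), at the modest cost of the sign: your composite computes to $b\bract\gve(b')-bb'=-\zeta([b\,\dd_u b'])$, which you correctly absorb by post-composing with $-\id_{B^+}$, an automorphism in ${}^\cH_B\cM$ --- and your observation that this sign is foreshadowed by $\xi(\dd b)=-\dd_u b$ is exactly right. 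Your generator computation is sound: the reduction of $\gve(b_{(1)}b')\lact b_{(2)}$ to $b\bract\gve(b')$ via $\gve(hh')=\gve(h\bract\gve(h'))$ and $\Delta(b\,s(r))=(b_{(1)}\bract r)\otimes_R b_{(2)}$ followed by counitality is precisely the correct bialgebroid bookkeeping, and your check that the image lies in $B^+$ via multiplicativity of $\gve|_B$ (Lemma~\ref{spinat}) and $s(R)\subseteq B$ (Lemma~\ref{wasserkessel}) is the same as the paper's. Two small remarks: the items ``(i)/(ii)'' you cite from Theorem~\ref{Kuchen} occur in its \emph{proof}, not its statement; and where you assert that $\Phi$ is exact simply because it is half of an equivalence, note that the paper routes this through Lemma~\ref{lem:Tak2} (faithful coflatness of $\cG$) to know $\Psi$ is exact and the equivalence holds in the first place --- under the standing principal-homogeneous-space hypothesis both justifications are available, so this is a difference of emphasis, not a gap.
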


\begin{proof}
In a first step, we show that $\Phi(\gO^u_B)$ with $\gO^u_B =\ker m_B$ is isomorphic to $B^+$ as a left $\cH $-covariant left $B$-module.
This isomorphism in ${}_B^\cH \mathcal{M}$ is given by
\begin{align}
\label{Kekse}
\zeta\colon\Phi(\gO^u_B)=\gO^u_B/\gO^u_BB^+ \to B^+,
\quad
[b {\mkern 1mu} \dd _u b'] \mapsto b b' - b \bract \gve(b')
\end{align}
with inverse $\zeta^{-1}\colon B^+\to\Phi(\gO^u_B)$, $b\mapsto[\dd _u b]$.
Since $\gve(bb') - \gve(b \bract \gve(b')) = \gve(bb') - \gve(bb') = 0$, 
the image of $\zeta$, seen as a map $\gO^u_B\to B$, lives in $B^+$. It is well-defined on the quotient, and thus well-defined as a map $\zeta\colon\Phi(\gO^u_B)\to B^+$ since for $b,b'\in B$ and $c\in B^+$, we have 
$$
\zeta \colon (b{\mkern 1mu} \dd_u b') c = b {\mkern 1mu} \dd_u (b'c) - bb' {\mkern 1mu} \dd_uc
\, \mapsto \, b\big(b'c - s\gve(b'c) - b'c + b' \bract \gve(c)\big) = 0, 
$$
again because $\gve$ here is an algebra morphism.
The map $\zeta^{-1}$ is well-defined on the nose, and let us show that it inverts $\zeta$, indeed: for all $b\in B^+$, we have $\zeta(\zeta^{-1}(b))=\zeta([\dd _u b])=b$,
while for all $b,b'\in B$, we obtain
\begin{equation*}
\begin{split}
    \zeta^{-1}\big(\zeta([b {\mkern 1mu} \dd _u b'])\big)
        &= \zeta^{-1}\big( b b' - b \bract \gve(b') \big)
        =     [\dd_u ( b b')] - [\dd_u b] \bract \gve(b') =
        [\dd_u ( b b')] - [(\dd_u b)b']
        =  [b {\mkern 1mu} \dd _u b'].
\end{split}
\end{equation*}
Hence, $\zeta$ and $\zeta^{-1}$ are mutually inverse.
Let us still show that $\zeta^{-1}$ is left $B$-linear and left $\cH $-colinear as well:
first, for $b\in B$ and $b^+\in B^+$, we obtain
$$
\zeta^{-1}(bb^+)
=[\dd _u(bb^+)]
=[\dd _u(b)b^++b {\mkern 1mu} \dd _u b^+]
=[\dd _u b ] \bract \varepsilon(b^+) + [b {\mkern 1mu} \dd _u b^+]
=0+b\zeta^{-1}(b^+),
$$
while for all $b^+\in B^+$, one has
\begin{align*}
    \gl_{\Phi(\gO^u_B)}(\zeta^{-1}(b^+))
    =\gl_{\Phi(\gO^u_B)}\big([\dd _u b^+]\big)
    =\pi(b^+_{(1)})\otimes_R [\dd _u b^+ _{(2)}]
    =(\cH \otimes_R \zeta^{-1})(\gl_{B^+}(b^+)),
\end{align*}
which means colinearity.
As for the more general statement, 
by Example \ref{freibadoderbadesee} and Proposition \ref{schweineschmalz}, for every left $\cG $-covariant first order differential calculus $(\Omega,\mathrm{d})$ on $B$ there is a short exact sequence
$$
0\to M\hookrightarrow\gO^u_B\twoheadrightarrow\Omega\to 0
$$
in ${}_B^{\mkern 3 mu \cG}\cM^{\phantom{\cG}}_{\mkern -2mu B}$. If now $\cG_\ract$ is faithfully $R$-flat, then $\cG$  is faithfully $\cH$-coflat by Lemma \ref{lem:Tak2}; hence,  $\Phi$ is an exact functor since $\Psi$ is so and both functors establish an equivalence of categories, see Theorem \ref{Kuchen}. We therefore
obtain a short exact sequence
$$
0\to \Phi(M)\hookrightarrow\Phi(\gO^u_B)\twoheadrightarrow\Phi(\Omega)\to 0
$$
in ${}_B^\cH \mathcal{M}$. By definition $I:=\zeta(\Phi(M))\subseteq B^+$ is a subobject in ${}_B^\cH \mathcal{M}$ and isomorphic to $\Phi(M)$, which is the isomorphism in the statement.
\end{proof}

\begin{theorem}
  \label{thm:herm}
  If $B$ is the principal homogeneous space for a right Hopf algebroid surjection $\pi\colon \cG \to \cH $,
  then
  $$
\begin{Bmatrix}
\text{left $\cG $-covariant first order}\\
\text{differential calculi on $B$}
\end{Bmatrix}
\overset{1:1}{\longleftrightarrow}
\begin{Bmatrix}
\text{subobjects $I\subseteq B^+$}\\
\text{in ${}^\cH _B\mathcal{M}$}
\end{Bmatrix}
$$
is a one-to-one correspondence.
\end{theorem}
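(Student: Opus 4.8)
The plan is to recognise that the two foregoing propositions already supply the two maps realising the correspondence, so that it only remains to check that they are mutually inverse; this will be a formal consequence of the Takeuchi-Schneider equivalence of Theorem~\ref{Kuchen}. To begin, I would recall from Proposition~\ref{schweineschmalz} that every left $\cG$-covariant first order differential calculus $(\Omega,\dd)$ on $B$ is, up to isomorphism of calculi, uniquely encoded by a subobject $M\subseteq\gO^u_B$ in ${}_B^{\mkern 3 mu \cG}\cM^{\phantom{\cG}}_{\mkern -2mu B}$ through $\Omega\cong\gO^u_B/M$. Hence the left-hand side of the claimed correspondence is in bijection with the lattice of subobjects of the universal calculus $\gO^u_B$ in ${}_B^{\mkern 3 mu \cG}\cM^{\phantom{\cG}}_{\mkern -2mu B}$.

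The core of the argument is then to transport subobjects across the equivalence. Since $B$ is a principal homogeneous space, $\cG$ is faithfully $\cH$-coflat by Lemma~\ref{lem:Tak2}, so the Takeuchi functors $\Phi$ and $\Psi$ of Theorem~\ref{Kuchen} are exact and mutually quasi-inverse equivalences. Any equivalence of categories induces a bijection between the subobjects of an object $X$ and those of $\Phi(X)$, as it preserves and reflects monomorphisms; applying this to $X=\gO^u_B$ and using the isomorphism $\zeta\colon\Phi(\gO^u_B)\xrightarrow{\cong}B^+$ in ${}^\cH_B\cM$ from the second foregoing proposition (cf.\ \eqref{Kekse}), the subobjects $M\subseteq\gO^u_B$ are put into bijection with the subobjects $I\subseteq B^+$ in ${}^\cH_B\cM$. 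Composing with the bijection from the previous paragraph yields the asserted one-to-one correspondence, with the two passages being exactly $\Omega\mapsto I=\zeta(\Phi(M))$ and $I\mapsto\Omega=\Psi(B^+/I)$.

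To close the loop, I would verify that these abstract bijections coincide with the explicit constructions and are genuinely inverse. In the direction $I\mapsto\Omega$, exactness of $\Psi$ gives $\Omega=\Psi(B^+/I)\cong\Psi(B^+)/\Psi(I)$, and under the calculus isomorphism $\xi\colon\Psi(B^+)\xrightarrow{\cong}\gO^u_B$ this identifies the associated subobject as $M=\xi(\Psi(I))$; applying $\Phi$ and $\zeta$ then returns $I$, since $\Phi$ and $\Psi$ are quasi-inverse. Conversely, starting from $\Omega\cong\gO^u_B/M$, the second foregoing proposition already records $\Phi(M)\cong I$ via $\zeta$, and feeding $I$ back through $\Psi$ reconstructs $\Omega$ by the same quasi-inverse relation. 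The main obstacle is not any isolated computation but the simultaneous bookkeeping of the left $B$-action together with the left $\cG$- respectively $\cH$-coaction as one passes through $\Phi$, $\Psi$, $\xi$, and $\zeta$; this is precisely what the faithful $\cH$-coflatness of $\cG$ controls, guaranteeing that short exact sequences, and hence subobjects and quotients, are transported faithfully between ${}_B^{\mkern 3 mu \cG}\cM^{\phantom{\cG}}_{\mkern -2mu B}$ and ${}^\cH_B\cM$. Once this is in place, the two explicit assignments are mutually inverse, establishing the correspondence.
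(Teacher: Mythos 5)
Your proposal is correct and takes essentially the same route as the paper's own proof: both encode a calculus as a subobject $M\subseteq\gO^u_B$ via Proposition \ref{schweineschmalz}, transport subobjects through the Takeuchi--Schneider equivalence of Theorem \ref{Kuchen} using the isomorphisms $\xi\colon\Psi(B^+)\xrightarrow{\cong}\gO^u_B$ and $\zeta\colon\Phi(\gO^u_B)\xrightarrow{\cong}B^+$, and conclude that the assignments $M\mapsto\zeta(\Phi(M))$ and $I\mapsto\xi(\Psi(I))$ are mutually inverse because $\Phi$ and $\Psi$ are quasi-inverse (exact) equivalences.
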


\begin{proof}
Recall first from \eqref{ichholjetztdochnochnKaffee} the isomorphism 
\begin{align*}
  \xi\colon\Psi(B^+)=\cG \bx_\cH B^+ \xrightarrow{\cong}\gO^u_B,
  \quad g^i\otimes_R b^i&\mapsto b^i_{\smap }\otimes_R b^i_{\smam }g^i
\end{align*}
in ${}_B^{\mkern 3 mu \cG}\cM^{\phantom{\cG}}_{\mkern -2mu B}$,
with inverse $b^i\otimes_R g^i\mapsto b^i_{(1)}g^i\otimes_R b^i_{(2)}$, and second from \eqref{Kekse} the isomorphism  
\begin{align*}
  \zeta\colon\Phi(\gO^u_B)=\gO^u_B/\gO^u_BB^+  \xrightarrow{\cong} B^+, \quad
  [b {\mkern 1mu} \dd _u b'] \mapsto b b' - b \bract \gve(b')
\end{align*}
in ${}_B^\cH \mathcal{M}$,
with inverse $b\mapsto[\dd _u b]$.
Let $(\Omega, \dd )$ be a left $\cG $-covariant first order differential calculus on $B$. Then, as just seen right above, there exists a subobject $M\subseteq\gO^u_B$ in ${}_B^{\mkern 3 mu \cG}\cM^{\phantom{\cG}}_{\mkern -2mu B}$ such that
$$
0\to M\hookrightarrow\gO^u_B\twoheadrightarrow\Omega\to 0
$$
is a short exact sequence in ${}_B^{\mkern 3 mu \cG}\cM^{\phantom{\cG}}_{\mkern -2mu B}$. This gives a subobject $I:=\zeta(\Phi(M))\subseteq B^+$ in ${}_B^\cH \mathcal{M}$.
On the other hand, every subobject $I\subseteq B^+$ in ${}_B^\cH \mathcal{M}$ gives rise to a short exact sequence
$$
0\to I\hookrightarrow B^+\twoheadrightarrow B^+/I\to 0
$$
in ${}_B^\cH \mathcal{M}$ so as to obtain a subobject $M:=\xi(\Psi(I))\subseteq\gO^u_B$ in ${}_B^{\mkern 3 mu \cG}\cM^{\phantom{\cG}}_{\mkern -2mu B}$. To see that these assignments are mutually inverse,
one verifies that, given subobjects $I\subseteq B^+$ in ${}^\cH _B\mathcal{M}$ and $M\subseteq\Omega^u_B$ in ${}_B^{\mkern 3 mu \cG}\cM^{\phantom{\cG}}_{\mkern -2mu B}$, the maps
\begin{equation}
    \label{eq:2isos}
\begin{split}
  I & \to \zeta\big(\Phi(\xi(\Psi(I)))\big),
  \\
M & \to \xi \big(\Psi(\zeta(\Phi(M)))\big),
\end{split}
\quad
\begin{split}
  b & \mapsto \zeta\big([\xi(b_{(1)}\otimes_R b_{(2)}-b\otimes_R1)]\big),
  \\
	m & \mapsto \xi \big(m_{(-1)}\bx_\cH\zeta([m_{(0)}]) \big),
\end{split}
\end{equation}
are bijections in ${}^\cH _B\mathcal{M}$ and ${}_B^{\mkern 3 mu \cG}\cM^{\phantom{\cG}}_{\mkern -2mu B}$, respectively. In fact, for all $b\in I$ we obtain 
\begin{align*}
	\zeta([\xi(b_{(1)}\otimes_R b_{(2)}-b\otimes_R1)])
	&=\zeta(b_{(2)\smap}\otimes_Rb_{(2)\smam}b_{(1)}-1_{\smap}\otimes_R1_{\smam}b)\\
	&=\zeta([b\otimes_R1-1\otimes_Rb])\\
	&=-\zeta([\mathrm{d}_ub])\\
	&=-b,
\end{align*}
which implies injectivity, as well as left $B$-linearity and left $\mathcal{H}$-colinearity of the first map in \eqref{eq:2isos}. As for surjectivity, for arbitrary $g^i\otimes_Rb^i\in\mathcal{G}\bx_\cH I$, as just shown, $\varepsilon(g^i) \lact b^i\in I$ is mapped to $-\varepsilon(g^i) \lact b^i$, from which
\begin{align*}
	- \varepsilon(g^i) \lact b^i
	=b^i_{\smap }b^i_{\smam  }g^i-b^i_{\smap }\bract\varepsilon(b^i_{\smam  }g^i)
	&=\zeta\big([b^i_{\smap }\mathrm{d}_u(b^i_{\smam  }g^i)]\big) \\
	&=\zeta\big([b^i_{\smap }\otimes_Rb^i_{\smam  }g^i]\big) =\zeta\big([\xi(g^i\otimes_Rb^i)]\big)
\end{align*}
follows. As a non-zero multiple of the identity map, the first assignment in \eqref{eq:2isos} maps the $\Bbbk$-module $I$ to itself and thus gives the equality $I=\zeta\big(\Phi(\xi(\Psi(I)))\big)$.
To prove that the second map in \eqref{eq:2isos} is an isomorphism, we employ Theorem \ref{Kuchen}, which gives an isomorphism $M\to\Psi(\Phi(M))=\mathcal{G}\bx_\cH\overline{M}$ in ${}_B^{\mkern 3 mu \cG}\cM^{\phantom{\cG}}_{\mkern -2mu B}$, and compose it with 
\begin{equation}
  \label{eq:idzeta}
\cG\otimes_R\zeta \colon\mathcal{G}\bx_\cH\overline{M}\to\mathcal{G}\bx_\cH\zeta(\overline{M}).
\end{equation} 
Since $\Phi$ is exact, the isomorphism $\zeta\colon\Phi(\Omega^u_B)\to B^+$ (co)restricts to an isomorphism $\zeta\colon\Phi(M)\to\zeta(\Phi(M))$. Thus \eqref{eq:idzeta} is an isomorphism by the right $\mathcal{H}$-coflatness of $\mathcal{G}$, therefore $M$ is isomorphic to $\mathcal{G}\bx_\cH\zeta(\overline{M})$, and hence  also isomorphic to $\xi\big(\mathcal{G}\bx_\cH\zeta(\overline{M})\big)$. Explicitly, for $m=b\mkern 1.5mu\mathrm{d}_ub'\in M$ with $b,b'\in B$, we obtain
\begin{align*}
  m\mapsto&~\xi\big(m_{(-1)}\bx_\cH\zeta([m_{(0)}])\big)
  \\
	&=\xi\big(b_{(1)}b'_{(1)}\bx_\cH\zeta([b_{(2)}\mathrm{d}_ub'_{(2)}])\big)
\\
        &=\xi\big(b_{(1)}b'_{(1)}\bx_\cH( b_{(2)}b'_{(2)}-b_{(2)}\bract\varepsilon(b'_{(2)}))\big)
        \\
	&=(b_{(2)}b'_{(2)})_{\smap }\otimes_R(b_{(2)}b'_{(2)})_{\smam}b_{(1)}b'_{(1)}
        -(b_{(2)}\bract\varepsilon(b'_{(2)}))_{\smap }\otimes_R(b_{(2)}\bract\varepsilon(b'_{(2)}))_{\smam }b_{(1)}b'_{(1)}
\\
	&=bb'\otimes_R1
- b_{(2)\smap }\otimes_R b_{(2) \smam }b_{(1)}t \varepsilon(b'_{(2)}) b'_{(1)}\\
	&=bb'\otimes_R1
	-b\otimes_Rb'\\
	&=-m,
\end{align*}
which shows that the second map in \eqref{eq:2isos} is a non-zero multiple of the identity and thus $M=\xi \big(\Psi(\zeta(\Phi(M)))\big)$, as before.
\end{proof}

\begin{remark}
\label{noworo}
Comparing Theorems \ref{thm:Wor} \& \ref{thm:herm},
one might wonder whether the former, that is, the Woronowicz kind of classification cannot be seen as a special case of the latter, as is the case in classical Hopf algebra theory. However, in view of Remark \ref{tobeornottobeakernel}, there does not seem to be any Hopf algebroid surjection $\pi \colon \cG \to \RRe$ such that $B \cong \cG$, which would correspond to Theorem \ref{thm:Wor}, and which is why it has been proven here separately.
\end{remark}

\subsection{Calculi on the scalar extension principal homogeneous space}
\label{sec:QHScalc}

  With the help of Theorem \ref{thm:herm}, we can now construct examples of differential calculi on the previously discussed
  class of principal homogeneous spaces for scalar extension Hopf algebroids, see \S\ref{sec:HomogeneousScalarExt}.
  Let us recall the setup: let $\pi\colon G\to H$ be a surjection of Hopf algebras with invertible antipodes and $B:=G^{\mathrm{co}H}\subseteq G$ a faithfully flat Hopf-Galois extension. Then, given a (braided) commutative monoid $A$ in ${}_H\mathcal{YD}^G$,
by Proposition \ref{prop:smashhomogeneous}
the left Hopf kernel $A\otimes B$ is a Hopf algebroid principal homogeneous space for the right Hopf algebroid surjection
$$
\pi_\#\colon\underbracket{{}_\pi F(A)\# G}_{=:\mathcal{G}}\to\underbracket{F^\pi(A)\#H}_{=:\mathcal{H}}.
$$ 
Given subobjects $I_A\subseteq A$ in ${}_H\mathcal{M}\cap{}_A\mathcal{M}$ and $I_B\subseteq B^+$ in ${}_B^H\mathcal{M}$, we then obtain a subobject $I:=I_A\otimes I_B\subseteq A\otimes B$ in ${}_{A\otimes B}^{\mathcal{H}}\mathcal{M}$: in fact, $I$ is closed under the left $A\otimes B$-action since
$$
(a\otimes b) (x\otimes y)
=a(
\pi(b_{(1)}) \mkern 1 mu x
)
\otimes
b_{(2)}y
\in I_A\otimes I_B
$$
for all $a\in A$, $b\in B$, $x\in I_A$, $y\in I_B$. 
Moreover, $I$ is closed under the left $\mathcal{H}$-coaction, as seen by
\begin{align*}
 \overline{\lambda}_I
  (x\otimes y)
	&=\pi_\#(x\otimes y_{(1)})\otimes_A(1\otimes y_{(2)})\\
	&=\big(x\otimes\pi(y_{(1)})\big)\otimes_A(1\otimes y_{(2)})\\
 &=\big(1\otimes\pi(y_{(2)})\big)(\pi\big(S^{-1}(y_{(1)})\big) \mkern 1 mu x\otimes 1)\otimes_A(1\otimes y_{(3)})
 \\
 &=\big(1\otimes\pi(y_{(2)})\big)
 \otimes_A
 \big(\underbracket{\pi\big(S^{-1}(y_{(1)})\big) \mkern 1 mu  x\otimes y_{(3)}}_{\in I_A\otimes I_B}\big),
\end{align*}
where $x\in I_A$ and $y\in I_B$.
Then Theorem \ref{thm:herm} implies the following result:

\begin{proposition}
  \label{prop:CalcSmashHom}
  Consider a principal homogeneous space $B =G^{\mathrm{co}H}$ for a Hopf algebra surjection $\pi\colon G\to H$ and a braided commutative monoid $A$ in ${}_H\mathcal{YD}^G$. Let us abbreviate $\cG:=A\# G$ resp.\ $\cH:=A\#H$.
Then
  for every left $H$-ideal and left $A$-ideal $I_A\subseteq A$ and any subobject $I_B\subseteq B^+$ in ${}_B^H\mathcal{M}$, we obtain a left $\cG$-covariant first order differential calculus $(\Omega,\dd )$ on the Hopf algebroid principal homogeneous space $A\otimes B=\mathcal{G}^{\mathrm{co}\mathcal{H}}$. Explicitly,
$$
\Omega:=\mathcal{G}\bx_\mathcal{H}(A\otimes B^+)/(I_A\otimes I_B)
$$
along with
$$
\dd \colon A\otimes B\to\Omega, \quad
a\otimes b \mapsto (\mathcal{G}\otimes_A\pi_I)\big(\Delta_\#(a\#b)-(a\#b)\otimes_A(1\#1)\big)
$$
for all $a\in A$ and $b\in B$.
\end{proposition}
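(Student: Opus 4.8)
The plan is to obtain this statement as a direct application of the Hermisson-type classification in Theorem~\ref{thm:herm} to the principal homogeneous space produced in Proposition~\ref{prop:smashhomogeneous}. That proposition supplies a right Hopf algebroid surjection $\pi_\#\colon\cG=A\#G\to\cH=A\#H$ over the base algebra $R=A$ whose left Hopf kernel is $A\otimes B$, together with the fact that $A\otimes B=\cG^{\mathrm{co}\cH}$ is a Hopf algebroid principal homogeneous space once $B\subseteq G$ is faithfully flat Hopf-Galois and $\Bbbk$ is a field. Thus the hypotheses of Theorem~\ref{thm:herm} are in place with Hopf kernel $A\otimes B$, and it only remains to convert the data $(I_A,I_B)$ into a single subobject of the augmentation ideal lying in ${}^\cH_{A\otimes B}\mathcal{M}$.

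First I would pin down the augmentation ideal of the Hopf kernel. Since the counit of $\cG$ restricts on $A\otimes B$ to $a\otimes b\mapsto\varepsilon_G(b)a$ and $\Bbbk$ is a field, this map is $\mathrm{id}_A\otimes(\varepsilon_G|_B)$ up to the identification $A\otimes\Bbbk\cong A$; flatness of $A$ over $\Bbbk$ then yields $(A\otimes B)^+=A\otimes B^+$. In particular $I_A\otimes I_B\subseteq A\otimes B^+=(A\otimes B)^+$, because $\varepsilon_G$ vanishes on $I_B\subseteq B^+$. Next I would verify that $I:=I_A\otimes I_B$ is a genuine subobject of $(A\otimes B)^+$ in ${}^\cH_{A\otimes B}\mathcal{M}$: this is exactly the content of the two computations displayed immediately before the statement, where closedness under the left $A\otimes B$-action uses that $I_A$ is simultaneously a left $A$- and a left $H$-ideal and that $I_B$ is left $B$-stable, while closedness under the induced left $\cH$-coaction $\overline{\lambda}_I$ rests on the explicit translation map of $B\subseteq G$ combined with the Yetter-Drinfel'd compatibility, landing both tensor legs back in $I_A\otimes I_B$.

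With $I$ so identified, Theorem~\ref{thm:herm}, through the general construction of \S\ref{sec:calcPHS}, delivers a left $\cG$-covariant first order differential calculus given by $\Omega=\Psi\big((A\otimes B^+)/I\big)=\cG\bx_\cH(A\otimes B^+)/(I_A\otimes I_B)$ with differential $b\mapsto(\cG\otimes_A\pi_I)(\Delta b-b\otimes_A 1)$; specialising $b=a\otimes b'$, writing it as $a\#b'$, and using $\Delta=\Delta_\#$ reproduces the stated formula. So the proof is essentially a bookkeeping specialisation once the subobject condition is in hand.

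The main obstacle is structural rather than computational: one must make sure the standing flatness hypotheses required by Theorem~\ref{thm:herm} genuinely transfer to the smash-product setting, that is, that $\cG_\ract$ is (faithfully) $A$-flat and that $\cG$ is faithfully $(A\otimes B)$-flat. These should follow from the faithful flatness of the classical extension $B\subseteq G$ together with the freeness of $A\#G$ over $A$, but carefully tracking which flatness is invoked where, and confirming the identification $(A\otimes B)^+=A\otimes B^+$ that legitimises writing the calculus in terms of $A\otimes B^+$ and the pair $(I_A,I_B)$, is the step deserving the most care.
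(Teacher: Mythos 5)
Your proposal is correct and follows essentially the same route as the paper: the paper's proof consists precisely of the two computations you cite, checking that $I:=I_A\otimes I_B$ is a subobject of $(A\otimes B)^+ = A\otimes B^+$ in ${}^{\cH}_{A\otimes B}\cM$ (closure under the left $A\otimes B$-action using that $I_A$ is a left $A$- and left $H$-ideal, and closure under the induced left $\cH$-coaction using the antipode/Yetter--Drinfel'd compatibility), followed by an appeal to Theorem \ref{thm:herm} and the explicit construction of \S\ref{sec:calcPHS} with base $R=A$. The flatness transfer you flag at the end is likewise left implicit in the paper, being absorbed into Proposition \ref{prop:smashhomogeneous} together with the hypotheses that $\Bbbk$ is a field and $B\subseteq G$ is faithfully flat (so that $A\# G\cong A\otimes G$ is free over $A$ and faithfully flat over $A\otimes B$), so your account introduces no gap.
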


\begin{example}
As a major class of examples, let us recall the covariant calculi of Heckenberger and Kolb \cite{HeckKolb} that are constructed on irreducible quantum flag manifolds. Fix a finite-dimensional complex semisimple Lie algebra $\mathfrak{g}$ of rank $r$, with Cartan subalgebra $\mathfrak{h}$ and Cartan matrix $(a_{ij})$. For a nonzero number $q\in\mathbb{R}$ that is not a root of unity, the \textit{Drinfel'd-Jimbo quantum group} $U_q(\mathfrak{g})$ is generated by $E_i,F_i,K^{\pm1}_i$, for $i=1,\ldots,r$, subject to the relations
\begin{align*}
  K_iE_j=q^{a_{ij}}E_jK_i,
  \quad
  K_iF_j=q^{-a_{ij}}F_jK_i,
  \quad
  K_iK_j=K_jK_i,
  \quad
	E_iF_j-F_jE_i=\delta_{ij}\frac{K_i-K_i^{-1}}{q_i-q_i^{-1}},
\end{align*}
and the so-called {\em quantum Serre relations}. This quantised enveloping algebra is a Hopf algebra with comultiplication and antipode determined by
\begin{equation*}
\begin{split}
	\Delta(K_i)
	&=K_i\otimes K_i,\\
	\Delta(E_i)
	&=E_i\otimes K_i+1\otimes E_i,\\
	\Delta(F_i)
	&=F_i\otimes 1+K_i^{-1}\otimes F_i,
\end{split}\qquad
\begin{split}
	\varepsilon(K_i)
	&=1,\\
	\varepsilon(E_i)
	&=0,\\\varepsilon(F_i)
	&=0,
\end{split}\qquad
\begin{split}
	S(K_i)
	&=K_i^{-1},\\
	S(E_i)
	&=-E_iK_i^{-1},\\
	S(F_i)
	&=-K_iF_i,
\end{split}
\end{equation*}
see \cite[\S6]{KlSch} for more details. Choosing a subset $\mathfrak{S}$ of simple roots, one defines the \textit{Levi subalgebra} 
$$
U_q(\mathfrak{l}_\mathfrak{S}):=\big\langle K^\pm_i,E_j,F_j \mid i=1,\ldots,r\text{ and }j\in\mathfrak{S} \big\rangle
$$
as the subalgebra of $U_q(\mathfrak{g})$ generated by all $K_i^\pm$ but only the $E_j,F_j$ corresponding to the chosen set of simple roots.
Then $U_q(\mathfrak{l}_\mathfrak{S})$ turns out to be a Hopf subalgebra and one obtains a left $U_q(\mathfrak{l}_\mathfrak{S})$-action on the dual coordinate algebra $\mathcal{O}_q(G)$ of $U_q(\mathfrak{g})$, of which the invariant subalgebra is denoted by
$$
\mathcal{O}_q(G/L_\mathfrak{S}):=\big\{ f\in\mathcal{O}_q(G) \mid x\cdot f=\varepsilon(x)f\text{ for all }x\in U_q(\mathfrak{l}_\mathfrak{S}) \big\}.
$$
Considering the dual coordinate algebra $\mathcal{O}_q(L_\mathfrak{S})$ of $U_q(\mathfrak{l}_\mathfrak{S})$, we can describe $\mathcal{O}_q(G/L_\mathfrak{S})$ equivalently as the Hopf kernel of the Hopf algebra surjection $\pi\colon\mathcal{O}_q(G)\to\mathcal{O}_q(L_\mathfrak{S})$, that is, $\mathcal{O}_q(G/L_\mathfrak{S})\cong\mathcal{O}_q(G)^{\mathrm{co}\,\mathcal{O}_q(L_\mathfrak{S})}$. It is well-known, see, for example, \cite[\S5]{AleRe}, that $\mathcal{O}_q(G/L_\mathfrak{S})$ is a principal homogeneous space, referred to as the \textit{quantum flag manifold} associated to $\mathfrak{S}$,
and called \textit{irreducible} if $\mathfrak{S}$ is obtained by crossing out one node in the Dynkin diagram of which
the corresponding simple root has coefficient $1$ in the expansion of the highest root of $\mathfrak{g}$.

It was shown in \cite{HeckKolb} that on every irreducible quantum flag manifold $\mathcal{O}_q(G/L_\mathfrak{S})$, there exists an (essentially unique) left $\mathcal{O}_q(L_\mathfrak{S})$-covariant first order differential calculus of classical dimension. By \cite{Hermisson}, this calculus then corresponds to a subobject $I_\mathfrak{S}\subseteq\mathcal{O}_q(G/L_\mathfrak{S})^+$ in ${}_{\mathcal{O}_q(G/L_\mathfrak{S})}^{\mathcal{O}_q(L_\mathfrak{S})}\mathcal{M}$. Using the fact that $\mathcal{O}_q(G)$ is a coquasitriangular Hopf algebra \cite[\S10.1.2]{KlSch}, one can construct a braided commutative algebra $A_\mathfrak{S}$ in ${}_{\mathcal{O}_q(L_\mathfrak{S})}\mathcal{YD}^{\mathcal{O}_q(G)}$: denoting the coquasitriangular structure by $\mathcal{R}\colon\mathcal{O}_q(G)\otimes\mathcal{O}_q(G)\to\mathbb{C}$ and the generators of $\mathcal{O}_q(G)$ by $u_i^j$ such that $\Delta(u_i^j)=u^j_k\otimes u^k_i$ and $\varepsilon(u_i^j)=\delta_i^j$, see {\em op.~cit.}, \S9, we define
$A_\mathfrak{S}$ as the free algebra generated by $\xi_1,\ldots,\xi_r$ modulo the relations
\begin{equation}
  \label{eq:bc}
	\xi_j\xi_i-\xi_k\xi_\ell\mathcal{R}(u^\ell_j\otimes u^k_i).
\end{equation}
One can verify that $A_\mathfrak{S}$ is a monoid in ${}_{\mathcal{O}_q(G)}\mathcal{YD}^{\mathcal{O}_q(G)}$ with respect to the right $\mathcal{O}_q(G)$-coaction and left $\mathcal{O}_q(G)$-action determined by
$$
A_\mathfrak{S} \to A_\mathfrak{S} \otimes \mathcal{O}_q(G), \ \
\xi_i\mapsto\xi_j\otimes u^j_i,
\qquad\quad
\mathcal{O}_q(G) \otimes A_\mathfrak{S} \to
A_\mathfrak{S}, \ \
u_i^j \otimes \xi_k \mapsto \xi_\ell\mathcal{R}(u_k^\ell\otimes u_i^j),
$$
respectively. By construction \eqref{eq:bc}, the monoid $A_\mathfrak{S}$ is braided commutative and thus $A_\mathfrak{S}$ is a braided commutative algebra in ${}_{\mathcal{O}_q(L_\mathfrak{S})}\mathcal{YD}^{\mathcal{O}_q(G)}$, as claimed. This discussion, combined with Proposition \ref{prop:CalcSmashHom}, implies the following result:
\begin{proposition}
Let $\mathfrak{g}$ be a finite-dimensional semisimple Lie algebra over $\C$ and $\mathcal{O}_q(G/L_\mathfrak{S})\cong\mathcal{O}_q(G)^{\mathrm{co}\mathcal{O}_q(L_\mathfrak{S})}$ an irreducible quantum flag manifold with respect to a subset $\mathfrak{S}$ of simple roots. Consider 
\begin{itemize}
  \compactlist{70}
\item
the subobject $I_\mathfrak{S}\subseteq\mathcal{O}_q(G/L_\mathfrak{S})^+$ in ${}_{\mathcal{O}_q(G/L_\mathfrak{S})}^{\mathcal{O}_q(L_\mathfrak{S})}\mathcal{M}$, which determines the Heckenberger-Kolb calculus on $\mathcal{O}_q(G/L_\mathfrak{S})$,
along with
\item
  a left $A_\mathfrak{S}$-ideal and left $\mathcal{O}_q(L_\mathfrak{S})$-ideal $I_{A_\mathfrak{S}}\subseteq A_\mathfrak{S}$.
\end{itemize} 
Then
$$
\Omega:=\big(A_\mathfrak{S}\#\mathcal{O}_q(G)\big)
\bx_{A_\mathfrak{S}\#\mathcal{O}_q(L_\mathfrak{S})}
\big(A_\mathfrak{S}\otimes\mathcal{O}_q(G/L_\mathfrak{S})^+\big)/
(I_{A_\mathfrak{S}}\otimes I_\mathfrak{S})
$$
and
\begin{align*}
  \dd \colon A_\mathfrak{S}\otimes\mathcal{O}_q(G/L_\mathfrak{S})&\to\Omega,
  \\
	\xi_i\otimes u_j^k &\mapsto \pig(\big(A_\mathfrak{S}\#\mathcal{O}_q(G)\big)\otimes_{A_\mathfrak{S}}\pi_{I_\mathfrak{S}}\pig)
        \pig((\xi_i\#u_\ell^k)\otimes_{A_\mathfrak{S}}(1\#u^\ell_j)-(\xi_i\otimes u_j^k)\otimes_{A_\mathfrak{S}}(1\#1)\pig)
\end{align*}
describe a left $A_\mathfrak{S}\#\mathcal{O}_q(G)$-covariant first order differential calculus on the Hopf algebroid principal homogeneous space $A_\mathfrak{S}\otimes\mathcal{O}_q(G/L_\mathfrak{S})\cong(A_\mathfrak{S}\#\mathcal{O}_q(G))^{\mathrm{co}A_\mathfrak{S}\#\mathcal{O}_q(L_\mathfrak{S})}$.
\end{proposition}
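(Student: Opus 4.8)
The plan is to exhibit this statement as a direct specialisation of Proposition \ref{prop:CalcSmashHom}; accordingly, the work consists in checking that its hypotheses are satisfied by the data at hand and then unwinding the general differential on the generators $\xi_i\otimes u_j^k$. To begin, I would recall that $\mathcal{O}_q(G)$ and $\mathcal{O}_q(L_\mathfrak{S})$ are Hopf algebras with bijective antipodes and that $\pi\colon\mathcal{O}_q(G)\to\mathcal{O}_q(L_\mathfrak{S})$ is a Hopf algebra surjection, so that $\mathcal{O}_q(G/L_\mathfrak{S})\cong\mathcal{O}_q(G)^{\mathrm{co}\,\mathcal{O}_q(L_\mathfrak{S})}$. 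The essential geometric input is that an irreducible quantum flag manifold is a principal homogeneous space, that is, $\mathcal{O}_q(G/L_\mathfrak{S})\subseteq\mathcal{O}_q(G)$ is a faithfully flat Hopf-Galois extension; this I would import from \cite{AleRe}, as already quoted above.

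Next I would assemble the algebroid data. The braided commutative algebra $A_\mathfrak{S}$ in ${}_{\mathcal{O}_q(L_\mathfrak{S})}\mathcal{YD}^{\mathcal{O}_q(G)}$ has been constructed in the discussion preceding the statement, using the coquasitriangular form $\mathcal{R}$ of $\mathcal{O}_q(G)$ and the relations \eqref{eq:bc}, so I would take it as given. By Proposition \ref{prop:smashhomogeneous} it then yields a right Hopf algebroid surjection $\pi_\#\colon\mathcal{G}\to\mathcal{H}$, with $\mathcal{G}:=A_\mathfrak{S}\#\mathcal{O}_q(G)$ and $\mathcal{H}:=A_\mathfrak{S}\#\mathcal{O}_q(L_\mathfrak{S})$, whose Hopf kernel is the Hopf algebroid principal homogeneous space $A_\mathfrak{S}\otimes\mathcal{O}_q(G/L_\mathfrak{S})\cong\mathcal{G}^{\mathrm{co}\mathcal{H}}$. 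The two remaining pieces of data are exactly those demanded by Proposition \ref{prop:CalcSmashHom}: by hypothesis $I_{A_\mathfrak{S}}$ is simultaneously a left $A_\mathfrak{S}$-ideal and a left $\mathcal{O}_q(L_\mathfrak{S})$-ideal, while $I_\mathfrak{S}\subseteq\mathcal{O}_q(G/L_\mathfrak{S})^+$ is a subobject in ${}_{\mathcal{O}_q(G/L_\mathfrak{S})}^{\mathcal{O}_q(L_\mathfrak{S})}\mathcal{M}$, since it arises from Hermisson's classification \cite{Hermisson} of the Heckenberger-Kolb calculus \cite{HeckKolb}.

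With all hypotheses in place, I would invoke Proposition \ref{prop:CalcSmashHom} with $G=\mathcal{O}_q(G)$, $H=\mathcal{O}_q(L_\mathfrak{S})$, $A=A_\mathfrak{S}$, $I_A=I_{A_\mathfrak{S}}$, and $I_B=I_\mathfrak{S}$, which immediately produces the claimed left $A_\mathfrak{S}\#\mathcal{O}_q(G)$-covariant first order differential calculus with underlying module $\Omega$. To recover the explicit formula for $\dd$ on generators, I would specialise the general expression $a\otimes b\mapsto(\mathcal{G}\otimes_{A_\mathfrak{S}}\pi_I)(\Delta_\#(a\#b)-(a\#b)\otimes_{A_\mathfrak{S}}(1\#1))$, with $I=I_{A_\mathfrak{S}}\otimes I_\mathfrak{S}$, at $a=\xi_i$ and $b=u_j^k$, using the matrix coproduct $\Delta(u_j^k)=u_\ell^k\otimes u_j^\ell$ together with the smash coproduct rule $\Delta_\#(\xi_i\#u_j^k)=(\xi_i\#u_\ell^k)\otimes_{A_\mathfrak{S}}(1\#u_j^\ell)$; substituting this into the general differential reproduces the displayed expression.

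In this argument there is no genuinely new obstacle: the statement is an application, and the only point requiring care is confirming that the faithful flatness and Hopf-Galois conditions underlying Definition \ref{def:PHS} are met in this example. These, however, are precisely what the principal homogeneous space property of the flag manifold from \cite{AleRe} and the braided commutativity of $A_\mathfrak{S}$ established above provide, so the verification reduces to bookkeeping of the hypotheses.
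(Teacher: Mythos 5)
Your proposal is correct and coincides with the paper's own (essentially implicit) proof: the paper derives this proposition directly from the preceding discussion—the principal homogeneous space property of the irreducible quantum flag manifold, Hermisson's classification supplying $I_\mathfrak{S}$, and the braided commutative algebra $A_\mathfrak{S}$—combined with Proposition \ref{prop:CalcSmashHom}, exactly as you do. Your additional unwinding of the differential on generators via the matrix coproduct $\Delta(u_j^k)=u^k_\ell\otimes u^\ell_j$ and the smash coproduct is the same routine specialisation the paper leaves to the reader.
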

\end{example}

\addtocontents{toc}{\SkipTocEntry}
\section*{Declarations}

\addtocontents{toc}{\SkipTocEntry}
\subsection*{Data availability}
Data sharing is not applicable as no data sets were generated or analysed during the current study.

\addtocontents{toc}{\SkipTocEntry}
\subsection*{Conflict of Interest}
On behalf of all authors, the corresponding author states that there is no conflict of interest.

\end{document}